\setlist{itemsep=1pt,topsep=2pt,parsep=1pt}
\theoremstyle{plain}
\newtheorem{theorem}{Theorem}[section]
\newtheorem{assumption}{Assumption}
\newtheorem{proposition}[theorem]{Proposition}
\newtheorem{lemma}[theorem]{Lemma}
\newtheorem{example}{Example}
\newtheorem{remark}[theorem]{Remark}
\newtheorem{corollary}[theorem]{Corollary}
\newtheorem*{remarque*}{Remark}
\newcounter{countD}
\newtheorem{assumpprime}[countD]{Assumption}
\newcommand{\sign}{\mathrm{sign}}
\newcommand{\e}{\mathrm{e}}
\newcommand{\m}{\mathfrak{m}}
\newcommand{\f}{\mathfrak{F}}
\newcommand{\hq}{\tilde{q}}
\newcommand{\he}{\tilde{e}}
\newcommand{\bbP}{\mathbb{P}}
\newcommand{\bbE}{\mathbb{E}}
\newcommand{\bbR}{\mathbb{R}}
\renewcommand{\epsilon}{\varepsilon}
\newcommand{\gep}{\epsilon}
\newcommand{\dr}{\mathrm{d}}
\newcommand{\dd}{\mathrm{d}}
\newcommand{\nn}{\mathfrak{n}}
\newcommand{\sca}{\mathfrak{s}}
\newcommand{\mm}{\mathfrak{m}}
\newcommand{\ind}{\bm{1}}
\numberwithin{equation}{section}
\author[*,$\dagger$]{Quentin Berger}
\author[*]{Lo\"ic B\'ethencourt}
\author[*]{Camille Tardif}
\affil[*]{\footnotesize Laboratoire de Probabilit\'e Statistique et Mod\'elisation, Sorbonne Universit\'e.}
\affil[$\dagger$]{\footnotesize D\'epartement de Math\'ematiques et Applications, \'Ecole Normale Sup\'erieure, PSL.}
\title{Persistence problems for additive functionals\\
of one-dimensional Markov processes}
\date{}
\begin{document}

\maketitle

\begin{abstract}
\noindent
In this article, we consider additive functionals $\zeta_t = \int_0^t f(X_s)\dd s$ of a c\`adl\`ag Markov process $(X_t)_{t\geq 0}$ on $\mathbb R$.
Under some general conditions on the process $(X_t)_{t\geq 0}$ and on the function~$f$, we show that the persistence probabilities verify $\bbP(\zeta_s < z  \text{ for all } s\leq t ) \sim \mathcal V(z) \varsigma(t)\, t^{-\theta}$ as $t\to\infty$, for some (explicit) $\mathcal V(\cdot)$, some slowly varying function $\varsigma(\cdot)$ and some $\theta\in (0,1)$.
This extends results in the literature, which mostly focused on the case of a self-similar process $(X_t)_{t\geq 0}$ (such as Brownian motion or skew-Bessel process) with a homogeneous functional~$f$ (namely a pure power, possibly asymmetric). 
In a nutshell, we are able to deal with processes which are only asymptotically self-similar and functionals which are only asymptotically homogeneous.
Our results rely on an excursion decomposition of $(X_t)_{t\geq 0}$, together with a Wiener--Hopf decomposition of an auxiliary (bivariate) L\'evy process, with a probabilistic point of view. This provides an interpretation for the asymptotic behavior of the persistence probabilities, and in particular for the exponent $\theta$, which we write as $\theta = \rho \beta$, with $\beta$ the scaling exponent of the local time of $(X_{t})_{t\geq 0}$ at level $0$ and $\rho$ the (asymptotic) positivity parameter of the auxiliary L\'evy process.\\[3pt]
\textit{Keywords:} Persistence problems; Markov processes; additive functionals; fluctuation theory.\\[3pt]
\textit{MSC2020 AMS classification:} 60G51; 60J55; 60J25.
\end{abstract}

%
%

{\small
\setcounter{tocdepth}{2}
\tableofcontents
}

\section{Introduction}

The study of persistence (or survival) probabilities for stochastic processes is a widely investigated problem and an extensive literature exists on this matter.
It consists in estimating the probability for a given stochastic process to remain below some level (or more generally below some barrier), at least in some asymptotic regime.
For instance, for a general random walk $(S_n)_{n\geq 0}$ or a Lévy process $(Z_t)_{t\geq0}$ in $\mathbb{R}$, fluctuation's theory and the Wiener--Hopf factorization gives the following characterization, see e.g.~\cite{Doney07}.
Letting $T_z$, for $z\geq0$, denote the first hitting of $(S_n)_{n\geq 0}$ (resp.\ of~$(Z_t)_{t\geq0}$) above level $z$, then $t\mapsto\mathbb{P}(T_z > t)$ is regularly varying with index $-\rho$, $\rho\in(0,1)$,  if and only if $\lim_{n\to\infty} \frac1n \sum_{k=1}^n\mathbb{P}(S_k >0) =\rho$ (resp.\ if and only if $\lim_{t\to\infty}\frac1t \int_0^t\mathbb{P}(Z_s >0)\dr s =\rho$). The latter condition is known as \textit{Spitzer's condition}.

\subsection{Persistence problems for additive functionals}
In this paper, we consider a one-dimensional c\`adl\`ag strong Markov process $(X_t)_{t\geq0}$ with values in $\mathbb R$, and we assume that $0$ is recurrent for $(X_t)_{t\geq 0}$. We denote by $\bbP_x$ the law of the process starting from $X_0=x$; we also write $\bbP =\bbP_0$ for simplicity.
For a measurable function $f:\mathbb{R}\to\mathbb{R}$, we consider the following additive functional of $(X_t)_{t\geq 0}$:
\begin{equation}
\label{def:zeta}
\zeta_t = \int_0^t f(X_s) \dr s \,.
\end{equation}
We are now interested in the asymptotic behavior of the persistence (or survival) probabilities for the (non-Markovian) process~$(\zeta_t)_{t\geq 0}$, \textit{i.e.}\ probabilities that the process~$\zeta$ avoids a barrier during a long period of time.

\smallskip
For $z>0$, we denote by $T_z= \inf\{t>0,\:\zeta_t \geq z\}$ the first hitting time of $(\zeta_t)_{t\geq0}$ at the level~$z$. 
We aim at describing the asymptotic behavior of the probability $\mathbb{P}_x(T_z > t)$.
More precisely we show that, under some natural conditions on $f$ and $(X_t)_{t\geq0}$ (presented below in Section~\ref{sec:resI}), there exist a persistence exponent $\theta \in (0,1)$, a slowly varying function $\varsigma(\cdot)$ and a constant $\mathcal V(z)$ such that
\begin{equation}
\label{eq:asympPT}
 \mathbb{P}(T_z > t) \sim \mathcal V(z) \varsigma(t) \, t^{-\theta} \qquad \text{ as } t\to\infty \,.
\end{equation}

\subsection{Overview of the literature and main contribution}

The study of persistence probabilities for additive functionals of random walks or Lévy processes processes has a long history; we refer to the survey~\cite{ASsurvey} for a thourough review. Let us here present some quick (and updated) overview of the literature and outline what are the main novelties of our paper.

\paragraph*{The discrete case: integrated random walks.}
A question that has attracted a lot of attention since the seminal work of Sinai~\cite{Sinai92} is that of persistence probabilities for the integrated random walk. Let $(U_i)_{i\geq 1}$ be i.i.d.\ random variables and let $X_n:=\sum_{i=1}^n U_i$.
Then setting $\zeta_n = \sum_{k=1}^n X_k$, Sinai~\cite{Sinai92} proved that if $(X_n)_{n\geq 0}$ is the simple random walk (\textit{i.e.}\ $U_i$ is uniform in $\{-1,1\}$), then 
\[
\bbP(\zeta_k \geq 0 \text{ for all }  0\leq k \leq n) \asymp n^{-1/4} \,,
\]
where $u_n \asymp v_n$ means that there are two constants $c,c'>0$ such that $c'v_n\leq u_n \leq c v_n$.

In the case where $\bbE[X_i]=0$ and $\bbE[X_i^2]<+\infty$, this result has then been extended to include the case of more general random walks.
Vysotsky~\cite{Vysotsky10} gave the same $\asymp n^{-1/4}$ asymptotic for double-sided exponential and double-sided geometric walks (not necessarily symmetric).
Dembo, Ding and Gao~\cite{DDG13} gave a general proof for the $\asymp n^{-1/4}$ asymptotic.
Finally, Denisov and Wachtel~\cite{DW15} proved the  precise asymptotic behavior, \textit{i.e.}\ there is some constant $c_0$ such that
\[
\bbP(\zeta_k \geq 0 \text{ for all }  0\leq k \leq n) \sim c_0 n^{-1/4} \qquad \text{as }n\to\infty \,.
\]

The case where $\bbE[X_i]=0$ and $(X_i)_{i\geq 1}$ is in the domain of attraction of an $\alpha$-stable random variable with $\alpha\in (1,2)$ is still mostly open.
An example of a \textit{one-sided} random variable $X_i$ with pure power tail is given in~\cite{DDG13}, for which one has $\bbP(\zeta_k \geq 0 \text{ for all }  0\leq k \leq n) \asymp n^{-\theta}$ with $\theta =\frac{\alpha-1}{2\alpha}$.
Let us also mention~\cite{Vysotsky14}, which gives the sharp asymptotic $\sim c_0 n^{-\theta}$ in the case of a \textit{one-sided} random variables (more precisely, right-exponential or skip free), with the same exponent~$\theta$.

Let us stress that in the discrete case, the main focus in the literature has been so far on intergrated random walks rather than some more general additive functionals of a more general Markov process. Nevertheless, let us mention a series of article by I. Grama and R. Lauvergnat and E. Le Page ~\cite{GLL18AOP,GLL20PTRF} where the authors study additive functional of discrete-time Markov chain under a spectral-gap assumption. They show that the probability of persistence behave like $n^{-1/2}$ as the simple random walk and they continue the analysis further by proving local limit theorem for the additive functional conditioned to be positive.

We do not pursue further in this article the case of an additive functional $\zeta_n=\sum_{k=1}^n f(X_n)$ of a discrete-time Markov chain $(X_n)_{n\geq0}$, but we mention~\cite{BB23} which uses a generalization of Sparre Andersen's formula to treat general cases where $X$ is symmetric (and skip free) and $f$ is also symmetric; we also refer to Section~\ref{sec:examples} where continuous-time Bessel random walks are considered.
We believe that the methods of the present paper are general and could be adapted to treat some more general (\textit{i.e.}\ non-symmetric) cases.

\paragraph*{Integrated Brownian motion and $\alpha$-stable Lévy processes.}
Analogously to the discrete case, the question that first attracted a lot of attention was that of persistence probabilities for the integrated Brownian motion. Consider $(X_t)_{t\geq 0}$ a standard Brownian motion and let $\zeta_t := \int_0^t X_s \dd s$ and $T_z:= \inf\{ t, \zeta_t \geq z\}$.
It is then proven in~\cite{Goldman71,IW94} that one also has 
\[
\bbP(T_z >t) \sim c_0 z^{1/6} t^{-1/4} \qquad \text{ as } \quad  \frac{t}{z^{2/3}}\to+\infty \,,
\]
with an explicit constant $c_0=\frac{3^{4/3} \Gamma(2/3)}{\pi 2^{13/12} \Gamma(3/4)}$.
Let us also stress that there exists an explicit formula for the density of $(T_z, B_{T_z})$, given by Lachal~\cite{lachal91}. 

The case where $(X_t)_{t\geq 0}$ is a strictly $\alpha$-stable Lévy process has been treated more recently.
First, Simon~\cite{Simon07} proved that if $\alpha\in (1,2)$ and $(X_t)_{t\geq 0}$ is spectrally positive, then, as $t\to\infty$ we have
$\bbP(T_z >t) = t^{-\theta +o(1)}$ with $\theta = \frac{\alpha-1}{2\alpha}$.
(Note that this is the same exponent as in the case of random walks mentioned above.)
In the case $\alpha\in (0,2]$ and if $(X_t)_{t\geq 0}$ has a positivity parameter $\varrho := \bbP(X_t >0)$, Profeta and Simon~\cite{ps15} proved that, as $t\to\infty$,
\[
\bbP(T_z >t) = t^{-\theta +o(1)} \qquad \text{ with } \quad \theta = \frac{\varrho}{1+\alpha (1-\varrho)}\,.
\]
These results do not include the case of general Lévy processes $(X_t)_{t\geq 0}$.
A natural conjecture, which is still open, is that the above asymptotic behavior for the persistence probabilities remains valid if $(X_t)_{t\geq 0}$ is in the domain of attraction of an $\alpha$-stable Lévy process with positivity parameter $\varrho$.

\paragraph*{Homogeneous additive functional of (skew-)Bessel processes.}
In the above, we only accounted for the literature concerning integrated processes, \textit{i.e.}\ additive functionals $\zeta_t = \int_0^t f(X_s) \dd s$ with the identity function $f(x)=x$.
The case of a more general function $f$ has also been considered, starting with the work of Isozaki~\cite{Isozaki96}.

First, in the case where $(X_t)_{t\geq 0}$ is a Brownian motion,
Isozaki considered the function $f$ is homogeneous (and symmetric), given by $f(x)=\sign(x)|x|^{\gamma}$ for some $\gamma \geq 0$ and proved that
$\bbP(T_z >t) \asymp t^{-1/4}$ as $t\to+\infty$. This work was inspired by the one of Sinaï~\cite{Sinai92} and exploits the underlying idea that the fluctuations of $\zeta_t$  are related to the one of $Z_t:=\zeta_{\tau_t}$ where $\tau_t$ is the inverse local time time at $0$ of the Brownian motion. Observing  that $Z_t$ is a Lévy process for which fluctuation theory is well known, Kotani managed to solve the persistence problem for $\zeta_t$ by establishing a Wiener--Hopf factorization for the bi-dimensional Lévy process $(\tau_t, Z_t)_{t\geq0}$.
Later, Isozaki and Kotani~\cite{IK00} considered the case where $f$ is homogeneous but possibly asymmetric, \textit{i.e.}\ $f(x)=|x|^{\gamma} (c_+\ind_{\{x>0\}} - c_- \ind_{\{x<0\}})$ for some $c_+,c_->0$ and $\gamma>-1$:
they prove the precise asymptotic estimate
$\bbP(T_z >t) \sim C_z t^{-\rho/2}$ as $t\to+\infty$,
 where $\rho$ is some asymmetry parameter that depends (explicitly) on $\gamma$ and the ratio $c_+/c_-$. Note that the tools used in \cite{IK00} are somewhat analytical and do not rely on the Wiener--Hopf factorization established by Isozaki in the previous article \cite{Isozaki96}.

 Let us also mention the work of McGill \cite{mcgill} who considers the case of a Brownian motion, and a generalized function $f$ taking $f(x)\dr x = \mathbf{1}_{\{x\geq 0\}}m_{+}(\dr x) -  \mathbf{1}_{\{x <0\}}m_{-}( \dr x)$, with $m_{+}$ and $m_{-}$ being Radon measures respectively  on $\bbR_+$ and $\bbR_{-}$. The question raised in \cite{mcgill} is to give the asymptotic behavior of $\bbP(T_z >t)$ as $z\downarrow 0$, when $t$ is fixed, which is slightly different from our persistence problem. Using excursion theory of the Brownian motion, McGill proves, under some technical conditions on $m_{\pm}$, that $\bbP(T_z >t)\sim C_t \mathcal{V}(z)$ as $z\to0$ where $\mathcal{V}$ is the renewal function  of the ladder height process associated to $(Z_t)_{t\geq 0}$ (see~\eqref{eq_renewal} for a definition), generalizing results in \cite{IK00}.

More recently, Profeta~\cite{Prof21} treated the case where $(X_t)_{t\geq 0}$ is a \textit{skew-Bessel process} with dimension $\delta \in [1,2)$ and skewness parameter $\eta \in (-1,1)$, \textit{i.e.}\ roughly speaking a Bessel process of dimension $\delta \in [1,2)$ which has some asymmetry $\eta$ when it touches $0$ (see Example~\ref{ex:skewbessel} for a proper definition).
Profeta also considers the case of a homogeneous but possibly asymmetric function $f$, namely $f(x)=|x|^{\gamma} (c_+\ind_{\{x>0\}} - c_- \ind_{\{x<0\}})$ for some $c_+,c_->0$ but his work is restricted to the case $\gamma>0$.
He proves that 
$\bbP(T_z >t) \sim C_z t^{-\theta}$ as $t\to\infty$,
with some explicit expression for the constant $C_z$ and the exponent $\theta$ (see section~\ref{sec:examples} below).
Actually,~\cite{Prof21} goes further and provides some explicit expression for the law of different quantities related to this problem.

Let us also mention the work of Simon~\cite{Simon07}, which deals with the additive functional of a strictly $\alpha$-stable Lévy process $(X_t)_{t\geq 0}$ with $\alpha \in (1,2]$ and a (symmetric) homogeneous functional $f(x)=\sign(x)|x|^{\gamma}$ for some $\gamma > - \frac12 (1+\alpha)$.
Letting $\theta = \frac{\alpha-1}{2\alpha}$, the first result of~\cite{Simon07} is that one always have $\bbP(T_z >t) \leq  C t^{-\theta}$;
the second result is that if $(X_t)_{t\geq 0}$ is spectrally positive, then $\bbP(T_z >t) =   t^{-\theta+o(1)}$.

\paragraph*{Our main contribution.}
Before we briefly describe our contribution, let us make a few comments on the above-mentioned results.

First of all, all the results on persistence probabilities for additive functionals of processes are limited to:
(i) self-similar Markov processes, \textit{i.e.}\ Brownian motion, (skew-)Bessel processes and strictly stable Lévy processes;
(ii) functions $f$ that are homogeneous, \textit{i.e.}\ also enjoy some scaling property.
These two points are important in the proofs, since it immediately entails some scaling property for the additive functional $(\zeta_t)_{t\geq 0}$; which is then easily seen to be self-similar.

Second, the method of proof of Isozaki~\cite{Isozaki96} relies on an excursion decomposition of the process $(X_t)_{t\geq 0}$, together with a Wiener--Hopf decomposition for the auxiliary process $(\tau_t,Z_t)_{t\geq 0}$.
Further works, in particular~\cite{IK00,mcgill}, did not completely rely on this decomposition to obtain the sharp asymptotic behavior (and in particular the constant $C_z$), but rather on a more analytical approach.
For instance, Profeta's approach in~\cite{Prof21} uses exact calculations to derive the densities of various quantities of interest; the exact formulas available when dealing with Bessel processes then becomes crucial.

Let us keep the following example in mind (from~\cite{Prof21}), that we will use as a common thread:

\begin{example}~
\label{ex:main}

\noindent
(i)  $(X_t)_{t\geq 0}$ is a skew-Bessel process of dimension $\delta \in (0,2)$ and skewness parameter $\eta \in(-1,1)$.

\noindent
(ii) $f$ is homogeneous $f(x)=|x|^{\gamma} (c_+\ind_{\{x>0\}} - c_- \ind_{\{x<0\}})$ for some $\gamma \in \mathbb R$ and $c_+,c_->0$.
\end{example}

\noindent
With that said, here are the main contribution of our paper
\begin{itemize}
\item We treat the case of a general Markov process $(X_t)_{t\geq 0}$ (with some minimal assumption); in particular, we only need asymptotic properties on $(X_t)_{t\geq 0}$. Note that we also treat the case where $(X_t)_{t\geq 0}$ positive recurrent (e.g.\ an Ornstein-Uhlenbeck process), which seems to have been left outside of the literature so far.
\item We treat the case of a general function $f$; in particular, we only need asymptotic properties on $f$. In the case of Example~\ref{ex:main}, we also extend the range of parameters where~\eqref{eq:asympPT} holds, compared to~\cite{IK00,Prof21}, see Section~\ref{sec:examples}.
\end{itemize}
Another contribution of our paper is that it somehow unifies all the above results, by using a probabilistic approach. We push the excursion decomposition employed in~\cite{Isozaki96,IK00} (our main assumption on $(X_t)_{t\geq 0}$ ensures that it possess an excursion decomposition) and we prove on a slightly more general Wiener--Hopf factorization for the bivariate process $(\tau_t,Z_t)_{t\geq 0}$.
In particular, this provides a probabilistic interpretation of the exponent $\theta$ in~\eqref{eq:asympPT}, which we decompose into two parts: $\theta=\beta \rho$, with
(i)~$\beta \in (0,1]$ which descibes the scaling exponent of the local time of $(X_t)_{t\geq 0}$ at level $0$ (we have $\beta=1-\delta/2$ in Example~\ref{ex:main});
(ii)~$\rho \in (0,1)$ which is an asymmetry parameter, namely the asymptotic positivity parameter of $(Z_t)_{t\geq 0}$ (explicit in Example~\ref{ex:main}, see~\eqref{eq:rhoskewbessel}).
Our approach also provides a natural interpretation of the constant $\mathcal{V}(z)$ in~\eqref{eq:asympPT}.

\section{Main results}

\subsection{Main assumptions and notation of the article}

Throughout this paper, we will consider a strong Markov process $(\Omega, \mathcal{F}, (\mathcal{F}_t)_{t\geq0}, (X_t)_{t\geq0}, (\mathbb{P}_x)_{x\in \mathbb{R}})$ with c\`adl\`ag paths and valued in $\mathrm{E} \subset \bbR$. We assume that the filtration is right-continuous and complete and that the process is conservative, \textit{i.e.}\ it has an infinite lifetime $\mathbb{P}_x$-a.s.\ for every $x\in\mathbb{R}$. We also assume that $0\in\mathrm{E}$ and we define $(\zeta_t)_{t\geq 0}$ as in~\eqref{def:zeta}, with a function $f$ that verifies the following assumption.

\begin{assumption}\label{assump-f}
The function $f$ is measurable and locally bounded, except possibly around~$0$. It is such that a.s.\ $|\zeta_t| < \infty$ for any $t\geq0$. Moreover $f$ preserves the sign of $x$, in the sense that $f(x)\geq 0$ if $x> 0$ and $f(x) \leq 0$ if $x< 0$. Finally, we assume that $f(0)=0$.
\end{assumption}

As far as the Markov process $(X_t)_{t\geq 0}$ is concerned, we assume that $0$ is regular for itself, that is $\bbP_0(\eta_0 =0)=1$, where $\eta_0 = \inf\{t>0, X_t=0\}$.
We also assume that $0$ is recurrent for $(X_t)_{t\geq 0}$.
We make the following important assumption on $(X_t)_{t\geq 0}$ (the most restricting one).

\setcounter{countD}{\value{assumption}}
\begin{assumption}
\label{assump_exc}
Under $\bbP_0$, the process $(X_t)_{t\geq 0}$ is a.s.\ not of constant sign. Additionally, it cannot change sign without touching $0$.
\end{assumption}

These assumptions allow us to introduce the local time $(L_t)_{t\geq 0}$ of the process $(X_t)_{t\geq 0}$ at level~$0$. 
Its right-continuous inverse $(\tau_t)_{t\geq 0}$ is a subordinator and we denote by $\Phi$ its Laplace exponent:
\begin{equation}
\label{def:Phi}
t\Phi(q) = - \log \mathbb{E}[\e^{-q\tau_t}] 
\quad
\text{ for any } q\geq 0\,.
\end{equation}
We then introduce the process $(Z_t)_{t\geq0} = (\zeta_{\tau_t})_{t\geq0}$ which we will refer to as the L\'evy process associated to the additive functional $(\zeta_t)_{t\geq0}$. Indeed, $(Z_t)_{t\geq0}$ is a pure jump L\'evy process with finite variations and should be understood this way:
\begin{equation}
\label{def:Z}
Z_t := \zeta_{\tau_t} = \int_0^{\tau_t}f(X_s) \dr s = \sum_{s\leq t} \int_{\tau_{s-}}^{\tau_s} f(X_u) \dr u\, .
\end{equation}
We also introduce $g_t$, the last zero before $t$, and $I_t$, the contribution of the last (unfinished) excursion:
\begin{equation}
\label{def:xig}
g_t := \sup\{s < t, \: X_s = 0\}\quad \text{and} \quad I_t=\zeta_t - \zeta_{g_t}=\int_{g_t}^t f(X_r) \dr r \,.
\end{equation}
To state our theorems, we need to introduce the renewal function $\mathcal{V}(\cdot)$ of the usual \textit{ladder height process} $(H_t)_{t\geq0}$ associated with $(Z_t)_{t\geq0}$, see Section~\ref{section_WH} for a proper definition of $(H_t)_{t\geq0}$: 
\begin{equation}\label{eq_renewal}
 \mathcal{V}(z) = \int_0^{\infty}\mathbb{P}(H_t \leq z) \dr t\,,\qquad z\in \mathbb R_+ \,.
\end{equation}

\subsection{Main results I: persistence probabilities}
\label{sec:resI}

There are two main assumptions under which we are able to obtain the exact asymptotic behavior for the persistence probability.
The first one corresponds to the case where $0$ is  positive recurrent for $(X_t)_{t\geq 0}$, in which case the last part of the integral, \textit{i.e.}\ the term $I_t$ (see~\eqref{def:xig}), becomes irrelevant.
The second one is a bit more involved since the last term $I_t$ plays a role; the assumption is discussed in more detail in Section~\ref{sec:resII} below.

\begin{assumption}\label{assump_pos}
 The point $0$ is positive recurrent for $(X_t)_{t\geq0}$.
\end{assumption}

Under this hypothesis, we have a necessary and sufficient condition so that \eqref{eq:asympPT} holds. This condition is the analog of \textit{Spitzer's condition} for Lévy processes or random walks.
\begin{theorem}\label{main_thm_rec_pos}
 Suppose that Assumption \ref{assump_pos} holds and let $\rho \in(0,1)$. The two following assertions are equivalent:
 \begin{enumerate}[label=(\roman*)]
  \item $\lim_{t\to\infty}\frac{1}{t}\int_0^{t}\mathbb{P}(\zeta_{s} \geq 0) \dr s = \rho$
  \item For any $z> 0$, the map $t\mapsto \mathbb{P}(T_z > t)$ is regularly varying at $\infty$ with index $-\rho$.
 \end{enumerate}
Moreover, if (i) or (ii) holds for some $\rho\in(0,1)$, then there exists a slowly varying function $\varsigma(\cdot)$ such that for any $z > 0$
 \[
  \mathbb{P}(T_z > t) \sim \mathcal{V}(z)\varsigma(t)t^{-\rho} \quad \text{as }t\to\infty.
 \]
\end{theorem}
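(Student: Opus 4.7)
The plan is to reduce the persistence problem for $\zeta$ to an analogous one for the Lévy process $Z_t = \zeta_{\tau_t}$ from~\eqref{def:Z}, where classical Spitzer/Doney fluctuation theory applies, and then to transfer the asymptotics back via the time-change $\tau$.

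The key observation is that Assumption~\ref{assump-f} forces $s \mapsto \zeta_s$ to be monotone along every excursion of $X$ away from $0$, so $\sup_{s \leq \tau_r}\zeta_s = \sup_{u \leq r} Z_u$ for every $r \geq 0$. Setting $\tilde T_z := \inf\{u \geq 0: Z_u \geq z\}$ and taking $r = L_t$, this yields the sandwich
\[
\{\tilde T_z > L_t\} \cap \{Z_{L_t^-} + I_t^+ < z\} \;\subseteq\; \{T_z > t\} \;\subseteq\; \{\tilde T_z \geq L_t\},
\]
with $I_t^+ := \max(0, I_t)$. Under Assumption~\ref{assump_pos} the subordinator $\tau$ has finite mean $m := \Phi'(0^+) \in (0,\infty)$, so $L_t/t \to c := 1/m$ almost surely, and by the renewal theorem applied to the excursion straddling~$t$ the pair $(t - g_t, I_t)$ is tight as $t \to \infty$. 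The residual term $I_t^+$ in the sandwich above should therefore be asymptotically negligible, reducing matters to $\mathbb{P}(T_z > t) \sim \mathbb{P}(\tilde T_z > L_t)$.

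With this reduction in hand, I would invoke Doney's extension of Spitzer's theorem to the Lévy process $Z$ (see e.g.~\cite{Doney07}), according to which for $\rho \in (0,1)$,
\[
\tfrac{1}{s}\!\int_0^s \mathbb{P}(Z_u \geq 0)\, du \to \rho
\quad \Longleftrightarrow \quad
\mathbb{P}(\tilde T_z > s) \sim \mathcal V(z)\,\tilde \varsigma(s)\, s^{-\rho} \text{ as } s \to \infty,
\]
for some slowly varying $\tilde \varsigma$, with $\mathcal V$ the renewal function of the ladder-height process of $Z$ as in~\eqref{eq_renewal}. To transfer (i) into its analogue for $Z$, I would write $\zeta_s = Z_{L_s^-} + I_s$ and use $L_s/s \to c$ together with tightness of $I_s$ and dominated convergence to identify the Cesàro limits of $\mathbb{P}(\zeta_s \geq 0)$ and $\mathbb{P}(Z_{cs} \geq 0)$. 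Regular variation of $\mathbb{P}(\tilde T_z > \cdot)$ combined with $L_t/t \to c$ then upgrades the sandwich into the precise asymptotic $\mathbb{P}(T_z > t) \sim \mathcal V(z)\, \varsigma(t)\, t^{-\rho}$ with $\varsigma(t) := c^{-\rho}\tilde\varsigma(ct)$. The converse (ii)$\Rightarrow$(i) is obtained by reversing the same argument, using that regular variation of $\mathbb{P}(T_z > t)$ together with the sandwich forces regular variation of $\mathbb{P}(\tilde T_z > s)$.

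The main obstacle is the quantitative handling of the unfinished-excursion term $I_t$: one has to show that the event $\{Z_{L_t^-} + I_t^+ \geq z,\ \tilde T_z > L_t\}$ has probability $o(\mathbb{P}(\tilde T_z > L_t))$, and analogously that the shift by $I_s$ does not disturb the Cesàro limit of $\mathbb{P}(Z_{L_s^-} + I_s \geq 0)$. Both estimates hinge on the tightness of $I_t$ supplied by positive recurrence versus the fact that $|Z_{L_t^-}|$ is not stuck at a bounded value on the relevant events, a consequence of $\rho \in (0,1)$ (which excludes the degenerate cases where $Z$ drifts to $\pm\infty$ too regularly).
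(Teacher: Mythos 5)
Your high-level picture (reduce to the L\'evy process $Z$, invoke fluctuation theory, control the straddling excursion) is indeed the philosophy of the paper, but the two steps you treat as routine are precisely where the difficulty sits, and as written they do not go through. First, the substitution $L_t\rightsquigarrow ct$. The events $\{\tilde T_z>L_t\}$ and the value of $L_t$ are built from the \emph{same} path, so $\mathbb{P}(\tilde T_z>L_t)$ is not $\mathbb{E}[F(L_t)]$ with $F(s)=\mathbb{P}(\tilde T_z>s)$; and even the crude sandwich $\mathbb{P}(\tilde T_z>(c+\epsilon)t)-\mathbb{P}(L_t>(c+\epsilon)t)\le \mathbb{P}(\tilde T_z>L_t)\le \mathbb{P}(\tilde T_z>(c-\epsilon)t)+\mathbb{P}(L_t<(c-\epsilon)t)$ fails to close, because Assumption~\ref{assump_pos} only gives $\nn(\ell)<\infty$, hence $\mathbb{P}(|L_t/t-c|>\epsilon)=o(1)$ but not $o(t^{-\rho})$: take $\nn(\ell>u)\asymp u^{-1}(\log u)^{-2}$, for which $\mathbb{P}(\tau_{(c-\epsilon)t}>t)\asymp(\log t)^{-2}\gg t^{-\rho}$. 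One would then have to analyse $\mathbb{P}(\tilde T_z>L_t,\,L_t<(c-\epsilon)t)$ by decomposing over the long excursion responsible for the deviation, and the sign of that excursion matters; nothing in your sketch addresses this. Second, the negligibility of the straddling excursion has to be proved \emph{on} the event $\{S_{L_t^-}<z\}$, which itself has probability $\asymp t^{-\rho}$. Tightness of $I_t$ is not enough: you need that, conditionally on this rare event, the gap $\xi_{g_t}-\zeta_{g_t}$ tends to infinity (the relevant fact being $-\inf_{s\le t}Z_s\to\infty$ under Spitzer's condition with $\rho\in(0,1)$, not a statement about drift), and at a deterministic time there is no independence between the past and the straddling excursion, nor between $\xi_{g_t}$ and $\xi_{g_t}-\zeta_{g_t}$, to convert this into the required conditional estimate.

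The paper's proof is engineered exactly to avoid both obstacles. It never decouples $\tau$ from $Z$: the bivariate Wiener--Hopf factorization yields the exact identity $\mathbb{P}(\xi_{g_e}<z)=\kappa(0,q,0)\mathcal{V}_q(z)$ at an independent exponential time $e=e(q)$, where $\kappa(0,q,0)$ already carries the time change, and Theorem~\ref{equivalence_theorem} shows that condition (i) is equivalent to regular variation of $q\mapsto\kappa(0,q,0)$ (this is where your Ces\`aro-transfer step lives, done via Frullani-type identities rather than dominated convergence alone, since one also needs $\mathbb{P}(Z_t\in[0,A])\to0$). The exponential time gives independence of the pre-$g_e$ and post-$g_e$ pieces (Proposition~\ref{indep}) and of $\xi_{g_e}$ and $\xi_{g_e}-\zeta_{g_e}$ (Corollary~\ref{laplace_transform_x_zeta}), which is what turns the heuristic ``$I_e$ is tight while $\xi_{g_e}-\zeta_{g_e}\to\infty$'' into the rigorous statement $\mathbb{P}(\Delta_e\le0)\to1$; the passage back to deterministic $t$ is done only at the very end, on the monotone function $t\mapsto\mathbb{P}(T_z>t)$, via Tauberian and monotone density theorems. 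Finally, your direction (ii)$\Rightarrow$(i) is not a mere reversal: the paper needs a separate argument (via $\bar\kappa(0,q,0)\to0$) to rule out the case where $\xi_{g_e}-\zeta_{g_e}$ stays tight. In short, the skeleton is right but the proposal is missing the mechanisms (exponential randomization and the bivariate Wiener--Hopf identity) that make the two critical estimates provable.
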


For instance, Theorem~\ref{main_thm_rec_pos} applies to an Ornstein-Uhlenbeck process $(X_t)_{t\geq 0}$, see Section~\ref{sec:examples} below.
Let us stress that the slowly varying function $\varsigma(\cdot)$ and the so-called \emph{renewal function} $\mathcal V(\cdot)$ can be described explicitly in some cases, see Section~\ref{sec:examples}.
In particular, if the process $(Z_t)_{t\geq 0}$ is symmetric, for instance if $(X_t)_{t\geq 0}$ is symmetric and $f$ is odd, then $\rho=\frac12$ and the slowly varying function $\varsigma(\cdot)$ is constant (the expression in~\eqref{def:varsigma} is equal to $1$).

\begin{remark}
 It is shown below (see Theorem \ref{equivalence_theorem}) that for any $\rho\in(0,1)$, condition \textit{(i)} above is equivalent to $\frac{1}{t}\int_0^t \mathbb{P}(Z_s\geq0)\dr s \to \rho$ as $t\to \infty$. Therefore, it is also equivalent to the stronger condition $\mathbb{P}(Z_t \geq0) \to \rho$ as $t\to \infty$, see for instance~\cite{bertoin1997spitzer}. It is not clear whether or not these conditions are equivalent to $\mathbb{P}(\zeta_t \geq0) \to\rho$ as $t\to \infty$.
Condition \textit{(i)} of Theorem \ref{main_thm_rec_pos} is satisfied if, for instance, there is some central limit theorem for $(\zeta_t)_{t\geq0}$; we refer to \cite[Thm 5 and 7]{bethencourt2021stable} for such instances, in the case of one-dimensional diffusions. 
\end{remark}

We now turn to the case where $0$ is null recurrent: our assumption is the following.

\begin{assumption}\label{assump_null}
The point $0$ is null recurrent for $(X_t)_{t\geq0}$.
Moreover, there exist $\alpha\in(0,2]$, $\beta\in(0,1)$, some functions $a(\cdot)$ and $b(\cdot)$ that are regularly varying around $0$ with respective indices $1/\alpha$ and $1/\beta$ such that the following convergence in distribution holds (for the Skorokhod topology):
\[
\left(\tau_{t}^h, Z_{t}^h\right)_{t\geq 0} := \left(b(h)\tau_{t/h}, a(h)Z_{t/h}\right)_{t\geq 0} \xrightarrow[]{(d)} \left(\tau_{t}^0, Z_{t}^0\right)_{t\geq 0} \quad \text{ as } h\to0,
\]
where $(\tau_t^0, Z_t^0)_{t\geq0}$ is a Lévy process.
We additionally assume that $a(b^{-1}(q))I_e$ converges in distribution as $q\to0$, where $e = e(q)$ is an independent exponential random variable of parameter $q > 0$ and $b^{-1}$ is an asymptotic inverse of $b$.
\end{assumption}

\noindent
Let us stress that Assumption~\ref{assump_null} is about the auxiliary process $(\tau_t,Z_t)_{t\geq 0}$ (and not directly about the process $(X_t)_{t\geq 0}$ and the function $f$), which may be difficult to verify.
We present below in Section~\ref{sec:resII} some conditions on $(X_t)_{t\geq 0}$ and $f$ for Assumption~\ref{assump_null} to hold; these conditions are easier to verify in practice.

\begin{remark}
The limiting process $(\tau_t^0, Z_t^0)_{t\geq0}$ necessarily satisfies the following scaling property: 
\[
(\tau^0_t, Z^0_{t})_{t\geq0} \overset{d}{=} (c^{-1/\beta}\tau^0_{ct}, c^{-1/\alpha}Z^0_{ct})_{t\geq0} \qquad \text{ for any } c>0 \,.
\]
Also, the fact that $(b(h)\tau_{t/h})_{t\geq 0}$ converges in distribution to a $\beta$-stable subordinator $(\tau_t^0)_{t\geq 0}$ is actually equivalent to the fact that the Laplace exponent $\Phi(q)$ of $(\tau_t)_{t\geq 0}$ is regularly varying with exponent $\beta \in (0,1)$ as $q\downarrow 0$.
In that case, $b(\cdot)$ is an asymptotic inverse of $\Phi(\cdot)$ (up to a constant factor), see Section~\ref{sec:proof_null}.
\end{remark}

\begin{theorem}\label{main_theorem}
Suppose that Assumption \ref{assump_null} holds and assume that $(Z^0_{t})_{t\geq0}$ has positivity parameter $\rho = \mathbb{P}(Z_t^0 \geq0) \in(0,1)$. Then there exists a slowly varying function $\varsigma(\cdot)$ such that, for any $z > 0$,
 \[
  \mathbb{P}(T_z > t) \sim \mathcal{V}(z)\varsigma(t)t^{-\beta\rho} \quad \text{as }t\to\infty,
 \]
\noindent
where $\beta\in (0,1)$ is given by Assumption~\ref{assump_null}.
\end{theorem}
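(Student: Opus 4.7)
The plan is to reduce the persistence problem for $\zeta$ to a first-passage problem for the running supremum of the associated L\'evy process $Z$, then exploit a bivariate Wiener--Hopf factorization of $(\tau_t,Z_t)$ together with a Tauberian argument. The first step is an exact path-wise identity. By Assumptions \ref{assump-f} and \ref{assump_exc}, the map $s\mapsto\zeta_s$ is monotone on every excursion of $X$ away from $0$ (non-decreasing on positive excursions, non-increasing on negative ones). Consequently, the running maximum of $\zeta$ over $[0,g_t]$ coincides with $\bar Z_{L_t-}:=\sup_{s<L_t}Z_s$, while the maximum over the last unfinished excursion $[g_t,t]$ is $Z_{L_t-}+I_t^+$. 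This yields
\[
\{T_z>t\} \;=\; \bigl\{\bar Z_{L_t-}+I_t^+ < z\bigr\},
\]
which neatly separates the completed-excursion part from the boundary term $I_t$.

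Next I would sample at an independent exponential time $e(q)$ so that $q\int_0^\infty e^{-qt}\mathbb P(T_z>t)\,\dr t=\mathbb P(T_z>e(q))$. Excursion theory and the memoryless property identify $g_{e(q)}$ with $\tau_{\tilde e}$, where $\tilde e$ is exponential with parameter $\Phi(q)$ and independent of the age-excursion contributing $I_{e(q)}$. Hence the problem splits into (a) the law of $\bar Z_{\tilde e}$, which is the running supremum of $Z$ at an independent exponential time, and (b) the contribution of $I_{e(q)}$, precisely the quantity assumed to converge after rescaling in Assumption~\ref{assump_null}. On the first piece I would invoke the bivariate Wiener--Hopf factorization of $(\tau_t,Z_t)$ developed in Section~\ref{section_WH}: it expresses $\mathbb P(\bar Z_{\tilde e}<z)$ in terms of the renewal function $\mathcal V$ of the ascending ladder height process $H$ of $Z$, schematically of the form $\mathbb P(\bar Z_{\tilde e}<z)\sim\hat\kappa(\Phi(q),0)\,\mathcal V(z)$ as $q\downarrow 0$, where $\hat\kappa$ is the Laplace exponent of the descending ladder process.

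The final step is a regular-variation/Tauberian argument. The scaling assumption on $(\tau_t,Z_t)$ forces $\Phi$ to be regularly varying of index $\beta$ at $0$, and it transfers Spitzer's condition with parameter $\rho=\mathbb P(Z_t^0\geq 0)$ from the limit $Z^0$ to $Z$, so that the ladder Laplace exponent $\hat\kappa(\cdot,0)$ is regularly varying of index $\rho$ (this equivalence is what Theorem~\ref{equivalence_theorem} referred to in the excerpt is designed to provide). Combining the two, the Laplace transform $q\mapsto\int_0^\infty e^{-qt}\mathbb P(T_z>t)\,\dr t$ is regularly varying of index $-1+\beta\rho$ as $q\downarrow 0$, with leading constant proportional to $\mathcal V(z)$. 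Karamata's Tauberian theorem for monotone functions (applicable since $t\mapsto\mathbb P(T_z>t)$ is non-increasing) then converts this into the desired tail asymptotic $\mathbb P(T_z>t)\sim\mathcal V(z)\,\varsigma(t)\,t^{-\beta\rho}$.

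The main obstacle I anticipate is controlling the unfinished-excursion term $I_t^+$ at the right order: one must show that
\[
\mathbb P\bigl(\bar Z_{L_t-}+I_t^+<z\bigr) = \mathbb P\bigl(\bar Z_{L_t-}<z\bigr)\bigl(1+o(1)\bigr)
\]
uniformly enough to pass through the Tauberian inversion. This is precisely what the second half of Assumption~\ref{assump_null} (convergence in distribution of $a(b^{-1}(q))\,I_{e(q)}$) is tailored to guarantee: it rules out pathological cases in which a single excursion carries a macroscopic share of $\zeta$, and so allows $I_t^+$ to be absorbed into the slowly varying prefactor $\varsigma(t)$ without affecting either the exponent $\beta\rho$ or the spatial factor $\mathcal V(z)$. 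A secondary technical point is the clean formulation of the bivariate Wiener--Hopf factorization for $(\tau_t,Z_t)$ (as opposed to the standard one-dimensional version for $Z$), since the time-change subordinator $\tau$ and the jump process $Z$ have correlated jumps through the excursion Poisson point process.
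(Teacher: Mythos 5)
Your overall architecture (exponential sampling, decomposition at the last zero, bivariate Wiener--Hopf, regular variation of the ladder exponent, Tauberian inversion) is the same as the paper's, but two of your key steps are stated incorrectly and the fix is exactly where the real work lies. First, the path-wise identity is wrong: since $\zeta$ is monotone on the last unfinished excursion, the supremum over $[g_t,t]$ is $\zeta_{g_t}\vee\zeta_t$, so
\[
\xi_t \;=\; \max\bigl(\xi_{g_t},\,\zeta_{g_t}+I_t\bigr)\;=\;\xi_{g_t}+\bigl(I_t-(\xi_{g_t}-\zeta_{g_t})\bigr)^+,
\]
not $\xi_{g_t}+I_t^+$ (that would require the supremum of $Z$ to be attained at the last zero). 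The difference is essential: the correct identity involves the ``overshoot'' $\xi_{g_t}-\zeta_{g_t}$ of the past supremum over the current position, and to factorize $\mathbb{P}(\xi_{g_e}<z,\ \Delta_e\le 0)$ one needs that $\xi_{g_e}$ and $\xi_{g_e}-\zeta_{g_e}$ are \emph{independent} at the exponential time. That independence is not a formality; it is the main payoff of the bivariate Wiener--Hopf factorization (Corollary~\ref{laplace_transform_x_zeta}), combined with the pre-/post-$g_e$ independence of Proposition~\ref{indep}, and your proposal never isolates it. Relatedly, identifying $g_{e}$ with $\tau_{\tilde e}$ for an \emph{independent} exponential $\tilde e$ of parameter $\Phi(q)$ is not quite right: the law of the pre-$g_e$ path carries the tilt $\e^{-q\tau_t}$, which is why one obtains the exact identity $\mathbb{P}(\xi_{g_e}<z)=\kappa(0,q,0)\mathcal{V}_q(z)$ with a $q$-dependent renewal function $\mathcal{V}_q$ and must then use $\mathcal{V}_q(z)\uparrow\mathcal{V}(z)$.

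Second, your claim that $\mathbb{P}(T_z>t)=\mathbb{P}(\xi_{g_t}<z)(1+o(1))$ is false in the regime of Assumption~\ref{assump_null} with $\alpha\in(0,2)$: there $a(\Phi(q))I_e$ and $a(\Phi(q))(\xi_{g_e}-\zeta_{g_e})$ converge jointly (by independence) to non-degenerate limits on the \emph{same} scale, so the last excursion contributes a genuine multiplicative constant $c_1=\mathbb{P}(\Delta_0\le 0)\in(0,1]$, typically strictly less than $1$. The theorem survives because a constant can be absorbed into $\varsigma$, and because $\lim_q\mathbb{P}(\Delta_e\le 0)=\lim_q\mathbb{P}(\Delta_e<z)$ squeezes the two bounds in \eqref{eq_decomp_prob}; but to justify this you must prove convergence in law of $a(\Phi(q))(\xi_{g_e}-\zeta_{g_e})$ (the paper's Proposition~\ref{prop_conv_xizeta}, again via the Wiener--Hopf identity for $\bar\kappa$), not merely invoke the convergence of the rescaled $I_e$ from Assumption~\ref{assump_null}. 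As written, your argument for absorbing the boundary term does not go through.
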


Let us observe that Example~\ref{ex:main}, where $(X_t)_{t\geq0}$ is a skew-Bessel process and $f$ is homogeneous, verify our Assumption~\ref{assump_null}. Indeed, if $\gamma>-\delta$, the bivariate Lévy process $(\tau_t,Z_t)_{t\geq 0}$ directly enjoys a $(\beta,\alpha)$-scaling property (and similarly for $I_t$)  with $\beta=1-\delta/2$ and $\alpha=(2-\delta)/(2+\gamma)\in (0,1)$, so it trivially satisfies Assumption~\ref{assump_null}.
Thus, Theorem~\ref{main_theorem} applies and the parameter $\rho$ is also explicit; we refer to Section~\ref{sec:examples} for further details.
The advantage of our result is that we are able to treat a general class of Markov processes $(X_t)_{t\geq 0}$ (for instance that are ``asymptotically skew-Bessel processes'') and of function $f$ (that are ``asymptotically'' homogeneous).

\subsection{Main results II: application to one-dimensional generalized diffusions}
\label{sec:resII}

In this section we apply our result to a large class of one-dimensional Markov processes. We recall the It\^o-McKean \cite{im63, im96} construction of generalized one-dimensional diffusions, based on a Brownian motion changed of scale and time. Our main goal is to provide conditions on the function $f$, the \emph{scale function} $\sca$ and \emph{speed measure} $\mm$ that ensure that Assumption~\ref{assump_null} holds. 

\smallskip
Let $\mm: \bbR \to \bbR$ be a non-decreasing right-continuous function sucht that $\mm(0) = 0$, and $\sca: \bbR \to \bbR$ a continuous increasing function. 
We assume that $\sca(\bbR) = \bbR$, $\sca(0)=0$ and abusively, we also denote by $\mm$ the Radon measure associated to $\mm$, that is $\mm((a,b])= \mm(b)-\mm(a)$ for all $a<b$.
We introduce $\mm^\sca$ the image of $\mm$ by $\sca$, \textit{i.e.}\ the Stieltjes measure associated to the non-decreasing function $\mm\circ\sca^{-1}$, where $\sca^{-1}$ is the inverse function of $\sca$.
Then, we define $A_t^{\mm_{\sca}}$ the continuous additive functional of a Brownian motion $(B_t)_{t\geq 0}$ given by 
\[
A_t^{\mm^\sca} = \int_{\bbR} L^x_t \mm^\sca(\dr x),
\]
where $(L_t^x)_{t\geq0, x\in \bbR}$ denotes the usual family of local times of the Brownian motion, assumed to be continuous in the variables $x$ and $t$. We let $\rho_t$ the right-continuous inverse of $A_t^{\mm^\sca}$, and we set 
\[
X_t = \sca^{-1}( B_{\rho_t} ).
\]
Then it holds that $(X_t)_{t\geq0}$ is a strong Markov process valued in $\mathrm{supp}(\mm)$, where $\mathrm{supp}(\mm)$ is the support of the measure $\mm$. We refer to Section \ref{section_ito_mc_kean} for more details. We will therefore assume that $0\in\mathrm{supp}(\mm)$ and in this framework, $0$ is a recurrent point for $(X_t)_{t\geq0}$. When $\mathrm{supp}(\mm)$ is some interval $J$, $(X_t)_{t\geq0}$ is a one-dimensional diffusion living in $J$ and its generator is formally given by 
\[
\mathcal{A} = \frac{\dr}{\dr \mm}\frac{\dr}{\dr \sca}.
\]

\begin{example}
\label{ex:skewbessel}
A skew-Bessel process of dimension $\delta\in(0,2)$ and skewness parameter $\eta\in(-1,1)$, is the linear diffusion on $\bbR$ whose scale function $\sca$ and speed measure $\mm$ are defined as
\[
 \sca(x) = \mathrm{sgn}(x)\frac{1 - \mathrm{sgn}(x)\eta}{2-\delta}|x|^{2-\delta} \quad \text{and}\quad \mm(\dr x) = \frac{1}{1-\mathrm{sgn}(x)\eta}\bm{1}_{\{x\neq0\}}|x|^{\delta - 1}\dr x.
\]
Informally, this process can be constructed by concatenating independent excursions of the (usual) Bessel process, flipped to the negative half-line with probability $(1-\eta) / 2$.
\end{example}

\begin{example}
\label{ex:birthdeath}
When $\mm^\sca$ is a sum of Dirac masses, then $(X_t)_{t\geq0}$ is a birth and death process, see \cite{s63}. For instance, if $\sca =\mathrm{id}$ and $\mm = \sum_{n\in\mathbb{N}}\delta_n$, then $(X_t)_{t\geq0}$ is a continuous-time simple random walk on~$\mathbb{Z}$.
\end{example}

\noindent
For a function $f$ as in Assumption~\ref{assump-f}, we also set $\dr \mm^f := f\circ \sca^{-1} \dr \mm^\sca$.
Note that $\mm^f$ is a signed measure (recall that $f$ preserves the sign).
We suppose in addition that $f\circ \sca^{-1}$ is locally integrable with respect to $\mm^\sca$ so that $\mm^f$ is also a Radon measure. We will also denote by $\mm^f$ the associated function, \textit{i.e.}\ $\mm^f(x) = \int_0^xf\circ \sca^{-1}(u) \mm^\sca(\dr u)$, which is non-decreasing on~$\bbR_+$ and non-increasing on~$\bbR_-$.

\smallskip
We now give practical conditions on the scale function $\sca$, the speed measure $\mm$ and the function $f$ so that Assumption \ref{assump_null} holds. We will consider three different assumptions.

\begin{assumption}\label{assump_phi_beta_ito}
 There exist $\beta \in (0,1)$, a slowing variation function $\Lambda_\sca$ at $+\infty$, and two non-negative constants $ m_-, m_+$ with $m_{-} + m_+ > 0$,  such that
 \[
 \begin{cases}
\displaystyle  \mm^\sca(x) \sim m_+\Lambda_\sca(x) x^{1/\beta -1} &\quad \text{ as } x\to+\infty,\\
\displaystyle  \mm^\sca(x)\sim -m_-\Lambda_\sca(\vert x \vert ) |x|^{1/\beta -1} &\quad \text{ as } x\to-\infty.
 \end{cases}
\]
\end{assumption}

\begin{assumption}\label{assump_gaus_ito}
 The function $\sca$ is $\mathcal C^1$, and there exist a constant $\mm^f(\infty)\in (0,\infty)$ such that $\lim_{x\to\pm\infty} \mm^f(x) = \mm^f(\infty)$ and the function $\mm^f(\infty) - \mm^f$ belongs to $\mathrm{L}^2(\dr x)$.
\end{assumption}

\begin{assumption}\label{assump_m_f_alpha_ito}
 There exist $\alpha \in (0,2)$, a slowing variation function $\Lambda_f$ at $+\infty$, and two non-negative constants $f_{-}, f_{+}$ with $f_- + f_+ > 0$, such that  according to the value of $\alpha$, we have
\begin{enumerate}[label=(\roman*)]
 \item If $\alpha\in(0,1)$, then
 \begin{equation}
 \label{cond:alpha<1}
  \begin{cases}
 \displaystyle \mm^f(x) \sim f_+\Lambda_f(x) x^{1/\alpha -1} &\quad \text{ as } x\to+\infty,\\
 \displaystyle  \mm^f(x) \sim  f_-\Lambda_f(\vert x\vert ) |x|^{1/\alpha -1} &\quad \text{ as } x\to-\infty.
 \end{cases}
 \end{equation}

\item If $\alpha = 1$, then the following limit exists $\lim_{h\downarrow0}\frac{1}{\Lambda_f(1/h)}(\mm^f(1/h)-\mm^f(-1/h)) = \mathbf{c}$ and 
\begin{equation}
\label{cond:alpha=1}
\begin{cases}
 \displaystyle \lim_{h\to 0} \frac{1}{\Lambda_f(1/h)}(\mm^f(x/h)-\mm^f(1/h)) = f_+\log x, &\qquad \forall x>0,  \\ 
 \displaystyle \lim_{h\to 0}  \frac{1}{\Lambda_f(1/h)}(\mm^f(x/h)-\mm^f(-1/h))= f_-\log \vert x \vert,  &\qquad \forall x<0.
\end{cases}
\end{equation}
Note that if the limit $\mathbf{c}$ exists, this implies that $f_+=f_-$; we will assume for simplicity that $f_+=f_-=1$.

\item If $\alpha \in (1,2)$, then there is a constant $\mm^f(\infty)\in (0,\infty)$ such that $\lim_{x\to\pm\infty} \mm^f(x) = \mm^f(\infty)$ and 
\begin{equation}
\label{cond:alpha>1}
\begin{cases}
\displaystyle \mm^f(\infty) - \mm^f(x) \sim  f_+\Lambda_f(x) x^{1/\alpha -1} &\quad \text{ as } x\to+\infty,\\
\displaystyle \mm^f(\infty) - \mm^f(x) \sim  f_-\Lambda_f(\vert x \vert ) |x|^{1/\alpha -1} &\quad \text{ as } x\to-\infty.
 \end{cases}
\end{equation}
\end{enumerate}
\end{assumption}

\noindent
Note that if Assumption \ref{assump_gaus_ito} holds, then Assumption \ref{assump_m_f_alpha_ito} can not hold and conversely.
The main results of this section are the following.

\begin{proposition}[Gaussian case]
\label{prop_brow_null_rec}
Suppose that Assumptions~\ref{assump_phi_beta_ito} and~\ref{assump_gaus_ito} hold. Then, Assumption~\ref{assump_null} is verified with $\beta\in (0,1)$, $\alpha=2$, and the following choice for $a(\cdot)$, $b(\cdot)$:
\[
a(h) = h^{1/2},\qquad b(h) = h^{1/\beta} / \Lambda_\sca(1/h) \,.
\]
As a consequence, Theorem~\ref{main_theorem} holds under Assumptions~\ref{assump_phi_beta_ito} and~\ref{assump_gaus_ito}, with $\rho =\bbP(Z_t^{0} \geq 0)= 1/2$.
\end{proposition}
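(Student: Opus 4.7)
The plan is to verify Assumption~\ref{assump_null} with $\alpha=2$, $\beta\in(0,1)$, $a(h)=h^{1/2}$, $b(h)=h^{1/\beta}/\Lambda_\sca(1/h)$, and then to invoke Theorem~\ref{main_theorem}. The starting point is the It\^o-McKean representation $X_t=\sca^{-1}(B_{\rho_t})$, where $(B_s)_{s\geq 0}$ is a Brownian motion with bicontinuous family of local times $(L^x_s)_{s\geq 0,x\in\bbR}$ and $\rho_\cdot$ is the right-continuous inverse of $A^{\mm^\sca}_s=\int_{\bbR} L^x_s\, \mm^\sca(\dr x)$. Denoting by $(\ell_t)_{t\geq 0}$ the right-continuous inverse of $L^0_\cdot$, the inverse local time of $X$ at $0$ is $\tau_t=A^{\mm^\sca}_{\ell_t}$, and the occupation-times formula yields the central identities
\begin{equation}\label{eq:proofrep}
\tau_t = \int_{\bbR} L^x_{\ell_t}\, \mm^\sca(\dr x), \qquad Z_t = \int_{\bbR} L^x_{\ell_t}\, \mm^f(\dr x).
\end{equation}
The second Ray-Knight theorem furthermore identifies $(L^x_{\ell_t})_{x\geq 0}$ and $(L^{-x}_{\ell_t})_{x\geq 0}$ as two independent squared Bessel processes of dimension~$0$ started from~$t$.

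Since $\tau$ is a subordinator, the convergence of $(b(h)\tau_{t/h})_{t\geq 0}$ to a $\beta$-stable subordinator $(\tau^0_t)_{t\geq 0}$ reduces to showing that its Laplace exponent $\Phi$ is regularly varying of index $\beta$ at~$0$. Plugging~\eqref{eq:proofrep} into $\Phi(q)=-\log\bbE[\e^{-q\tau_1}]$ and using standard Laplace-transform formulae for squared Bessel~$0$ processes, a regular-variation/Tauberian argument combined with Assumption~\ref{assump_phi_beta_ito} gives $\Phi(q)\sim c_\beta(m_-+m_+)\, q^{\beta}/\Lambda_\sca(1/q)$ as $q\downarrow 0$ for an explicit $c_\beta>0$, which is equivalent to the claimed scaling of $\tau$. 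For the Gaussian limit of~$Z$, the key observation is that $L^0_{\ell_t}=t$ and that, under Assumption~\ref{assump_gaus_ito}, the signed measure $\mm^f$ has vanishing total mass $\mm^f(+\infty)-\mm^f(-\infty)=0$. Subtracting $t\int_\bbR \mm^f(\dr x)=0$ in~\eqref{eq:proofrep} produces the centered expression $Z_t=\int_{\bbR}(L^x_{\ell_t}-t)\, \mm^f(\dr x)$, in which by Ray-Knight $x\mapsto L^{\pm x}_{\ell_t}-t$ are independent mean-zero martingales with variance $4t|x|$. Fubini and an integration by parts then yield
\begin{equation}
\operatorname{Var}(Z_t) = 4t\int_{\bbR}\bigl(\mm^f(\infty)-\mm^f(x)\bigr)^2 \dr x < \infty,
\end{equation}
finiteness being precisely Assumption~\ref{assump_gaus_ito}. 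Since $(Z_t)_{t\geq 0}$ is a centered L\'evy process with finite variance, the classical central limit theorem gives $(h^{1/2}Z_{t/h})_{t\geq 0}\xrightarrow{(d)}(2\sigma W_t)_{t\geq 0}$, with $W$ a standard Brownian motion.

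To upgrade these marginal convergences to the joint convergence required by Assumption~\ref{assump_null}, we must show that the long excursions of $X$ driving the $\beta$-stable limit of $\tau$ give only a negligible contribution to $Z$ once rescaled by $a(h)=h^{1/2}$. The natural tool is the bivariate L\'evy measure of $(\tau,Z)$, which is the image of the It\^o excursion measure of $B$ under the map $\epsilon\mapsto\bigl(\int L^x_{|\epsilon|}\mm^\sca(\dr x),\int L^x_{|\epsilon|}\mm^f(\dr x)\bigr)$. Using once more that $\mm^f$ has zero total mass and $\mm^f(\infty)-\mm^f\in L^2(\dr x)$, a Brownian-scaling argument for the local-time profile of a single excursion gives that the $Z$-increment of an excursion of $B$-length $\ell$ is of order $o(\sqrt{\ell})$, which is enough to decouple the two limits and force $Z^0$ independent of $\tau^0$. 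A closely related size-biased argument for the excursion straddling the independent exponential time $e(q)$ yields the convergence in distribution of $a(b^{-1}(q))I_{e(q)}$ as $q\downarrow 0$, completing Assumption~\ref{assump_null}. The main obstacle is precisely this joint / $I_e$ analysis, as it demands sharper control of the excursion contributions to~$Z$ than the variance bound above, and is where the $L^2$-integrability of $\mm^f(\infty)-\mm^f$ plays its full role. Once Assumption~\ref{assump_null} is established, Theorem~\ref{main_theorem} applies with $\rho=\bbP(Z^0_t\geq 0)=\tfrac{1}{2}$ since $Z^0$ is a Brownian motion, yielding the announced exponent $\beta\rho=\beta/2$.
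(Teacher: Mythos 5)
Your overall route is the paper's: reduce the proposition to checking that $\Phi$ is regularly varying of index $\beta$, that $(Z_t)_{t\ge0}$ is a centered L\'evy process with finite variance (hence Brownian scaling limit with $a(h)=h^{1/2}$), and then handle the joint convergence and the last-excursion term $I_e$. Your variance computation is a legitimate variant: the paper obtains $\bbE[Z_t^2]=4t\int(\mm^f(\infty)-\mm^f(x))^2\dd x$ by writing $Z_t=2\int_0^{\tau_t^B}h(B_s)\dd B_s$ via It\^o--Tanaka with $h=\mm^f(\infty)-\mm^f$, whereas you compute the same quantity from the Ray--Knight covariance $\mathrm{Cov}(L^x_{\ell_t},L^y_{\ell_t})=4t(x\wedge y)$ and Fubini (valid here since $\mm^f$ restricted to each half-line is a positive measure). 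Both give the centering $\nn(\f)=0$ and $\nn(\f^2)<\infty$, i.e.\ the paper's Assumption~\ref{assum_gauss}. (Minor point: your claimed asymptotic $\Phi(q)\sim c\,q^\beta/\Lambda_\sca(1/q)$ has the wrong slowly varying factor --- since $b(\Phi(q))\sim q$ the correct one involves a de Bruijn conjugate of $\Lambda_\sca$, roughly $\Lambda_\sca(1/q)^{\beta}$; this does not affect the index, but to get convergence of $b(h)\tau_{t/h}$ with the \emph{specific} $b$ of the statement the paper proves the convergence directly by rescaling the excursion measure, as in Step~1 of the proof of Proposition~\ref{prop_assump_true}.)

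The two genuine gaps are exactly where you stop. First, the joint convergence: your proposed decoupling via ``the $Z$-increment of an excursion of length $\ell$ is $o(\sqrt{\ell})$'' is not the right statement (the increment over a single excursion is the fixed random variable $\f(\gep)$, with no deterministic relation to $\ell$), and no proof is offered. The paper avoids any excursion-level estimate: Lemma~\ref{indep_limite} shows that for a bivariate L\'evy process whose first coordinate is a subordinator, marginal convergence of $(b(h)\tau_{t/h})$ and of $(h^{1/2}Z_{t/h})$ to a \emph{Gaussian} limit already forces joint convergence with independent components, because the Gaussian limit makes the rescaled L\'evy measure lose all its mass in the $Z$-direction ($\int\ind_{\{|x|\ge\delta\}}\mu_h(\dd r,\dd x)\to0$). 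Second, the convergence of $a(b^{-1}(q))I_e$, which you flag as ``the main obstacle'' and leave to an unspecified size-biased argument, is in fact a one-line estimate once one uses the sign-constancy of excursions (Assumptions~\ref{assump-f} and~\ref{assump_2_prime}): $u\mapsto\f_u(\gep)=\int_0^u f(\gep_v)\dd v$ is monotone on each excursion, so $\f_u(\gep)^2\le\f(\gep)^2$, and Proposition~\ref{indep}-\eqref{eq:F2} gives
\[
\bbE\big[\Phi(q)I_e^2\big]\;=\;q\,\nn\Big(\int_0^{\ell}\e^{-qu}\f_u^2\,\dd u\Big)\;\le\;\nn\big(\f^2(1-\e^{-q\ell})\big)\;\xrightarrow[q\downarrow0]{}\;0
\]
by dominated convergence since $\nn(\f^2)<\infty$. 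Hence $a(b^{-1}(q))I_e\to0$ in $\mathrm{L}^2$, a degenerate but perfectly admissible limit for Assumption~\ref{assump_null}; no sharper control of the excursion contributions is needed.
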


\begin{proposition}[$\alpha$-stable case, $\alpha\in (0,2)$]
\label{prop_assump_true}
Suppose that Assumptions~\ref{assump_phi_beta_ito} and~\ref{assump_m_f_alpha_ito} hold.
Then, Assumption~\ref{assump_null} is verified with $\alpha\in (0,2)$, $\beta\in (0,1)$ and the following choice for $a(\cdot)$, $b(\cdot)$:
\[
a(h) = h^{1/\alpha}/ \Lambda_f(1/h),\qquad b(h) = h^{1/\beta} / \Lambda_\sca(1/h) \,.
\]
As a consequence, Theorem~\ref{main_theorem} holds under Assumptions~\ref{assump_phi_beta_ito} and~\ref{assump_m_f_alpha_ito}, with the following asymmetry parameter $\rho=\bbP(Z_t^{0} \geq 0)$,
\[
 \rho = \frac{1}{2} + \frac{1}{\pi \alpha} \arctan(\vartheta) \qquad \text{where} \quad \vartheta = 
 \begin{cases}
 \frac{f_+^\alpha - f_-^\alpha}{f_+^\alpha + f_-^\alpha} \tan(\pi\alpha/2) &\quad \text{if } \alpha \neq 1 \,,\\
\mathbf{c} \  &\quad \text{if } \alpha = 1 \,.
 \end{cases}
\]
\end{proposition}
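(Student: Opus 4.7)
My plan is to deduce Assumption~\ref{assump_null} from an explicit description of the bivariate L\'evy process $(\tau_t,Z_t)_{t\geq 0}$ via It\^o excursion theory, and then to verify the scaling convergence by analyzing its Laplace--Fourier exponent at $0$. Since $(X_t)_{t\geq 0}$ is built from a Brownian motion $(B_t)_{t\geq 0}$ by the It\^o--McKean construction of Section~\ref{section_ito_mc_kean}, the local time of $X$ at $0$ coincides (up to a multiplicative constant) with that of $B$ at $0$, and excursions of $X$ away from $0$ are in one-to-one correspondence with excursions of $B$ away from~$0$. By the occupation-time formula, the length of one $X$-excursion equals $\int_\bbR L_\infty^y\, \mm^\sca(\dd y)$ and its $f$-integral equals $\int_\bbR L_\infty^y\, \mm^f(\dd y)$, where $(L_t^y)$ denotes the Brownian local times accumulated during that excursion. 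Writing $\nn$ for the It\^o excursion measure of $B$ at $0$, this identifies $(\tau_t,Z_t)_{t\geq 0}$ as a bivariate L\'evy process with Laplace--Fourier exponent
\[
\Psi(\lambda,\xi) = \int \bigl( 1 - \exp\bigl(-\lambda\textstyle\int L_\infty^y \mm^\sca(\dd y) + i\xi \textstyle\int L_\infty^y \mm^f(\dd y)\bigr) \bigr)\, \nn(\dd \omega),\qquad \lambda\geq 0,\ \xi\in \bbR.
\]

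To verify Assumption~\ref{assump_null} it then suffices to prove that $(1/h)\Psi(b(h)\lambda, a(h)\xi) \to \Psi^0(\lambda,\xi)$ as $h\downarrow 0$, where $\Psi^0$ is the exponent of a bivariate L\'evy process. For the $\tau$-marginal, starting from $\Phi(q)=\Psi(q,0)$, a Tauberian/Ray--Knight argument using the known $\nn$-tail of the excursion height together with the Ray--Knight description of $\{L_\infty^y\}_{y\in\bbR}$ under $\nn(\,\cdot\mid\max|B|=r)$ shows that Assumption~\ref{assump_phi_beta_ito} translates into $\Phi$ being regularly varying at $0$ with index $\beta$, whose asymptotic inverse is (up to a constant) precisely $b(h)=h^{1/\beta}/\Lambda_\sca(1/h)$; this yields the convergence of $b(h)\tau_{t/h}$ to a $\beta$-stable subordinator. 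For the $Z$-marginal, the same calculation with $\mm^f$ in place of $\mm^\sca$ and the characteristic (rather than Laplace) exponent produces, under the three sub-cases of Assumption~\ref{assump_m_f_alpha_ito}, the classical stable regimes with scaling $a(h)=h^{1/\alpha}/\Lambda_f(1/h)$: the case $\alpha\in(1,2)$ requires a drift compensation (since then $\mm^f(\pm\infty)$ is finite), and the case $\alpha=1$ an additional logarithmic centering dictated by~\eqref{cond:alpha=1}. The positivity/negativity tail constants $f_\pm$ (respectively $\mathbf c$ when $\alpha=1$) identify the limiting asymmetry via the classical formula $\rho=\tfrac12 + \tfrac{1}{\pi\alpha}\arctan(\vartheta)$ for $\alpha$-stable laws.

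Both marginals are driven by the same ``large excursions'' (those whose Brownian extremum is of the relevant order), so the joint convergence $(b(h)\tau_{t/h}, a(h)Z_{t/h})\to (\tau_t^0, Z_t^0)$ follows from the joint tail analysis of $\bigl(\int L_\infty^y \mm^\sca(\dd y), \int L_\infty^y \mm^f(\dd y)\bigr)$ under $\nn$, controlled simultaneously by Assumptions~\ref{assump_phi_beta_ito} and~\ref{assump_m_f_alpha_ito}; the limit $\Psi^0$ is then the exponent of a bivariate L\'evy process whose $\tau$-coordinate is $\beta$-stable and $Z$-coordinate is $\alpha$-stable, yielding the announced scalings. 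For the remaining convergence of $I_{e(q)}$ with $e(q)$ an independent exponential of parameter $q$, standard excursion arguments show that, conditionally on $g_{e(q)}<e(q)$, the straddling excursion is governed by $\nn$ size-biased by its length, so that $a(b^{-1}(q))I_{e(q)}$ reduces to $a(b^{-1}(q))\int L_\infty^y \mm^f(\dd y)$ under this tilted law; the same tail calculation then gives its weak convergence, completing the verification of Assumption~\ref{assump_null} and hence of Theorem~\ref{main_theorem}. The main technical obstacle is the case $\alpha=1$: the characteristic exponent of $Z$ then carries a logarithmic divergence, the scaling must be accompanied by a centering that is jointly compatible with the full L\'evy measure of $(\tau,Z)$, and one must check that this centering matches the constant $\mathbf c$ of~\eqref{cond:alpha=1} so as to produce a $1$-stable limit with asymmetry parameter $\vartheta=\mathbf c$.
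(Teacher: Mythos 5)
Your overall strategy --- representing the Laplace--Fourier exponent of $(\tau_t,Z_t)_{t\geq0}$ through the Brownian excursion measure and passing to the limit after rescaling --- is exactly the route the paper takes (Lemma~\ref{lemma_charac_brown} combined with the scaling property $\nn^B=h\,\nn^B\circ\lambda_h^{-1}$ and the string-convergence Lemma~\ref{lemconv_excursion}); working with the joint exponent from the start makes your separate ``same large excursions'' argument for joint convergence unnecessary. There are, however, two genuine gaps.

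First, your treatment of $I_e$ is wrong. Conditionally on straddling an excursion, $I_e=\int_{g_e}^{e}f(X_r)\,\dr r$ is the integral of $f$ over a \emph{strict initial segment} of the straddling excursion, up to the (exponentially tilted) time at which $e$ falls inside it; it does not reduce to the full excursion functional $\int L_\infty^y\,\mm^f(\dr y)$ under the length-biased law. The correct identity (Proposition~\ref{indep} plus Lemma~\ref{lemma_charac_brown}-\textit{(ii)}) is
\[
\mathbb{E}\left[\e^{i\mu I_e}\right]=\frac{q}{\Phi(q)}\bigg(\mathrm{m}+\nn^B\bigg(\int_0^{\ell}\e^{-qA_t^{\sca}+i\mu A_t^{f}}\,\dr A_t^{\sca}\bigg)\bigg),
\]
and proving convergence of $a(b^{-1}(q))I_e$ then requires the \emph{uniform} convergence on $[0,\ell]$ of the processes $t\mapsto A_t^{\sca,q}$ and $t\mapsto A_t^{f,q}$ (not merely of their terminal values), together with weak convergence of the measures $\dr A^{\sca,q}$. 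A convergence proof based on your identification would establish convergence of the wrong random variable, so it does not verify the last clause of Assumption~\ref{assump_null}.

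Second, you do not address the domination needed to pass to the limit in $\nn^B\big(1-\exp(-\lambda A_\ell^{\sca,h}+i\mu A_\ell^{f,h})\big)$. The pointwise limit~\eqref{eq:limitAh} holds excursion by excursion, but $\nn^B$ has infinite mass concentrated on small excursions, and one must control, uniformly in $h$, quantities such as $\nn^B\big[\sup_h\big(\int\bm{\mathrm{L}}_\ell^x\mm_h(\dr x)\big)^2\ind_{\{M(\varepsilon)\leq\delta\}}\big]$ (first or second moment according to whether $\alpha<1$ or $\alpha\in[1,2)$). This is the content of Lemma~\ref{conv_domination}, proved via Williams' decomposition and Ray--Knight, and together with the H\"older regularity at $0$ of the excursion local times that drives Lemma~\ref{lemconv_excursion} it constitutes the main technical work of the proof; it is needed in every case, not only in the $\alpha=1$ case you single out. (A minor further point: the correspondence between $X$-excursions and Brownian excursions is in general only a pushforward from the set $\bm{C}_{x_+,x_-}$, not a bijection, though this is harmless for the exponent since the discarded excursions carry $A_\ell^{\sca}=A_\ell^{f}=0$.)
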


%
%
%

\subsection{Main results III: starting with a non-zero velocity}
\label{sec:resIII}

In this section, we are interested in the hitting time of zero of the additive functional $z+\zeta_t$, with some initial velocity $X_0=x$. A motivation to consider such a question is to construct the additive functional conditioned to stay negative; of course, the only reason we deal with a condition to remain negative (and not positive) is because we have treated above the asymptotics of $\bbP(T_z > t)$ for $z > 0$.

To avoid the introduction of lengthy notation, we only give an outline of our results, summarizing the content of Section \ref{sec:hitting}: the precise statement of the results are presented there.
We restrict ourselves to the case where $(X_t)_{t\geq0}$ is a regular diffusion process, valued in some open interval $J$ containing $0$. 
We consider the process $(\zeta_t, X_t)_{t\geq0}$ as a strong Markov process. For a pair $(z,x)\in\mathbb{R}\times J$, we denote by $\bm{\mathrm{P}}_{(z,x)}$ the law of $(\zeta_t, X_t)_{t\geq0}$ when started at $(z,x)$, \textit{i.e.}\ the law of $(z + \int_0^t f(X_s)\dr s, X_t)_{t\geq0}$ under $\mathbb{P}_x$. Roughly, we derive two kind of results:
\begin{enumerate}[wide,label=(\roman*)]
 \item We identify some finite function $h:\bbR\times J\to \bbR_+$ such that, under Assumption \ref{assump_pos} or \ref{assump_null}, we have for any $(z,x)\in\bbR\times J\setminus\{(0,0)\}$,
 \[
  \bm{\mathrm{P}}_{(z,x)}(T_0 > t) \sim h(z,x)\varsigma(t)\, t^{-\rho\beta} \quad \text{as }t\to\infty,
 \]
 for some slowly varying function $\varsigma$ (which does not depend on $(z,x)$) and some parameters $\beta\in (0,1]$, $\rho\in(0,1)$ (given by the assumption). We refer to Theorem~\ref{main_thm_hitting} for the precise statement.
 
 \item Secondly, we show that the function $h$ is harmonic for the killed process $(\zeta_{t\wedge T_0}, X_{t\wedge T_0})_{t\geq0}$, see Corollary \ref{coro_harmonic}. This classicaly enables us to construct the additive functional conditionned to stay negative, through Doob's $h$-transform, see Proposition~\ref{prop:conditioned}. This result generalizes the previous work \cite{GJW99} on the integrated Brownian motion conditioned to be positive and have the same flavor of some results from Grama-Lauvergnat-Le Page  \cite{GLL18AOP} in a discrete setting. It is also related to Profeta's article \cite{Prof15} where he investigated other penalizations for the integral of a Brownian motion. 
\end{enumerate}

\subsection{A series of examples}
\label{sec:examples}

In this section, we provide several examples of application to our main theorems. 
We start with examples where Assumptions~\ref{assump_pos} or~\ref{assump_null} are easy to verify; we then turn to examples where the reformulation in terms of Assumptions~\ref{assump_phi_beta_ito}, \ref{assump_gaus_ito} or~\ref{assump_m_f_alpha_ito} are useful.

\paragraph*{Ornstein-Uhlenbeck.}

Let $(X_t)_{t\geq0}$ be an Ornstein-Uhlenbeck process and $f$ be some odd function.
Then $0$ is positive recurrent for $(X_t)_{t\geq0}$ and since the Ornstein-Uhlenbeck process started at $0$ is symmetric (in the sense that the law of $(-X_t)_{t\geq0}$ equals the law of $(X_t)_{t\geq0}$) it is clear that $\mathbb{P}(\zeta_t \geq0) = 1/2$ for any $t > 0$. Therefore Theorem \ref{main_thm_rec_pos} holds with $\rho = 1/2$ and the slowly varying function~$\varsigma$ is constant (the term~\eqref{def:varsigma} is equal to $1$ since $Z_t$ is symmetric).

\paragraph*{Skew-Bessel and homogeneous functional, back to Example~\ref{ex:main}.}

Let $(X_t)_{t\geq0}$ be a skew-Bessel process of dimension $\delta\in(0,2)$ and skewness parameter $\eta\in(-1,1)$, as defined in Example~\ref{ex:skewbessel} by its scale function $\sca$ and speed measure $\mm$.
This process can be constructed by the following informal procedure: concatenate independent excursions of the (usual) Bessel process, flipped to the negative half-line with probability $(1-\eta) / 2$. Let $c_+,c_-$ be positive constants and consider the function $f$ defined as
$
 f(x) = \left( c_+ \bm{1}_{\{x > 0\}} - c_-\bm{1}_{\{x < 0\}} \right)|x|^{\gamma},
$
with $\gamma>-\delta$.
The persistence probability of $(\zeta_t)_{t\geq0}$ is studied in Profeta \cite{Prof21} (for $\delta\in[1,2)$ and $\gamma > 0$).

The condition $\gamma>-\delta$ is here to ensure that $|\zeta_t| < \infty$ a.s.\ for all $t>0$. One can verify in this case that $(\tau_t)_{t\geq0}$ is a $\beta$-stable subordinator where $\beta = 1 - \delta/2$. By the self-similarity of the skew-Bessel process, it holds that the law of $(\tau_t, X_t)_{t\geq0}$ is equal to the law of $(c^{-1/\beta}\tau_{ct}, c^{-1/2\beta}X_{c^{1/\beta}t})_{t\geq0}$ for any $c > 0$.
This entails that the law of $(\tau_t, Z_t)_{t\geq0}$ is equal to the law of $(c^{-1/\beta}\tau_{ct}, c^{-1/\alpha}Z_{ct})_{t\geq0}$ for any $c >0$, where $\alpha = (2-\delta) / (\gamma  +2)\in(0,1)$. It also entails that for any $t > 0$, the law of $t^{-\beta / \alpha}I_t$ is equal to the law of $I_1$.
These facts imply that Assumption \ref{assump_null} holds with $a(h) = h^{1/\alpha}$ and $b(h) = h^{1/\beta}$ (with an equality rather than a convergence in distribution); one can also verify Assumptions~\ref{assump_phi_beta_ito} and~\ref{assump_m_f_alpha_ito} directly with the expressions of the scale function $\sca$ and speed measure $\mm$ (which are pure powers, so the scaling properties are clear).

Therefore, Theorem \ref{main_theorem} holds with $\beta = 1 - \delta / 2$ and $\rho = \mathbb{P}(Z_t \geq 0)$. The positivity parameter~$\rho$ can be computed, see for instance Zolotarev \cite[\S2.6]{zolotarev1986one}: we have
\begin{equation}
\label{eq:rhoskewbessel}
 \rho = \frac{1}{2} + \frac{\arctan(\vartheta\tan(\pi\alpha/2))}{\pi \alpha} \qquad \text{where} \quad \vartheta = \frac{1+\eta - (1-\eta)(\frac{c_-}{c_+})^{\alpha}}{1+\eta +  (1-\eta) (\frac{c_-}{c_+})^{\alpha}}.
\end{equation}
The computation of $\vartheta$ can be done as in \cite[Lem.~11]{bethencourt2021stable}. Finally, since $(Z_t)_{t\geq0}$ is a stable process, the renewal function $\mathcal{V}$ is such that $\mathcal{V}(z) = b_+ z^{\alpha\rho}$ for some constant $b_+ > 0$. It is also clear that, by self-similarity, the slowing varying function $\varsigma$ is constant. Hence, we fully recover and extend the results of Profeta \cite{Prof21} to $\delta\in(0,1)$ and $\gamma\in(-\delta, 0]$.

\paragraph*{Kinetic Fokker-Planck.}
Let $(X_t)_{t\geq0}$ be a solution of the following stochastic differential equation
\[
 X_t = x_0 -\frac{\mu}{2}\int_0^t \frac{X_s}{1 + X_s^2} \dr s + B_t
\]
where $(B_t)_{t\geq0}$ is a Brownian motion, $\mu > -1$ and $x_0 \in\bbR$. The scaling limit of the process $(\zeta_t)_{t\geq0} = (\int_0^t X_s \dr s)_{t\geq0}$, \textit{i.e.}\ with the choice $f = \mathrm{id}$, is studied in \cite{fournier2018one, lebeau2019diffusion, nasreddine2015diffusion, cattiaux2019diffusion}. The corresponding scale function and speed measure are given by 
\[
 \sca(x) = \int_0^x(1+v^2)^{\mu / 2} \dr v \quad \text{and} \quad \mm(x) = \int_0^x(1+v^2)^{-\mu / 2}\dr v,
\]
see for instance \cite{fournier2018one}. Then one can check that:
\begin{enumerate}[label=(\roman*)]
\item If $\theta\in(-1,1)$, Assumption \ref{assump_phi_beta_ito} is satisfied with $\beta = \frac{1}{2}(\mu + 1) \in(0,1)$ and Assumption \ref{assump_m_f_alpha_ito} is satisfied with $\alpha = \frac{1}{3}(\mu + 1)  \in(0,\frac23)$.
Since $\mm$, $\sca$ and $f$ are odd functions, Theorem \ref{main_theorem} holds with $\beta = \frac12 (\mu + 1)$, $\rho = \frac12$ and a constant slowly varying function~$\varsigma(\cdot)$.

\item  When $\mu > 1$, $0$ is positive recurrent for $(X_t)_{t\geq0}$ and since $x \mapsto x / (1+x^2)$ is odd, the process $(\zeta_t, X_t)_{t\geq0}$ is symetric (when $X_0 = 0$) so that $ \mathbb{P}(\zeta_t \geq0) = 1/2$ for any $t> 0$. Therefore Theorem \ref{main_thm_rec_pos} holds with $\rho =1/2$ and a constant slowly varying function~$\varsigma(\cdot)$.
\end{enumerate}

\noindent
Note that our results would also be able to deal with $(\zeta_t)_{t\geq0} = (\int_0^t f(X_s) \dr s)_{t\geq0}$ for more general functions $f$.

\paragraph*{Non-homogeneous functionals of Bessel processes.}

The previous examples are limited to the case where $\alpha \in (0,1)$ in Assumption~\ref{assump_null} (or Assumption~\ref{assump_m_f_alpha_ito}).
Let us give here an example where one has $\alpha \in [1,2]$; we consider a simplified example for pedagogical purposes.

Consider a symmetric Bessel process $(X_t)_{t\geq 0}$ of dimension $\delta\in (0,2)$, \textit{i.e.} a diffusion with scale function $\sca(x)= \frac{\sign(x)}{2-\delta} |x|^{2-\delta}$ and speed measure $\mm(\dd x) = \ind_{\{x\neq 0\}} |x|^{\delta-1} \dd x$.
Then, one can check that Assumption~\ref{assump_phi_beta_ito} holds with $\beta =1-\delta/2$.
Now, let $f$ be some odd function such that: $\int_0^1 f(u^{1/(2-\delta)}) u^{1/\beta-1} \dd u <+\infty$, for instance if $f$ is bounded, to ensure that $f\circ \sca^{-1}$ is locally integrable with respect to $\mm^\sca$ (so that $\zeta_t<+\infty$ for all $t>0$); $f(x)\sim \sign(x) |x|^{\gamma}$ as $x\to\infty$, for some $\gamma \in \mathbb R$. 
Then, we can check that
\begin{enumerate}[label=(\roman*)]
\item if $\gamma> -(1+\delta)$ then Assumption~\ref{assump_m_f_alpha_ito} holds with $\alpha = (2-\delta)(\gamma+2) \in (0,2)$ and $f_+=f_-$ (by symmetry);
\item if $\gamma< -(1+\delta)$, then Assumption~\ref{assump_gaus_ito} holds.
\end{enumerate}
In all cases, we have the asymptotic behavior $\bbP(T_z>t) \sim c_0 \mathcal{V}(z) t^{-\beta/2}$, since $\rho=\frac12$ and $\varsigma(\cdot)$ is constant, by symmetry.
Note that our Assumption~\ref{assump_gaus_ito} does not deal with the case $\gamma = -(1+\delta)$, but the result should still hold in that case (one would need to deal with non-normal domain of attraction to the normal law, which would require further technicalities).


\paragraph*{Continuous-time birth and death chains (and Bessel-like walks).}

Let $(\tilde{X}_n)_{n\geq 0}$ be a birth and death process on $\mathbb{Z}$ with transition probabilities given by 
\[
\bbP(\tilde{X}_{n+1}=i+1\vert \tilde{X}_{n+1}=i)=p_i \in (0,1)
\quad\text{ and }\quad
\bbP(\tilde{X}_{n+1}=i-1\vert \tilde{X}_{n+1}=i)=q_i = 1-q_i,
\]
for $i\in \mathbb{Z}$.
We then define $X_t:= \tilde{X}_{N_t}$, with $(N_t)_{t\geq 0}$ an independent Poisson process of unit intensity.
Then $(X_t)_{t\geq_ 0}$ is a continuous-time birth and death chain, and can be described as a generalized diffusion associated to a scale function $\sca$ and a speed measure $\m$ as follows; we refer to Stone \cite{s63} for more details.
Let us define $\Delta_0=1$ and
\[
\Delta_i = \prod_{k=1}^i \frac{q_k}{p_k},\ \  \forall i \geq 1 \quad \mathrm{and} \quad \Delta_i = \prod_{k=i+1}^{0} \frac{p_k}{q_k}, \ \  \forall i \leq -1.
\]
The scale function $\sca:\bbR\to\bbR$ is increasing piecewise linear and such that $\sca(i)=x_i$ for $i \in \mathbb{Z}$, with $(x_i)_{i\in \mathbb Z}$ defined iteratively by $x_0=0$ and $\Delta_i=x_{i+1}-x_i$, $\forall i \in \mathbb{Z}$.
The speed measure $\mm$ is defined as
\[
\m := \sum_{i\in \mathbb{Z}} \Big ( \frac{1}{2\Delta_i} + \frac{1}{2\Delta_{i-1}} \Big ) \delta_{i}.
\]
In a companion paper~\cite{BB23}, the first two authors use an elementary approach to obtain two-sided bounds for the persistence of integrated symmetric (discrete-time) birth and death process with an odd function $f$; they apply their results to symmetric Bessel-like random walk (see \cite{K} for a recent account). 
Let us now observe that we can apply our machinery to continuous-time Bessel-like random walks and obtain sharps asymptotics for the persistence probabilities $\bbP(\zeta_s \leq 0 \text{ for all }s\leq t)$.

We define a symmetric Bessel-like random walk as a birth and death process with transition probabilities
\[
p_i:=\frac{1}{2}\Big (1- \frac{\mu+ \varepsilon_i}{2 i}  \Big ), \text{ for } i\geq 1 \,, \quad
p_i=q_{-i} \text{ for } i\leq -1 \,, \quad p_0=q_0=\frac12 \,.
\]
Here, $\mu$ is a real parameter and $\varepsilon_i$ is such that $\lim_{i\to \infty} \varepsilon_i=0$.
Then, we have that the process is recurrent if $\mu >1$ and null-recurrent if $\mu\in (-1,1)$; the case $\mu=1$ depends on~$(\gep_i)_{i\geq 0}$.

\smallskip
(i) In the case $\mu>1$, by Theorem~\ref{main_thm_rec_pos} we directly obtain the asymptotics 
\begin{equation}
\label{besselRW1}
\mathbb{P}(T_z>t )\sim c_0\mathcal{V}(z) t^{-1/2} \qquad \text{ as } t\to\infty \,.
\end{equation}
(We have $\rho=\frac12$ and $\varsigma(\cdot)$ constant thanks to the symmetry.)

\smallskip
(ii) In the case $\mu\in (-1,1)$, we use the following asymptotics: there exists a constant $C_0$ and a slowly varying function $L(i)=\exp(-\sum_{k=1}^i \frac{\varepsilon_k}{k})$ such that $\Delta_{i} \sim C_0\, i^{\mu} L(i)$ as $ i\to + \infty$ (and symmetrically for $i\to-\infty$).
From this asymptotics we obtain that $\sca(i) = x_i =\sum_{k=0}^{i-1} \Delta_k \sim C_0'\, i^{1+\mu} L(i)$ as $i\to +\infty$ and also that $\mm(x) \sim C_0'' \, x^{1-\mu} L(x)^{-1} $ as $x\to\infty$.
One can therefore show that Assumption~\ref{assump_phi_beta_ito} is satisfied with $\beta=\frac12(1+\mu) \in (0,1)$ (and $m_+=m_-$). If we consider the function $f(x)=x$,  Assumption \ref{assump_m_f_alpha_ito} is satisfied with $\alpha= \frac13 (1+\mu)$ and $\rho=\frac12$ (by symmetry). Finally, Theorem \ref{main_theorem} states that 
\begin{equation}
\label{besselRW2}
\mathbb{P}(T_z >t)\sim \mathcal{V}(z) \varsigma(t)t^{-(1+\mu)/4} \qquad \text{ as } t\to\infty \,,
\end{equation}
with $\varsigma$ some slowing variation function (depending on $(\varepsilon_i)_{i\geq 0}$).

Obviously, by a simple de-Poissonization argument, the asymptotics~\eqref{besselRW1}-\eqref{besselRW2} remain also valid for discrete-time Bessel-like random walks.
This therefore matches the results from \cite{BB23} and  additionnally gives the sharp asymptotics of the persistence probabilities; obviously one could consider a function $f(x) = \sign(x) |x|^{\gamma}$ with $\gamma \in \mathbb R$ withour affecting the conclusion (the exponent $\alpha$ of Assumption~\ref{assump_null} does not affect the persistence exponent $\theta$ in the symmetric case).

\section{Ideas of the proof and further comments}

\subsection{Ideas of the proof: path decomposition of trajectories}


Recall from \eqref{def:xig} that $I_t = \zeta_t - \zeta_{g_t}$, we introduce
\begin{equation}\label{def_xi_delta}
\xi_t = \sup_{[0,t]}\zeta_s \qquad \text{ and }\qquad  \Delta_t = I_t - (\xi_{g_t} - \zeta_{g_t})\,.
\end{equation}
We refer to Figure~\ref{fig:decomp} for an illustration of $\zeta_t, \xi_t, \xi_{g_t}, I_t$ and $\Delta_t$.
Then, to study the probability $\bbP(T_z >t) =\bbP(\xi_t < z)$, the main idea is to decompose it into two parts: for any $z>0$, we have
\begin{equation}
\label{firstdecomp}
\mathbb{P}\big(\xi_t < z\big) = \mathbb{P}\big(\xi_{g_t} < z,\Delta_t \leq 0\big) + \mathbb{P}\big(\xi_{g_t} + \Delta_t < z, 0 < \Delta_t \leq  z\big).
\end{equation}
(The second term will turn out to be negligible.)
As a first consequence of~\eqref{firstdecomp}, we see that
$\mathbb{P}(\xi_{g_t} < z,\Delta_t \leq 0)  \leq  \mathbb{P}(\xi_{g_t} < z)  \leq \mathbb{P}(\xi_{g_t} < z,\Delta_t \leq z)$, from which one easily gets that
\begin{equation*}
\label{generalbounds}
c_t\, \bbP\big(\xi_{g_t} <z \big)  \leq  \mathbb{P}\big(\xi_t < z\big)  \leq   \bbP\big(\xi_{g_t} <z \big) \,.
\end{equation*}
with $c_t = \bbP(I_t\leq 0)\in (0,1)$.
We will show that we actually have some constant $c_1 \in (0,1]$ such that
\begin{equation}
\label{eq:constantc1}
\mathbb{P}(\xi_t < z) \sim c_1 \mathbb{P}(\xi_{g_t} < z) \qquad \text{ as } t\to\infty \,.
\end{equation}
We will then control the probability $\bbP(\xi_{g_t} <z )$ by using the fact that $\xi_{\tau_t} = \sup_{[0,t]} Z_s$ (see Remark~\ref{sup_remark} below).

\begin{figure}
\begin{center}
\includegraphics[scale=1]{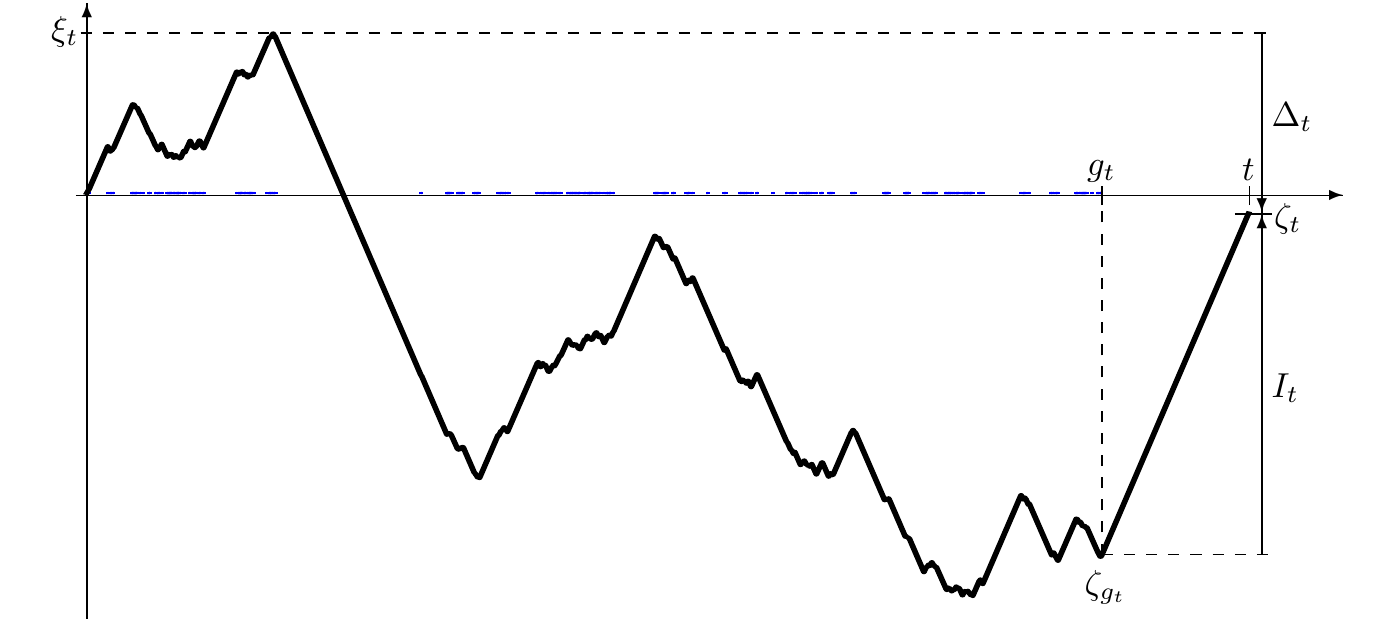}
\end{center}
\caption{\footnotesize Graphical representation of the trajectory of a realization of $(\zeta_s)_{0\leq s \leq t}$ and of its decomposition. The above is a simulation in the setting of Bessel-like random walk (see Section~\ref{sec:examples}): $(X_t)_{t\geq 0}$ is a (symmetric) Bessel-like random walk with $\mu=0.4$, \textit{i.e.}\ $\beta=0.7$, and with the function $f(x) =\sign(x)$.
The dots represent the returns to $0$ of $(X_s)_{0\leq s \leq t}$. In this particular realization, we have $I_t>0$ and $\Delta_t<0$.}
\label{fig:decomp}
\end{figure}

\medskip
\noindent
\textit{A standard trick to gain independence.\ }
To handle the quantities in~\eqref{firstdecomp}, we will use the following trick: letting $e = e(q)$ be an exponential random time $e$ with parameter $q$ independent of $(X_t)_{t\geq 0}$, we will look at the quantity $\bbP(\xi_e < z)$ instead of looking directly at $\bbP(\xi_t < z)$. 
This corresponds to taking the Laplace transform of $\bbP(\xi_t <z)$ and we loose no information by doing this: indeed, a combination of the Tauberian theom and the monotone density theorem (see \cite[Thms.~1.7.1 and~1.7.2]{bgt89}) tells us that having an asymptotic of $\bbP(\xi_e < z)$ as $q\to0$ is equivalent to having an asymptotic of $\bbP(\xi_t < z)$ as $t\to\infty$.

The first advantage of this trick is that it allows us to factorize functionals of trajectories before time $g_{e}$ and functionals of trajectories between times~$g_e$ and $e$. The precise statement is presented in Proposition~\ref{indep}, which is inspired by~\cite{svy}.
This enables us to operate a first reduction, treating $I_e$ separately from $\xi_{g_e}, \zeta_{g_e}$.
More precisely, using Proposition~\ref{indep} below, by independence, the first term in~\eqref{firstdecomp} (with $t$ replaced by an exponential random variable $e$) can be rewritten as
\begin{equation*}
\bbP(\xi_{g_e} <y, \Delta_e \leq 0 ) =\bbP(\xi_{g_e} <y , I_e \leq \xi_{g_e} -\zeta_{g_e}) = 
\mathbb E\big[ F_{I_e} (\xi_{g_e} -\zeta_{g_e}) \ind_{\{\xi_{g_e} < y\}} \big]\,,
\end{equation*}
where $F_{I_e}$ is the c.d.f.\ of $I_e$.

\medskip
\noindent
\textit{Wiener--Hopf factorization.\ }
A second key tool is a Wiener--Hopf factorization for the bivariate L\'evy Process $(\tau_t, Z_t)_{t\geq 0}$, that among other things allows us to obtain the joint distribution of~$\xi_{g_{e}}$ and $\xi_{g_e}-\zeta_{g_{e}}$,  see Corollary~\ref{laplace_transform_x_zeta} below.
In particular, it shows that $\xi_{g_e}$ and $\xi_{g_e}-\zeta_{g_e}$ are independent, so~\eqref{useindep} can further be decomposed as
\begin{equation}
\label{useindep}
\bbP(\xi_{g_e} <y, \Delta_e \leq 0 ) = \mathbb E\big[ F_{I_e} (\xi_{g_e} -\zeta_{g_e}) \big] \bbP\big( \xi_{g_e} < y \big)= \bbP(\Delta_e \leq 0) \bbP\big( \xi_{g_e} < y \big)\,.
\end{equation}
 From this, we will be able to prove that $\bbP(\xi_{e} <y)\sim c_1 \bbP(\xi_{g_e} <t)$ as $q\downarrow 0$, \textit{i.e.}~\eqref{eq:constantc1}, where the constant $c_1$ is $c_1 = \lim_{q\to0}\bbP(I_e \leq \xi_{g_e}-\zeta_{g_e})\in (0,1]$. Moreover, the Wiener--Hopf factorization will also help us obtain the asymptotic behavior of $\bbP(\xi_{g_e} <y)$ as $q\downarrow0$.

\medskip
\noindent
\textit{Conclusion.\ }
With this picture in mind, we split our results into two categories, that correspond to Assumptions~\ref{assump_pos} and~\ref{assump_null}:
\begin{itemize}[wide]
\item[(i)]   
If $X$ is positive recurrent.
Then, $\lim_{t\to \infty}\frac1t g_t = 1$ and $I_t$ will typically be much smaller than $\xi_{g_t}-\zeta_{g_t}$ as $t\to\infty$, so $\lim_{q\downarrow0}\bbP(I_e \le \xi_{g_e} -\zeta_{g_e})=1$: we will have $c_1=1$ in~\eqref{eq:constantc1}, that is $\mathbb{P}(\xi_e < y) \sim \mathbb{P}(\xi_{g_e} < y)$ as $q\downarrow 0$.
Loosely speaking, the part of the trajectory between time $g_t$ and~$t$ will have no impact on the behavior of the persistence probability.
Then, the behavior of $\bbP(\xi_{g_e}<y)$ is studied thanks to the Wiener--Hopf factorization, with the assumption that $(Z_t)_{t\geq 0}$ satisfies the so-called Spitzer's condition. 

\item[(ii)]
If $X$ is null recurrent then there are two cases, depending on whether $\alpha=2$ or $\alpha\in (0,2)$ in Assumption~\ref{assump_null} (or corresponding to Assumptions~\ref{assump_gaus_ito} and~\ref{assump_m_f_alpha_ito}).
First, if $(Z_t)_{t\geq 0}$ is in the (normal) domain of attraction of a normal law, then also in that case $I_t$ will typically be much smaller than $\xi_{g_t}-\zeta_{g_t}$ as $t\to\infty$ and we again have $\lim_{q\downarrow0}\bbP(I_e \le \xi_{g_e} -\zeta_{g_e})= 1$, that is $c_0=1$ in~\eqref{eq:constantc1}.
Second, if $(Z_t)_{t\geq 0}$ is in the domain of attraction of some $\alpha$-stable law, $\alpha\in (0,2)$, then under Assumption~\ref{assump_null} we have that $\lim_{q\downarrow0}\bbP(I_e \le \xi_{g_e} -\zeta_{g_e}) = \bbP(I\leq W)=: c_1 \in(0,1)$, where $I,W$ are independent random variables, the respective limits in law of $a(b^{-1}(q)) I_e$ and $a(b^{-1}(q))(\xi_{g_e} -\zeta_{g_e})$.
Then, the behavior of $\bbP(\xi_{g_e}<y)$ is again studied thanks to the Wiener--Hopf factorization, with the assumption that $(\tau_t)_{t\geq 0}$ is in the domain of attraction of a stable subordinator.
\end{itemize}

\subsection{Comparison with the literature}

We now discuss the novelty of our results and techniques and compare them with the existing litterature.

Let us first start with the work of Isozaki \cite{Isozaki96}, which treats the case of integrated powers of the Brownian motion.
Isozaki first uses the following Wiener--Hopf factorization of $(\tau_t, Z_t)_{t\geq0}$.
If $e=e(q)$ denotes an independent exponential random variable and $S_t = \sup_{[0,t]}Z_s$, then the product of the Laplace transforms of $(\tau_e, S_e)$ and $(\tau_e, Z_e - S_e)$ can be expressed in terms of the characteristic function of $(\tau_t, Z_t)_{t\geq0}$.
The rest of Isozaki's method is somehow more analytic and involves inversion of Fourier transforms.
Also, he exploits deeply the self-similarity of the Brownian motion. Of course, our work has been inspired by~\cite{Isozaki96}, but regarding the Wiener--Hopf factorization, we go one step further.
If we set $G_t = \sup\{s < t, Z_t = S_t\}$, then we are able to show that the law of $(G_e,\tau_{G_e}, S_e)$ and $(e-G_e,\tau_e - \tau_{G_e}, Z_e - S_e)$ are independent, infinitely divisible and can be expressed with the law of $(\tau_t, Z_t)_{t\geq0}$.
This enables us to the study the quantities of interest in the spirit of fluctuation's theory for Lévy processes.
We refer to Subsection \ref{section_WH} and Appendix~\ref{appendix_wiener} for more details.

Let us now discuss the work of McGill \cite{mcgill}, which seems to be the closest to our work.
McGill considers general additive functionals of the Brownian, \textit{i.e.}\ $\zeta_t = \int_{\bbR}L_t^x m(\dr x)$ where $(L_t^x)_{t\geq0, x\in\bbR}$ is the family of local times of the Brownian motion and $m(\dr x) = \bm{1}_{\{x \geq0\}}m_+(\dr x) - \bm{1}_{\{x < 0\}}m_-(\dr x)$ is a signed measure.
With the above notation, he caracterizes the behavior of $\bbP(\xi_e < z)$ as $z\to0$, for a fixed $q >0$; whereas we study $\bbP(\xi_e < z)$ when $q\to0$, for a fixed $z$. Let us quickly explain the content of~\cite{mcgill}.
First, if we fix $q > 0$ and set $v(z) = \bbP(\xi_e < z)$, he establishes the following identity:
\begin{equation}\label{eq_mcgill}
v(z) = \int_0^z\mathcal{V}(\dr y)\int_{\bbR_-}\hat{\mathcal{V}}(\dr x)k(z - y - x),
\end{equation}
where $\hat{\mathcal{V}}$ denotes the dual renewal function, $\mathcal{V}(\dr x)$ and $\hat{\mathcal{V}}(\dr x)$ the Stieltjes measures associated with the non-decreasing function $\mathcal{V}$ and $\hat{\mathcal{V}}$ and the function $k$ depends on $v$ and on $q$. 
This identity is derived using martingale techniques and tools from the excursion theory of $(Z_t)_{t\geq0}$ below its supremum.
With this key identity at hand, McGill proves under some technical conditions his main result: $v(z) \sim C(q)\mathcal{V}(z)$ as $z\to0$, where $C(q)$ is some unknown constant.
We emphasize that we can (more or less) recover \eqref{eq_mcgill} and his main results from our work.
Our interpretation of \eqref{eq_mcgill} is the following: the random variable $\xi_e$ can be decomposed as
\[
 \xi_e = \xi_{g_e} + \Delta_e\vee0,
\]
where $\xi_{g_e}$ and $\Delta_e := I_e- (\xi_{g_e}-\zeta_{g_e})$ are independent, see \eqref{def_xi_delta}. 
Our bivariate Wiener--Hopf factorization yields the following result (see Section~\ref{section_laplace}): $\bbP(\xi_{g_e} \in \dr z) = C(q) \mathcal{V}_q(\dr z)$, where the constant $C(q)$ is known and $\mathcal{V}_q$ is a non-decreasing function, which is close to $\mathcal{V}$ in the sense that for any fixed $z \geq0$, $\mathcal{V}_q(z)$ increases to $\mathcal{V}(z)$ as $q\to0$. 
Then, we claim that we are able to show that $v(z)\sim C(q)\mathcal{V}_q(z)$ as $z\to0$.
To match the result of Mcgill, it remains to show that $\mathcal{V}_q(z) \sim \mathcal{V}(z)$ as $z\to0$ and we believe this should hold, at least if $\mathcal{V}$ is regularly varying at $0$: we can show that their Laplace transforms are equivalent at infinity.

We insist on the fact that all of the above approach more or less relies on the fact that the integrated process is a Brownian motion, whereas we are able to consider a very large class of Markov process and of functions $f$, leading to a wide range of possible behaviors.

\subsection{Related problems and open questions}

Let us now give an overview of questions that we have chosen not to develop in the present paper, that either fall in the scope of our method or represent important challenges.

\paragraph*{Further examples in our framework.}
Let us give a couple of additional examples that we are able to treat with our method (but that we have chosen not to develop), since they are defined via their excursions (and satisfy Assumption~\ref{assump_exc}).

\medskip
\noindent
\textit{Jumping-in diffusions.\ }
A jumping-in diffusion in $\bbR$ is a strong Markov process $(X_t)_{t\geq0}$ which has continuous paths up until its first hitting time of $0$. 
When the process touches $0$, it immediately jumps back into $\bbR\setminus\{0\}$ and starts afresh. 
These processes are somehow again a generalization of diffusion processes and are typically constructed via excursion theory; we refer to the book of Itô \cite{ito2015poisson} for more details. 
Such processes obviously satisfy Assumption~\ref{assump_exc} and we can apply our results.

\medskip
\noindent
\textit{Stable processes reflected on their infimum.\ }
The following example is not related to a diffusion process and shows that our method are not only concerned with generalized diffusions.
Let $(S_t^\alpha)_{t\geq0}$ be an $\alpha$-stable process with some $\alpha \in(0,2)$ such that $(|S_t^\alpha|)_{t\geq0}$ is not a subordinator. 
We consider $(R_t^\alpha)_{t\geq0} = (S_t^\alpha - \inf_{[0,t]}S_s^\alpha)_{t\geq0}$ the process reflected on its infimum. 
It is a positive strong Markov process and $0$ is a regular and recurrent point for $(R_t^\alpha)_{t\geq0}$. 
Informally, we construct the strong Markov process $(X_t)_{t\geq0}$ by concatenating independent excursions of $(R_t^\alpha)_{t\geq0}$ flipped to the negative half-line with probability $1/2$. 
Then it also clear that $(X_t)_{t\geq0}$ satisfies Assumption~\ref{assump_exc}.
It is also clear that this process is self-similar with index $\alpha$, and as for (skew-)Bessel processes (see Section~\ref{sec:examples}), it entails that Assumption~\ref{assump_null} is satisfied if we consider $f$ to be homogeneous, for instance $f(x) = \mathrm{sgn}(x)|x|^\gamma$ for some $\gamma \in \mathbb R$ (with some restriction to ensure that $\zeta_t<+\infty$ a.s., \textit{e.g.}\ $\gamma\geq 0$).

\paragraph*{Asymptotics uniform in $t,z$.}
A natural question that we have chosen not to investigate further is the case when the barrier $z$ is ``far away''. In other words, we are interested in knowing for which regimes of $t,z$ the asymptotics~\eqref{eq:asympPT} remains valid. We would like to find some function $\varphi(\cdot)$ with $\varphi(t)\to\infty$ as $t\to\infty$ so that the following statement holds:
\begin{equation}
\label{eq:asymp++}
\bbP(T_z>t) \sim \mathcal{V}(z) \varsigma(t)\, t^{-\theta}
\quad \text{ uniformly for $t,z>0$ with } \varphi(t)/z \to \infty \,.
\end{equation}
In particular, this would allow to consider both the case when $z$ is fixed and $t\to\infty$ and the case when $t$ fixed and $z\downarrow 0$.

We have not pursued this issue further to avoid lengthening further the paper, but we believe that our method should work to obtain such a result.
Indeed, thanks to~\eqref{eq:constantc1}, we are reduced to estimating $\bbP(\xi_{g_e}<z)$ in terms of $t,z$, which is standard in fluctuation theory for Lévy processes, see~\cite{Kyprianou14}. 
Since we have identified that the correct scaling for $\xi_{g_e}$ (and $\xi_{g_e}-\zeta_{g_e}$) is $a(b^{-1}(q))$ under Assumption~\ref{assump_null}, it is natural to expect that~\eqref{eq:asymp++} holds with $\varphi(t)=a(b^{-1}(t))$.

In view of our proof, the main (and only?) step where an improvement is needed is in a control of the convergence $\mathcal{V}_q(z)\uparrow \mathcal{V}(z)$ as $q\downarrow 0$, where $\mathcal{V}_q$ is defined in~\eqref{def:Vq}, see Corollary~\ref{coro_exp}.

\paragraph*{Additive functional conditioned on being negative.}
In Section~\ref{sec:hitting}, we construct the additive functional $(\zeta_t,X_t)$ (under $\bbP_{(z,x)}$ with $(z,x) \neq 0$) conditioned to remain negative, see Proposition~\ref{prop:conditioned}. But there are several questions that we have left open.

\smallskip
\noindent
\textit{Starting from $(0,0)$.\ }
First of all, a natural question would be to construct the additive functional conditioned to stay negative, but with starting point $(0,0)$, that is starting from $0$ with a zero speed.
One should take the limit $(z,x)\to 0$ inside Proposition~\ref{prop:conditioned}, but this brings several technical difficulties and requires further work.

\smallskip
\noindent
\textit{Scaling limit of the conditioned process.\ }
Another natural question is that of obtaining the long-term behavior of the additive functional (conditioned to be negative or not) and in particular scaling limits.
Our approach should yield all necessary tools to obtain such results, and let us give an outline of what one could expect:

\begin{enumerate}[wide,label=(\roman*)]
\item  If $(X_t)_{t\geq 0}$ is positive recurrent, \textit{i.e.}\ under Assumption~\ref{assump_pos}, then one should have that the conditioned process converges to either a Brownian motion or an $\alpha$-stable Lévy process conditioned to be negative (depending on whether the process $(Z_t)_{t\geq 0}$ is in the domain of attraction of an $\alpha$-stable law with $\alpha=2$ or $\alpha\in (0,2)$).

\item  If $(X_t)_{t\geq 0}$ is null-recurrent and Assumption~\ref{assump_null} holds, then one should have that the conditioned process converges to:
\begin{itemize}
\item If $\alpha=2$, a time-changed Brownian motion conditioned to stay negative, namely $(B^0_{L_t^0})_{t\geq 0}$, where $B^0$ is a Brownian motion $Z^0$ conditioned to stay negative and $L_t^0$ is the inverse of the $\beta$-stable subordinator $\tau^0$; note that necessarily $\tau^0$ and $Z^0$ are independent.
\item If $\alpha \in (0,2)$, a ``squeleton'' given by a time-changed $\alpha$-stable Lévy process conditioned to stay negative, namely $B^0_{L_t^0}$ as above (with here $Z^0$ an $\alpha$-stable Lévy process), then ``filled'' with the integrals of excursions conditioned to bridge the squeleton (\textit{i.e.}\ excursions of lengths $\tau^0_t-\tau^0_{t-}$ with  the integral $\int_0^{\ell} (\gep_s)^{\gamma} \dd s$ conditioned to be equal to $Z^0_{t}-Z^0_{t-}$). 
\end{itemize}
\end{enumerate}
We refer to Figure~\ref{fig:conditioned} for illustrative simluations.
Let us also mention that one might also be interested in some local limit theorems for the conditioned process, in the spirit of~\cite{Car05} for random walk and \cite{GLL18AOP} for additive functional of Markov chain under a spectral gap assumption. 
\begin{figure}
\begin{center}
\includegraphics[scale=0.26]{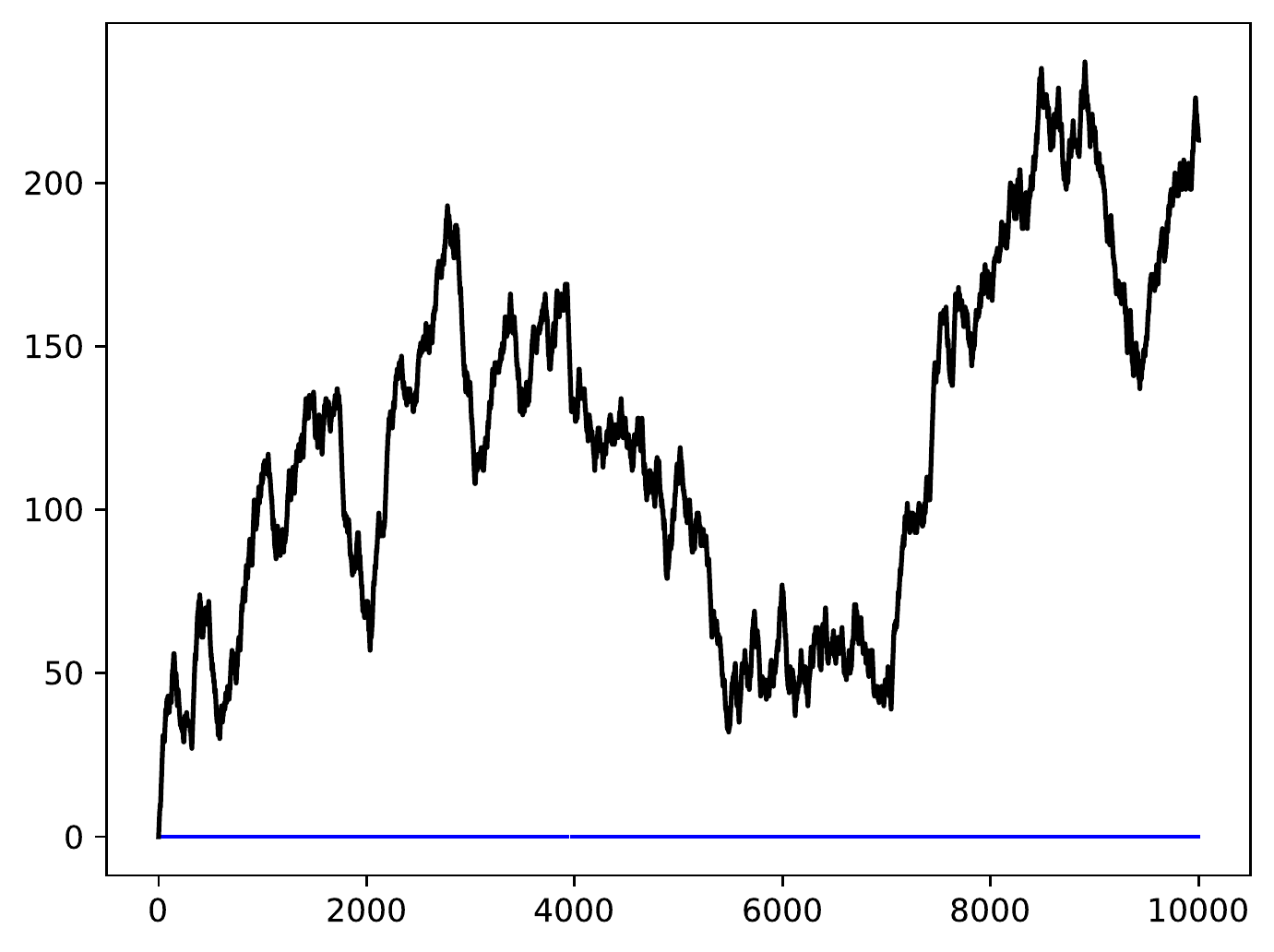}\quad
\includegraphics[scale=0.26]{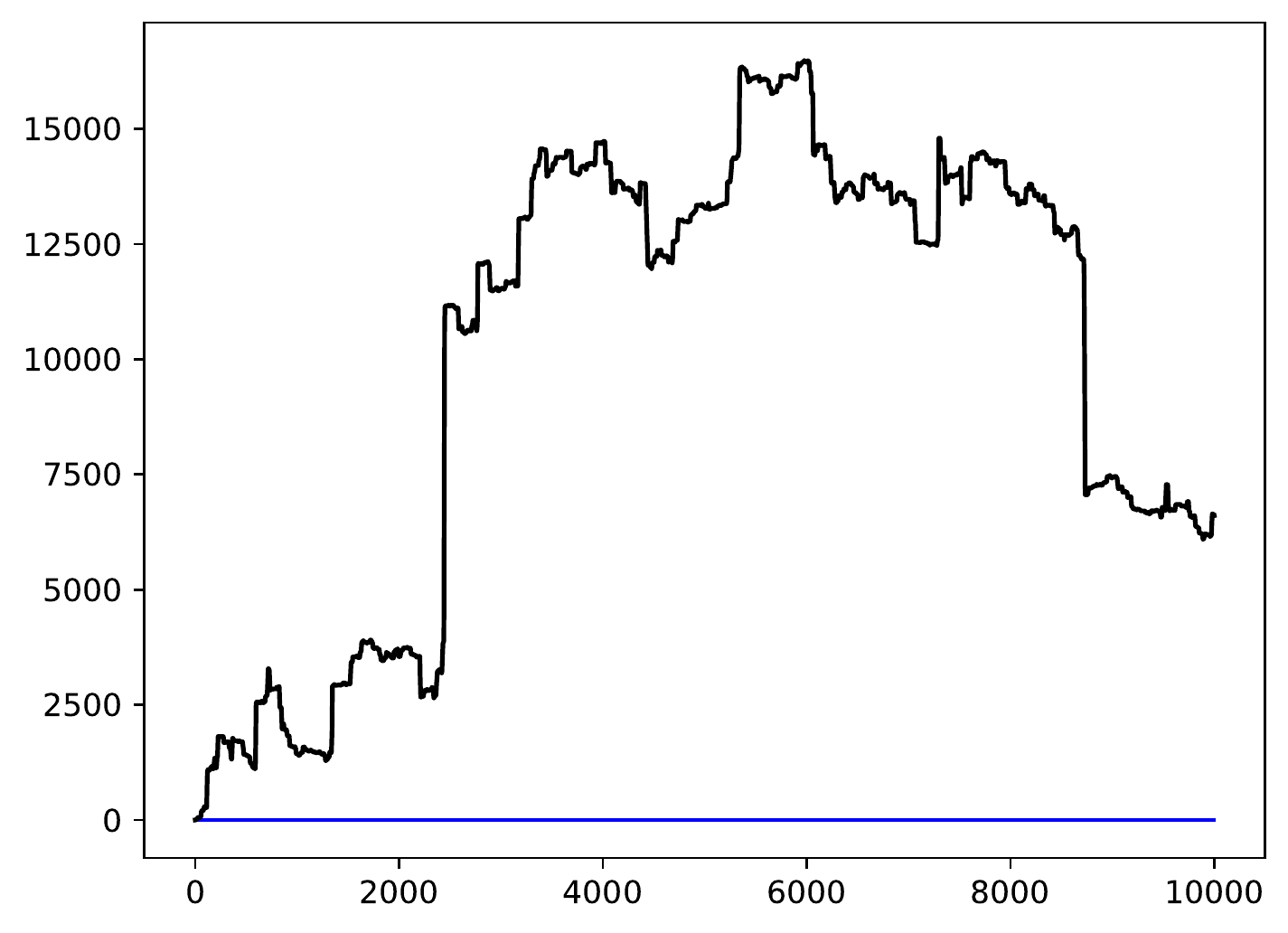}\quad
\includegraphics[scale=0.26]{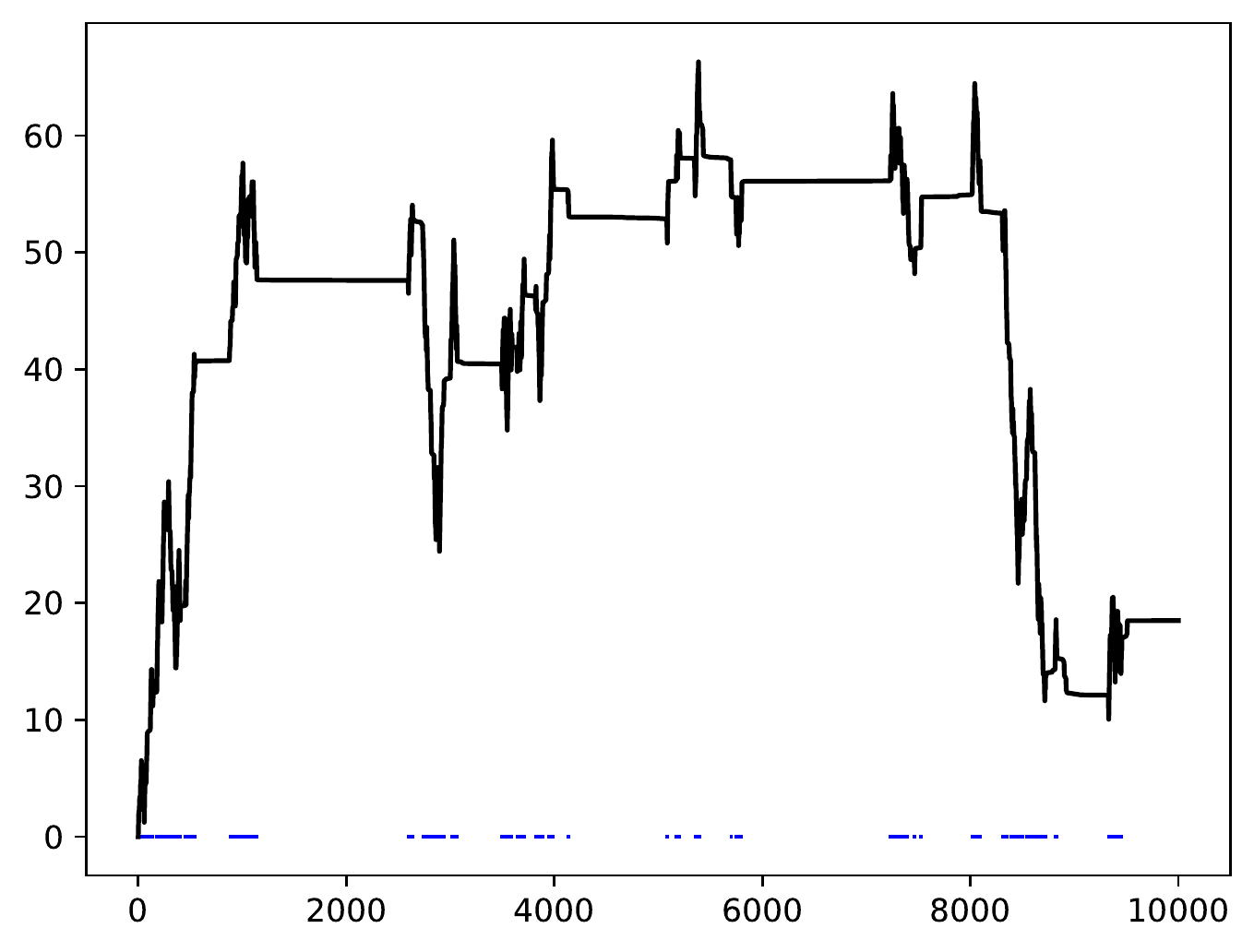}\quad
\includegraphics[scale=0.26]{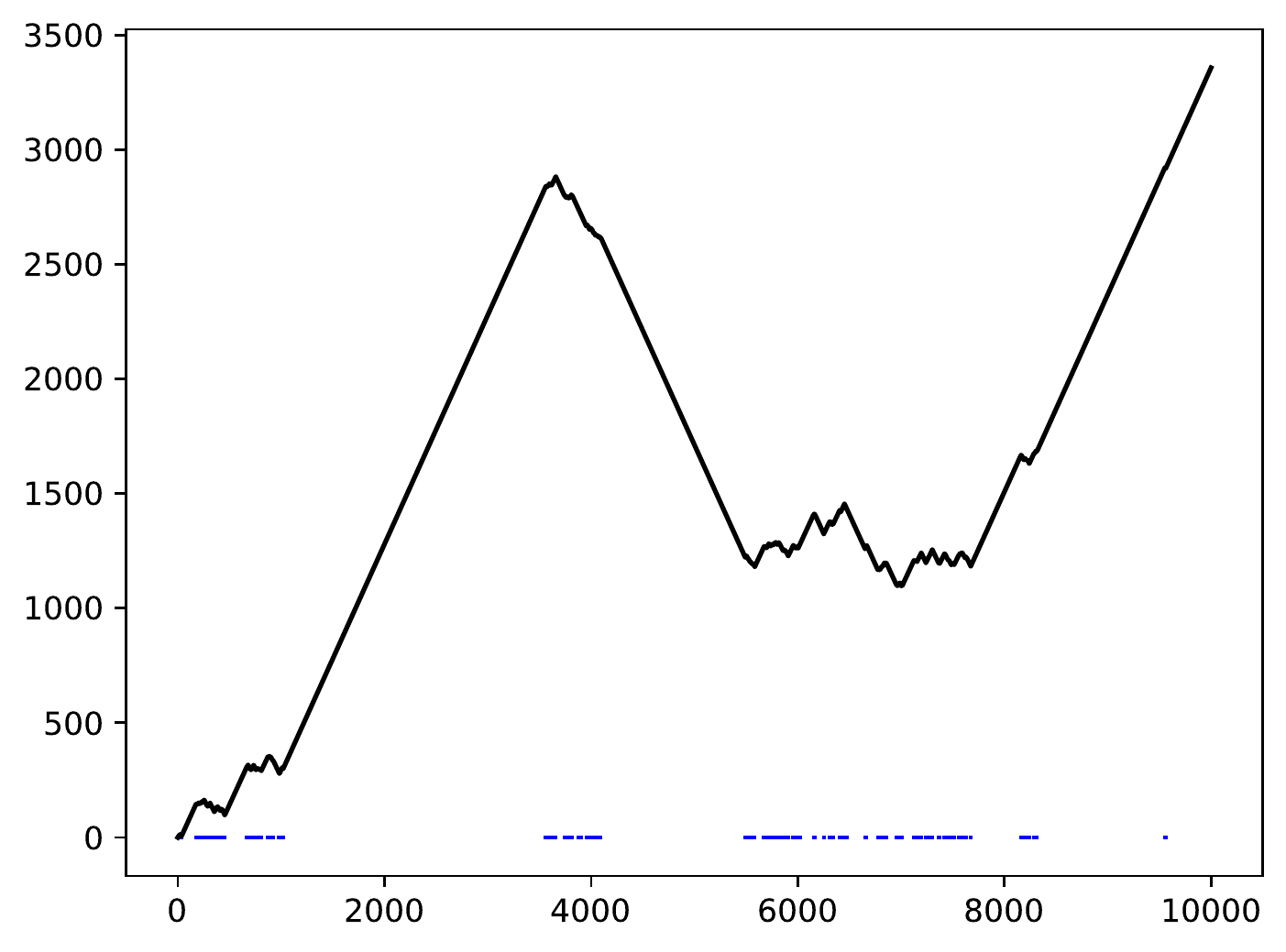}
\end{center}
\caption{\footnotesize Graphical representation of the trajectory of a realization of $(\zeta_s)_{0\leq s \leq t}$ conditioned to be positive. These are simulations in the setting of Bessel-like random walks: $(X_t)_{t\geq 0}$ is a (symmetric) Bessel-like random walk and $f(x) =\sign(x) |x|^{\gamma}$ for some $\gamma \in \mathbb R$; the dots represent the returns to $0$ of $(X_s)_{0\leq s \leq t}$.
The first two simulations are when $(X_t)_{t\geq 0}$ is positive recurrent and $(Z_t)_{t\geq 0}$ converges either to a Brownian motion (first) or an $\alpha$-stable Lévy process (second).
The last two simulations are when $(X_t)_{t\geq 0}$ is null recurrent and Assumption~\ref{assump_null} holds either with $\alpha=2$ (third, corresponding to Assumption~\ref{assump_gaus_ito}) or with $\alpha\in (0,2)$ (fourth, corresponding to Assumption~\ref{assump_m_f_alpha_ito}).}
\label{fig:conditioned}
\end{figure}

\paragraph*{Additive functionals of Markov processes with jumps.}
Finally we stress that our main assumption on the process $(X_t)_{t\geq0}$, \textit{i.e.}\ Assumption~\ref{assump_exc}, is not satisfied for a large class of processes, including Lévy processes with jumps (except for the difference of two independent Poisson processes).
When Assumption \ref{assump_exc} is not satisfied, it is not clear how to attack the problem and the only result so far in this direction seems to focus only on (symmetric) \textit{homogeneous} functionals of \textit{strictly $\alpha$-stable} Lévy processes, see~\cite{Simon07}.
Generalizing this result, for instance to aymptotically $\alpha$-stable processes, remains an important open problem.

\subsection{Overview of the rest of the paper}

The rest of the paper is organized as follows.
\begin{itemize}[wide]
 \item  In Section \ref{sec:keytools}, we develop several key tools that are crucial in our proof. We decompose the paths of the process $(X_t)_{t\geq0}$ around the last excursion and we establish a Wiener--Hopf factorization for the bi-dimensional Lévy process $(\tau_t, Z_t)_{t\geq0}$, using the excursions of $Z_t$ below its supremum, in the spirit of Greenwood-Pitman \cite{gp}. With these tools at hand we are able to compute the Laplace transform of $(\xi_{g_e}, \xi_{g_e}-\zeta_{g_e})$ which is the key identity of our paper, see Corollary~\ref{laplace_transform_x_zeta}, and seems to be new to the best of our knowledge.
 
 \item Using the tools of Section~\ref{sec:keytools}, we are able to show our persistence result in two different settings. In Section~\ref{sec_main_thm_rec_pos} we prove Theorem \ref{main_thm_rec_pos} under the assumption that $0$ is positive recurrent; in Section~\ref{sec:proof_null} we prove Theorem \ref{main_theorem} under Assumption \ref{assump_null} (corresponding to the case where $0$ is null recurrent).
 
 \item In Section~\ref{section_ito_mc_kean} we consider the case of generalized diffusions and we give tangible conditions which ensure that Assumption  \ref{assump_null} holds.
 
 \item Finally in section \ref{sec:hitting} we treat the case where $(X_t,\zeta_t)_{t\geq0}$ does not start at $(0,0)$. To handle the proof we assume that $(X_t)_{t\geq0}$ has continuous paths.
 
 \item We also collect some technical results in Appendix: in Appendix~\ref{appendix_wiener} we prove our Wiener--Hopf factorization are related results (based on well-established techniques), in Appendix~\ref{appendix_levy} we present results on convergences of Lévy processes and of their Laplace exponents that serve in Section~\ref{section_ito_mc_kean}, in Appendix~\ref{app:generaldiffusion} we give technical results on generalized one-dimensional diffusions that are used in Section~\ref{sec:hitting}.
\end{itemize}

\section{Path decomposition and Wiener--Hopf factorization}\label{sec:keytools}

\subsection{Preliminaries on excursion theory}
\label{sec:exc_theory}

Recall we assumed that $0$ is regular for itself, that is $\mathbb{P}(\eta_0 = 0) = 1$ where $\eta_0:= \inf\{t > 0, \: X_t = 0\}$ (and $\mathbb{P} = \mathbb{P}_0$), and that $0$ is a recurrent point.
In this setting, we have a theory of excursions of $(X_t)_{t\geq0}$ away from $0$, see for instance Bertoin \cite[Chapter IV]{b} or Getoor \cite[Section 7]{getoor_exc}.
Setting $S=\inf\{t\geq0, \: X_t \neq0\}$, then Blumenthal 0-1 law gives that either $\mathbb{P}(S=0) = 1$ or $\mathbb{P}(S=0)=0$:
in the first case, $0$ is called an \emph{instantaneous} point;
in the second case, $0$ is called a \emph{holding} point.

\smallskip
The process $(X_t)_{t\geq0}$ possesses a local time $(L_t)_{t\geq0}$ at the level $0$, in the sense that there is a non-decreasing and continuous additive functional whose support is the closure of the zero set of $(X_t)_{t\geq0}$. Then there exists some $\mathrm{m}\geq0$ such that a.s.\ for any $t\geq0$
\[
 \mathrm{m} L_t = \int_0^t\bm{1}_{\{X_s = 0\}}\dr s.
\]

\noindent
The right-continuous inverse $(\tau_t)_{t\geq0}$ of the local time  is a (c\`adl\`ag) subordinator and we will denote by $\Phi$ its Laplace exponent, see~\eqref{def:Phi}.
The subordinator $(\tau_t)_{t\geq0}$ has the following representation: for any $t\geq0$
\[
 \tau_t = \mathrm{m}t + \sum_{s\leq t}\Delta\tau_s,
 \quad 
 \text{ with } \Delta\tau_s := \tau_s- \tau_{s-}\,.
\]

\noindent
Let us also recall that there are two cases:
\begin{itemize}[leftmargin=*]
 \item If $0$ is an instantaneous point, then the Lévy measure of $(\tau_t)_{t\geq0}$ has infinite mass.
 \item If $0$ is a holding point, then the Lévy measure of $(\tau_t)_{t\geq0}$ has finite mass and $\mathrm{m}>0$. In other words, $(\tau_t)_{t\geq0}$ is a drifted compound Poisson process.
\end{itemize}

\medskip
We denote by $\mathcal{D}$ the usual space of c\`adl\`ag functions from $\bbR_+$ to $\bbR$, and for $\varepsilon\in\mathcal{D}$, let us introduce the length of $\gep$
\[
 \ell(\varepsilon) = \inf\{t > 0,\: \varepsilon_t = 0\}\,,
\]
and we will often write for simplicity $\ell$ instead of $\ell(\varepsilon)$.

Then the set of excursions $\mathcal{E}$ is the set of functions $\varepsilon\in\mathcal{D}$ such that:
(i) $0 < \ell(\varepsilon) < \infty$;
(ii) $\varepsilon_t = 0$ for every $t\geq\ell(\varepsilon)$;
(iii) $\varepsilon_t \neq 0$ for every $0 < t < \ell(\varepsilon)$.
This space is endowed with the usual Skorokhod's topology and the associated Borel $\sigma$-algebra. 

\smallskip
We now introduce the excursion processes of $(X_t)_{t\geq0}$, denoted by $(e_t)_{t\geq0}$, which take values in $\mathcal{E}_0=\mathcal{E}\cup\{\Upsilon\}$, where $\Upsilon$ is an isolated cemetery point. The excursion process is given by
\begin{equation*}
  e_t = 
  \begin{cases}
   (X_{\tau_{t-} + s})_{s\in[0,\Delta\tau_t]}& \text{if } \Delta\tau_t >0,  \\
   \Upsilon &\text{otherwise},. 
  \end{cases}
\end{equation*}
A famous result, essentially due to Itô \cite{i72}, states that $(e_t)_{t\geq0}$ is a Poisson point process and we denote by $\nn$ its characteristic measure, which is defined for any measurable set $\Gamma$ by
\begin{equation*}\label{def_exc_measure}
 \nn(\Gamma) = \frac{1}{t}\mathbb{E}\left[N_{t,\Gamma}\right]\qquad\text{where}\qquad N_{t,\Gamma} = \sum_{s\leq t}\bm{1}_\Gamma(e_s) \,.
\end{equation*}

\noindent
Here again, let us distinguish two cases:
\begin{itemize}[leftmargin=*]
 \item If $0$ is an instantaneous point, then $\nn$ has infinite mass
 \item If $0$ is a holding point, then $\nn$ has finite mass and is proportional to the law of $e_1$ under $\mathbb{P}$.
\end{itemize}

For a non-negative measurable function $F: \mathcal{E}_0 \to \mathbb R$, we also write $\nn(F)=\int_{\mathcal{E}} F(\gep) \nn (\dd \gep)$.
Let us stress that, from the exponential formula for Poisson point processes, we can express the Laplace exponent $\Phi$ of $(\tau_t)_{t\geq 0}$ in terms of $\mathrm{m}$ and $\nn$:
\[
 \Phi(q) = \mathrm{m}q + \nn(1-\e^{-q\ell}).
\]
%
%

Recall the definition~\eqref{def:Z} of the Lévy process $(Z_t)_{t\geq0}$, and recall Assumption~\ref{assump-f} on the function~$f$.
Let us stress that if $0$ is an instantaneous point, then the Lévy measure $\nu$ has infinite mass, whereas if $0$ is a holding point, then $(Z_t)_{t\geq0}$ is a compound Poisson process.

Recalling the last expression in~\eqref{def:Z}, the Lévy measure $\nu$ of $(Z_t)_{t\geq 0}$ can be expressed through the excursion measure $\nn$.
For $\epsilon\in\mathcal{E}$, we define 
\begin{equation}
\label{def:fepsilon}
\f(\varepsilon) = \int_0^{\ell}f(\varepsilon_s)\dr s \,.
\end{equation}
Then the exponential formula for Poisson point processes ensures that $\nu(\dr z) = \nn(\f \in \dr z)$.
Then, Assumption~\ref{assump_exc} that the process $(X_t)_{t\geq 0}$ cannot change sign without touching $0$ (and is not of constant sign) can be reformulated as follows:

\begin{assumpprime}
\label{assump_2_prime}
The measure $\nn$ is supported by the set of excursions of constant sign. Moreover, $\nn$ is neither supported by the set of positive excursions, nor by the set of negative excursions.
\end{assumpprime}

This assumption, combined with Assumption \ref{assump-f} on the function $f$, leads to the following remark, which is crucial in our study.

\begin{remark}\label{sup_remark}
Under Assumptions \ref{assump-f} and \ref{assump_2_prime}, $(|Z_t|)_{t\geq0}$ is not a subordinator. Moreover we have almost surely $\sup_{[0, \tau_t]}\zeta_s = \sup_{[0, t]}Z_s$, for every $t\geq0$. Indeed, for every $s\geq0$, $(X_t)_{t\geq0}$ is of constant sign on the time-interval $[\tau_{s-}, \tau_s]$ and consequently $t\mapsto \zeta_t$ is monotone on every such interval. As a consequence, the supremum is necessarily reached at the extremities, i.e.\ we have $\sup_{u\in[\tau_{s-},\tau_s]}\zeta_u = \zeta_{\tau_{s-}}\vee\zeta_{\tau_{s}}$.
\end{remark}

\subsection{Decomposing paths around the last excursion}

We now show a path decomposition at an exponential random time. Recall the definition~\eqref{def:xig} of $g_t=\sup\{s \leq t\,, X_s=0\}$, the last return to $0$ of $X$ before time $t$.
The following result, inspired by Salminem, Vallois and Yor \cite[Thm 9]{svy} allows us to decouple the path of $X$ at an exponential random time into two independent parts, before and after the last return to $0$.
An important application of this independence has already been presented in~\eqref{useindep}.
Additionally, Proposition~\ref{indep} provides useful formulas for computing functionals of the path before and after the last zero, respectively.

To make the statement more precise, let us introduce some notation: for $t> 0$ we let $\mathcal{D}_t$ denote the space of c\`adl\`ag funtions from $[0,t)$ to $\mathbb R$, and for $t\geq 0$ we let $\widetilde{\mathcal{D}}_t$ denote the space of c\`adl\`ag funtions from $[0,t]$ to $\mathbb R$.

\begin{proposition}\label{indep}
Let $e = e(q)$ be an exponential random variable of parameter~$q$, independent of~$(X_t)_{t\geq 0}$. Then the processes $(X_u)_{0\leq u < g_e}$ and $(X_{g_e + v})_{0 \leq v \leq e-g_e}$ are independent.
Moreover, for all non negative functionals $F_1 : \bigcup_{t> 0} \mathcal D_t \to \mathbb R_+$ and $F_2:  \bigcup_{t\geq 0}\widetilde{\mathcal{D}}_t \to \mathbb R_+$, we have
\begin{equation}
\label{eq:F1}
\mathbb{E}\left[F_1((X_u)_{0\leq u < g_e})\right] = \Phi(q)\int_0^{\infty}\mathbb{E}\left[F_1((X_u)_{0\leq u < \tau_t}) \e^{-q\tau_t}\right] \dr t
\end{equation}
and
\begin{equation}
\label{eq:F2}
\mathbb{E}\left[F_2((X_{g_e + v})_{0 \leq v \leq e-g_e})\right] = \frac{q}{\Phi(q)}\bigg(\mathrm{m}F_2(\tilde{0}) + \nn\Big( \int_0^{\ell} e^{-q u} F_2\big((\gep_{v})_{0\leq v\leq u} \big) \dd u \Big)  \bigg) \,,
\end{equation}
where $\tilde{0}$ denotes the null function of $\widetilde{\mathcal{D}}_0$.
\end{proposition}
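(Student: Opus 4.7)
The strategy is to compute the joint expectation
\[
J(F_1,F_2) := \mathbb E\bigl[F_1\bigl((X_u)_{0\leq u<g_e}\bigr)\,F_2\bigl((X_{g_e+v})_{0\leq v\leq e-g_e}\bigr)\bigr]
\]
directly via It\^o's excursion theory and check that it factorizes as a product of an ``$F_1$-term'' and an ``$F_2$-term''. Both the independence of the pre-$g_e$ and post-$g_e$ paths and the identities \eqref{eq:F1}--\eqref{eq:F2} will then follow as special cases, obtained by taking $F_2\equiv 1$ or $F_1\equiv 1$.

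First I would split $J=I+II$ according to where the independent exponential $e$ falls. In the ``excursion'' case, there is a unique $\sigma>0$ with $\Delta\tau_\sigma>0$ and $e\in(\tau_{\sigma-},\tau_\sigma)$, so that $g_e=\tau_{\sigma-}$ and the post-$g_e$ path is the truncated excursion $(e_\sigma(w))_{0\leq w\leq e-\tau_{\sigma-}}$. In the ``zero-interval'' case (which is vacuous when $\mathrm{m}=0$), $e$ lies in the zero set of $X$, so that $g_e=e$ and the post-$g_e$ path is the null function $\tilde 0\in\widetilde{\mathcal{D}}_0$.

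Conditioning on the excursion process and integrating over $e$ first with density $q e^{-qt}\dr t$, the excursion contribution rewrites as
\[
I = \mathbb E\!\left[\sum_{\sigma\,:\,\Delta\tau_\sigma>0} q\,e^{-q\tau_{\sigma-}}F_1\bigl((X_u)_{u<\tau_{\sigma-}}\bigr)\int_0^{\ell(e_\sigma)}\!\!\! e^{-qv}F_2\bigl((e_\sigma(w))_{w\leq v}\bigr)\dr v\right].
\]
Since $(e_\sigma)_{\sigma\geq 0}$ is a Poisson point process with intensity $\nn$ and the prefactor $qe^{-q\tau_{\sigma-}}F_1((X_u)_{u<\tau_{\sigma-}})$ is predictable, the compensation formula yields
\[
I = q\,A(F_1)\cdot\nn\!\left(\int_0^\ell e^{-qv}F_2\bigl((\gep_w)_{w\leq v}\bigr)\dr v\right),\quad A(F_1):=\mathbb E\!\left[\int_0^\infty\! e^{-q\tau_{s-}}F_1\bigl((X_u)_{u<\tau_{s-}}\bigr)\dr s\right].
\]
For the zero-interval part, using $\int_0^t \ind_{\{X_s=0\}}\dr s=\mathrm{m}L_t$, i.e.\ $\ind_{\{X_t=0\}}\dr t=\mathrm{m}\dr L_t$, together with the change of variables $t=\tau_s$ and $L\circ\tau_s=s$, I obtain $II = q\,\mathrm{m}\,F_2(\tilde 0)\cdot A(F_1)$. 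Summing the two contributions gives the clean factorized identity
\[
J(F_1,F_2) = q\,A(F_1)\cdot B(F_2),\qquad B(F_2):=\mathrm{m}F_2(\tilde 0)+\nn\!\left(\int_0^\ell e^{-qv}F_2\bigl((\gep_w)_{w\leq v}\bigr)\dr v\right).
\]

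To conclude, I would specialize. Taking $F_2\equiv 1$ and using $B(1)=\mathrm{m}+\nn((1-e^{-q\ell})/q)=\Phi(q)/q$, together with $\tau_{s-}=\tau_s$ for a.e.\ $s$, yields \eqref{eq:F1}; taking $F_1\equiv 1$ and using $A(1)=\int_0^\infty e^{-s\Phi(q)}\dr s=1/\Phi(q)$ yields \eqref{eq:F2}. The identity $J(F_1,F_2)=J(F_1,1)\cdot J(1,F_2)$ that then holds for arbitrary non-negative measurable $F_1,F_2$ is precisely the desired independence. The main (and essentially only) subtle point is the rigorous application of the compensation formula in the evaluation of $I$: the prefactor must be verified to be predictable with respect to the natural filtration of the excursion point process, which amounts to observing that both $\tau_{\sigma-}$ and the trajectory $(X_u)_{u<\tau_{\sigma-}}$ are measurable with respect to the excursions indexed by local times strictly less than $\sigma$.
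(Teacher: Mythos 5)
Your proposal is correct and follows essentially the same route as the paper's own proof: the same decomposition of the time integral according to whether $e$ falls in the zero set or inside an excursion, the same use of $\mathrm{m}\,\dr L_t=\ind_{\{X_t=0\}}\dr t$ for the first part, and the same application of the compensation formula for the excursion point process (with the predictable prefactor $q\,e^{-q\tau_{\sigma-}}F_1((X_u)_{u<\tau_{\sigma-}})$) to obtain the factorized identity, from which independence and \eqref{eq:F1}--\eqref{eq:F2} follow by specializing $F_1$ or $F_2$ to $1$.
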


Let us stress that the strict inequality in considering $(X_u)_{0\leq u<g_e}$ is crucial in the proof.

\begin{proof}
Let $F_1$ and $F_2$ be as in the statement. We have
\begin{align*}
  \mathbb{E}\big[F_1((X_u)_{0\leq u< g_e}) & F_2((X_{g_e + v})_{0 \leq v \leq e-g_e})\big] \\
      & \qquad =  \mathbb{E}\bigg[q\int_0^{\infty}\e^{-qt}F_1((X_u)_{0\leq u< g_t})F_2((X_{g_t + v})_{0 \leq v \leq t-g_t}) \dr t  \bigg] \,.
\end{align*}
We decompose the above integral in two parts, according to whether $X_t=0$ or not. 
If $g$ and $d$ denotes the left and right endpoints of an excursion, then the excursion straddling time $t$ is the only excursion such that $g\leq t \leq d$. 
Indexing the excursions by their left endpoint $g$, \textit{i.e.}\ by $\mathcal G = \{\tau_{t-}; \Delta \tau_t >0, t\in \mathbb R_+\}$,  the previous display is then equal to
\begin{multline*}
 \mathbb{E}\bigg[q\int_0^{\infty}\e^{-qt}F_1((X_u)_{0\leq u<t})F_2(\tilde{0})\bm{1}_{\{X_t = 0\}} \dr t \bigg] \\
   + \mathbb{E}\bigg[q\int_0^{\infty}\e^{-qt}\sum_{g\in \mathcal G} F_1((X_u)_{0\leq u< g})F_2((X_{g + v})_{0 \leq v \leq t-g})\ind_{\{g< t < d\}} \dr t \bigg] \,.
\end{multline*}
Regarding the first term, it is equal to
\[
 \mathrm{m}F_2(\tilde{0})q\mathbb{E}\bigg[\int_0^{\infty}\e^{-qt}F_1((X_u)_{0\leq u<t})\dr L_t \bigg] = \mathrm{m}F_2(\tilde{0})q\int_0^{\infty}\mathbb{E}\left[F_1((X_u)_{0\leq u < \tau_t}) \e^{-q\tau_t}\right] \dr t.
\]
For the second term, it is equal to
\[
 \mathbb{E}\bigg[\sum_{g\in \mathcal G} F_1((X_u)_{0\leq u< g}) \e^{-qg}\int_0^{\ell} q \e^{-qt} F_2((X_{g + v})_{0 \leq v \leq t}) \dr t\bigg] \,,
\]
where we simply used that $(\tau_t)_{t\geq 0}$ is the right-continuous inverse of $L_t$.

\noindent
Using the compensation formula for Poisson point processes, we end up with
\begin{align*}
\mathbb{E} \bigg[\int_0^{\infty}F_1((X_u)_{0\leq u< \tau_t})&\e^{-q\tau_t}\nn\Big( \int_0^{\ell} e^{-q u} F_2\big((\gep_{v})_{0\leq v\leq u} \big) \dd u \Big) \dr t \bigg] \\
&  = \int_0^{\infty}\mathbb{E}\left[F_1((X_u)_{0\leq u< \tau_t}) \e^{-q\tau_t}\right] \dr t \times\nn\Big( \int_0^{\ell} e^{-q u} F_2\big((\gep_{v})_{0\leq v\leq u} \big) \dd u \Big) \, .
\end{align*}

\noindent
To summarize, we have showed that $\mathbb{E}[F_1((X_u)_{0\leq u< g_e})F_2((X_{g_e + v})_{0 \leq v \leq e-g_e})]$ is equal to
\[
 \int_0^{\infty}\mathbb{E}\left[F_1((X_u)_{0\leq u < \tau_t}) \e^{-q\tau_t}\right] \dr t \times q\bigg(\mathrm{m} F_2(\tilde 0) + \nn\Big( \int_0^{\ell} e^{-q u} F_2\big((\gep_{v})_{0\leq v\leq u} \big) \dd u \Big) \bigg),
\]
and the result follows since we have $\Phi(q) = (\int_0^{\infty} \bbE[e^{-q\tau_t}] \dd t)^{-1}= \mathrm{m}q + \nn(1- \e^{-q\ell})$.
\end{proof}
%
%
%
%
%

\subsection{Wiener--Hopf factorization}\label{section_WH}

In this section, we derive a Wiener--Hopf factorization for the bivariate Lévy process $(\tau_t, Z_t)_{t\geq0}$. This factorization is similar to the one in Isozaki \cite{Isozaki96} although the factorization therein is somehow incomplete and our method is a bit different: we follow the approach of Greenwood and Pitman \cite{gp} which can also be found in Bertoin \cite[Chapter VI]{b}.
We will only display in this section the results needed as it is not the main purpose of the paper: the proofs are postponed to Appendix~\ref{appendix_wiener}.
In fact, our results hold for any bivariate Lévy process $(\tau_t, Z_t)_{t\geq0}$ for which $(\tau_t)_{t\geq 0}$ is a subordinator.

\smallskip
We define $S_t= \sup_{[0,t]} Z_t$ the running supremum of $Z_t$. Then the reflected process $(R_t)_{t\geq0} = (S_t - Z_t)_{t\geq0}$ is a strong Markov process (see \cite[Proposition VI.1]{b}) and possesses a local time $(L_t^{R})_{t\geq 0}$ at $0$ and we denote by $(\sigma_t)_{t\geq0}$ its right-continuous inverse, called the \textit{ladder time} process. 
Next we define $\theta_t := \tau_{\sigma_t}$ and $H_t:=S_{\sigma_t}$ the \textit{ladder heights processes}.
Then
$(\sigma_t, \theta_t, H_t)_{t\geq0} := (\sigma_t, \tau_{\sigma_t}, S_{\sigma_t})_{t\geq0}$ is a trivariate subordinator possibly killed at some exponential random time, according to wether or not $0$ is recurrent for $(R_t)_{t\geq0}$ (see Lemma~\ref{lem:trivariate} in  Appendix~\ref{appendix_wiener}). We denote by $\kappa$ its Laplace exponent: for any non-negative $\alpha,\beta,\gamma$:
\begin{equation}
\label{def:kappa}
\kappa(\alpha, \beta,\gamma) = -\log \bbE\left[e^{-\alpha \sigma_1 -\beta \theta_1 -\gamma H_1}\bm{1}_{\{1 < L_{\infty}^R\}}\right] \,.
\end{equation}
If $0$ is transient for $(R_t)_{t\geq0}$, then $\kappa(0,0,0) > 0$ and $L_{\infty}^R$ has an exponential distribution of parameter $\kappa(0,0,0)$ whereas if $0$ is recurrent, then $\kappa(0,0,0) = 0$ and $L_{\infty}^R = \infty$ a.s. In any case, using the convention $\e^{-\infty} = 0$, we have for any $\alpha,\beta,\gamma\geq0$ such that $\alpha + \beta + \gamma > 0$,
\begin{equation}\label{laplace_kappa}
 \bbE\left[\e^{-\alpha \sigma_t -\beta \theta_t -\gamma H_t}\right] = \exp\left(-\kappa(\alpha,\beta,\gamma)t\right).
\end{equation}
Let us also set $G_t = \sup\{s<t, Z_s = S_s\}$ be the last time before $t$ where $(Z_t)_{t\geq 0}$ attains its supremum, or equivalently the last return to $0$ before $t$ of $(R_t)_{t\geq0}$.

We now state a Wiener--Hopf factorization for the process $(\tau_t, Z_t)_{t\geq 0}$, which will allow us below to obtain (among other things), the joint Laplace transform of $\xi_{g_e}$ and $\xi_{g_e}-\zeta_{g_e}$, see Corollary~\ref{laplace_transform_x_zeta} below. Let us stress that we need to separate two cases, according to whether $0$ is regular or irregular for the reflected process $(R_t)_{t\geq 0}$.

\begin{theorem}[\bf{Wiener--Hopf factorization}]\label{WH}
Let $e = e(q)$ be an exponential random variable of parameter $q$, independent of $(\tau_t,Z_t)_{t\geq 0}$. 

\noindent
(A) If $0$ is irregular for the reflected process $(R_t)_{t\geq0}$, then we have the following:
\begin{enumerate}[label=(\roman*)]
    \item The triplets $(G_{e}, \tau_{G_{e}}, S_{e})$ and $(e-G_{e}, \tau_{e} - \tau_{G_{e}}, Z_{e} - S_{e})$ are independent.
    
    \item The law of $(G_{e}, \tau_{G_{e}}, S_{e})$ is infinitely divisible and its L\'evy measure is
\[
\mu_{+}(\dr t, \dr r, \dr x) = \frac{\e^{-q t}}{t}\mathbb{P}(\tau_t\in \dr r, Z_t\in \dr x)\dr t,\quad \text{ for } t > 0,\: r \geq 0,\: x \geq 0.
\]
    
    \item The law of $(e-G_{e}, \tau_{e} - \tau_{G_{e}}, Z_{e} - S_{e})$ is infinitely divisible and its Levy measure is
\[
\mu_{-}(\dr t, \dr r, \dr x) = \frac{\e^{-q t}}{t}\mathbb{P}(\tau_t\in \dr r, Z_t\in \dr x)\dr t,,\quad \text{ for }  t > 0,\: r \geq 0,\: x < 0.
\]
\end{enumerate}

\noindent
(B) If $0$ is regular for the reflected process, then the same statements hold with $\tau_{G_e}$ replaced by $\tau_{G_e-}$ and $\tau_e - \tau_{G_e}$ replaced by $\tau_e - \tau_{G_e -}$.
\end{theorem}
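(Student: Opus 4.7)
My plan is to follow the excursion-theoretic approach of Greenwood--Pitman \cite{gp}, adapted to the bivariate process. The reflected process $(R_t)_{t\geq 0}=(S_t-Z_t)_{t\geq 0}$ is strongly Markov, and its zero set coincides with the set of times at which $Z$ attains a running maximum. The inverse local time $\sigma$ of $R$ parametrizes the \textit{ladder epochs}, so that by construction $(G_e, \tau_{G_e}, S_e)$ (respectively $(G_e, \tau_{G_e-}, S_e)$ in the regular case) is obtained from the trivariate ladder subordinator $(\sigma_t, \theta_t, H_t)$ sampled at the (random) local time $L^R_e$, while $(e-G_e, \tau_e-\tau_{G_e}, Z_e-S_e)$ is read off the unique excursion of $R$ that straddles the independent exponential time~$e$.

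\textbf{Independence and first reduction.} For the statement (A.i), I would argue via the strong Markov property applied at $G_e$. Since $e=e(q)$ is independent of $(\tau,Z)$, writing $G_e=\sigma_{L^R_e}$ in the irregular case, the triplet $(G_e, \tau_{G_e}, S_e)$ is $\mathcal F_{G_e}$-measurable; the memorylessness of~$e$ combined with the Markov property of $(R,\tau,Z)$ at time $G_e$ then yields that the post-$G_e$ triplet $(e-G_e,\tau_e-\tau_{G_e},Z_e-S_e)$ is independent of $\mathcal F_{G_e}$, and is distributed as its analogue for a fresh copy of the process sampled at an independent $e(q)$.

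\textbf{Infinite divisibility and Lévy measures.} For (A.ii)--(A.iii), I would compute the joint Laplace transform of the pre-$G_e$ triplet using the Poisson point-process structure of the excursions of $R$ (indexed by the local-time scale) together with the compensation formula; this gives, up to a normalization constant,
\[
\bbE\big[\e^{-\alpha G_e - \beta \tau_{G_e} - \gamma S_e}\big] = \frac{\kappa(q,0,0)}{\kappa(q+\alpha,\beta,\gamma)} \, ,
\]
with $\kappa$ as in~\eqref{def:kappa}; since $\kappa$ is the Laplace exponent of a (possibly killed) trivariate subordinator, this ratio is already of infinitely divisible type. To identify the Lévy measure as $\mu_+$, I would apply Frullani's identity
\[
\log \frac{b}{a} = \int_0^\infty \frac{\e^{-at}-\e^{-bt}}{t}\,\dr t, \qquad a,b>0,
\]
combined with a Wiener--Hopf type relation expressing $\kappa(q+\alpha,\beta,\gamma)\,\hat\kappa(q+\alpha,\beta,\gamma)$ in terms of the Laplace--Fourier exponent of $(\tau_t,Z_t)$. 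Expanding the resulting integral and separating the contribution from $\{x\geq 0\}$ (handled by $\kappa$) from that of $\{x<0\}$ (handled by the dual $\hat\kappa$) should yield the bivariate Fristedt-type formula
\[
\kappa(q+\alpha,\beta,\gamma) = c\,\exp\bigg(\int_0^\infty\!\! \int_{\bbR_+\times\bbR_+} \frac{1-\e^{-\alpha t-\beta r-\gamma x}}{t}\,\e^{-qt}\,\bbP(\tau_t\in\dr r,\, Z_t\in\dr x)\,\dr t\bigg),
\]
in which one recognizes the exponent of the infinitely divisible law with Lévy measure $\mu_+$; part~(A.iii) is then symmetric, using the dual ladder process and restricting to $\{x<0\}$.

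\textbf{The regular case and main obstacle.} The regular case (B) proceeds identically. The only adjustment is that $G_e$ may coincide with the right endpoint of an excursion of $R$, in which case a jump of $\tau$ occurs precisely at $G_e$ and must be attributed to the post-$G_e$ part; replacing $\tau_{G_e}$ by $\tau_{G_e-}$ exactly records this convention. The main obstacle I anticipate is the rigorous derivation of the bivariate Fristedt formula above: it is a technical but standard extension of the one-dimensional version (see \cite[Ch.~VI]{b}), the extra component~$\tau_t$ adding an additional Laplace parameter without introducing a new conceptual difficulty. The rest of the computation amounts to carefully unfolding the compensation formula for the Poisson point process of excursions of $R$, together with their associated jumps of $\tau$ and $Z$, which is straightforward once the Fristedt representation is in place.
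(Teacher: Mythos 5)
Your overall architecture matches the paper's (Greenwood--Pitman via excursions of the reflected process, the ladder Laplace transform $\bbE[\e^{-\alpha G_e-\beta\tau_{G_e}-\gamma S_e}]=\kappa(q,0,0)/\kappa(q+\alpha,\beta,\gamma)$, and identification of the L\'evy measures by separating supports in the $x$-variable), but two steps need repair. First, your identification of $\mu_+$ is circular as written: the bivariate Fristedt formula for $\kappa$ is a \emph{consequence} of the theorem (this is precisely how the paper obtains Proposition~\ref{wiener_prop_corps}), and the ``Wiener--Hopf type relation expressing $\kappa\hat\kappa$ in terms of the Laplace--Fourier exponent'' that you invoke is itself derived from the factorization you are trying to prove. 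The non-circular argument runs the other way: once both triplets are known to be infinitely divisible with no Gaussian or drift part, their independent sum is $(e,\tau_e,Z_e)$, whose L\'evy measure $t^{-1}\e^{-qt}\,\bbP(\tau_t\in\dr r,\,Z_t\in\dr x)\,\dr t$ is classical; additivity of L\'evy measures of independent infinitely divisible summands, together with the fact that the two L\'evy measures are supported on $\{x\geq0\}$ and $\{x<0\}$ respectively and that $\bbP(Z_t=0)=0$, then forces $\mu_\pm$ to be the stated restrictions. You should justify the support claims rather than assert the separation.

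Second, and more seriously, this argument fails when $(Z_t)_{t\geq0}$ is a compound Poisson process, a case the theorem covers (it arises whenever $0$ is a holding point for $X$). There $\bbP(Z_t=0)>0$, so disjointness of supports no longer determines how the mass on $\{x=0\}$ is split between $\mu_+$ and $\mu_-$; moreover the supremum is not attained at a unique time, which your duality treatment of the post-$G_e$ triplet implicitly uses. The paper handles this by perturbing to $Z_t^{\gep}=Z_t+\gep t$ and passing to the limit, and you would need such an approximation step. Relatedly, the attribution of the jump of $\tau$ at time $G_e$ (i.e.\ whether $\tau_{G_e}$ or $\tau_{G_e-}$ appears, which depends on the regularity of $0$ for $R$ \emph{and} for the dual reflected process) requires a proof rather than the convention you state in case (B); the paper devotes a separate lemma to showing $\tau_{G_e}=\tau_{G_e-}$ when $0$ is regular on both sides, and this is where the extra $\tau$-component genuinely adds work over the classical one-dimensional factorization.
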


Let us give a corollary,  which is key in our analysis, that expresses the Laplace transform of $(S_e, S_e-Z_e, \tau_e)$; let us stress that a more general formula is available, see~\eqref{eq2} in Appendix~\ref{appendix_wiener}.

\begin{proposition}\label{wiener_prop_corps}
Let $e = e(q)$ be an exponential random variable of parameter $q$, independent of $(\tau_t,Z_t)_{t\geq 0}$.
There exists a constant $c>0$ such that
\begin{equation*}
\label{eq:kappa}
\kappa(\alpha, \beta, \gamma) = c \exp\bigg(\int_0^{\infty}\int_{[0,\infty)\times\mathbb{R}}\frac{\e^{-t} - \e^{-\alpha t- \beta r - \gamma x}}{t}\bm{1}_{\{x\geq0\}}\mathbb{P}(\tau_t\in \dr r, Z_t\in \dr x)\dr t\bigg).
\end{equation*}
Also, for any positive $\alpha, \beta, \gamma$, we have 
\begin{equation} \label{eq1}
\mathbb{E} \left[ \e^{-\alpha S_{e} -\beta(Z_{e} -S_{e}) - \gamma \tau_e} \right] = \frac{\kappa(q,0,0)}{\kappa(q, \gamma, \alpha)}   \frac{\bar{\kappa}(q,0,0)}{\bar{\kappa}(q, \gamma, \beta)} \,,
\end{equation}
where we have defined
 \[
  \bar{\kappa}(\alpha, \beta, \gamma) = \exp\bigg(\int_0^{\infty}\int_{[0,\infty)\times\mathbb{R}}\frac{\e^{-t} - \e^{-\alpha t- \beta r + \gamma x}}{t}\bm{1}_{\{x<0\}}\mathbb{P}(\tau_t\in \dr r, Z_t\in \dr x)\dr t\bigg).
 \]
\end{proposition}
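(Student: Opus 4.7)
The plan is to combine the two kinds of information provided by Theorem~\ref{WH}: the explicit Lévy measures $\mu_{\pm}$ from parts (ii)--(iii), which yield Laplace transforms via the Lévy--Khintchine formula, and the independence from part (i), which lets us factor the three-dimensional Laplace transform on the left of \eqref{eq1}.

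First I would prove the identity for $\kappa$. By Theorem~\ref{WH}(ii), the law of $(G_e, \tau_{G_e}, S_e)$ is infinitely divisible with Lévy measure $\mu_+$, so the Lévy--Khintchine formula gives
\[
 \bbE\bigl[\e^{-\alpha G_e -\beta \tau_{G_e} - \gamma S_e}\bigr] = \exp\biggl(-\int_0^\infty\!\! \int \frac{1-\e^{-\alpha t-\beta r-\gamma x}}{t}\e^{-qt}\ind_{\{x\geq 0\}}\bbP(\tau_t\in\dr r, Z_t\in\dr x)\dr t\biggr).
\]
On the other hand, a standard Wiener--Hopf argument in the spirit of Bertoin \cite[Ch.~VI]{b}, via the time change $t=\sigma_s$ together with the Laplace transform~\eqref{laplace_kappa} of the trivariate ladder subordinator, identifies this same quantity with $\kappa(q,0,0)/\kappa(q+\alpha,\beta,\gamma)$. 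Equating the two expressions, specializing to $q=1$ and relabeling $\alpha+1\to\alpha$, produces exactly the stated integral formula for $\log\kappa(\alpha,\beta,\gamma)$ up to the additive constant $\log c:=\log\kappa(1,0,0)$; the restriction $\alpha>1$ is then removed by analytic continuation in $\alpha$.

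Next I would derive \eqref{eq1}. By Theorem~\ref{WH}(i), together with $\tau_e=\tau_{G_e}+(\tau_e-\tau_{G_e})$, the left-hand side of \eqref{eq1} factors as
\[
\bbE\bigl[\e^{-\alpha S_e -\gamma \tau_{G_e}}\bigr]\cdot \bbE\bigl[\e^{-\beta(Z_e-S_e) - \gamma(\tau_e-\tau_{G_e})}\bigr],
\]
since $S_e,\tau_{G_e}$ are functionals of the first triple and $Z_e-S_e,\tau_e-\tau_{G_e}$ are functionals of the second. The first factor is $\kappa(q,0,0)/\kappa(q,\gamma,\alpha)$ by specializing the previous step to zero time-component. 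For the second factor, I would repeat the same Lévy--Khintchine / Frullani argument using Theorem~\ref{WH}(iii): the triple $(e-G_e,\tau_e-\tau_{G_e},Z_e-S_e)$ is infinitely divisible with Lévy measure $\mu_-$ supported on $\{x<0\}$, and the analogous computation identifies its Laplace transform as $\bar\kappa(q,0,0)/\bar\kappa(q,\gamma,\beta)$, the sign $+\gamma x$ in the definition of $\bar\kappa$ being precisely what makes the integrand integrable on $\{x<0\}$.

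The main technical obstacle is the Wiener--Hopf identity $\bbE[\e^{-\alpha G_e -\beta\tau_{G_e}-\gamma S_e}]=\kappa(q,0,0)/\kappa(q+\alpha,\beta,\gamma)$; once this is established, through excursion theory of the reflected process $(R_t)_{t\geq 0}=(S_t-Z_t)_{t\geq 0}$ and a careful time change to its local time $L^R$, all remaining steps are routine Frullani-type manipulations and direct consequences of infinite divisibility. This is why the details are naturally deferred to Appendix~\ref{appendix_wiener}.
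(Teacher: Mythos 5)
Your proposal is correct and matches the paper's own proof in Appendix~\ref{appendix_wiener} essentially verbatim: the $\kappa$-formula is obtained by equating the Lévy--Khintchine representation of $(G_e,\tau_{G_e},S_e)$ (Theorem~\ref{WH}(ii), Lévy measure $\mu_+$) with the ladder expression $\kappa(q,0,0)/\kappa(\alpha+q,\beta,\gamma)$ of Proposition~\ref{lemWH_app}, specializing to $q=1$ and extending by analyticity, and \eqref{eq1} follows from the independence of Theorem~\ref{WH}(i) together with the analogous Lévy--Khintchine computation for the dual triple with Lévy measure $\mu_-$. The paper phrases this more tersely, stating a six-parameter identity (its equation~\eqref{eq2}) and noting that \eqref{eq1} is a specialization, but the decomposition, the key inputs (Theorem~\ref{WH} and Proposition~\ref{lemWH_app}) and the analytic-continuation step are identical to yours.
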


\begin{remark}
\label{rem_dual}
We can also introduce the same objects as above for the dual process $(\hat{Z}_t)_{t\geq0} = (-Z_t)_{t\geq0}$. If we set $(\hat S_t)_{t\geq 0} = (\sup_{[0,t]} \hat Z_t)_{t\geq 0}$, then the dual reflected process $(\hat{R}_t)_{t \geq 0}= (\hat{S}_t-\hat{Z}_t)_{t\geq 0}$ also posesses a local time at $0$ denoted by $(L_t^{\hat{R}})_{t\geq 0}$, with right-continuous inverse $(\hat{\sigma}_t)_{t\geq0}$.
Finally, we set $(\hat{\sigma}_t, \hat{\theta}_t, \hat{H}_t)_{t\geq0} = (\hat{\sigma}_t, \tau_{\hat{\sigma}_t}, \hat{S}_{\hat{\sigma}_t})_{t\geq0}$, which is again a trivariate subordinator possibly killed at some exponential random time with Laplace exponent that we denote~$\hat{\kappa}$, \textit{i.e.}\ such that
$\hat \kappa(\alpha, \beta,\gamma) = -\log \bbE\left[\e^{-\alpha \sigma_1 -\beta \theta_1 -\gamma H_1}\bm{1}_{\{1 < L_{\infty}^{\hat{R}}\}}\right] $ for any non-negative $\alpha, \beta,\gamma$.
Then, we prove in Appendix~\ref{appendix_wiener} that if $(Z_t)_{t\geq0}$ is not a compound Poisson process, there exists a constant $\hat{c}>0$ such that $\hat{\kappa} = \hat{c}\,\bar{\kappa}$. 
\end{remark}
 
%

\subsection{Consequences of the Wiener--Hopf factorization}
\label{section_laplace}

From Proposition~\ref{wiener_prop_corps}, using also Proposition \ref{indep}, we can compute the Laplace transform of $(\xi_{g_e},\xi_{g_e}-\zeta_{g_e})$ in terms of $\kappa$ and $\bar{\kappa}$, where $e = e(q)$ is an exponential random variable of parameter $q >0$ independent of the rest.

\begin{corollary}\label{laplace_transform_x_zeta}
Let $e = e(q)$ be an exponential random variable of parameter $q$, independent of~$(X_t)_{t\geq 0}$. Then for every $\lambda,\mu \geq 0$, we have
\begin{align}
\mathbb{E}\left[\e^{-\lambda \xi_{g_e} - \mu(\xi_{g_e} - \zeta_{g_e})}\right]= \frac{\kappa(0,q,0)}{\kappa(0, q,\lambda)} \frac{\bar{\kappa}(0,q,0)}{\bar{\kappa}(0, q,\mu)}. \label{fluctuidentity}
\end{align}
As a consequence, $\xi_{g_e}$ and $\xi_{g_e} -\zeta_{g_e}$ are independent, the law of $\xi_{g_e}$, $\xi_{g_e} -\zeta_{g_e}$ and $\zeta_{g_e}$ are infinitely divisible.
\end{corollary}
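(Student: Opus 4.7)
The strategy is to reduce the joint Laplace transform of $(\xi_{g_e}, \xi_{g_e} - \zeta_{g_e})$ to a Wiener--Hopf identity for the bivariate L\'evy process $(\tau_t, Z_t)_{t \geq 0}$, via the path decomposition of Proposition~\ref{indep}.

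First I would apply Proposition~\ref{indep} with the functional $F_1((X_u)_{0 \leq u < t}) = \exp(-\lambda \sup_{s < t}\zeta_s - \mu(\sup_{s < t}\zeta_s - \zeta_t))$, which is well-defined since $\zeta$ is continuous. Substituting $t \to \tau_t$ in \eqref{eq:F1}, invoking Remark~\ref{sup_remark} to identify $\sup_{s \leq \tau_t}\zeta_s = S_t$ and using $\zeta_{\tau_t} = Z_t$, this reduces the claim to showing
\[
\Phi(q) \int_0^{\infty} \bbE\bigl[\e^{-\lambda S_t - \mu(S_t - Z_t) - q\tau_t}\bigr]\,\dd t = \frac{\kappa(0, q, 0)\bar{\kappa}(0, q, 0)}{\kappa(0, q, \lambda)\bar{\kappa}(0, q, \mu)}.
\]

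Since the weight $\e^{-q\tau_t}$ tilts by $\tau_t$ rather than by real time $t$, I would then perform an Esscher change of measure in order to apply the Wiener--Hopf factorisation of Proposition~\ref{wiener_prop_corps}. Define $\bbP^{(q)}$ on $\mathcal{F}_t$ by $\dd\bbP^{(q)}/\dd\bbP = \e^{-q\tau_t + t\Phi(q)}$, an honest exponential martingale since $\bbE[\e^{-q\tau_t}] = \e^{-t\Phi(q)}$. Under $\bbP^{(q)}$, the pair $(\tau, Z)$ remains a bivariate L\'evy process with $\tau$ still a subordinator, and the integral above rewrites as $\bbE^{(q)}[\e^{-\lambda S_E - \mu(S_E - Z_E)}]$, where $E$ is an independent exponential random variable of parameter $\Phi(q)$ under $\bbP^{(q)}$. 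Equation~\eqref{eq1} of Proposition~\ref{wiener_prop_corps} applied to $(\tau, Z)$ under $\bbP^{(q)}$ at time $E$ (with the $\tau$-parameter equal to $0$) gives a factorisation in terms of the tilted ladder exponents $\kappa^{(q)}, \bar{\kappa}^{(q)}$. Using the integral representation of $\kappa$ in Proposition~\ref{wiener_prop_corps} together with the tilted density $\bbP^{(q)}(\tau_t \in \dd r, Z_t \in \dd x) = \e^{-qr + \Phi(q) t}\bbP(\tau_t \in \dd r, Z_t \in \dd x)$, the factors $\e^{\pm \Phi(q) t}$ cancel and a direct check yields
\[
\frac{\kappa^{(q)}(\Phi(q), 0, \lambda)}{\kappa^{(q)}(\Phi(q), 0, 0)} = \frac{\kappa(0, q, \lambda)}{\kappa(0, q, 0)},
\]
and similarly for $\bar{\kappa}$; substitution produces~\eqref{fluctuidentity}.

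The consequences are immediate from the factorised form: the product structure of~\eqref{fluctuidentity} in $\lambda$ and $\mu$ gives the independence of $\xi_{g_e}$ and $\xi_{g_e} - \zeta_{g_e}$; each marginal Laplace transform is then a ratio $\kappa(0, q, 0)/\kappa(0, q, \cdot)$ or its $\bar{\kappa}$ analogue, whose logarithm has the form $-\int(1 - \e^{-\lambda x})\nu(\dd x)$ by the integral representation, proving infinite divisibility; and $\zeta_{g_e} = \xi_{g_e} - (\xi_{g_e} - \zeta_{g_e})$ is infinitely divisible as a difference of two independent infinitely divisible variables. The delicate point is the identification of exponents: one must justify that $(\tau, Z)$ stays L\'evy under $\bbP^{(q)}$ (L\'evy--Khintchine bookkeeping for bivariate Laplace exponents) and carefully track the cancellations in the integral representation. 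A slightly heavier but essentially equivalent alternative would be to invoke the more general Wiener--Hopf identity referenced as~\eqref{eq2} in the appendix, with its $\tau$-parameter set to $q$, thereby bypassing the Esscher step.
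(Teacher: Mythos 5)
Your proposal is correct, and it takes a genuinely different route from the paper in the second half of the argument. The first step — applying Proposition~\ref{indep}-\eqref{eq:F1} with $F_1 = \exp(-\lambda\sup_{s<t}\zeta_s - \mu(\sup_{s<t}\zeta_s - \zeta_{t-}))$ and invoking Remark~\ref{sup_remark} to reduce to $\Phi(q)\int_0^\infty\bbE[\e^{-\lambda S_t - \mu(S_t-Z_t)-q\tau_t}]\,\dd t$ — is exactly what the paper does. At that point the paper introduces an auxiliary Laplace variable $\hq$, so the integral becomes $\frac{\Phi(q)}{\hq}\bbE[\e^{-\lambda S_{\he}-\mu(S_{\he}-Z_{\he})-q\tau_{\he}}]$ with $\he$ exponential of rate $\hq$; it then applies~\eqref{eq1}, uses Frullani's identity to factor $\Phi(q)/\hq$ into two exponentials that absorb the $\kappa(\hq,0,0)$ and $\bar{\kappa}(\hq,0,0)$ terms (producing $\kappa(0,q,0)$ and $\bar{\kappa}(0,q,0)$, remarkably with no residual $\hq$), and finally sends $\hq\to0$ in the remaining denominators. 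You replace the whole $\hq$-limit device by an Esscher tilt $\dd\bbP^{(q)}/\dd\bbP|_{\mathcal{F}_t}=\e^{-q\tau_t+t\Phi(q)}$, under which the deterministic-time integral becomes an exponential-time expectation directly, and you recover the right-hand side of~\eqref{fluctuidentity} via the cancellation $\kappa^{(q)}(\Phi(q),0,\lambda)/\kappa^{(q)}(\Phi(q),0,0) = \kappa(0,q,\lambda)/\kappa(0,q,0)$ in the integral representation — a computation I checked and which is indeed a clean identity, the $\e^{\Phi(q)t}$ and $\e^{-t}$ factors dropping out of the ratio. What your route buys is the avoidance of the limit and the Frullani manipulation; what it costs is the (standard but not free) verification that the Esscher transform preserves the bivariate Lévy structure and the subordinator property of $\tau$, and that the ladder objects of the tilted process are the same path functionals so that Proposition~\ref{wiener_prop_corps} applies verbatim under $\bbP^{(q)}$. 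You flag this yourself. Your handling of the consequences is also fine, though worth noting that the paper actually verifies $\int(1\wedge x)\mu(\dd x)<\infty$ explicitly (via Lemma~\ref{tempspetit}) rather than leaving the Lévy-measure condition implicit; the finiteness of the Laplace transform makes this automatic, but the paper is more explicit. The concluding remark about bypassing the Esscher step via~\eqref{eq2} is slightly off: with $\alpha=\hat\alpha=0$, \eqref{eq2} collapses to~\eqref{eq1}, so it does not by itself eliminate the need for either the limit or the tilt.
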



\begin{proof}
Since $(\zeta_t)_{t\geq0}$ and $(\xi_t)_{t\geq0}$ are continuous, we have $\zeta_{g_e} = \zeta_{g_e-}$ and $\xi_{g_e}=\xi_{g_e-}$ and we can apply Proposition~\ref{indep}-\eqref{eq:F1}:
\begin{align*}
 \mathbb{E}\left[\e^{-\lambda \xi_{g_e} - \mu(\xi_{g_e} - \zeta_{g_e}) }\right] & = \Phi(q)\int_0^{\infty} \mathbb{E} \left [ \e^{-\lambda \xi_{\tau_t} - \mu (\xi_{\tau_t}-\zeta_{\tau_t})-q \tau_t }   \right ] \dr t \\
  & = \Phi(q)\int_0^{\infty} \mathbb{E} \left [ \e^{-\lambda S_t - \mu (S_t-Z_t)-q \tau_t }   \right ] \dr t.
\end{align*}
In the last equality, we have used Remark \ref{sup_remark} which tells us that $\xi_{\tau_t} = S_t$ a.s. We now introduce some additional Laplace variable $\hq>0$ so that the above integral becomes a quantity evaluated at an exponential random  variable $\he$ of parameter $\hq$. We write: 
\begin{equation*}
\label{eqhq}
\begin{split}
\mathbb{E}\left[\e^{-\lambda \xi_{g_e} - \mu(\xi_{g_e} - \zeta_{g_e}) }\right] &= \lim_{\hq \to 0} \Phi(q)\int_0^{\infty} \mathbb{E} \left [ \e^{-\lambda S_t - \mu (S_t-Z_t)-q \tau_t -\hq t}   \right ] \dr t  \\
&=  \lim_{\hq \to 0} \frac{\Phi(q)}{\hq} \mathbb{E} \left [ \e^{-\lambda S_{\he} - \mu (S_{\he}-Z_{\he}) -q \tau_{\he}} \right ] \,,
\end{split}
\end{equation*}
where $\he = \he(\hq)$ is an independent exponential random variable of parameter $\hq$.
Therefore, using~\eqref{eq1}, we obtain 
\begin{align}
\mathbb{E}\left[\e^{-\lambda \xi_{g_e} - \mu(\xi_{g_e} - \zeta_{g_e}) }\right]= \lim_{\hq \to 0} \frac{\Phi(q)}{\hq} \frac{\kappa(\hq, 0,0)}{\kappa(\hq,q,\lambda)} \frac{\bar{\kappa}(\hq, 0,0)}{\bar{\kappa}(\hq, q , \mu)}. \label{a}
\end{align}
Using Frullani's formula $\int_0^{\infty} \frac{\e^{- u}-\e^{- bu}}{u} \dd u = \ln (b)$, we can write
\[
\Phi(q) =  \exp\bigg( \int_0^{\infty} \frac{\e^{-t} - \e^{-\Phi(q) t}}{t} \dd t\bigg)\,, \qquad
\frac{1}{\hq}  = \exp\bigg( -\int_0^{\infty} \frac{\e^{-t}-\e^{-\hq t}}{t} \dd t \bigg) \,,
\]
so that writing $\e^{-\Phi(q) t} = \bbE[\e^{-q\tau_t}]$, we obtain
\begin{align*}
\frac{\Phi(q)}{\hq}&= \exp \bigg (\int_0^{+\infty} \int_{0}^{+\infty}\frac{\e^{-\hq t} - \e^{-qr}}{t}\mathbb{P}(\tau_t \in \dr r) \dr t\bigg ) =  \left (\frac{\Phi(q)}{\hq} \right)^{+}  \left (\frac{\Phi(q)}{\hq} \right)^{-},
\end{align*}
 where we have set
 \begin{align*}
\left (\frac{\Phi(q)}{\hq} \right)^{+} &:=\exp \bigg (\int_0^{+\infty} \int_{[0,+\infty)\times \mathbb{R}} \frac{\e^{-\hq t} - \e^{-qr}}{t} \mathbf{1}_{ \{x\geq 0 \}}\mathbb{P}(\tau_t \in \dr r, Z_t \in \dr  x ) \dr t\bigg ) ,\\ 
\left (\frac{\Phi(q)}{\hq} \right)^{-} &:=\exp \bigg (\int_0^{+\infty} \int_{[0,+\infty)\times \mathbb{R}}\frac{\e^{-\hq t} - \e^{-qr}}{t} \mathbf{1}_{\{x< 0 \}}\mathbb{P}(\tau_t \in \dr r, Z_t \in \dr  x ) \dr t\bigg ).
 \end{align*}
 Then, by Proposition \ref{wiener_prop_corps}, we observe that for all $\hq >0$,
 \begin{align*}
\left  (\frac{\Phi(q)}{\hq} \right)^{+} \kappa(\hq, 0,0)&= c \exp \bigg (\int_0^{+\infty} \int_{[0,+\infty)\times \mathbb{R}} \frac{\e^{-t} - \e^{-qr}}{t} \mathbf{1}_{ \{x\geq 0 \}}\mathbb{P}(\tau_t \in \dr r, Z_t \in \dr  x ) \dr t\bigg ) \\
&= \kappa(0,q,0)\,,
 \end{align*}
 and 
 \begin{align*}
\left  (\frac{\Phi(q)}{\hq} \right)^{-} \bar{\kappa}(\hq, 0,0)&= \exp \bigg (\int_0^{+\infty} \int_{[0,+\infty)\times \mathbb{R}} \frac{\e^{-t} - \e^{-qr}}{t} \mathbf{1}_{ \{x< 0 \}}\mathbb{P}(\tau_t \in \dr r, Z_t \in \dr  x ) \dr t\bigg ) \\
&= \bar{\kappa}(0,q,0).
 \end{align*}
Finally equation \eqref{a} gives that
\[
\mathbb{E}\left[\e^{-\lambda \xi_{g_e} - \mu(\xi_{g_e} - \zeta_{g_e}) }\right]= \lim_{\hq \to 0}\frac{\kappa(0,q,0)}{\kappa(\hq, q,\lambda)} \frac{\bar{\kappa}(0,q,0)}{\bar{\kappa}(\hq, q,\mu)} =\frac{\kappa(0,q,0)}{\kappa(0, q,\lambda)} \frac{\bar{\kappa}(0,q,0)}{\bar{\kappa}(0, q,\mu)},
\]
which is the desired result.

Finally, we show that the law of $\xi_{g_e}$ and $\xi_{g_e} - \zeta_{g_e}$ are infinitely divisible. Let us start with~$\xi_{g_e}$. By Proposition \ref{wiener_prop_corps} and the expression of $\kappa(\alpha,\beta,\gamma)$, we have for any $\lambda > 0$,
\begin{align*}
 \mathbb{E}\left[\e^{-\lambda \xi_{g_e} }\right]= & \exp\bigg(\int_0^{\infty}\int_{[0,+\infty)\times \mathbb{R}}t^{-1}\e^{-qr}(\e^{-\lambda x} - 1)\mathbf{1}_{ \{x\geq 0 \}}\mathbb{P}(\tau_t \in \dr r, Z_t \in \dr  x ) \dr t\bigg) \\
  &  = \exp\bigg(\int_{(0,\infty)}(\e^{-\lambda x} - 1) \mu(\dr x)\bigg),
\end{align*}
where we have set
\[
 \mu(\dr x) = \bm{1}_{\{x > 0\}}\int_{[0,\infty)^2}t^{-1}\e^{-qr}\mathbb{P}(\tau_t \in \dr r, Z_t \in \dr  x ) \dr t.
\]
It only remains to show that $\int_{(0,\infty)}1\wedge x \mu(\dr x) < \infty$. We have
\[
 \int_{(0,\infty)}1\wedge x \mu(\dr x) = \int_0^{\infty}t^{-1}\mathbb{E}\left[\e^{-q\tau_t}(1\wedge Z_t) \bm{1}_{\{Z_t > 0\}}\right].
\]
It can be shown, see for instance Lemma \ref{tempspetit}, that there exists a constant $C > 0$ such that for any $t\geq0$, $\mathbb{E}[1\wedge |Z_t|] \leq C \sqrt{t}$ which is enough to check that $\int_{(0,\infty)}1\wedge x \mu(\dr x) < \infty$. This shows that $\xi_{g_e}$ is infinitely divisible. The same proof carries on for $\xi_{g_e} - \zeta_{g_e}$. Since these variables are independent, it comes that $\zeta_{g_e} = \xi_{g_e} + (\zeta_{g_e} - \xi_{g_e})$ is also infinitely divisible.
\end{proof}

From this result, we can deduce a formula for $\mathbb{P}(\xi_{g_e} < z)$ with $z > 0$ by inverting Laplace transforms. For $q >0$, define the function $\mathcal{V}_q$ on $\mathbb{R}_+$ as
\begin{equation}
\label{def:Vq}
 \mathcal{V}_q(z) = \mathbb{E}\left[\int_0^\infty\e^{-q\theta_t}\bm{1}_{\{H_t \leq z\}}\dr t\right]\,,
\end{equation}
where we recall that $(\theta_t, H_t)_{t\geq 0}$ are the ladder heights, defined in Section~\ref{section_WH}. Recalling from \eqref{eq_renewal} that for any $z\geq0$, $\mathcal{V}(z) = \int_0^\infty \mathbb{P}(H_t \leq z) \dr t$, we observe that $\mathcal{V}_q(z)$ increases to $\mathcal{V}(z)$ as $q\to0$.

\begin{corollary}\label{coro_exp}
 For any $z > 0$, $\mathbb{P}(\xi_{g_e} < z) = \kappa(0,q,0)\mathcal{V}_q(z)$.
\end{corollary}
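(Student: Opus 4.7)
My plan is to identify $\mathbb{P}(\xi_{g_e} \in \cdot)$ with the measure $\kappa(0,q,0)\,d\mathcal V_q$ through Laplace transforms, leveraging Corollary~\ref{laplace_transform_x_zeta} and the definition~\eqref{def:Vq} of $\mathcal V_q$ together with~\eqref{laplace_kappa}.

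First, I would specialize Corollary~\ref{laplace_transform_x_zeta} by setting $\mu=0$, which gives, for every $\lambda\geq 0$,
\[
\mathbb{E}\big[\e^{-\lambda \xi_{g_e}}\big] = \frac{\kappa(0,q,0)}{\kappa(0,q,\lambda)}.
\]
Next, I would compute the Laplace--Stieltjes transform of $\mathcal V_q$. By Fubini's theorem applied to the defining formula~\eqref{def:Vq},
\[
\int_{[0,\infty)} \e^{-\lambda z}\, d\mathcal V_q(z) = \mathbb{E}\bigg[\int_0^\infty \e^{-q\theta_t - \lambda H_t}\, dt\bigg] = \int_0^\infty \e^{-\kappa(0,q,\lambda)\, t}\, dt = \frac{1}{\kappa(0,q,\lambda)},
\]
where the second equality uses identity~\eqref{laplace_kappa} (applied with $\alpha=0$, $\beta=q$, $\gamma=\lambda$), and the last step is valid because $\kappa(0,q,\lambda) \geq \kappa(0,q,0) > 0$ whenever $q>0$ (the trivariate subordinator is effectively killed at rate at least $q$ in the $\theta$-coordinate).

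Combining the two displays yields
\[
\mathbb{E}\big[\e^{-\lambda \xi_{g_e}}\big] = \kappa(0,q,0) \int_{[0,\infty)} \e^{-\lambda z}\, d\mathcal V_q(z),
\]
so by uniqueness of Laplace transforms on $[0,\infty)$, the law of $\xi_{g_e}$ coincides with the measure $\kappa(0,q,0)\,d\mathcal V_q$. In particular, evaluating distribution functions gives $\mathbb{P}(\xi_{g_e}<z)=\kappa(0,q,0)\,\mathcal V_q(z)$ for every $z>0$, as announced. The only mildly delicate point is the distinction between strict and large inequality: since $\mathcal V_q$ is right-continuous and the measure $d\mathcal V_q$ can only put mass at $0$ (the ladder height process $(H_t)_{t\geq 0}$ has no fixed positive jumps by the strong Markov property of $Z$), one has $\mathcal V_q(z-)=\mathcal V_q(z)$ for every $z>0$, so the conclusion holds as stated. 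This last continuity property is the main (minor) technical point to check carefully; everything else is a direct Laplace inversion.
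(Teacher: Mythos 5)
Your proof is essentially the paper's: specialize Corollary~\ref{laplace_transform_x_zeta} with $\mu=0$, compute the Laplace transform of $\mathcal V_q$ from \eqref{laplace_kappa}, and conclude by injectivity (the paper works with the ordinary Laplace transform of the non-decreasing functions $z\mapsto\mathbb{P}(\xi_{g_e}<z)$ and $z\mapsto\mathcal V_q(z)$ rather than with the Stieltjes transform of the underlying measures, but the two arguments are interchangeable). One word of caution on your final remark: the claim that $d\mathcal V_q$ has no atoms on $(0,\infty)$ is \emph{not} a consequence of ``$H$ has no fixed positive jumps.'' Absence of fixed jumps is automatic for every Lévy process, whereas the potential measure of a compound Poisson subordinator can perfectly well carry atoms away from $0$ (for example when the jump distribution of $H$ is discrete), and the framework here does not exclude $(Z_t)_{t\geq0}$ being compound Poisson (this happens when $0$ is a holding point for $X$, cf.\ Section~\ref{sec:exc_theory}). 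In that degenerate situation, the Laplace-transform argument only yields $\mathbb{P}(\xi_{g_e}\leq z)=\kappa(0,q,0)\mathcal V_q(z)$ and $\mathbb{P}(\xi_{g_e}< z)=\kappa(0,q,0)\mathcal V_q(z-)$, which may differ at countably many $z$. The paper's own proof silently passes over the same subtlety; if you wish to address it, the correct route is either to restrict to the case where $\nn$ (hence the Lévy measure of $H$) is infinite, so that the potential measure of $H$ is atomless on $(0,\infty)$, or to state the equality for all $z$ outside a countable set, which is all that the subsequent Tauberian arguments require.
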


\begin{proof}
By Corollary~\ref{laplace_transform_x_zeta}, we have for any $q>0$ and any $\lambda > 0$
\[
\lambda\int_0^{\infty}\e^{-\lambda z}\mathbb{P}(\xi_{g_e} < z) \dr z = \mathbb{E}\left[\e^{-\lambda \xi_{g_e}}\right] = \frac{\kappa(0,q,0)}{\kappa(0,q,\lambda)}.
\]

\noindent
By the definition of $\mathcal{V}_q$, we also have
\[
\lambda\int_0^{\infty}\e^{-\lambda z}\mathcal{V}_q(z) \dr z = \mathbb{E}\left[\int_0^{\infty}\e^{-q \theta_t - \lambda H_t}\right] \dr t = \int_0^\infty \e^{-t\kappa(0,q,\lambda)} \dr t =  \frac{1}{\kappa(0,q,\lambda)},
\]

\noindent
and the results holds by injectivity of the Laplace transform.
\end{proof}

We finally end this section with the following proposition, which allows us to decompose $\mathbb{P}(\xi_e < z)$, our quantity of interest.

\begin{proposition}\label{prop_indep}
 The random variables $\xi_{g_e}$, $\xi_{g_e} - \zeta_{g_e}$ and $I_e$ are mutually independent. Moreover, we have for any $z > 0$,
 \begin{equation}\label{eq_decomp_prob}
  \mathbb{P}(\xi_e < z) = \mathbb{P}(\xi_{g_e} < z)\mathbb{P}(\Delta_e \leq 0) + \mathbb{P}(\xi_{g_e} + \Delta_e < z, \Delta_e \in (0,z)),
 \end{equation}
\noindent
where $\Delta_e = I_e + \zeta_{g_e} -\xi_{g_e} = \zeta_e - \xi_{g_e}$.
\end{proposition}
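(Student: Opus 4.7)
My plan is in two steps: first establish the mutual independence of $\xi_{g_e}$, $\xi_{g_e}-\zeta_{g_e}$ and $I_e$, then derive the identity \eqref{eq_decomp_prob} from a pathwise description of $\xi_e$ obtained via Assumption~\ref{assump_exc}.

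\textbf{Step 1: mutual independence.} The pair $(\xi_{g_e},\zeta_{g_e})$ is a measurable functional of the pre-$g_e$ trajectory $(X_u)_{0\leq u<g_e}$ (using continuity of $\zeta$ to extend the supremum/integral up to $g_e$), whereas $I_e=\int_{g_e}^{e}f(X_r)\dr r$ is a measurable functional of the post-$g_e$ trajectory $(X_{g_e+v})_{0\leq v\leq e-g_e}$. Proposition~\ref{indep} then gives that $(\xi_{g_e},\xi_{g_e}-\zeta_{g_e})$ is independent of $I_e$. Combining with Corollary~\ref{laplace_transform_x_zeta}, which already yields the independence of $\xi_{g_e}$ and $\xi_{g_e}-\zeta_{g_e}$, we obtain the mutual independence of the three random variables. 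In particular, $\Delta_e = I_e-(\xi_{g_e}-\zeta_{g_e})$ is independent of $\xi_{g_e}$.

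\textbf{Step 2: pathwise representation of $\xi_e$.} This is the key deterministic input. By Assumption~\ref{assump_exc}, on the interval $(g_e,e]$ the process $(X_s)$ does not visit $0$, so it has constant sign. By Assumption~\ref{assump-f}, $f$ preserves the sign of its argument, hence $s\mapsto f(X_s)$ has constant sign on $(g_e,e)$, and consequently $s\mapsto\zeta_s$ is monotone on $[g_e,e]$. Therefore
\[
\sup_{s\in[g_e,e]}\zeta_s \;=\; \zeta_{g_e}\vee\zeta_e \;=\; \zeta_{g_e}+(I_e)^+,
\]
and combining this with $\sup_{s\in[0,g_e]}\zeta_s=\xi_{g_e}$ we get the identity
\[
\xi_e \;=\; \xi_{g_e}\vee\zeta_e \;=\; \xi_{g_e}+\Delta_e^+,
\]
where $\Delta_e=\zeta_e-\xi_{g_e}=I_e-(\xi_{g_e}-\zeta_{g_e})$ as claimed.

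\textbf{Step 3: assembling the decomposition.} Splitting according to the sign of $\Delta_e$,
\[
\mathbb{P}(\xi_e<z) = \mathbb{P}(\xi_{g_e}<z,\,\Delta_e\leq 0)+\mathbb{P}(\xi_{g_e}+\Delta_e<z,\,\Delta_e>0).
\]
Since $\xi_{g_e}\geq 0$, the constraint $\xi_{g_e}+\Delta_e<z$ forces $\Delta_e<z$, so the second term equals $\mathbb{P}(\xi_{g_e}+\Delta_e<z,\,\Delta_e\in(0,z))$. For the first term, the independence of $\xi_{g_e}$ and $\Delta_e$ proved in Step~1 allows the factorization $\mathbb{P}(\xi_{g_e}<z,\,\Delta_e\leq 0)=\mathbb{P}(\xi_{g_e}<z)\,\mathbb{P}(\Delta_e\leq 0)$, yielding~\eqref{eq_decomp_prob}.

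The only non-routine point is the pathwise identity $\xi_e=\xi_{g_e}+\Delta_e^+$, which is precisely where Assumption~\ref{assump_exc} on the excursions not changing sign is used in an essential way; everything else is a direct consequence of the independence already established in Proposition~\ref{indep} and Corollary~\ref{laplace_transform_x_zeta}.
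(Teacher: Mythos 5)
Your proof is correct and follows essentially the same route as the paper's: independence of $I_e$ from $(\xi_{g_e},\xi_{g_e}-\zeta_{g_e})$ via Proposition~\ref{indep} (using $\xi_{g_e}=\xi_{g_e-}$, $\zeta_{g_e}=\zeta_{g_e-}$), independence of $\xi_{g_e}$ and $\xi_{g_e}-\zeta_{g_e}$ via Corollary~\ref{laplace_transform_x_zeta}, and the pathwise identity $\xi_e=\xi_{g_e}+\Delta_e^+$ from the monotonicity of $\zeta$ on $[g_e,e]$. You are slightly more explicit than the paper in spelling out why the constraint $\Delta_e<z$ can be inserted in the second term (since $\xi_{g_e}\geq 0$), but this is only a matter of presentation.
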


\begin{proof}
 By Proposition \ref{indep} and since $\xi_{g_e} = \xi_{g_e-}$ and $\zeta_{g_e} = \zeta_{g_e-}$, we get that $I_e$ is independent of $(\xi_{g_e}, \xi_{g_e} - \zeta_{g_e})$. Then by Corollary \ref{laplace_transform_x_zeta}, $\xi_{g_e}$ and $\xi_{g_e} - \zeta_{g_e}$ are independent. This implies that the three random variables are mutually independent.
 
Now observe that thanks to Assumptions \ref{assump-f} and \ref{assump_exc}, $s\mapsto \zeta_s$ is monotone on the interval $[g_t, t]$ for every $t > 0$. This implies that $\sup_{s\in[g_t, t]}\zeta_s = \zeta_{g_t} \vee \zeta_t$ which in turn implies that $\xi_t = \xi_{g_t}\vee \zeta_t$. We get the identity
\begin{equation}\label{split}
 \xi_t = \xi_{g_t}\bm{1}_{\{\Delta_t \leq 0\}} + (\xi_{g_t} + \Delta_t)\bm{1}_{\{\Delta_t > 0\}}= \xi_{g_t} + \max(\Delta_t,0).
\end{equation}
\noindent
This allows to derive the desired identity.
\end{proof}

\section{The positive recurrent case: proof of Theorem~\ref{main_thm_rec_pos}} \label{sec_main_thm_rec_pos}

In this section, we focus on the case $\nn(\ell) < \infty$, which corresponds to the positive recurrent case, \textit{i.e.}\ Assumption~\ref{assump_pos}. 
In this case, we have that $\tau_t \approx t$ as $t\to\infty$ and thus we should expect $\xi_t$ to behave as $S_t$, as we shall see. We recall that the Laplace exponent $\Phi$ is expressed as $\Phi(q) = \mathrm{m}q + \nn(1-\e^{-q\ell})$. Therefore, if $\nn(\ell) < \infty$, we have $\Phi(q) \sim (\mathrm{m}+\nn(\ell))q$ as $q\to0$ which shows that $\mathbb{E}[\tau_1] = \mathrm{m} + \nn(\ell) = \mathrm{m}_1 < \infty$. Then by the strong law of large numbers for subordinators, it holds that a.s.
\begin{equation}\label{SLLN}
 \lim_{t\to\infty} \frac1t\tau_t = \mathrm{m}_1 \,.
\end{equation}
Our main goal is to prove Theorem~\ref{main_theorem} under Assumption~\ref{assump_pos}. We procede in three steps: we deal with the contribution of the unfinished excursion (this will show that $I_e$ can be neglected); we study the behavior of $\kappa(0,q,0)$ as $q\downarrow 0$; we conclude the proof by combining the above with Corollary~\ref{coro_exp} and Proposition~\eqref{prop_indep}.

\subsection{Estimate of the last excursion}
Let us first estimate $I_e$ as $q\downarrow0$. It turns out that in the positive recurrent case, it converges in law.

\begin{lemma}\label{lemma_conv_ie}
 If $\nn(\ell) < \infty$, then $I_e$ converges in law as $q\to0$ to some random variable $I_0$ which is a.s. finite and such that $\mathbb{P}(I_0 \leq 0) > 0$.
\end{lemma}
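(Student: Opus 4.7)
The plan is to extract an explicit formula for $\mathbb{E}[g(I_e)]$ for bounded continuous $g$ using Proposition~\ref{indep}, and then pass to the limit $q\to 0$ using $\nn(\ell)<\infty$.

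Applying Proposition~\ref{indep}, identity~\eqref{eq:F2}, with the functional $F_2((\gep_v)_{0\leq v\leq u}) = g(\int_0^u f(\gep_v)\dr v)$ (which depends only on the past up to time $u$), and recalling the definition of $I_e=\int_{g_e}^e f(X_r)\dr r$, I would obtain
\[
\mathbb{E}[g(I_e)] = \frac{q}{\Phi(q)}\bigg(\mathrm m\, g(0) + \nn\Big(\int_0^\ell \e^{-q u}\, g\Big(\int_0^u f(\gep_v)\dr v\Big)\dr u\Big)\bigg).
\]
Since $\Phi(q)=\mathrm m q + \nn(1-\e^{-q\ell})$ and $\nn(\ell)<\infty$, monotone convergence gives $\Phi(q)\sim \mathrm m_1 q$ as $q\to 0$, where $\mathrm m_1 := \mathrm m+\nn(\ell)$. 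For the second factor, dominated convergence in the excursion measure—justified by the bound $\int_0^\ell \e^{-qu}|g(\cdot)|\dr u\le \|g\|_\infty \ell$ and $\nn(\ell)<\infty$—yields the limit $\nn(\int_0^\ell g(\int_0^u f(\gep_v)\dr v)\dr u)$.

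Passing to the limit, I get
\[
\lim_{q\to 0}\mathbb{E}[g(I_e)] = \frac{1}{\mathrm m_1}\bigg(\mathrm m\, g(0) + \nn\Big(\int_0^\ell g\Big(\int_0^u f(\gep_v)\dr v\Big)\dr u\Big)\bigg) =: \mathbb{E}[g(I_0)].
\]
Taking $g\equiv 1$ shows the right-hand side equals $(\mathrm m+\nn(\ell))/\mathrm m_1 = 1$, so this defines a genuine probability measure on $\mathbb R$, and convergence in law $I_e \xRightarrow{} I_0$ follows from convergence of $\mathbb{E}[g(I_e)]$ for all bounded continuous $g$. In particular $I_0$ is almost surely finite.

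It remains to prove $\mathbb{P}(I_0\leq 0)>0$. If $\mathrm m>0$ (holding case), the formula immediately gives a point mass $\mathrm m/\mathrm m_1>0$ at $0$, and we are done. Otherwise $\mathrm m=0$, and I would use Assumption~\ref{assump-f} together with Assumption~\ref{assump_2_prime}: on the set of negative excursions $\mathcal E_-$, one has $f(\gep_v)\leq 0$ for all $v\in[0,\ell]$, hence $\int_0^u f(\gep_v)\dr v\leq 0$ for every $u\in[0,\ell]$. Therefore
\[
\mathbb{P}(I_0\leq 0) \geq \frac{1}{\mathrm m_1}\nn\Big(\int_0^\ell \ind_{\{\int_0^u f(\gep_v)\dr v\leq 0\}}\dr u\Big) \geq \frac{1}{\mathrm m_1}\nn(\ell\, \ind_{\mathcal E_-}) > 0,
\]
where the strict positivity comes from Assumption~\ref{assump_2_prime} ($\nn(\mathcal E_-)>0$, every excursion has $\ell>0$) combined with $\nn(\ell)<\infty$.

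The only step requiring care is the dominated convergence inside $\nn$, which is straightforward here thanks to $\nn(\ell)<\infty$; no real obstacle is expected.
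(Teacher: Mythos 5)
Your proof is correct and follows essentially the same route as the paper: the formula for $\mathbb{E}[g(I_e)]$ from Proposition~\ref{indep}, the asymptotic $\Phi(q)\sim \mathrm m_1 q$, and dominated convergence under $\nn$ justified by $\nn(\ell)<\infty$. Your treatment of $\mathbb{P}(I_0\leq 0)>0$ is slightly more explicit than the paper's (which only invokes Assumption~\ref{assump_2_prime} without spelling out that negative excursions give $\int_0^u f(\gep_v)\dr v\leq 0$), but it is the same argument.
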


\begin{proof}
 Let $F:\mathbb{R}\to\mathbb{R}_+$ be a bounded continuous function. Using Proposition \ref{indep}, we have
\[
\mathbb{E}[F(I_e)]= \frac{q}{\Phi(q)} \bigg(\mathrm{m}F(0) + \nn\bigg( \int_0^\ell \e^{-qt}F\Big(\int_0^t f(\epsilon_r)\dr r\Big) \dr t \bigg)\bigg).
\]

\noindent
Again, $\Phi(q) \sim \mathrm{m}_1q$ as $q\to0$, and we get by the dominated convergence theorem that
\[
 \mathbb{E}[F(I_e)] \longrightarrow \frac{\mathrm{m}F(0) + \nn\Big(\int_0^\ell F\Big(\int_0^t f(\epsilon_r)\dr r\Big) \dr t\Big)}{\mathrm{m}_1} \quad \text{as }q\to0,
\]
which shows the result. Since $0$ is recurrent for $(X_t)_{t\geq0}$, for $\nn$-almost every excursions $\varepsilon$, $\varepsilon$ has a finite lifetime $\ell$, which shows that $I_0$ is a.s. finite. By Assumption~\ref{assump_2_prime}, $\nn$ charges the set of negative excursions so that we necessarily have $\mathbb{P}(I_0 \leq 0) > 0$.
\end{proof}

\subsection{Asymptotic behavior of the Laplace exponent}

Let us now study the asymptotic behavior of $\kappa(0,q,0)$ as $q\downarrow0$. Since by Corollary \ref{coro_exp} we have that $\mathbb{P}(\xi_{g_e} < z) = \kappa(0,q,0)\mathcal{V}_q(z)$ with $\mathcal{V}_q(z) \uparrow \mathcal{V}(z)$ as $q\downarrow0$, the behavior of $\kappa(0,q,0)$ characterizes the behavior of $\mathbb{P}(\xi_{g_e} < z)$.

\begin{theorem}\label{equivalence_theorem}
 Assume that $\nn(\ell) < \infty$ and let $e$ be an exponential variable with parameter $q$, independent of $(X_t)_{t\geq 0}$. Then the following assertions are equivalent for any $\rho\in(0,1)$:
\begin{enumerate}[label=(\roman*)]
 \item $\lim_{t\to\infty}\frac{1}{t}\int_0^{t}\mathbb{P}(Z_s \geq 0) \dr s = \rho$;
 
 \item $\lim_{t\to\infty}\frac{1}{t}\int_0^{t}\mathbb{P}(\zeta_{s} \geq 0) \dr s = \rho$;
 
 \item The function $q\mapsto\kappa(0,q,0)$ is regularly varying as $q\downarrow0$, with index $\rho$.
\end{enumerate}
\end{theorem}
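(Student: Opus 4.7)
The natural route is to prove (i) $\Leftrightarrow$ (iii) via a Rogozin-type identity, and separately (i) $\Leftrightarrow$ (ii) via the last-excursion decomposition of $\zeta$.

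\textbf{Step 1: (i) $\Leftrightarrow$ (iii).} Specializing Proposition~\ref{wiener_prop_corps} to $(\alpha,0,0)$ yields
\[
\log \kappa(\alpha,0,0) \;=\; \log c + \int_0^\infty \frac{e^{-t}-e^{-\alpha t}}{t}\, \bbP(Z_t \geq 0)\,\dr t ,
\]
which is the classical Rogozin formula for the Laplace exponent of the ladder time subordinator of the Lévy process $(Z_t)_{t\geq0}$. Hence by the Rogozin--Spitzer theorem, condition (i) is equivalent to $\alpha \mapsto \kappa(\alpha,0,0)$ being regularly varying at $0$ with index $\rho$. It remains to transfer the regular variation to the variable $q$ in $\kappa(0,q,0)$. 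Writing similarly
\[
\log \kappa(0,q,0) - \log \kappa(\mathrm{m}_1 q, 0, 0) \;=\; \int_0^\infty \frac{1}{t}\, \bbE\bigl[(e^{-\mathrm{m}_1 q t} - e^{-q\tau_t})\ind_{\{Z_t \geq 0\}}\bigr]\,\dr t ,
\]
I would show this difference vanishes as $q \downarrow 0$, giving $\kappa(0,q,0)\sim\kappa(\mathrm{m}_1 q,0,0)$. The key inputs are $\Phi(q) \sim \mathrm{m}_1 q$ (positive recurrence) and the SLLN $\tau_t/t \to \mathrm{m}_1$ a.s. One splits the integral at $t = 1/q$: on $t \leq 1/q$ use $|e^{-a}-e^{-b}|\leq |a-b|$ together with $\bbE|\tau_t - \mathrm{m}_1 t| = o(t)$; on $t \geq 1/q$ use the two exponentials are both small and dominated convergence.

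\textbf{Step 2: (i) $\Leftrightarrow$ (ii).} I would use the decomposition $\zeta_t = Z_{L_t} + I_t$, where $L_t$ is the local time at $0$ and $I_t$ is the contribution of the unfinished excursion. Under Assumption~\ref{assump_pos}, the distribution of $t-g_t$ converges by renewal theory, so a reasoning analogous to Lemma~\ref{lemma_conv_ie} shows that $I_t$ is tight as $t\to\infty$. Since $(Z_t)_{t\geq 0}$ is neither trivial nor a subordinator (by Assumption~\ref{assump_2_prime}), its fluctuations are unbounded, so $\bbP(|Z_{L_t}|\leq M) \to 0$ for any fixed $M$. A standard argument then gives
\[
\bigl|\bbP(\zeta_t \geq 0) - \bbP(Z_{L_t} \geq 0)\bigr| \;\leq\; \bbP(|Z_{L_t}|\leq |I_t|) \;\longrightarrow\; 0 \quad \text{as } t\to\infty.
\]
Combining this with the a.s.\ convergence $L_t/t \to 1/\mathrm{m}_1$ and a change of variables, one concludes that $\frac{1}{t}\int_0^t \bbP(\zeta_s \geq 0)\,\dr s$ and $\frac{1}{t}\int_0^t \bbP(Z_u \geq 0)\,\dr u$ share the same limit (if any) as $t\to\infty$, yielding (i) $\Leftrightarrow$ (ii).

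\textbf{Main obstacle.} The most delicate point is the quantitative comparison $\kappa(0,q,0) \sim \kappa(\mathrm{m}_1 q,0,0)$ in Step 1: the naive bound by $q|\tau_t-\mathrm{m}_1 t|$ in the integrand is not summable in $t$ without a careful splitting, and the precise control relies on knowing enough about the moments (or at least the $L^1$-concentration) of $\tau_t - \mathrm{m}_1 t$, which only requires $\nn(\ell)<\infty$ and is therefore available under Assumption~\ref{assump_pos}. A secondary subtlety in Step 2 is to handle the fact that $Z_{L_t}$ and $I_t$ need not be independent at fixed (non-exponential) time, but a sandwich via independent copies obtained through Proposition~\ref{indep} at a slowly varying exponential time $e(q_t)$ with $q_t \to 0$ resolves this.
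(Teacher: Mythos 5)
Your Step 1 is correct and is essentially the paper's argument in a slightly different packaging: the paper writes $\kappa(0,q,0)=c\,\Phi(q)^{\rho}\varsigma(q)$ and shows that $\varsigma$ is slowly varying iff (i) holds, while you compare $\kappa(0,q,0)$ directly with $\kappa(\mathrm m_1 q,0,0)$; both reductions hinge on $\Phi(q)\sim\mathrm m_1 q$, the strong law $\tau_t/t\to\mathrm m_1$ (note you need $\bbE|\tau_t-\mathrm m_1 t|=o(t)$, which follows from the a.s.\ convergence plus $\bbE[\tau_t]=\mathrm m_1 t$ via Scheff\'e, or from the $L^2$ bound the paper uses), and the classical Rogozin--Spitzer equivalence for $\kappa(\cdot,0,0)$. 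One small caution: on $t\geq 1/q$ the two exponentials are \emph{not} individually negligible after dividing by $t$ and integrating (each contributes a constant); you must keep the difference $\bbE|e^{-q\tau_{t/q}}-e^{-\mathrm m_1 t}|$ together and apply dominated convergence to it, as the paper does.

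Step 2 has a genuine gap. The decomposition $\zeta_t=\zeta_{g_t}+I_t$ and the inclusion $\{\zeta_t\geq0\}\,\triangle\,\{\zeta_{g_t}\geq0\}\subseteq\{|\zeta_{g_t}|\leq|I_t|\}$ are fine, but the two inputs you feed into it are asserted, not proved, and the harder one does not follow from what you cite. First, $\bbP(|Z_{L_t}|\leq M)\to0$ is not a consequence of Sato's lemma: $L_t$ is a random time built from the zero set of $X$ and is \emph{not} independent of $(Z_s)_{s\geq0}$, so you cannot disintegrate $\bbP(|Z_{L_t}|\leq M)$ over the law of $L_t$ and apply $\bbP(|Z_s|\leq M)\to0$. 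Nor can you transfer the exponential-time statement $\bbP(|\zeta_{g_e}|\leq M)\to0$ (which the paper does prove, via the explicit formula of Proposition~\ref{indep}) to deterministic times by a Tauberian theorem, because $t\mapsto\bbP(|\zeta_{g_t}|\leq M)$ is not monotone. Second, tightness of $I_t$ at deterministic times requires a renewal-theoretic control of the excursion straddling $t$ (whose law is length-biased), which is not "analogous to Lemma~\ref{lemma_conv_ie}" — that lemma crucially uses the exponential time through Proposition~\ref{indep}. Your proposed patch ("a sandwich via independent copies at a slowly varying exponential time $e(q_t)$") is too vague to close either point. The paper avoids all of this by never leaving exponential times: since (i) and (ii) are Ces\`aro statements, the Karamata Tauberian theorem reduces each to $\bbP(Z_{e'}\geq0)\to\rho$, resp.\ $\bbP(\zeta_e\geq0)\to\rho$, and at exponential times Proposition~\ref{indep} supplies both the independence of $I_e$ from $(\xi_{g_e},\zeta_{g_e})$ and explicit integral formulas on which dominated convergence works. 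If you reroute your comparison through $\frac1t\int_0^t\bbP(|\zeta_{g_s}|\leq|I_s|)\,\dr s$, whose Laplace transform is controlled by $\bbP(|\zeta_{g_e}|\leq|I_e|)$, your strategy becomes the paper's proof.
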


\begin{proof}
 \textit{Step 1:} We show that \textit{(i)} is equivalent to \textit{(iii)}. By Proposition \ref{wiener_prop_corps}, we have for any $q > 0$,
\begin{align*}
 \log\kappa(0,q,0) & = \log c + \int_0^{\infty} \frac{\mathbb{E}\left[(\e^{-t} - \e^{-q\tau_t})\bm{1}_{\{Z_t \geq0\}}\right]}{t} \dr t \\
  & = \log c + \rho\int_0^{\infty}\frac{\e^{-t} - \e^{-\Phi(q)t}}{t}\dr t + \int_0^{\infty} \frac{\mathbb{E}\left[(\e^{-t} - \e^{-q\tau_t})(\bm{1}_{\{Z_t \geq0\}} - \rho)\right]}{t} \dr t \\
  & = \log c + \rho\log\Phi(q) + \int_0^{\infty} \frac{\mathbb{E}\left[(\e^{-t} - \e^{-q\tau_t})(\bm{1}_{\{Z_t \geq0\}} - \rho)\right]}{t} \dr t,
\end{align*}
where we used Frullani's identity in the third equality. Let us define the function $\varsigma$ for $q>0$ as
\begin{equation}
\label{def:varsigma}
 \varsigma(q) = \exp\left(\int_0^{\infty} \frac{\mathbb{E}\left[(\e^{-t} - \e^{-q\tau_t})(\bm{1}_{\{Z_t \geq0\}} - \rho)\right]}{t} \dr t\right).
\end{equation}

\noindent
Since $\Phi(q) \sim \mathrm{m}_1 q$ as $q\to 0$, it is clear that $q\mapsto\kappa(0,q,0)$ is regularly varying as $q\to0$ with index $\rho$ if and only if $\varsigma$ is slowly varying at $0$. We have for $\lambda > 0$ and $q > 0$
\begin{equation}
\label{eq:slowlyvar}
\begin{split}
 \log\varsigma(\lambda q) - \log\varsigma(q) = & \int_0^{\infty} \frac{\mathbb{E}\left[(\e^{-q\tau_t} - \e^{-\lambda q\tau_t})(\bm{1}_{\{Z_t \geq0\}} - \rho)\right]}{t} \dr t \\
  = &  \int_0^{\infty} \frac{\mathbb{E}\left[(\e^{-q\tau_{t/q}} - \e^{-\lambda q\tau_{t/q}})(\bm{1}_{\{Z_{t/q} \geq0\}} - \rho)\right]}{t} \dr t 
  = \mathbf{I} + \mathbf{II}\,, 
\end{split}
\end{equation}
where we have set
\[
\begin{split}
 \mathbf{I} & :=  \int_0^{\infty} \frac{\e^{-\mathrm{m}_1 t} - \e^{-\mathrm{m}_1\lambda t}}{t}\left(\mathbb{P}(Z_{t/q} \geq0)-\rho\right) \dr t \,,\\
 \mathbf{II} & := \int_0^{\infty} \frac1t  \mathbb{E}\left[(\e^{-q\tau_{t/q}} - \e^{-\mathrm{m}_1t} + \e^{-\mathrm{m}_1\lambda t} - \e^{-\lambda q\tau_{t/q}})(\bm{1}_{\{Z_{t/q} \geq0\}} - \rho)\right] \dr t \,.
 \end{split}
\]

We first show that the term $\mathbf{II}$ always converges to $0$ under the assumption $\nn(\ell) < \infty$. Let us set $F(t,q) = F_1(t,q) + F_2(t,q)$ where $F_1(t,q) = \mathbb{E}[(\e^{-q\tau_{t/q}} - \e^{-\mathrm{m}_1t})(\bm{1}_{\{Z_{t/q} \geq0\}} - \rho)]$ and $F_2(t,q) = \mathbb{E}[(\e^{-\mathrm{m}_1\lambda t} - \e^{-\lambda q\tau_{t/q}})(\bm{1}_{\{Z_{t/q} \geq0\}} - \rho)]$. By \eqref{SLLN}, we have that for any $t \geq0$, $q\tau_{t/q}\to \mathrm{m}_1t$ a.s.\ as $q\to0$. Therefore by the dominated convergence theorem, we have for any $t\geq0$, $F(t,q) \to 0$ as $q\to0$, and it only remains to dominate $F(t,q)$ to get that $\mathbf{II} = \int_0^{\infty} \frac 1t F(t,q) \dd t$ goes to zero as $q\downarrow 0$. We only show that we can dominate $F_1$; the same domination can be done on $F_2$. We write
\begin{align*}
 |F_1(t,q)| \leq 2\mathbb{E}\left[|\e^{-q\tau_{t/q}} - \e^{-\mathrm{m}_1t}|\right] & \leq 2\mathbb{E}\left[|\e^{-q\tau_{t/q}} - \e^{-\mathrm{m}_1t}|^2\right]^{1/2} \\
  & \leq 2 \left(\e^{-2t\Phi(2q)/ 2q} - 2\e^{-(\mathrm{m}_1 + \Phi(q)/q)t} + \e^{-2\mathrm{m}_1t} \right)^{1/2}.
\end{align*}
We set $b_+ = \sup_{q\in(0,1)}\Phi(q) / q$ and $b_- = \inf_{q\in(0,1)}\Phi(q) / q$ which are two finite and strictly positive real numbers. Then it is clear that for any $q\in(0,\frac12)$, for any $t > 0$,
\[
 \frac{1}{t} |F_1(t,q)| \leq \frac{2}{t} \big(\e^{-2tb_-} - 2\e^{-(\mathrm{m}_1 + b_+) t} + \e^{-2\mathrm{m}_1t} \big)^{1/2}.
\]
The term on the right-hand side is integrable: indeed, 
for $t\in [0,1]$ it is bounded by a constant times $t^{-1/2}$, whereas for $t>1$ it is bounded by $2 (\e^{-2tb_-}+ \e^{-2\mathrm{m}_1t})^{1/2}$.
We can conclude by the dominated convergence theorem that $\mathbf{II} = \int_0^{\infty} \frac 1t F(t,q) \dd t$ goes to zero as $q\downarrow 0$.

\smallskip
At this point, we know that \textit{(iii)} holds if and only if for any $\lambda > 0$, the term $\mathbf{I}$ goes to $0$ as $q\downarrow 0$.
We now use well-known results about Lévy processes. The usual Laplace exponent of the ladder time process of $(Z_t)_{t\geq0}$ is $q\mapsto\kappa(q,0,0)$ and it is well-known, see for instance \cite[Prop.~VI.18 and its proof]{b} that \textit{(i)} holds if and only if $q\mapsto\kappa(q,0,0)$ is regularly varying at $0$ with index~$\rho$. Recalling Proposition~\ref{wiener_prop_corps}, we have for any $q>0$,
\begin{align*}
 \log\kappa(q,0,0) = & \log c + \int_0^\infty\frac{\e^{-t} - \e^{-qt}}{t}\mathbb{P}(Z_{t} \geq0)\dr t \\
  & = \log c + \rho\log q + \int_0^\infty\frac{\e^{-t} - \e^{-qt}}{t}\left(\mathbb{P}(Z_{t} \geq0) - \rho\right)\dr t.
\end{align*}
We therefore obtain that \text{(i)} holds if and only if, for any $\lambda >0$
\[
\log\kappa(\lambda q,0,0)- \log \kappa(q,0,0) - \rho\log \lambda
= \int_0^{\infty} \frac{\e^{-q\lambda t} - \e^{-qt}}{t}\left(\mathbb{P}(Z_{t} \geq0) - \rho\right)\dr t 
 \longrightarrow 0
\]
as $q\downarrow 0$.
But this is clearly equivalent to having the term $\mathbf{I}$ going to $0$ as $q\downarrow 0$, for any $\lambda >0$.
This concludes the proof  that \text{(i)} holds if and only if \text{(iii)} holds.

\medskip\noindent
\textit{Step 2:} Next, we show that \textit{(i)} holds if and only if we have \textit{(ii')}
$\lim_{t\to\infty}\frac{1}{t}\int_0^{t}\mathbb{P}(\zeta_{g_s} \geq 0) \dr s = \rho$.
Let $e=e(q)$ be an independent exponential random variable of parameter $q>0$ and $e'=e(\mathrm{m}_1q)$ be an independent exponential random variable of parameter $\mathrm{m}_1q>0$. We have 
\[
 \mathbb{P}(\zeta_{g_e} \geq 0) = q\int_0^{\infty}\e^{-qt}\mathbb{P}(\zeta_{g_t} \geq 0)\dr t \quad \text{and} \quad \mathbb{P}(Z_{e'} \geq 0) = \mathrm{m}_1q\int_0^{\infty}\e^{-\mathrm{m_1} qt}\mathbb{P}(Z_t\geq 0)\dr t.
\]
Then by the Tauberian theorem, see \cite[Thm.~1.7.1]{bgt89}, we have that $\lim_{q\downarrow 0}\mathbb{P}(Z_{e'} \geq 0)= \rho$ if and only if \textit{(i)} holds, and $\lim_{q\downarrow 0}\mathbb{P}(\zeta_{g_e} \geq 0) =\rho$ if and only if $\lim_{t\to\infty}\frac{1}{t}\int_0^{t}\mathbb{P}(\zeta_{g_s} \geq 0) \dr s = \rho$. Therefore, to conclude, it only remains to show that $\lim_{q\to 0} |\mathbb{P}(\zeta_{g_e} \geq 0) - \mathbb{P}(Z_{e'}\geq 0)|= 0$.

Applying Proposition~\ref{indep}-\eqref{eq:F1} with the functional $F_1 = \bm{1}_{\{\zeta_{g_t} \geq0\}}$, we get
\[
 \mathbb{P}(\zeta_{g_e} \geq 0) = \Phi(q)\int_0^{\infty}\mathbb{E}\left[\e^{-q\tau_t}\bm{1}_{\{Z_t \geq0\}}\right] \dr t.
\]
Then we obtain that $|\mathbb{P}(\zeta_{g_e} \geq 0) - \mathbb{P}(Z_{e'}\geq 0)|$ is bounded by
\begin{align*}
 \bigg|\int_0^{\infty} &\bbE\left[ (\Phi(q)\e^{-q \tau_t} -\mathrm{m}_1 q\e^{-\mathrm{m}_1 qt} ) \bm{1}_{\{Z_t\geq 0\}}\right] \dd t \bigg| \\
& \leq  \left|\Phi(q) -\mathrm{m}_1q\right|\int_0^{\infty}\mathbb{E}\left[\e^{-q\tau_t}\bm{1}_{\{Z_t \geq0\}}\right] \dr t  +  \mathrm{m}_1 \int_0^{\infty}\mathbb{E}\left[ | q \e^{-q\tau_t} -q\e^{-qt}|\bm{1}_{\{Z_t \geq0\}}\right] \dr t \\
  &  \leq \left|\Phi(q) -\mathrm{m}_1q\right| \frac{1}{\Phi(q)}
   + \mathrm{m}_1\int_0^{\infty}\mathbb{E}\left[| \e^{-q\tau_{t/q}} - \e^{-\mathrm{m}_1t}|\right] \dr t.
\end{align*}

\noindent
By assumption, we have $\lim_{q\to0} \Phi(q)/q =\mathrm{m}_1$, so the first term goes to $0$.
By the law of large numbers \eqref{SLLN} and dominated convergence, we have that for any $t\geq0$, $\mathbb{E}[|\e^{-q\tau_{t/q}} - \e^{-\mathrm{m}_1t}|]$ converges to $0$ as $q\downarrow 0$.
We conclude again by dominated convergence that the second term goes to~$0$, since $\mathbb{E}[|\e^{-q\tau_{t/q}} - \e^{-\mathrm{m}_1t}|] \leq \mathbb{E}[\e^{-q\tau_{t/q}}] + \e^{-\mathrm{m}_1t} = \e^{-t \Phi(q)/q} +  \e^{-\mathrm{m}_1t} \leq \e^{- b_- t} +  \e^{-\mathrm{m}_1t}$.

\medskip\noindent
\textit{Step 3:} Finally, we show that \textit{(ii)} is equivalent to \textit{(ii')} $\lim_{t\to\infty}\frac{1}{t}\int_0^{t}\mathbb{P}(\zeta_{g_s} \geq 0) \dr s = \rho$.
Again, by the Tauberian theorem, \textit{(ii)} holds if and only if $\lim_{q\downarrow 0}\mathbb{P}(\zeta_e \geq0) = \rho$ and \textit{(ii')} holds if and only if $\lim_{q\to 0}\mathbb{P}(\zeta_{g_e} \geq0) = \rho$ (with $e=e(q)$ an independent exponential random variable of parameter $q>0$).
Therefore it is enough to show that $\lim_{q\to0} |\mathbb{P}(\zeta_{g_e} \geq0) - \mathbb{P}(\zeta_{e} \geq0)| = 0$ in the case $\nn(\ell) < \infty$.
To prove this, we write
 \[
  \mathbb{P}(\zeta_{g_e} \geq0) - \mathbb{P}(\zeta_{e} \geq0) = \mathbb{P}(\zeta_{g_e} \geq0, \zeta_e < 0) - \mathbb{P}(\zeta_{g_e} <0, \zeta_e \geq 0).
 \]

\noindent
We will only show that $\lim_{q\to 0}\mathbb{P}(\zeta_{g_e} \geq0, \zeta_e < 0) = 0$ as the proof for the other term is similar.
By Lemma~\ref{lemma_conv_ie} above, the term $I_e = \zeta_e - \zeta_{g_e}$ converges in law as $q\to0$ to some real random variable~$I_0$.
We let $A > 0$ and we decompose the probability as
\[
 \mathbb{P}(\zeta_{g_e} \geq0, \zeta_e < 0) = \mathbb{P}(\zeta_{g_e} \geq0, \zeta_{g_e} + I_e < 0, I_e < - A) + \mathbb{P}(\zeta_{g_e} \geq0, \zeta_{g_e} + I_e < 0, I_e \geq - A),
\]
which yields the following inequality:
\[
 \mathbb{P}(\zeta_{g_e} \geq0, \zeta_e < 0) \leq \mathbb{P}(I_e < -A) + \mathbb{P}(\zeta_{g_e} \in[0,A]).
\]

\noindent
By Proposition \ref{indep}, we have
\[
 \mathbb{P}(\zeta_{g_e} \in [0, A]) = \Phi(q)\int_0^{\infty}\mathbb{E}\left[\e^{-q\tau_t}\bm{1}_{\{Z_t \in[0, A]\}}\right] \dr t = \frac{\Phi(q)}{q}\int_0^{\infty}\mathbb{E}\left[\e^{-q\tau_{t/q}}\bm{1}_{\{Z_{t/q} \in[0, A]\}}\right]\dr t.
\]

\noindent
For every $A>0$, we have $\lim_{t\to\infty}\mathbb{P}(Z_t\in[0, A]) = 0$, see e.g.\ Sato \cite[Ch.~9 Lem.~48.3]{ken1999levy}.
Since $\lim_{q\to 0}\Phi(q)/q = \mathrm{m}_1$  and $\mathbb{E}[\e^{-q\tau_{t/q}}\bm{1}_{\{Z_{t/q} \in[0, A]\}}] \leq \mathbb{E}[\e^{-q\tau_{t/q}}] = \e^{-t\Phi(q) / q} \leq \e^{- b_-t}$, it follows from the dominated convergence theorem that $\lim_{q\to 0} \mathbb{P}(\zeta_{g_e} \in [0, A)) = 0$.
We deduce that
\[
 \limsup_{q\to0}\mathbb{P}(\zeta_{g_e} \geq0, \zeta_e < 0) \leq \limsup_{q\to0}\mathbb{P}(I_e < -A) \leq \mathbb{P}(I_0 \leq A).
\]

\noindent
Since $I_0$ is a.s. finite, the result follows by letting $A\to\infty$.
\end{proof}

\subsection{Conclusion of the proof of Theorem \ref{main_thm_rec_pos} under Assumption \ref{assump_pos}}
We are finally able to prove our main theorem in the positive recurrent case.
\begin{proof}
We assume that $\nn(\ell) < \infty$, and will first show that, by setting $c_0 = \liminf_{q\to0}\bbP(\Delta_e \leq 0)$, we have $c_0\in(0,1]$ and for any $z > 0$,
\begin{equation}\label{encadre_recu_pos}
 c_0\mathcal{V}(z) \leq \liminf_{q\to0}\frac{\mathbb{P}(\xi_e < z)}{\kappa(0,q,0)} \leq \limsup_{q\to0}\frac{\mathbb{P}(\xi_e < z)}{\kappa(0,q,0)} \leq \mathcal{V}(z).
\end{equation}

\noindent
Then we will see that if $q\mapsto \mathbb{P}(\xi_e < z)$ is regularly varying at $0$ with index $\rho \in(0,1)$, or if $t^{-1}\int_0^t \mathbb{P}(\zeta_s \geq0) \dr s \to \rho$ as $t\to\infty$, then necessarily $c_0 = 1$. Using \eqref{eq_decomp_prob}, we deduce the following inequalies
\[
 \mathbb{P}(\xi_{g_e} < z) \mathbb{P}(\Delta_e \leq 0) \leq \mathbb{P}(\xi_e < z) \leq \mathbb{P}(\xi_{g_e} < z).
\]
By Corollary \ref{coro_exp}, we have $\mathbb{P}(\xi_{g_e} < z) = \kappa(0,q,0) \mathcal{V}_q(z)$ for any $z > 0$, and since $\mathcal{V}_q(z) \to \mathcal{V}(z)$, \eqref{encadre_recu_pos} holds if we can show that $\mathbb{P}(\Delta_e \leq 0)$ converges to some positive constant as $q\to0$.

\smallskip
Remember that $I_e$ and $\zeta_{g_e} - \xi_{g_e}$ are independent, that $\Delta_e = I_e + \zeta_{g_e} - \xi_{g_e}$, and that by Lemma~\ref{lemma_conv_ie}, $I_e$ converges in law as $q\to0$.
We will show that, either $\zeta_{g_e} - \xi_{g_e}$ converges to $-\infty$ as $q\to0$ in probability, or $\zeta_{g_e} - \xi_{g_e}$ converges in law to some (finite) random variable as $q\to0$. By Proposition \ref{indep}, we have for any $\mu > 0$,
\[
 \mathbb{E}\left[\e^{-\mu(\xi_{g_e} - \zeta_{g_e})}\right] = \Phi(q)\int_0^{\infty}\mathbb{E}\left[\e^{-q\tau_t -\mu(S_t - Z_t)}\right]\dr t = \frac{\Phi(q)}{q}\int_0^{\infty}\mathbb{E}\left[\e^{-q\tau_{t/q} -\mu(S_{t/q} - Z_{t/q})}\right]\dr t.
\]

\noindent
Recall that $\Phi(q)/q \to \mathrm{m}_1$ as $q\to0$ and that $q\tau_{t/q} \to \mathrm{m}_1 t$ as $q\to0$. Duality entails that for every $t\geq0$, $S_t - Z_t$ is equal in law to $\hat{S}_t$ where $\hat{S}_t = \sup_{s\in[0,t]}\hat{Z}_s$ and $\hat{Z}_t = - Z_t$, see \cite[Ch.~VI Prop.~3]{b}. Since $(\hat{S}_t)_{t\geq0}$ is increasing, $\hat{S}_t \to \hat{S}_{\infty}$ as $t\to\infty$. By the $0$-$1$ law, $\mathbb{P}(\liminf_{t\to\infty} Z_t = \infty)$ is equal to $0$ or $1$, two cases are to be considered:
\begin{itemize}
 \item If $\liminf_{t\to\infty} Z_t = \infty$ a.s., then $\hat{S}_{\infty} = \infty$ a.s. and thus, $S_t - Z_t$ converges in probability to $\infty$ as $t\to\infty$. Since $\mathbb{E}[\e^{-q\tau_{t/q} -\mu(S_{t/q} - Z_{t/q})}] \leq \mathbb{E}[\e^{-q\tau_{t/q}}] = \e^{-t\Phi(q) / q}$, we can apply the dominated convergence theorem which shows that $\xi_{g_e} - \zeta_{g_e}$ converges in probability to $\infty$ as $q\to0$. It comes that $\mathbb{P}(\Delta_e \leq 0) \to 1$ as $q\to0$.
 
 \item If $\liminf_{t\to\infty} Z_t < \infty$ a.s., then $\hat{S}_{\infty} <\infty$ a.s. and thus $S_t - Z_t$ converges in law to $\hat{S}_{\infty}$ as $t\to\infty$. By Slutsky's lemma, for any $t\geq0$, $(q\tau_{t/q}, S_{t/q} - Z_{t/q})$ converges in law to $(\mathrm{m}_1 t, \hat{S}_{\infty})$ and we can again apply the dominated convergence theorem, which shows that $\xi_{g_e} - \zeta_{g_e}$ converges in law to some non-negative finite random variable that we name $\xi_0 - \zeta_0$. Then we have $\mathbb{P}(\Delta_e \leq 0) \to \mathbb{P}(\zeta_0 - \xi_0 + I_0 \leq 0)$ where $\zeta_0 - \xi_0$ and $I_0$ are indepedent. By Lemma \ref{lemma_conv_ie}, $\mathbb{P}(I_0 \leq 0) > 0$ which shows that $\mathbb{P}(\zeta_0 - \xi_0 + I_0 \leq 0) > 0$.
\end{itemize}

\noindent
We showed that in any case \eqref{encadre_recu_pos} holds. Let us now assume first that $t^{-1}\int_0^t \mathbb{P}(\zeta_s \geq0) \dr s \to \rho$ as $t\to\infty$ for some $\rho\in(0,1)$. Then by Theorem \ref{equivalence_theorem}, it also holds that $t^{-1}\int_0^t \mathbb{P}(Z_s \geq0) \dr s \to \rho$ as $t\to\infty$, which implies that $\liminf_{t\to\infty} Z_t = \infty$ a.s., see for instance \cite[Theorem VI.12]{b}, and therefore the constant $c_0$ in \eqref{encadre_recu_pos} is equal to $1$. We then have $\mathbb{P}(\xi_e < z)\sim \kappa(0,q,0)\mathcal{V}(z)$ which shows by Theorem \ref{equivalence_theorem} that $q\mapsto\mathbb{P}(\xi_e < z)$ is regularly varying with index $\rho$.

Let us now assume that $q\mapsto\mathbb{P}(\xi_e < z)$ is regularly varying at $0$ with index $\rho\in(0,1)$. Then by \eqref{encadre_recu_pos}, for any $\delta > 0$, $q^{\rho + \delta} / \kappa(0,q,0) \to 0$ as $q\to0$. Now remember from Proposition \ref{wiener_prop_corps} that
\[
 \kappa(0,q,0) = c\exp\bigg(\int_0^{\infty}\frac{\mathbb{E}[(\e^{-t} - \e^{-q\tau_t})\bm{1}_{\{Z_t \geq0\}}]}{t}\dr t\bigg)
\]

\noindent
and 
\[
 \bar{\kappa}(0,q,0) = \exp\bigg(\int_0^{\infty}\frac{\mathbb{E}[(\e^{-t} - \e^{-q\tau_t})\bm{1}_{\{Z_t < 0\}}]}{t}\dr t\bigg).
\]

\noindent
We see by Frullani's identity that $\kappa(0,q,0)\bar{\kappa}(0,q,0) = c \Phi(q)$ and since $\Phi(q) \sim \mathrm{m}_1 q$ as $q\to0$ it comes that $\bar{\kappa}(0,q,0) \to 0$ as $q\to0$. Recall that by Corollary \ref{laplace_transform_x_zeta}, for any $\mu > 0$, we have $\mathbb{E}[\e^{-\mu(\xi_{g_e} - \zeta_{g_e})}] = \bar{\kappa}(0,q,0) / \bar{\kappa}(0,q,\mu)$, and since $\bar{\kappa}(0,q, \mu) \to \bar{\kappa}(0, 0, \mu) > 0$ as $q\to0$, we see that $\mathbb{E}[\e^{-\mu(\xi_{g_e} - \zeta_{g_e})}] \to 0$ as $q\to0$ which shows that, in this case $\xi_{g_e} - \zeta_{g_e}$ converges in probability to $\infty$. From the previous analysis, we see that we are necessarily in the case $\liminf_{t\to\infty} Z_t = \infty$ a.s. and therefore the constant $c_0$ from \eqref{encadre_recu_pos} is equal to $1$ so that $\mathbb{P}(\xi_e < z) \sim \kappa(0,q,0)\mathcal{V}(z)$ as $q\to0$. This shows that $q\mapsto\kappa(0,q,0)$ is also regularly varying at $0$ with index $\rho$ and thus, by Theorem \ref{equivalence_theorem}, $t^{-1}\int_0^t \mathbb{P}(\zeta_s \geq0) \dr s \to \rho$ as $t\to\infty$.
\end{proof}

\section{The null recurrent case: proof of Theorem~\ref{main_theorem}}
\label{sec:proof_null}

In this section, we assume that $0$ is null recurrent, i.e. that $\nn(\ell) = \infty$ and suppose that Assumption~\ref{assump_null} holds:
in a nutshell,
\[
 \left(b(h)\tau_{t/h}, a(h)Z_{t/h}\right)_{t\geq 0} \longrightarrow \left(\tau_{t}^0, Z_{t}^0\right)_{t\geq 0} \quad \text{ as } h\to0,
\]
with $(\tau_{t}^0, Z_{t}^0)_{t\geq 0}$ a bivariate Lévy process, $(\tau_t^0)_{t\geq0}$ being a $\beta$-stable subordinator and $(Z_t^0)_{t\geq0}$ an $\alpha$-stable process.
Assumption~\ref{assump_null} also entails that  $a(b^{-1}(h))\, I_{1/h}$ converges in law as $q\to0$ to some (possibly degenerate) random variable $I$.

\begin{remark}
\label{rem:bPhi}
Let us stress that since $(b(h)\tau_{t/h})_{t\geq0}$ converges in law to $(\tau_t^0)_{t\geq0}$ as $h\to0$, the Laplace exponent $\Phi(q)$ of $(\tau_t)_{t\geq0}$ is regularly varying at $0$ with index $\beta$. More precisely, $\Phi(\cdot)$ is an asymptotic inverse (up to a constant) of $b(\cdot)$ near $0$. Indeed, $(b(q)\tau_{t/q})$ converges in law to a $\beta$-stable subordinator, and therefore
\[
 -\frac{1}{t}\log\mathbb{E}\left[\e^{-b(q)\tau_{t/q}}\right] = \frac{\Phi(b(q))}{q} \xrightarrow[]{q\to 0} -\frac{1}{t}\log\mathbb{E}\left[\e^{-\tau_{t}^0}\right].
\]
This also implies, see \cite[Thm.~1.5.12]{bgt89}, that $q/b(\Phi(q))$ converges to some some positive constant $\bar{b}$ as $q\to0$.
For simplicity and without loss of generality, we assume in the following that $\bar{b}=1$
Since $(b(\Phi(q))\tau_{t/\Phi(q)}, a(\Phi(q))Z_{t/\Phi(q)})_{t\geq0}$ converges in law to $(\tau_t^0, Z_t^0)_{t\geq0}$ as $q\to0$, it comes that
\begin{equation}\label{conv_truqué}
 (q\tau_{t/\Phi(q)}, a(\Phi(q))Z_{t/\Phi(q)})_{t\geq0} \longrightarrow (\tau_t^0, Z_t^0)_{t\geq0} \quad \text{as }q\to0.
\end{equation}
\end{remark}

Below, we use some convergence results for Lévy processes, whose proofs are collected in Appendix~\ref{appendix_levy}.

\subsection{Laplace exponent and convergence of scaled $\xi_{g_e}-\zeta_{g_e}$}
We start with the following result.
\begin{proposition}
 Suppose that Assumption \ref{assump_null} holds. Then the function $q\mapsto \kappa(0,q,0)$ is regularly varying as $q\to0$ with index $\beta \rho$ with  $\rho:=\mathbb{P}(Z_t^0 \geq0)$.
\end{proposition}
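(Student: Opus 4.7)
The plan is to use the integral representation of Proposition~\ref{wiener_prop_corps} and to isolate a pure power of $\Phi(q)$ via Frullani's identity. Starting from
\[
\log\kappa(0,q,0) = \log c + \int_0^{\infty} \frac{1}{t}\bbE\bigl[(\e^{-t} - \e^{-q\tau_t})\bm{1}_{\{Z_t \geq 0\}}\bigr] \dr t,
\]
and using Frullani together with $\bbE[\e^{-q\tau_t}] = \e^{-t\Phi(q)}$, I would add and subtract $\rho$ inside the indicator to obtain the decomposition
\[
\kappa(0,q,0) = c\, \Phi(q)^{\rho}\, \varsigma(q), \qquad \log\varsigma(q) := \int_0^{\infty} \frac{1}{t}\bbE\bigl[(\e^{-t} - \e^{-q\tau_t})(\bm{1}_{\{Z_t \geq 0\}} - \rho)\bigr] \dr t.
\]
By Remark~\ref{rem:bPhi}, $\Phi$ is regularly varying at $0$ with index $\beta$, so $\Phi(q)^\rho$ is regularly varying with index $\beta\rho$, and it remains to prove that $\varsigma$ is slowly varying at $0$.

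To show slow variation of $\varsigma$, I would fix $\lambda>0$ and change variables $t=u/\Phi(q)$ (which preserves $\dr t/t = \dr u/u$), yielding
\[
\log\varsigma(\lambda q) - \log\varsigma(q) = \int_0^{\infty} \frac{1}{u}\, f_q(u)\, \dr u,
\]
where $f_q(u) := \bbE[(\e^{-q\tau_{u/\Phi(q)}} - \e^{-\lambda q\tau_{u/\Phi(q)}})(\bm{1}_{\{Z_{u/\Phi(q)} \geq 0\}} - \rho)]$. By~\eqref{conv_truqué} and the fact that $\bbP(Z^0_u = 0) = 0$ and $\bbP(Z^0_u \geq 0) = \rho$ for every $u>0$ (by $\alpha$-stability of $Z^0$), the integrand converges pointwise to $u^{-1}\bbE[(\e^{-\tau^0_u} - \e^{-\lambda\tau^0_u})(\bm{1}_{\{Z^0_u \geq 0\}} - \rho)]$. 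The critical step is to justify dominated convergence by finding a uniform integrable bound on $f_q(u)/u$. Near $u=0$, I would use the elementary inequality $|\e^{-x} - \e^{-\lambda x}| \leq C_\lambda(1 - \e^{-x})$ for all $x \geq 0$, combined with the exact identity $\bbE[1 - \e^{-q\tau_{u/\Phi(q)}}] = 1 - \e^{-u}$, to obtain $|f_q(u)|/u \leq C_\lambda(1 - \e^{-u})/u \leq C_\lambda$, which is integrable on $(0,1]$. Near $u=\infty$, I would use $|f_q(u)| \leq \e^{-u} + \e^{-u\Phi(\lambda q)/\Phi(q)}$; since $\Phi(\lambda q)/\Phi(q) \to \lambda^\beta$, this ratio stays above $\tfrac{1}{2}\lambda^\beta$ for $q$ small enough, giving $|f_q(u)|/u \leq u^{-1}(\e^{-u} + \e^{-u\lambda^\beta/2})$, integrable on $(1,\infty)$.

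Finally, to show that the resulting limit integral vanishes, I would exploit the scaling $(\tau^0_u, Z^0_u) \overset{d}{=} (u^{1/\beta}\tau^0_1, u^{1/\alpha}Z^0_1)$ and the substitution $v=u^{1/\beta}$ to rewrite the limit as
\[
\beta \int_0^\infty \frac{1}{v}\bbE\bigl[(\e^{-v\tau^0_1} - \e^{-\lambda v\tau^0_1})(\bm{1}_{\{Z^0_1 \geq 0\}} - \rho)\bigr] \dr v.
\]
Fubini's theorem applies because $\int_0^\infty v^{-1}|\e^{-vx} - \e^{-\lambda vx}| \dr v = |\log\lambda|$ for every $x>0$, so the double integral is bounded by $|\log\lambda|\cdot \bbE[|\bm{1}_{\{Z^0_1 \geq 0\}} - \rho|]<\infty$. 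Applying Frullani inside the expectation then gives $\beta\log\lambda \cdot \bbE[\bm{1}_{\{Z^0_1 \geq 0\}} - \rho] = 0$, which proves slow variation of $\varsigma$ and completes the proof. The main obstacle is the dominated-convergence step: the trick of comparing $|\e^{-x}-\e^{-\lambda x}|$ with $1-\e^{-x}$, together with the exact equality $\bbE[1-\e^{-q\tau_{u/\Phi(q)}}]=1-\e^{-u}$ coming from the very definition of $\Phi$, is what delivers a control independent of $q$ near $u=0$.
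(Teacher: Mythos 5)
Your proof is correct, and it handles the key step --- the slow variation of the remainder --- by a genuinely different route than the paper. Both arguments start from the representation of Proposition~\ref{wiener_prop_corps} and use Frullani's identity to extract the factor $\Phi(q)^{\rho}$, so that regular variation of index $\beta\rho$ follows from Remark~\ref{rem:bPhi}. The paper then splits the remainder into two pieces: a term $\int_0^\infty t^{-1}\bbE[(\e^{-t}-\e^{-q\tau_{t/\Phi(q)}})\ind_{\{Z_{t/\Phi(q)}\geq0\}}]\,\dd t$, whose convergence is delegated to the general result on converging Laplace exponents (Proposition~\ref{conv_laplace_exponent}, which itself rests on the domination Lemma~\ref{tempspetit}), and the Spitzer-type function $\exp\big(\int_0^\infty t^{-1}(\e^{-t}-\e^{-\Phi(q)t})(\bbP(Z_t\geq0)-\rho)\,\dd t\big)$, whose slow variation is inherited from the argument of Theorem~\ref{equivalence_theorem}. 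You instead keep the whole correction as a single function $\varsigma$ and prove its slow variation directly; this is in effect the null-recurrent analogue of Step~1 of the proof of Theorem~\ref{equivalence_theorem}, with the substitution $t=u/\Phi(q)$ replacing $t=u/q$. Your two key devices are sound: the exact identity $\bbE[1-\e^{-q\tau_{u/\Phi(q)}}]=1-\e^{-u}$ combined with $|\e^{-x}-\e^{-\lambda x}|\leq C_\lambda(1-\e^{-x})$ (Bernoulli's inequality) gives a $q$-uniform integrable bound near $u=0$, the regular variation of $\Phi$ gives one near $u=\infty$, and the limit integral is computed explicitly to be $\beta\log\lambda\,(\bbP(Z_1^0\geq0)-\rho)=0$ via the scaling of $(\tau^0,Z^0)$ and Frullani. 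What your route buys is a self-contained argument that avoids both the appendix proposition and the classical Spitzer-condition equivalence; what it uses instead is the exact stability of the limit process, which is available here but not in the positive recurrent case (where the paper's more indirect treatment is genuinely needed). The one point you should state explicitly is that the pointwise convergence of $f_q(u)$ under \eqref{conv_truqué} requires $\bbP(Z_u^0=0)=0$, since the discontinuity set of $(x,y)\mapsto(\e^{-x}-\e^{-\lambda x})(\ind_{\{y\geq0\}}-\rho)$ is $\{y=0\}$; this holds because $(Z_t^0)_{t\geq0}$ is a non-degenerate stable process, and it is the same fact the paper invokes in Proposition~\ref{conv_laplace_exponent}.
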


\begin{proof}\label{prop_rec_null_kappa}
Recalling Proposition~\ref{wiener_prop_corps}, we start by writing that
\begin{align*}
 &\log\kappa(0,q,0) =  \log c + \int_0^\infty\frac{\mathbb{E}\left[(\e^{-t} - \e^{-q\tau_t})\bm{1}_{\{Z_t \geq0\}}\right]}{t} \dr t \\
&\  =  \log c + \int_0^{\infty}\frac{\e^{-t/\Phi(q)} - \e^{-t}}{t}\mathbb{P}(Z_{t/\Phi(q)} \geq0)\dr t  + \int_0^\infty\frac{\mathbb{E}\big[(\e^{-t} - \e^{-q\tau_{t/\Phi(q)}})\bm{1}_{\{a(\Phi(q))Z_{t/\Phi(q)} \geq0\}}\big]}{t} \dr t.
\end{align*}
Using the convergence \eqref{conv_truqué} and Proposition \ref{conv_laplace_exponent} in Appendix~\ref{appendix_levy}, the last term converges as $q\to0$. Then, with $\rho = \mathbb{P}(Z_t^0 \geq0)$, the second term becomes, by Frullani's identity
\[
 \rho\log \Phi(q) + \int_0^{\infty}\frac{\e^{-t} - \e^{-\Phi(q)t}}{t}(\mathbb{P}(Z_{t} \geq0) -\rho)\dr t.
\]

\noindent
As we argued in the proof of Theorem \ref{equivalence_theorem}, the function
\[
 q\mapsto\exp\left(\int_0^{\infty}\frac{\e^{-t} - \e^{-\Phi(q)t}}{t}(\mathbb{P}(Z_{t} \geq0) -\rho)\dr t\right)
\]
is slowly varying as $q\to0$. Since $\Phi$ is regularly varying with index $\beta$, it comes that $q\mapsto\kappa(0,q,0)$ is regularly varying with index $\beta\rho$, which establishes the results.
\end{proof}

\begin{proposition}
\label{prop_conv_xizeta}
Suppose that Assumption~\ref{assump_null} holds and let $e$ be an exponential variable with parameter $q > 0$, independent of $(X_t)_{t\geq0}$. Then $a(\Phi(q))(\xi_{g_e} - \zeta_{g_e})$ converges in law as $q\to0$: more precisely, we have that for any $\lambda>0$
\[
\lim_{q\to 0} \mathbb{E}\left[\e^{-\lambda a(\Phi(q))(\xi_{g_e} - \zeta_{g_e})}\right] = \frac{\bar{\kappa}^0(0,1,0)}{\bar{\kappa}^0(0,1,\lambda)} \,,
\]
where $\bar{\kappa}^0$ is defined, analogously to $\bar \kappa$, as
\[
 \bar{\kappa}^0(\alpha, \beta, \gamma) = \exp\bigg(\int_0^{\infty}\int_{[0,\infty)\times\mathbb{R}}\frac{\e^{-t} - \e^{-\alpha t- \beta r + \gamma x}}{t}\bm{1}_{\{x<0\}}\mathbb{P}(\tau_t^0\in \dr r, Z_t^0\in \dr x)\dr t\bigg).
\]
\end{proposition}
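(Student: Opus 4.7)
The plan is to start from the identity established in Corollary~\ref{laplace_transform_x_zeta} (by taking $\lambda=0$ there and relabelling $\mu$ as $\lambda a(\Phi(q))$), which yields
\[
\mathbb{E}\left[\e^{-\lambda a(\Phi(q))(\xi_{g_e} - \zeta_{g_e})}\right] = \frac{\bar{\kappa}(0,q,0)}{\bar{\kappa}(0,q,\lambda a(\Phi(q)))}\,,
\]
so that it suffices to analyse this ratio as $q\downarrow 0$. Using the integral representation of $\bar\kappa$, a direct computation gives
\[
\log \frac{\bar{\kappa}(0,q,0)}{\bar{\kappa}(0,q,\mu)} = -\int_0^{\infty} \frac{1}{t}\,\mathbb{E}\!\left[\e^{-q\tau_t}\bigl(1 - \e^{\mu Z_t}\bigr)\bm{1}_{\{Z_t<0\}}\right]\dr t\,,
\]
which is non-positive, reflecting that the left-hand side is the Laplace transform of a non-negative random variable.

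The next step is to insert $\mu = \lambda a(\Phi(q))$ and perform the change of variables $t = s/\Phi(q)$, yielding
\[
\log \mathbb{E}\left[\e^{-\lambda a(\Phi(q))(\xi_{g_e} - \zeta_{g_e})}\right] = -\int_0^{\infty} \frac{1}{s}\,\mathbb{E}\!\left[\e^{-q\tau_{s/\Phi(q)}}\bigl(1 - \e^{\lambda a(\Phi(q))Z_{s/\Phi(q)}}\bigr)\bm{1}_{\{Z_{s/\Phi(q)}<0\}}\right]\dr s\,.
\]
By the joint convergence \eqref{conv_truqué}, for every fixed $s>0$ the pair $(q\tau_{s/\Phi(q)}, a(\Phi(q))Z_{s/\Phi(q)})$ converges in law to $(\tau_s^0, Z_s^0)$. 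Since $(r,x)\mapsto \e^{-r}(1-\e^{\lambda x})\bm{1}_{\{x<0\}}$ is continuous and bounded by $1$, the integrand converges pointwise (in $s$) to $\tfrac1s\,\mathbb{E}[\e^{-\tau_s^0}(1-\e^{\lambda Z_s^0})\bm{1}_{\{Z_s^0<0\}}]$, whose integral over $(0,\infty)$ is exactly $-\log(\bar\kappa^0(0,1,\lambda)/\bar\kappa^0(0,1,0))$.

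The main obstacle is to justify the interchange of limit and integral via a $q$-uniform integrable majorant. For large $s$, the identity $\mathbb{E}[\e^{-q\tau_{s/\Phi(q)}}] = \e^{-s}$ (valid because $q/b(\Phi(q))\to 1$, see Remark~\ref{rem:bPhi}) shows that the integrand is dominated by $\e^{-s}/s$, integrable at infinity. For small $s$, using the elementary bound $1 - \e^{\mu x} \leq 1\wedge(\mu|x|)$ for $\mu > 0$ and $x < 0$, one needs a control of the form $\mathbb{E}[1\wedge a(\Phi(q))|Z_{s/\Phi(q)}|] \lesssim s^\delta$ for some $\delta > 0$, uniformly in small $q$. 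This is a convergence-of-Lévy-characteristics statement for the rescaled bivariate process, which is precisely the content of Proposition~\ref{conv_laplace_exponent} of Appendix~\ref{appendix_levy}; that proposition was already invoked in the proof of Proposition~\ref{prop_rec_null_kappa} to handle an analogous integral, and the same argument applies verbatim here.

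Finally, once the pointwise convergence of Laplace transforms on $(0,\infty)$ is established, convergence in law of $a(\Phi(q))(\xi_{g_e}-\zeta_{g_e})$ to the non-negative random variable with Laplace transform $\lambda \mapsto \bar\kappa^0(0,1,0)/\bar\kappa^0(0,1,\lambda)$ follows from the continuity theorem for Laplace transforms, provided this limit tends to $1$ as $\lambda\downarrow 0$; this is immediate from the integral representation of $\bar\kappa^0$ together with dominated convergence.
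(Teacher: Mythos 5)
Your proposal is correct and follows essentially the same route as the paper: both start from the identity of Corollary~\ref{laplace_transform_x_zeta}, rewrite the log-ratio of $\bar\kappa$'s as an integral, change variables $t=s/\Phi(q)$, and pass to the limit using the joint convergence \eqref{conv_truqué} together with the domination arguments of Appendix~\ref{appendix_levy} (the paper simply cites Proposition~\ref{conv_laplace_exponent} twice rather than redoing the majorant inline as you do). One minor remark: the identity $\mathbb{E}[\e^{-q\tau_{s/\Phi(q)}}]=\e^{-s}$ is exact by the definition \eqref{def:Phi} of the Laplace exponent and does not require the asymptotic $q/b(\Phi(q))\to 1$ from Remark~\ref{rem:bPhi}.
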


\begin{proof}
By Corollary \ref{laplace_transform_x_zeta}, we have that the Laplace transform of $a(\Phi(q))(\xi_{g_e} - \zeta_{g_e})$ is
\[
  \mathbb{E}\left[\e^{-\lambda a(\Phi(q))(\xi_{g_e} - \zeta_{g_e})}\right] = \frac{\bar{\kappa}(0, q, 0)}{\bar{\kappa}(0, q, \lambda a(\Phi(q)) )}.
\]

\noindent
Now, by the definition of $\bar{\kappa}$ (in Proposition~\ref{wiener_prop_corps}), we  have
\[
 \log\bar{\kappa}(0,q,0) - \log \bar{\kappa}(0,q,\lambda a(\Phi(q))) =  \int_0^{\infty}\frac{\mathbb{E}\left[(\e^{-q\tau_t + \lambda a(\Phi(q)) Z_t} - \e^{-q\tau_t})\bm{1}_{\{Z_t < 0\}}\right]}{t} \dr t.
\]
Making the change of variables $t = u / \Phi(q)$ and splitting the integral in two parts, we get that the above quantity is equal to
\begin{multline*}
 \int_0^{\infty}t^{-1}\mathbb{E}\Big[(\e^{-t} - \e^{-q\tau_{t/\Phi(q)}})\bm{1}_{\{Z_{t/\Phi(q)} < 0\}}\Big] \dr t \\
   - \int_0^{\infty}t^{-1}\mathbb{E}\left[(\e^{-t} - \e^{-q\tau_{t/\Phi(q)} + \lambda a(\Phi(q)) Z_{t/\Phi(q)}})\bm{1}_{\{Z_{t/\Phi(q)} < 0\}}\right] \dr t.
\end{multline*}
By \eqref{conv_truqué} and Proposition \ref{conv_laplace_exponent}, $\log\bar{\kappa}(0,q,0) - \log \bar{\kappa}(0,q,\lambda a(\Phi(q)))$ is a difference of two converging terms. More precisely, Proposition \ref{conv_laplace_exponent} entails that it converges to $\log\bar{\kappa}^0(0,1,0)- \log\bar{\kappa}^0(0,1,\lambda)$, which completes the proof.
\end{proof}

\subsection{Conclusion of the proof of Theorem~\ref{main_theorem} under Assumption~\ref{assump_null}}

Finally, we are able to prove our main theorem in the null recurrent case.

\begin{proof}
 We start by recalling that, from \eqref{eq_decomp_prob}, for any $z >0$ we have
 \[
  \mathbb{P}(\xi_e < z) = \mathbb{P}(\xi_{g_e} < z)\mathbb{P}(\Delta_e \leq 0) + \mathbb{P}(\xi_{g_e} + \Delta_e < z, \Delta_e \in (0,z))\,.
 \]
We also remind that by Proposition \ref{prop_indep}, the random variable $I_e$, $\xi_{g_e}$ and $\xi_{g_e} - \zeta_{g_e}$ are mutually independent. Since $\Delta_e = I_e + \zeta_{g_e} - \xi_{g_e}$, it is independent from $\xi_{g_e}$ and we get that
\[
 \mathbb{P}(\xi_{g_e} < z)\mathbb{P}(\Delta_e \leq 0) \leq \mathbb{P}(\xi_e < z) \leq \mathbb{P}(\xi_{g_e} < z)\mathbb{P}(\Delta_e < z).
\]

\noindent
By Corollary \ref{coro_exp}, we have $\mathbb{P}(\xi_{g_e} < z) = \kappa(0,q,0) \mathcal{V}_q(z)$ and since $\kappa(0,q,0)$ is regularly varying at $0$ with index $\beta\rho$ and $\mathcal{V}_q(z)$ increases to $\mathcal{V}(z)$, it only remains to control $\mathbb{P}(\Delta_e \leq 0)$ and $\mathbb{P}(\Delta_e < z)$. By Assumption~\ref{assump_null}, and recalling that $\Phi(\cdot)$ is an asymptotic inverse of $b(\cdot)$, we have that $a(\Phi(h))I_{1/h}$ converges in law as $h\to0$ to some random variable $I$.
This easily implies that $a(\Phi(q))I_{e}$ converges in law as $q\downarrow 0$ and thanks to Proposition~\ref{prop_conv_xizeta} so does $a(\Phi(q))(\zeta_{g_e} - \xi_{g_e})$.
Since $I_e$ and $\zeta_{g_e} - \xi_{g_e}$ are independent, it gives that $a(\Phi(q))\Delta_e$ converges in law as $q\to0$ to some random variable, that we denote $\Delta_0$. Finally, we end up with
\[
 \lim_{q\downarrow 0}\mathbb{P}(\Delta_e \leq 0) = \lim_{q\downarrow0} \mathbb{P}(\Delta_e < z) = \mathbb{P}(\Delta_0 \leq 0) \,,
\]
which completes the proof.
\end{proof}

\subsection{The case of Gaussian fluctuations}

In this section, we treat the special case where  Assumption \ref{assump_null} is satisfied with $\alpha = 2$, \textit{i.e.}\ when the limiting process $(Z_t^0)_{t\geq0}$ is a Brownian motion. This case is somehow simpler since $(\tau_t^0)_{t\geq0}$ and $(Z_t^0)_{t\geq0}$ are then necessarily independent (this can be seen directly from the Lévy-Khintchine formula), and the convergence of $(b(h)\tau_{t/h})_{t\geq0}$ and $(a(h)Z_{t/h})_{t\geq0}$ alone implies the convergence of the bi-dimensional process, see Lemma~\ref{indep_limite}.
We are going to prove that in that case, we can somehow weaken Assumption~\ref{assump_null} (relaxing the assumption on the convergence of the last part $a(b^{-1}(h))\,I_{1/h}$).

Recall that for a generic excursion $\varepsilon$, we denote $\f = \f(\varepsilon) = \int_0^\ell f(\varepsilon_s)\dr s$, see~\eqref{def:fepsilon}.
\begin{assumption}
\label{assum_gauss}
There exists some $\beta\in(0,1)$ such that $\Phi$ is regularly varying at $0$ with index $\beta$, and $\nn(\f)=0$, $\nn(\f^2) < \infty$.
\end{assumption}

\noindent
We then have the following result.
\begin{theorem}
Suppose that Assumption \ref{assum_gauss} holds. Then there exists a slowly varying function $\varsigma$ such that for any $z > 0$,
 \[
  \mathbb{P}(T_z > t) = \mathcal{V}(z)\varsigma(t)t^{-\beta/2} \quad \text{as }t\to\infty,
\]
where $\beta$ is given by Assumption \ref{assum_gauss}.
\end{theorem}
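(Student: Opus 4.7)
My strategy is to reduce to Theorem~\ref{main_theorem} by verifying Assumption~\ref{assump_null} with $\alpha=2$. Once this is done the limiting process $Z^0$ will be a centered Brownian motion, hence symmetric, so the positivity parameter is $\rho=\tfrac12$ and the announced exponent $\theta=\beta\rho=\beta/2$ falls out automatically.

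First I would identify the correct norming on each coordinate. Regular variation of $\Phi$ at $0$ with index $\beta\in(0,1)$ is well known to be equivalent to $(b(h)\tau_{t/h})_{t\geq 0}$ converging to a $\beta$-stable subordinator $\tau^0$, where $b$ is an asymptotic inverse of $\Phi$ (cf.\ Remark~\ref{rem:bPhi}). For the L\'evy process $(Z_t)_{t\geq 0}$, whose L\'evy measure is $\nu(\dr z)=\nn(\f\in\dr z)$, the hypotheses $\nn(\f)=0$ and $\nn(\f^2)<\infty$ are precisely the classical CLT conditions, so $(h^{1/2}Z_{t/h})_{t\geq 0}$ converges to a centered Brownian motion $Z^0$ of variance $t\,\nn(\f^2)$; in particular $a(h)=h^{1/2}$. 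The limits $\tau^0$ and $Z^0$ are respectively a pure-jump process and a continuous Gaussian process, hence independent as marginals of any bivariate L\'evy process, so Lemma~\ref{indep_limite} upgrades the two marginal convergences to the joint convergence in Skorokhod topology required by Assumption~\ref{assump_null}. Symmetry of $Z^0$ yields $\rho=\tfrac12$.

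The delicate point, which I expect to be the main obstacle, is the remaining part of Assumption~\ref{assump_null}, namely convergence in law of $a(b^{-1}(q))\,I_e=\Phi(q)^{1/2}I_e$ as $q\downarrow 0$. In the Gaussian regime one expects a degenerate limit at $0$, reflecting the heuristic that the last unfinished excursion should be negligible compared to $\xi_{g_e}-\zeta_{g_e}$. I would establish this via an $L^2$-estimate. Applying~\eqref{eq:F2} with $F(x)=x^2$ and reorganising gives
\[
\Phi(q)\,\mathbb{E}[I_e^2]\;=\;\nn\!\left(\int_0^{\ell} q\,\mathrm{e}^{-qu}\Bigl(\int_0^{u} f(\varepsilon_r)\dr r\Bigr)^{2}\dr u\right).
\]
Since $f$ preserves signs and each excursion $\varepsilon$ has constant sign, $|\int_0^u f(\varepsilon_r)\dr r|\leq |\f(\varepsilon)|$, so the inner integrand is dominated by $\f(\varepsilon)^2\int_0^\infty q\,\mathrm{e}^{-qu}\dr u\leq \f(\varepsilon)^2$; on the other hand, since $\ell(\varepsilon)<\infty$ for $\nn$-almost every $\varepsilon$, the same quantity is at most $q\,\ell(\varepsilon)\,\f(\varepsilon)^2$, which tends to $0$ pointwise. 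As $\nn(\f^2)<\infty$, dominated convergence under $\nn$ yields $\Phi(q)\,\mathbb{E}[I_e^2]\to 0$, hence $\Phi(q)^{1/2}I_e\to 0$ in $L^2$, a fortiori in distribution.

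With Assumption~\ref{assump_null} fully verified, Theorem~\ref{main_theorem} delivers the claim. The crucial input in the hard step is the prefactor $q/\Phi(q)$ coming from Proposition~\ref{indep}, which vanishes as $q\downarrow 0$ (because $\beta<1$) and produces the right cancellation against the second-moment bound $\nn(\f^2)<\infty$.
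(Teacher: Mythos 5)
Your proposal is correct and follows essentially the same route as the paper: identify $a(h)=h^{1/2}$, $b$ an asymptotic inverse of $\Phi$, obtain the marginal convergences from the CLT and regular variation, invoke Lemma~\ref{indep_limite} for the joint limit, and then control $\Phi(q)^{1/2}I_e$ in $\mathrm{L}^2$ via Proposition~\ref{indep}-\eqref{eq:F2} and the monotonicity of $u\mapsto \f_u(\varepsilon)$. The only cosmetic difference is in the last step: the paper dominates by the single function $\f^2(1-\e^{-q\ell})$, while you split the argument into a domination by $\f^2$ and a pointwise bound $q\ell\f^2\to 0$, but these are interchangeable.
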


\begin{proof}
We simply check that Assumption~\ref{assum_gauss} implies that Assumption \ref{assump_null} is satisfied.
Let us denote by $\nu(\dr z) = \nn(\f \in \dr z)$ the Lévy measure of $(Z_t)_{t\geq0}$. Then Assumption \ref{assum_gauss} entails that $\int_{\mathbb{R}\setminus\{0\}}z^2\nu(\dr z) < \infty$, which implies that $(h^{1/2}Z_{t/h})_{t\geq0}$ converges in law towards a Brownian motion $(Z_t^0)_{t\geq0}$ as $h\to0$ (this can be easily checked via the characteristic function).
 
Moreover, if $b$ is an asymptotic inverse of $\Phi$ at $0$, we have that $(b(h)\tau_{t/h})_{t\geq0}$ converges in law to a $\beta$-stable subordinator $(\tau_t^0)_{t\geq0}$ as $h\to0$.
 By Lemma \ref{indep_limite}, this shows that the bivariate process $(b(h)\tau_{t/h}, h^{1/2}Z_{t/h})_{t\geq0}$ converges in law to $(\tau_t^0, Z_t^{0})_{t\geq0}$ as $h\to0$.
 
Finally, we show that $\Phi(q)^{1/2}I_e$ converges to $0$ in $\mathrm{L}^2$ as $q\to0$. By Proposition \ref{indep}-\eqref{eq:F2}, 
setting $\f_u(\varepsilon)=\int_0^u f(\epsilon_v) \dr v$ for any $u\leq \ell(\varepsilon)$,
we have
 \[
\mathbb{E}\left[\Phi(q)I_e^2\right] = q\int_{\mathcal{E}}\int_0^{\ell(\epsilon)}\e^{-qu}\f_u(\varepsilon)^2 \dr u \, \nn(\dr\epsilon) \leq \int_{\mathcal{E}}\f(\varepsilon)^2(1 - \e^{-q\ell(\epsilon)}) \nn(\dr\epsilon) \,.
\]
Indeed Assumptions \ref{assump-f} and \ref{assump_2_prime} entail that the map $u \mapsto \f_u(\varepsilon)$ is monotonic on $[0,\ell(\epsilon)]$ and thus $\f_u(\varepsilon)^2 \leq \f(\varepsilon)^2$.
Since $\nn(\f^2)<+\infty$, we can use dominated convergence to deduce that $\lim_{q\to0}\mathbb{E}[\Phi(q)I_e^2]=0$. We can then apply Theorem \ref{main_theorem} with $\rho = \mathbb{P}(Z_t^0 \geq0) = 1/2$.
\end{proof}

\section{Application to generalized diffusions}\label{section_ito_mc_kean}

In this section we apply our result to a large class of one-dimensional Markov processes called \textit{generalized diffusions}. These processes are defined as a time and space changed Brownian motion. Originally, it was noted by Itô and McKean \cite{im63, im96} that regular diffusions, \textit{i.e.}\ regular strong Markov processes with continuous paths can be represented as a time and space changed Brownian motion through a \emph{scale function} $\sca$ and a \emph{speed measure} $\mm$. In the mean time, Stone \cite{s63} also observed that continuous-in-time birth and death processes could be represented this way. As we shall see, this construction leads to a general class of Markov processes. Our main goal is to provide conditions on the function $f$, the \emph{scale function} $\sca$ and \emph{speed measure} $\mm$ that ensure that Assumption~\ref{assump_null} holds. Let us now recall the notation of Section~\ref{sec:resII}.

\smallskip
Let $\mm: \bbR \to \bbR$ be a non-decreasing right-continuous function such that $\mm(0) = 0$, and $\sca: \bbR \to \bbR$ a continuous increasing function such that $\sca(0)=0$ and $\sca(\mathbb{R}) = \bbR$.
We assume moreover that $\mm$ is not constant and will also denote by $\mm$ the Radon measure associated to $\mm$, that is $\mm( (a,b] )= \mm(b)-\mm(a)$ for all $a<b$.
Recall that $\mm^\sca$ is the image of $\mm$ by $\sca$, \textit{i.e.}\ the Stieltjes measure associated to the non-decreasing function $\mm\circ\sca^{-1}$.
We consider a Brownian motion $(B_t)_{t\geq0}$ on some filtered probability space $(\Omega, \mathcal{F}, (\mathcal{F}_t)_{t\geq0}, \mathbb{P})$ with $(L_t^x)_{t\geq0, x\in \bbR}$ the usual family of its local times and we introduce
\[
A_t^{\mm^\sca} = \int_{\bbR} L^x_t \mm^\sca(\dr x) \,.
\]
The process $(A_t^{\mm^\sca})_{t\geq0}$ is a non-decreasing continuous additive functional of the Brownian motion $(B_t)_{t\geq0}$. For every $x \in \bbR$ we have $L_{\infty}^x = \infty$ a.s., and since the support of the measure $\mm^\sca$ is not empty we see by Fatou's lemma that $A_{\infty}^{\mm^\sca} = \infty$ a.s. 
Now we introduce $(\rho_t)_{t\geq0}$ the right-continuous inverse of $(A_t^{\mm^\sca})_{t\geq0}$ and we set 
\[
X_t = \sca^{-1}( B_{\rho_t} ).
\]
As the change of time through a continuous non-decreasing additive functional of a strong Markov process preserves the strong Markovianity, see Sharpe \cite[Ch. VIII Thm. 65.9]{sharpe1988general}, and since $\sca$ is bijective, it holds that $(X_t)_{t\geq0}$ is a strong Markov process with respect to the filtration $(\mathcal{F}_{\rho_t})_{t\geq0}$.
Let us denote by $\mathrm{supp}(\mm^\sca)$ the support of the measure $\mm^\sca$. It is rather classical, see again Sharpe \cite[Ch. VIII Thm. 65.9]{sharpe1988general} or Revuz-Yor \cite[Ch. X Prop. 2.17]{ry}, that $(X_t)_{t\geq0}$ is valued in $\sca^{-1}(\mathrm{supp}(\mm^\sca))=\mathrm{supp}(\mm)$.
From now on, we will always assume that $0\in\mathrm{supp}(\mm^\sca)$ so that, since $\sca(0) = 0$, $(X_t)_{t\geq 0}$ spends time in $0$.
Since $0$ is recurrent for the Brownian motion, it is also recurrent for $(X_t)_{t\geq0}$. We have the following proposition, which shows that the process $(X_t)_{t\geq0}$ satisfies the hypothesis of this article and that its local time at the level $0$ can be expressed with the local time of the Brownian motion. The proof is postponed to Appendix \ref{appendix_loc_times}.

\begin{proposition}\label{prop_ok_gen}
 The following assertions hold.
 \begin{enumerate}[label=(\roman*)]
  \item The family $(L_{\rho_t}^{\sca(x)})_{t\geq0, x\in\mathbb{R}}$ defines a family of local times of $(X_t)_{t\geq0}$ in the sense that \textit{a.s.}, for any non-negative Borel functions $h$, for any $t\geq0$, the following occupation times formula holds:
\[
 \int_0^t h(X_s)\dr s = \int_\bbR h(x)L_{\rho_t}^{\sca(x)}\mm(\dr x).
\]

\item The point $0$ is regular for $(X_t)_{t\geq0}$.

\item The process $(L_{\rho_t}^0)_{t\geq0}$ is a proper local time for $(X_t)_{t\geq0}$ at the level $0$ in the sense that it is a continuous additive functional whose support almost surely coincides with the closure of the zero set of $(X_t)_{t\geq0}$.

\item Let $(\tau_t)_{t\geq0}$ be the right-continuous inverse of $(L_{\rho_t}^0)_{t\geq0}$ and $(\tau_t^B)_{t\geq0}$ be the right continuous inverse of $(L_t^0)_{t\geq0}$, then we have
\[
\tau_t = \int_{\bbR} L^x_{\tau^B_t} \mm^\sca(\dr x) = A_{\tau_t^B}^{\mm^\sca}.
\]
Moreover, it also holds that for any $t\geq0$, $\rho_{\tau_t} = \tau_t^B$.
 \end{enumerate}
\end{proposition}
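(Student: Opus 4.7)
The plan is to work through the four items by systematically exploiting the change of time identity $X_t=\sca^{-1}(B_{\rho_t})$ together with standard properties of Brownian local times.

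For item (i), I would start from $\int_0^t h(X_s)\dr s=\int_0^t h(\sca^{-1}(B_{\rho_s}))\dr s$ and perform the change of variables $u=\rho_s$, equivalently $s=A_u^{\mm^{\sca}}$, so that $\dr s=\dr A_u^{\mm^{\sca}}=\int_{\bbR}\dr L_u^x\,\mm^{\sca}(\dr x)$. An application of Fubini yields
\[
\int_0^t h(X_s)\dr s=\int_{\bbR}\mm^{\sca}(\dr x)\int_0^{\rho_t}h(\sca^{-1}(B_u))\,\dr L_u^x=\int_{\bbR}h(\sca^{-1}(x))L_{\rho_t}^x\,\mm^{\sca}(\dr x),
\]
using that $\dr L_u^x$ is supported on $\{u:B_u=x\}$. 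Pushing $\mm^{\sca}$ back through $\sca$ gives the claimed formula.

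I would attack item (iv) next, since its algebraic content feeds the remaining items. Starting from $\tau_t=\inf\{s:L_{\rho_s}^0>t\}$ and unwinding the definitions of the right-continuous inverses, the equivalences
\[
L_{\rho_s}^0>t\iff \rho_s>\tau_t^B \iff A_{\tau_t^B}^{\mm^{\sca}}<s
\]
(the last by continuity of $A^{\mm^{\sca}}$) yield $\tau_t=A_{\tau_t^B}^{\mm^{\sca}}$. To deduce $\rho_{\tau_t}=\tau_t^B$, I would observe that at time $\tau_t^B$ the Brownian motion sits at $0$, and by regularity of $0$ for $B$ it visits every neighborhood of $0$ immediately after $\tau_t^B$; since $0\in\mathrm{supp}(\mm^{\sca})$, this forces $A^{\mm^{\sca}}$ to be strictly increasing to the right of $\tau_t^B$, hence $\rho_{A^{\mm^{\sca}}_{\tau_t^B}}=\tau_t^B$.

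Item (ii) then follows easily: by regularity of $0$ for Brownian motion, $\tau_t^B\downarrow 0$ as $t\downarrow 0$, and by continuity of $A^{\mm^{\sca}}$ with $A_0=0$, we also get $\tau_t=A_{\tau_t^B}^{\mm^{\sca}}\downarrow 0$; since $X_{\tau_t}=\sca^{-1}(B_{\rho_{\tau_t}})=\sca^{-1}(B_{\tau_t^B})=0$ for every $t>0$, we conclude $\eta_0=0$ almost surely. For item (iii), I would verify three facts in turn: (a) $t\mapsto L_{\rho_t}^0$ is continuous, because any jump of $\rho$ corresponds to a flat piece of $A^{\mm^{\sca}}$, occurring over a time-interval on which $B$ avoids $\mathrm{supp}(\mm^{\sca})\ni 0$, so that $L^0$ does not change across such a jump; (b) additivity follows from the strong Markov property of $B$ combined with the additive structure of both $A^{\mm^{\sca}}$ and $L^0$; (c) for the support, $L_{\rho_t}^0$ increases exactly when $B_{\rho_t}=0$, i.e.\ when $X_t=0$, so its support is the closure of $\{t:X_t=0\}$, as required for a proper local time.

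The main technical subtlety lies in the careful treatment of the (right-continuous) jumps of $\rho$: one must check that they never coincide with times where $L^0$ grows, which is exactly where the assumption $0\in\mathrm{supp}(\mm^{\sca})$ and the fact that $A^{\mm^{\sca}}$ is strictly increasing whenever $B$ lies in $\mathrm{supp}(\mm^{\sca})$ are used. Once this is handled cleanly, the rest of the proof is a chain of changes of variable and inverse-function identities that can be made rigorous by standard measure-theoretic arguments.
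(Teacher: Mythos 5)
Your overall route coincides with the paper's: item (i) via the change of variables $s=A_u^{\mm^\sca}$ and the Revuz--Yor occupation formula for continuous additive functionals; item (iv) via the inverse-function identities together with the key observation that $A^{\mm^\sca}$ increases strictly immediately after any stopping time at which $B$ vanishes (this is where $0\in\mathrm{supp}(\mm^\sca)$ and the joint continuity of the Brownian local times enter); and the continuity part of item (iii) via the fact that the jumps of $\rho$ occur inside flat sections of $L^0$. Your derivation of item (ii) from item (iv) (producing zeros of $X$ at the times $\tau_t\downarrow 0$) is a mild variant of the paper's direct construction of a sequence $A^{\mm^\sca}_{d_{t_n}}\downarrow 0$ of zeros of $X$, and the logical order (i), (iv), (ii), (iii) is admissible since the proof of (iv) does not use (ii). One small point: to justify $\rho_{\tau_t}=\tau_t^B$ you should argue through local times ($L^0$ increases immediately after $\tau_t^B$ by the strong Markov property and regularity, hence by joint continuity so does $L^x$ for $x$ in a neighborhood of $0$, hence so does $A^{\mm^\sca}$ since $0\in\mathrm{supp}(\mm^\sca)$); the statement that ``$B$ visits every neighborhood of $0$'' does not by itself produce positive $\dr A^{\mm^\sca}$-mass.

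The one genuine gap is the support identification in item (iii). The easy inclusion $\mathrm{supp}(\dr L^0_{\rho_\cdot})\subset\overline{\{t\geq 0:\,X_t=0\}}$ does follow from a change of variables as you indicate. The converse inclusion, however, is not a consequence of the slogan ``$L^0_{\rho_t}$ increases exactly when $X_t=0$'': one must show that for \emph{every} $t$ in the closure of the zero set and every $\epsilon>0$ one has $L^0_{\rho_{t+\epsilon}}>L^0_{\rho_{(t-\epsilon)\vee 0}}$, which is an ``a.s.\ for all $t$'' statement over an uncountable family of non-stopping times. The paper handles this by first establishing growth of the local time at the stopping times $d_t^X=\inf\{s>t:\,X_s=0\}$ for rational $t$ (where the strong Markov property applies), and then using that, when $0$ is instantaneous, the closed zero set of $X$ is perfect with empty interior, so that every point of it is approximated from the left by such $d_t^X$'s. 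Some argument of this type is needed; as written, the harder inclusion is asserted rather than proved.
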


\noindent
In this framework, the Lévy process $Z_t= \int_0^{\tau_t} f(X_s) \dr s$ can be expressed, thanks to items~\textit{(i)} and~\textit{(iv)} of Proposition~\ref{prop_ok_gen}, as
\[
Z_t= \int_{\bbR} L^x_{\tau_t^B} \mm^f (\dr x), 
\]
where we have set $\mm^f(\dr x) := f\circ \sca^{-1}(x) \mm^\sca(\dr x)$.
Note that $\mm^f$ is a signed measure (recall that by Assumption \ref{assump-f}, the function $f$ preserves the sign).
We suppose in addition that $f\circ \sca^{-1}$ is locally integrable with respect to $\mm^\sca$ so that $\mm^f$ is also a Radon measure. We will also denote by $\mm^f$ the associated function, \textit{i.e.}\ 
$
\mm^f(x) = \int_0^x f\circ \sca^{-1}(u) \mm^\sca(\dr u) ,
$
which is non-decreasing on~$\bbR_+$ and non-increasing on~$\bbR_-$.
Then it holds that  $(Z_t)_{t\geq 0}$ is a L\'evy process with finite variations, with zero drift (since $f(0)=0$). The aim of this section is to show Propositions \ref{prop_brow_null_rec} and \ref{prop_assump_true}

\subsection{Excursions of $(X_t)_{t\geq0}$ using those of $(B_t)_{t\geq0}$}
Let us first describe the excursions away from $0$ of $(X_t)_{t\geq0}$ in terms of the excursions of the Brownian motion. Let us denote by $\mathcal{D}$ the usual space of cadlad functions from $\bbR_+$ to $\bbR$, and for $\varepsilon\in\mathcal{D}$, let us introduce
\[
 \ell(\varepsilon) = \inf\{t > 0,\: \varepsilon_t = 0\}.
\]

\noindent
Then the set of excursions $\mathcal{E}$ is the set of functions $\varepsilon\in\mathcal{D}$ such that $0 < \ell(\varepsilon) < \infty$, for every $t\geq\ell(\varepsilon)$, $\varepsilon_t = 0$, and for every $0 < t < \ell(\varepsilon)$, $\varepsilon_t \neq 0$. This space is endowed with the usual Skorokhod's topology and the associated Borel $\sigma$-algebra.

\medskip
We now introduce the excursion processes of $(B_t)_{t\geq0}$ and $(X_t)_{t\geq0}$, denoted by $(e_t^B)_{t\geq0}$ and $(e_t^X)_{t\geq0}$, which take values in $\mathcal{E}\cup\{\Upsilon\}$, where $\Upsilon$ is an isolated cemetery point, and are given by
\begin{equation*}
 e_t^B = \left\lbrace
        \begin{array}{lll}
         (B_{\tau_{t-}^B + s})_{s\in[0,\Delta\tau_t^B]}&\text{if } \Delta\tau_t^B >0  \\
         \Upsilon &\text{otherwise}  
        \end{array}
    \right.
\quad \text{and} \quad e_t^X = \left\lbrace
        \begin{array}{lll}
         (X_{\tau_{t-} + s})_{s\in[0,\Delta\tau_t]}&\text{if } \Delta\tau_t >0  \\
         \Upsilon &\text{otherwise}  
        \end{array}\right.
\end{equation*}

\noindent
A famous result, essentially due to Itô \cite{i72}, states that $(e_t^B)_{t\geq0}$ and $(e_t^X)_{t\geq0}$ are Poisson point processes and we will respectively denote by $\nn^B$ and $\nn$ their characteristic measure, which are defined by
\begin{equation}\label{def_exc_measure}
\nn^B(\Gamma) = \frac{1}{t}\mathbb{E}\left[N_{t,\Gamma}^B\right]\qquad\text{and}\qquad  \nn(\Gamma) = \frac{1}{t}\mathbb{E}\left[N_{t,\Gamma}^X\right],
\end{equation}
for any measurable set $\Gamma$, where
\[
 N_{t,\Gamma}^B = \sum_{s\leq t}\bm{1}_\Gamma(e_s^B) \qquad\text{and}\qquad N_{t,\Gamma}^X = \sum_{s\leq t}\bm{1}_\Gamma(e_s^X).
\]

Our first aim is to describe the measure $\nn$ in terms of $\nn^B$; we will see $\nn$ as a push-forward of $\nn^B$ by some application $T$ ()
To this end, we first introduce the subset $\mathcal{C}$ of $\mathcal{E}$ of functions which are continuous. It is clear that every function in $\mathcal{C}$ has constant sign, and that $\nn^B(\mathcal{E}\setminus\mathcal{C}) = 0$. We have the following lemma.
\begin{lemma}\label{holder_local_time}
 Under $\nn^B$, almost every path posseses a family of local times $(\bm{\mathrm{L}}_t^x)_{t \geq 0, x \in \mathbb{R}}$, in the sense that for any Borel function $g$ and every $t\geq0$, we have
\[
 \int_0^t g(\epsilon_s) \dr s = \int_{\mathbb{R}}g(x) \bm{\mathrm{L}}_t^x \dr x \,.
\]
The family $(\bm{\mathrm{L}}_t^x)_{t \geq 0, x \in \mathbb{R}}$ is jointly continuous and for any $ \gamma \in (0,1/2)$ the map $x\mapsto \bm{\mathrm{L}}_t^x$ is Hölder of order $\gamma$ uniformly on compact time intervals.
\end{lemma}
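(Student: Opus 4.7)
The plan is to transfer the classical regularity properties of Brownian local times from paths to excursions via the compensation formula for the Itô excursion Poisson point process. Recall from the construction that the excursions $(e_s^B)_{s \geq 0}$ under $\mathbb{P}$ form a Poisson point process with intensity measure $\mathrm{d}s \otimes \nn^B(\mathrm{d}\epsilon)$, so that for any non-negative measurable functional $G$ on $\mathcal{E}$,
\[
\mathbb{E}\bigg[\sum_{s \leq 1,\, \Delta \tau_s^B > 0} G(e_s^B)\bigg] = \nn^B(G).
\]
The strategy is therefore to show that the left-hand side is zero when $G$ is the indicator of the set of excursions which fail to possess a local time family with the desired regularity.

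First, I would invoke the classical Trotter-type regularity theorem for Brownian local times (see e.g.\ Revuz--Yor, Ch.~VI): the Brownian motion $(B_t)_{t \geq 0}$ admits a jointly continuous family $(L_t^x)_{t \geq 0, x \in \mathbb{R}}$ and for every $T > 0$ and $\gamma \in (0, 1/2)$, there is an almost surely finite random constant $C_{T,\gamma}$ such that
\[
\sup_{t \in [0, T]} |L_t^x - L_t^y| \leq C_{T,\gamma} |x - y|^\gamma \qquad \forall x, y \in \mathbb{R}.
\]
Next, for each excursion interval $[\tau_{s-}^B, \tau_s^B]$ with $\Delta \tau_s^B > 0$ and $s \leq 1$ (so that $\tau_{s-}^B \leq \tau_1^B < \infty$ a.s.), the associated excursion $e_s^B$ inherits a natural family of local times by restriction: set
\[
\bm{\mathrm{L}}_t^x(e_s^B) := L_{\tau_{s-}^B + t}^{x} - L_{\tau_{s-}^B}^{x}, \qquad t \in [0, \Delta \tau_s^B], \; x \in \mathbb{R}.
\]
This family is jointly continuous in $(t, x)$ and inherits the same spatial Hölder estimate (uniformly over $t \in [0, \Delta \tau_s^B]$). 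The occupation time formula for $e_s^B$ follows directly from the occupation time formula for $B$ applied to the interval $[\tau_{s-}^B, \tau_{s-}^B + t]$.

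Consequently, for every such $s$ the excursion $e_s^B$ lies in the set $A \subset \mathcal{E}$ of excursions equipped with a jointly continuous local time family satisfying the occupation formula and the Hölder estimate. Taking $G = \mathbf{1}_{\mathcal{E} \setminus A}$ in the compensation formula gives $\nn^B(\mathcal{E} \setminus A) = 0$, which is exactly the claim. The only delicate point I anticipate is a careful measurability check for the map $\epsilon \mapsto \bm{\mathrm{L}}_t^x(\epsilon)$ on $(\mathcal{E}, \mathcal{B}(\mathcal{E}))$ (needed to interpret the exceptional set $\mathcal{E} \setminus A$ as measurable), which can be handled by approximating $\bm{\mathrm{L}}_t^x$ by the standard Tanaka-type formula $\frac{1}{2\varepsilon}\int_0^t \mathbf{1}_{(x-\varepsilon, x+\varepsilon)}(\epsilon_s)\,\mathrm{d}s$ as $\varepsilon \downarrow 0$, which is manifestly Borel.
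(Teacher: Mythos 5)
Your proof is correct and takes essentially the same route as the paper: define the local times of an excursion $e_s^B$ by restriction of the ambient Brownian local times, invoke the Trotter-type joint continuity and Hölder regularity theorem, verify the occupation formula by restriction, and transfer from almost sure statements about the Brownian path to $\nn^B$-a.e.\ statements via the defining relation between $\nn^B$ and the excursion point process (which the paper invokes through \eqref{def_exc_measure} and you invoke through the compensation formula — the same thing). Your closing remark on measurability is a useful addition that the paper glosses over.
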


\begin{proof}
 Consider some $t\geq0$ such that $\Delta\tau_t^B >0$ and set for  any $x\in\bbR$ and any $s\in[0,\Delta\tau_t^B]$, $\bm{\mathrm{L}}_s^x = L_{\tau_{t-}^B + s}^x - L_{\tau_{t-}^B}^x$. Then $(\bm{\mathrm{L}}_s^x)_{s\in[0,\Delta\tau_t^B], x\in\mathbb{R}}$ is the family of local time of $e_t^B$. Indeed, for any Borel function $g$ and $s\in[0,\Delta\tau_t^B]$, we have,
\[
 \int_0^s g(e_t^B(u))\dr u = \int_0^s g(B_{\tau_{t-}^B + u}) \dr u = \int_\bbR g(x)(L_{\tau_{t-}^B + s}^x - L_{\tau_{t-}^B}^x) \dr x.
\]

\noindent
Since the Brownian local times are jointly continuous and almost surely Hölder of order $\gamma$ (for any $\gamma\in(0,1/2)$) in the variable $x$  uniformly on compact time intervals, so is $(\bm{\mathrm{L}}_t^x)_{t \geq 0, x \in \mathbb{R}}$. Hence we showed, almost surely, for any $t$ such $\Delta\tau_t^B > 0$, $e_t^B$ has the property stated in the lemma: by \eqref{def_exc_measure}, this shows that outside of a negligeable set for $\nn^B$, every path has the stated property.
\end{proof}

We denote by $\mathcal{C}_+$ and $\mathcal{C}_-$ the subsets of $\mathcal{C}$ of positive and negative (continuous) excursions. We introduce the first point of increase and decrease of $\mm^\sca$ around $0$, \textit{i.e.}\ the real numbers defined by
\[
 x_+ = \inf\{x > 0,\: \mm^\sca(x) > \mm^\sca(0) \} \quad \text{and} \quad x_- = \sup\{x < 0,\: \mm^\sca(x) < \mm^\sca(0-)\}.
\]
(Note that for standard diffusions we have $x_+=x_=0$, but one may have $x_-<0<x_+$, for instance for birth and death chains, see Section~\ref{sec:examples}.)
Under Assumption~\ref{assump_2_prime} they are finite, \textit{i.e.}\ $\mm^\sca$ eventually increases and decreases. For a path $\varepsilon\in\mathcal{C}$, we let $M(\varepsilon) = \sup\{|\varepsilon_t|, t\geq0\}$ and we introduce the measurable set 
\[
 \bm{C}_{x_+,x_-} = \left(\mathcal{C}_+ \cap \{M(\varepsilon) > x_+\}\right)\cup\left(\mathcal{C}_- \cap \{M(\varepsilon) > |x_-|\}\right).
\]
For $\varepsilon\in\bm{C}_{x_+,x_-}$, we define the time-change
\begin{equation}
\label{eq:Atsca}
 (A_t^\sca)_{0\leq t \leq \ell} = \left(\int_\bbR \bm{\mathrm{L}}_t^x \mm^\sca(\dr x)\right)_{0\leq t \leq \ell}.
\end{equation}
Observe that $A_{\ell}^\sca >0$ if $\varepsilon\in \bm{C}_{x_+,x_-}$ (whereas $A_{\ell}^\sca = 0$ if $\varepsilon\in \mathcal{C}\setminus \bm{C}_{x_+,x_-}$). We denote by  $(\rho_t^\sca)_{0 \leq t \leq A_{\ell}}$ the right-continuous inverse of $(A_t^\sca)_{0\leq t \leq \ell}$ and finally define the measurable application $T:\bm{C}_{x_+,x_-} \to \mathcal{E}$ such that, 
\[
 T(\epsilon)_t = \sca^{-1}(\varepsilon_{\rho_t^\sca})\quad \text{if} \quad t< A_{\ell}^\sca \quad \text{and} \quad T(\varepsilon)_t = 0 \quad \text{if} \quad t\geq A_{\ell}^\sca.
\]
A key tool in this section is the following result, which expresses the measure $\nn$ as the pushforward measure of $\nn^B$ by~$T$.

\begin{proposition}\label{pushforward_brownian}
For any measurable set $\Gamma$, we have $\nn(\Gamma) = \nn^B(T^{-1}(\Gamma))$.
\end{proposition}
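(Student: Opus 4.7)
The plan is to identify each excursion of $X$ with exactly one Brownian excursion belonging to $\bm{C}_{x_+,x_-}$, via the map $T$, and then to invoke the defining formula~\eqref{def_exc_measure}. Concretely, it will suffice to establish, for every measurable $\Gamma \subset \mathcal{E}$, the pathwise identity
\[
N_{t,\Gamma}^X \;=\; N_{t, T^{-1}(\Gamma)}^B \qquad \text{for every } t\geq 0,
\]
since the result then follows by taking expectations and dividing by $t$.

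First, combining part~(iv) of Proposition~\ref{prop_ok_gen} with the continuity of $(A_t^{\mm^\sca})_{t\geq 0}$ in the Brownian time variable, one has $\tau_s = A_{\tau_s^B}^{\mm^\sca}$ and, passing to the left limit, $\tau_{s-} = A_{\tau_{s-}^B}^{\mm^\sca}$. Therefore, for any $s$ with $\Delta\tau_s^B > 0$,
\[
\Delta\tau_s \;=\; A_{\tau_s^B}^{\mm^\sca} - A_{\tau_{s-}^B}^{\mm^\sca} \;=\; \int_{\bbR} \bm{\mathrm{L}}_{\ell}^x(e_s^B)\, \mm^\sca(\dr x) \;=\; A_{\ell}^\sca(e_s^B),
\]
using the local times of the Brownian excursion $e_s^B$ provided by Lemma~\ref{holder_local_time}. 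The next step is to verify that, $\nn^B$-a.s., one has $A_\ell^\sca(\varepsilon) > 0$ if and only if $\varepsilon \in \bm{C}_{x_+,x_-}$: on $\bm{C}_{x_+,x_-}$ the range of $\varepsilon$ meets an open set charged by $\mm^\sca$ by definition of $x_\pm$, while a positive excursion with $M(\varepsilon) < x_+$ stays in $[0,x_+)$ where $\mm^\sca$ vanishes; the boundary case $M(\varepsilon) = x_+$ is excluded since $M$ has a diffuse distribution under $\nn^B$.

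It remains to identify $e_s^X$ when $e_s^B \in \bm{C}_{x_+,x_-}$. The time-change identity propagates inside the excursion: for $r \in [0, \Delta\tau_s]$ one has $\rho_{\tau_{s-}+r} = \tau_{s-}^B + \rho_r^\sca(e_s^B)$, where $\rho_r^\sca(e_s^B)$ denotes the right-continuous inverse of $A^\sca$ associated with the excursion $e_s^B$, evaluated at $r$. Substituting into $X_{\tau_{s-}+r} = \sca^{-1}(B_{\rho_{\tau_{s-}+r}})$ yields $X_{\tau_{s-}+r} = \sca^{-1}(e_s^B(\rho_r^\sca(e_s^B))) = T(e_s^B)_r$, which is exactly the definition of $T$; hence $e_s^X = T(e_s^B)$. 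Since $T^{-1}(\Gamma) \subset \bm{C}_{x_+,x_-}$ by construction, we conclude
\[
N_{t,\Gamma}^X = \sum_{s\leq t,\, \Delta\tau_s>0} \bm{1}_{\Gamma}(e_s^X) = \sum_{s\leq t,\, e_s^B \in \bm{C}_{x_+,x_-}} \bm{1}_{\Gamma}(T(e_s^B)) = \sum_{s\leq t} \bm{1}_{T^{-1}(\Gamma)}(e_s^B) = N_{t, T^{-1}(\Gamma)}^B,
\]
which is the desired pathwise identity. The main technical obstacle will be justifying rigorously the propagation of the time-change identity within a single excursion, especially when $\mm^\sca$ carries atoms at $x_\pm$ so that both $\rho$ and $\rho^\sca$ genuinely jump; once this point is handled, the remainder of the argument is purely combinatorial.
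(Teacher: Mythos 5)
Your proof is correct and follows essentially the same route as the paper: express $\Delta\tau_s$ via the excursion local times and $\mm^\sca$, observe that $\Delta\tau_s>0$ exactly when $e_s^B\in\bm{C}_{x_+,x_-}$, propagate the time-change inside the excursion to get $e_s^X=T(e_s^B)$, and conclude from the pathwise identity $N_{t,\Gamma}^X=N_{t,T^{-1}(\Gamma)}^B$ together with the definition of the characteristic measures. The "technical obstacle" you flag at the end is handled by the one-line computation $\rho_r^\sca(e_s^B)=\inf\{u>0,\ A_{\tau_{s-}^B+u}^{\mm^\sca}>\tau_{s-}+r\}=\rho_{\tau_{s-}+r}-\tau_{s-}^B$, which you have already essentially written down.
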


\noindent
Let us note that, since $T^{-1}(\Gamma) \subset \bm{C}_{x_+,x_-}$, the measure $\nn$ is a finite measure if and only if $x_+ >0$ and $x_- < 0$, since in this case $\nn^B(\bm{C}_{x_+,x_-})<\infty$.

\begin{proof}
We first emphasize that thanks to Proposition~\ref{prop_ok_gen}-\textit{(iv)}, we have a.s., for any $t\geq0$,
\[
 \Delta\tau_t = A^{\mm^\sca}_{\tau_t^B} - A^{\mm^\sca}_{\tau_{t-}^B} =  \int_\bbR (L_{\tau_t^B}^x - L_{\tau_{t-}^B}^x)\mm^\sca(\dr x)\,.
\]
Thus, it should be clear that 
\[
 \Delta\tau_t > 0 \qquad \text{if and only if} \qquad \Delta\tau_t^B >0 \quad\text{and}\quad e_t^B \in \bm{C}_{x_+,x_-}.
\]

\noindent
Let us now consider some $t \geq 0$ such that $\Delta\tau_t > 0$, \textit{i.e.}\ some $t\geq0$ such that $\Delta\tau_t^B >0$ and $e_t^B \in \bm{C}_{x_+,x_-}$. We set for $x\in\bbR$ and $s\in[0,\Delta\tau_t^B]$, $\bm{\mathrm{L}}_s^x = L_{\tau_{t-}^B + s}^x - L_{\tau_{t-}^B}^x$, the local time of $e_t^B$. Then we introduce the time change $(A_s^t)_{s\in[0,\Delta\tau_t^B]}$, defined by
\[
 A_s^t = \int_\bbR\bm{\mathrm{L}}_s^x\mm^\sca(\dr x) = A_{\tau_{t-}^B + s}^{\mm^\sca} - \tau_{t-}.
\]

\noindent
If $(\rho_s^t)_{s\in[0,\Delta\tau_t]}$ denotes the right-continuous inverse of $(A_s^t)_{s\in[0,\Delta\tau_t^B]}$, then for every $s\in[0,\Delta\tau_t]$, we get the following identity.
\[
 \rho_s^t = \inf\{u > 0, \: A_{\tau_{t-}^B + u}^{\mm^\sca} >  \tau_{t-} +s\} = \rho_{\tau_{t-} + s} - \tau_{t-}^B.
\]

\noindent
Therefore, we conclude that, if $t$ is such that $\Delta\tau_t> 0$, then we have
\[
 e_t^X = (\sca^{-1}(B_{\rho_{\tau_{t-} + s}}))_{s\in[0,\Delta\tau_t]} = (\sca^{-1}(B_{\tau_{t-}^B + \rho_s^t}))_{s\in[0,\Delta\tau_t]} = T(e_t^B).
\]

\noindent
Therefore, for any measurable set $\Gamma$, we have for any $t\geq0$, a.s. $N_{t,\Gamma}^X = N_{t,T^{-1}(\Gamma)}^B$, which, by~\eqref{def_exc_measure}, shows the result.
\end{proof}

We are now able to express some quantities of interest  of $(\tau_t,Z_t)_{t\geq 0}$ using the Brownian excursion measure. We first define for any $\varepsilon \in \bm{C}_{x_+,x_-}$ 
\begin{equation}
\label{eq:Atf}
 (A_t^f)_{0\leq t\leq \ell} = \left(\int_{\bbR}\bm{\mathrm{L}}_t^x\mm^f(\dr x) \right)_{0\leq t\leq \ell} \,.
\end{equation}
\begin{lemma}\label{lemma_charac_brown}
 The following assertions hold.
 \begin{enumerate}[label=(\roman*)]
  \item For any $\lambda > 0$ and any $\mu\in\bbR$,
  \[
   -\log\mathbb{E}\left[\e^{-\lambda \tau_1 + i\mu Z_1}\right] = \nn^B\left(1 - \exp\left(-\lambda A_\ell^\sca + i\mu A_\ell^f \right)\right).
  \]
  \item Let $e = e(q)$ be an independent exponential random variable of parameter $q > 0$. Then for any $\mu\in\bbR$,
  \[
   \mathbb{E}\left[\e^{i\mu I_e}\right] = \frac{q}{\Phi(q)}\bigg(\mathrm{m} +\nn^B\bigg(\int_0^\ell\exp(-q A_t^{\sca} + i \mu A_t^f)\dr A_t^\sca\bigg)\bigg)\,,
  \]
  where we recall that $\mathrm{m}$ is the drift coefficient of $(\tau_t)_{t\geq 0}$, see Section~\ref{sec:exc_theory}.
 \end{enumerate}
\end{lemma}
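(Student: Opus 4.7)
The plan is to derive both identities from the Lévy--Khintchine representation (for (i)) and from Proposition~\ref{indep} (for (ii)), in each case converting an integral against $\nn$ into an integral against $\nn^B$ via the push-forward identity $\nn(\cdot) = \nn^B(T^{-1}(\cdot))$ from Proposition~\ref{pushforward_brownian}.

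For (i), I would first observe that $(\tau_t, Z_t)_{t\geq 0}$ is a bivariate Lévy process, whose first coordinate is a subordinator with drift $\mathrm{m}=\mm(\{0\})$ and whose second coordinate has no drift (since $f(0)=0$). The simultaneous jumps $(\Delta\tau_t,\Delta Z_t)$ are given by $(\ell(e^X_t),\f(e^X_t))$, so the Lévy measure of $(\tau,Z)$ is the image of $\nn$ under $\gep\mapsto(\ell(\gep),\f(\gep))$. The Lévy--Khintchine formula yields
\[
-\log\bbE\!\left[\e^{-\lambda \tau_1+i\mu Z_1}\right] = \mathrm{m}\lambda + \nn\!\left(1-\e^{-\lambda \ell + i\mu \f}\right).
\]
Using Proposition~\ref{pushforward_brownian} and the two key identifications $\ell\circ T(\eta)=A^\sca_\ell(\eta)$ (by construction of $T$) and $\f\circ T(\eta)=A^f_\ell(\eta)$ (see below), the right-hand integral becomes $\nn^B\bigl(1-\e^{-\lambda A^\sca_\ell+i\mu A^f_\ell}\bigr)$. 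Note that outside $\bm{C}_{x_+,x_-}$ both $A^\sca_\ell$ and $A^f_\ell$ vanish (the excursion's local times are supported off the support of $\mm^\sca\supseteq\mathrm{supp}(\mm^f)$), so the integral automatically extends to all of $\mathcal{E}$.

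For (ii), I would apply Proposition~\ref{indep}-\eqref{eq:F2} with the bounded functional $F_2((\gep_v)_{0\le v\le u})=\exp\bigl(i\mu\int_0^u f(\gep_v)\,\dr v\bigr)$, which has $F_2(\tilde 0)=1$; this directly gives
\[
\bbE\!\left[\e^{i\mu I_e}\right]=\frac{q}{\Phi(q)}\bigg(\mathrm{m}+\nn\!\bigg(\int_0^\ell \e^{-qu+i\mu\int_0^u f(\gep_v)\dr v}\dr u\bigg)\bigg).
\]
Pushing the $\nn$-integral forward by $T$, one is left with evaluating, for $\eta\in\bm{C}_{x_+,x_-}$, the integral $\int_0^{A^\sca_{\ell(\eta)}} \e^{-qu+i\mu\int_0^u f(T(\eta)_v)\dr v}\,\dr u$. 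The change of variables $u=A^\sca_t(\eta)$ (so $\dr u=\dr A^\sca_t$) combined with the identity $\int_0^{A^\sca_t} f(T(\eta)_v)\,\dr v = A^f_t(\eta)$ transforms this into $\int_0^{\ell(\eta)} \e^{-q A^\sca_t+i\mu A^f_t}\dr A^\sca_t$, which is exactly the claimed expression.

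The main technical step, shared by both parts, is the identification $\int_0^t f\bigl(\sca^{-1}(\eta_{\rho^\sca_{A^\sca_s}})\bigr)\,\dr A^\sca_s = A^f_t(\eta)$, which boils down to the generalized occupation time formula
\[
\int_0^t g(\eta_s)\,\dr A^\sca_s = \int_{\bbR} g(x)\,\bm{\mathrm{L}}^x_t\,\mm^\sca(\dr x),\qquad g=f\circ \sca^{-1},
\]
whose right-hand side equals $A^f_t(\eta)$ since $\dr\mm^f = f\circ\sca^{-1}\,\dr\mm^\sca$. This in turn rests on the joint continuity of the excursion local times supplied by Lemma~\ref{holder_local_time} and on the local integrability of $f\circ\sca^{-1}$ with respect to $\mm^\sca$ assumed from the outset. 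Once this occupation formula and the change-of-variables step are in place, the rest of the argument is a routine manipulation of Lévy--Khintchine exponents and an application of Fubini's theorem (which is legitimate because $\lambda>0$ makes $1-\e^{-\lambda A^\sca_\ell+i\mu A^f_\ell}$ bounded by $\lambda A^\sca_\ell+|\mu|A^f_\ell$ on the relevant set).
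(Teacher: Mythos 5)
Your argument follows the paper's proof step for step: for (i), the exponential formula over the excursion point process followed by the push-forward $\nn=\nn^B\circ T^{-1}$ of Proposition~\ref{pushforward_brownian} and the time-change/occupation identity $\f\circ T=A^f_\ell$; for (ii), Proposition~\ref{indep}-\eqref{eq:F2} (extended to complex-valued $F_2$ by linearity), the same push-forward, and the substitution $u=A^\sca_t$. All of these steps are carried out correctly, and your remark that $A^\sca_\ell=A^f_\ell=0$ off $\bm{C}_{x_+,x_-}$, so that the indicator can be dropped, is exactly the paper's.

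There is one loose end in (i) that you must address. Your L\'evy--Khintchine display reads $-\log\bbE[\e^{-\lambda\tau_1+i\mu Z_1}]=\mathrm{m}\lambda+\nn\big(1-\e^{-\lambda\ell+i\mu\f}\big)$, with drift $\mathrm{m}=\mm^\sca(\{0\})$, whereas the identity to be proved has no $\mathrm{m}\lambda$ term, and you never say where it goes. It is not absorbed into $\nn^B\big(1-\e^{-\lambda A^\sca_\ell+i\mu A^f_\ell}\big)$: a Brownian excursion accrues zero local time at level $0$, so $A^\sca_\ell$ is blind to the atom of $\mm^\sca$ at $0$. Hence, when $\mathrm{m}>0$ (e.g.\ the birth-and-death examples, where $\mm^\sca$ charges $0$), what your computation actually establishes is $-\log\bbE[\e^{-\lambda\tau_1+i\mu Z_1}]=\mathrm{m}\lambda+\nn^B\big(1-\e^{-\lambda A^\sca_\ell+i\mu A^f_\ell}\big)$ --- consistent with $\Phi(q)=\mathrm{m}q+\nn(1-\e^{-q\ell})$ and with the explicit $\mathrm{m}$ in part (ii), but not literally the displayed identity. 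You should either carry the drift term onto the right-hand side or state that you are taking $\mathrm{m}=0$. (The paper's own proof applies the exponential formula as if $\tau$ were pure jump and drops the term silently, so your version is in fact the more careful one; and in the only place the lemma is used the drift contributes $\lambda\mathrm{m}\,b(h)/h\to0$ under the rescaling, so nothing downstream is affected. But as written your second display contradicts your conclusion, and that has to be reconciled explicitly.)
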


\begin{proof}
 Let us start with the first item. By the exponential formula for Poisson point processes, it is straightforward that for any $\lambda > 0$ and any $\mu\in\bbR$,
 \[
 \psi(\lambda, \mu) :=-\log \mathbb{E}\left[\e^{-\lambda\tau_1 + i\mu Z_1}\right] = \nn\bigg(1 - \exp\bigg(-\lambda\ell + i\mu\int_0^\ell f(\varepsilon_s) \dr s\bigg)\bigg) \,.
\]
Applying Proposition \ref{pushforward_brownian}, we get that
\[
 \psi(\lambda, \mu) = \nn^B\bigg(\bm{1}_{\{\varepsilon \in \bm{C}_{x_+,x_-}\}}\bigg(1-\exp\bigg(-\lambda A_\ell^\sca + i\mu\int_0^{A_\ell^\sca}f(\sca^{-1}(\varepsilon_{\rho_s^{\sca}}))\dr s\bigg)\bigg)\bigg)
\]
For any $\varepsilon \in \bm{C}_{x_+,x_-}$, we have $\int_0^{A_\ell^\sca}f(\sca^{-1}(\varepsilon_{\rho_s^{\sca}}))\dr s = \int_0^\ell f(\sca^{-1}(\varepsilon_u))\dr A_u^\sca = A_\ell^f$. Remark that if $\varepsilon\notin\bm{C}_{x_+,x_-}$, then $A_\ell^\sca = A_\ell^f = 0$ so that
\[
 \psi(\lambda, \mu) = \nn^B\left(1 - \exp\left(-\lambda A_\ell^\sca + i\mu A_\ell^f \right)\right).
\]
We now show the second item. By Proposition~\ref{indep}, we have for any $\mu\in\bbR$,
\[
   \mathbb{E}\left[\e^{i\mu I_e}\right] = \frac{q}{\Phi(q)}\bigg(\mathrm{m} +\nn\bigg(\int_0^\ell\exp\bigg(-q t + i \mu \int_0^t f(\varepsilon_s)\dr s\bigg)\dr t\bigg)\bigg) =: \frac{q}{\Phi(q)}(\mathrm{m} + G(\mu, q)).
\]
Then, by Proposition \ref{pushforward_brownian} again, we get
\[
 G(\mu, q) = \nn^B\bigg(\bm{1}_{\{\varepsilon \in \bm{C}_{x_+,x_-}\}}\int_0^{A_\ell^\sca}\exp\bigg(-q t + i \mu \int_0^t f(\sca^{-1}(\varepsilon_{\rho_s^{\sca}}))\dr s\bigg)\dr t\bigg).
\]
Performing the change of variables $t = A_u^\sca$ and using the occupation time formula for the Brownian excursion, we get
\[
 G(\mu, q) = \nn^B\bigg(\bm{1}_{\{\varepsilon \in \bm{C}_{x_+,x_-}\}}\int_0^\ell\exp(-q A_t^{\sca} + i \mu A_t^f)\dr A_t^\sca\bigg).
\]
If $\varepsilon\notin\bm{C}_{x_+,x_-}$, then $A_t^\sca = 0$ for any $t\in[0,\ell]$ and we can remove $\bm{1}_{\{\varepsilon \in \bm{C}_{x_+,x_-}\}}$ from the above expression, which gives the desired statement.
\end{proof}

\subsection{Additive functionals for strings with regular variation}\label{section_strings}

Assumptions \ref{assump_phi_beta_ito} and \ref{assump_m_f_alpha_ito} state that $\mm^\sca$ and $\mm^f$ are regularly varying and we will show that it implies that $(\tau_t, Z_t)_{t\geq 0}$ belongs to the domain of attraction of a bivariate stable process.
To prove that, we remark that the jumps of the rescaled process $(\tau^h_t, Z^h_t)_{t\geq 0}$ (with $h\downarrow 0$) are also additive functionals of the Brownian motion $(B_t)_{t\geq0}$ but involving rescaled measures $\mm_h^\sca$ and $\mm_h^f$.
We need technical tools to go from the convergence of $(\mm_h^\sca,\mm_h^f)$ to the convergence of the law of those additive functionals. The goal of this section is to provide those technical tools which are mainly inspired by a work by Fitzsimmons and Yano \cite{fy08} and are based on classical result from regular variation theory \cite{bgt89}. All the proofs of the following results are postponed to Appendix~\ref{appendix_strings}.

Let us now give some convergence results for sequences for additive functionals of Brownian excursions, in the spirit of \cite[Thm.~2.9]{fy08}; we will apply them to $\mm^\sca$ and $\mm^f$ (more precisely to their restrictions to $\bbR_+$ and $\bbR_-$). 
We consider a \emph{string}, \textit{i.e.}\ a non-decreasing right-continuous function $\mm$ on $\bbR_+$ such that $\mm(0) = 0$,  with regular variation in the sense that there are some $\alpha\in(0,2)$, and some smooth, locally bounded slowly varying function $\Lambda:\bbR_+\to(0,\infty)$, such that, according to the values of $\alpha$, we have

\begin{itemize}
 \item If $\alpha < 1$, then $\mm(x) \sim \Lambda(x)x^{1/\alpha -1}$ as $x\to\infty$.
 \vspace{1 mm}
 \item If $\alpha = 1$, then for any $x > 0$, $(\mm(x/h) - \mm(1/h)) / \Lambda(1/h) \longrightarrow \log x$ as $h\to0$.
 \vspace{1 mm}
 \item If $\alpha\in(1,2)$, then $\lim_{x\to\infty}\mm(x) = \mm(\infty) < \infty$ and $\mm(\infty)-\mm(x) \sim \Lambda(x)x^{1/\alpha -1}$ as $x\to\infty$.
\end{itemize}

\noindent
For such a string $\mm$, we define the family of rescaled strings $\mm_h, h>0$ as follows:
\begin{itemize}
 \item If $\alpha < 1$, then $\mm_h(x) = h^{1/\alpha - 1} \mm(x/h) / \Lambda(1/h)$.
 \vspace{1 mm}
 \item If $\alpha = 1$, then $\mm_h(x) = (\mm(x/h) - \mm(1/h))/ \Lambda(1/h)$.
 \vspace{1 mm}
 \item If $\alpha\in(1,2)$, then $\mm_h(x) = h^{1/\alpha - 1} (\mm(x/h) - \mm(\infty)) / \Lambda(1/h)$.
\end{itemize}

\noindent
Note that in all cases, we have $\mm_h(\dr x) = \frac{h^{1/\alpha - 1}}{\Lambda(1/h)}\mm(\dr(x/h))$. 
Moreover, it is easily checked that
\begin{itemize}
 \item If $\alpha \in (0,1)$, then for any $x > 0$, $\mm_h(x) \longrightarrow x^{1/\alpha - 1}$ as $h\to0$.
 \vspace{1 mm}
 \item If $\alpha = 1$, then for any $x > 0$, $\mm_h(x)\longrightarrow \log x$ as $h\to0$.
 \vspace{1 mm}
 \item If $\alpha \in (1,2)$, then for any $x > 0$, $\mm_h(x) \longrightarrow -x^{1/\alpha - 1}$ as $h\to0$.
\end{itemize}

\noindent
The following lemma shows that the regular variations of $\mm$ implies the convergence for additive functionals with respect to the rescaled strings $\mm_h$ (as $h \to 0$). This will reveal to be a key result in the proof of Proposition \ref{prop_assump_true} and is close from results in \cite{fy08}.

\begin{lemma}\label{lemconv_excursion}
Let $\mm$ be such a string with regular variation and $g:[0,\infty) \to \bbR$ some compactly supported continuous function such that $g(0)=0$ and which is $\gamma$-Hölder at $0$ for any $\gamma<1/2$. Then 
\[
\lim_{h\downarrow 0}\int_{\bbR_+} g(x) \mm_h(\dr x)  = c_\alpha \int_{\bbR_+} g(x) x^{1/\alpha -2} \dr x,
\]
where $c_\alpha = \vert 1/\alpha -1 \vert $ if $\alpha\neq1$ and $c_\alpha =1$ for $\alpha =1$.
\end{lemma}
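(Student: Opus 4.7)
The plan is to prove the lemma as a weak convergence statement: the rescaled Stieltjes measures $\mm_h(\dr x)$ converge vaguely on $(0,\infty)$ to $c_\alpha x^{1/\alpha-2}\dr x$, with the subtle point being the control near~$0$ where the limit density is singular for $\alpha \geq 1$. The first step is to use the uniform convergence theorem for regularly varying (resp.\ $\Pi$-varying) functions \cite[Thm.~1.2.1 and \S 3.1]{bgt89} to upgrade the pointwise convergence of $\mm_h$ from the hypothesis to locally uniform convergence on $(0,\infty)$, hence vague convergence of the associated measures. This immediately yields, for every fixed $\varepsilon > 0$,
\[
\int_{(\varepsilon,\infty)} g(x)\, \mm_h(\dr x) \xrightarrow[h\downarrow 0]{} c_\alpha \int_{\varepsilon}^{\infty} g(x)\, x^{1/\alpha-2}\, \dr x,
\]
using continuity of $g$ and its compact support.

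The core of the argument is to control the tail $\int_0^\varepsilon |g(x)|\, \mm_h(\dr x)$ uniformly in small~$h$. I would fix $\gamma \in (0,1/2)$ with $\gamma > 1 - 1/\alpha$ (possible since $\alpha < 2$), so that $|g(x)| \leq C x^\gamma$ near~$0$, and change variables $y = x/h$ to get
\[
\int_0^\varepsilon x^\gamma \mm_h(\dr x) = \frac{h^{\gamma+1/\alpha-1}}{\Lambda(1/h)} \int_0^{\varepsilon/h} y^\gamma\, \mm(\dr y).
\]
The inner integral can be rewritten via integration by parts as $\gamma \int_0^M y^{\gamma-1}(\mm(M) - \mm(y)) \dr y$, and I would estimate it by the scaling $y = Mu$: in each of the cases $\alpha \in (0,1)$, $\alpha = 1$, $\alpha \in (1,2)$, the normalized quantity $(\mm(M) - \mm(Mu))/(\Lambda(M) M^{1/\alpha-1})$ converges pointwise on $(0,1)$ to $1 - u^{1/\alpha-1}$, $\log(1/u)$, or $u^{1/\alpha-1} - 1$ respectively, and Potter-type bounds \cite[\S 1.5 and \S 3.1]{bgt89} provide an integrable $u$-majorant (the condition $\gamma > 1 - 1/\alpha$ is exactly what ensures integrability of this majorant at $u = 0$). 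Dominated convergence then yields
\[
\int_0^M y^\gamma \mm(\dr y) \sim \frac{c_\alpha}{\gamma + 1/\alpha - 1}\, \Lambda(M)\, M^{\gamma+1/\alpha-1} \qquad \text{as } M \to \infty,
\]
and substituting $M = \varepsilon/h$ together with the slow variation of $\Lambda$ gives $\lim_{h\downarrow 0}\int_0^\varepsilon x^\gamma \mm_h(\dr x) = c_\alpha \varepsilon^{\gamma+1/\alpha-1}/(\gamma + 1/\alpha - 1)$, which vanishes as $\varepsilon \downarrow 0$ and matches exactly the corresponding tail $c_\alpha \int_0^\varepsilon x^{\gamma + 1/\alpha - 2} \dr x$ of the target limit.

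Combining these two steps (split $\int_0^\infty g\, \mm_h$ at $\varepsilon$, let $h \downarrow 0$, then $\varepsilon \downarrow 0$) gives the result. The main obstacle is the case $\alpha \in [1,2)$: a naive estimate such as $\int_0^M y^\gamma \mm(\dr y) \leq M^\gamma \mm(\infty)$ (for $\alpha > 1$) or $\leq M^\gamma \mm(M) \sim M^\gamma \Lambda(M) \log M$ (for $\alpha = 1$) is too crude by a divergent slowly-varying factor, and the correct coefficient $c_\alpha/(\gamma + 1/\alpha - 1)$ only emerges after an exact cancellation of leading divergent terms in the integration-by-parts identity above, combined with the sharp de-Haan or regular-variation asymptotics of $\mm(M) - \mm(Mu)$.
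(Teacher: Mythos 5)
Your proof is correct and follows essentially the same route as the paper: split the integral at a small threshold, use weak convergence of the rescaled measures away from the origin, and control the piece near $0$ via the Hölder bound $|g(x)|\le C x^{\gamma}$ with $\gamma+1/\alpha-1>0$ together with Potter-type bounds on $\mm_h$. The only difference is that for the near-origin piece the paper is content with the crude uniform-in-$h$ estimate $\int_0^{\varepsilon}x^{\gamma}\mm_h(\dr x)\le \varepsilon^{\gamma}|\mm_h(\varepsilon)|+\gamma\int_0^{\varepsilon}x^{\gamma-1}|\mm_h(x)|\dr x\le C\varepsilon^{1/\alpha-1+\gamma-\eta}$ (obtained from the uniform bound $|\mm_h(x)|\le Cx^{1/\alpha-1-\eta}$ of its Lemma~\ref{Potter'sbounds}), which vanishes as $\varepsilon\downarrow0$, so your sharper computation of the exact limit of that piece, while valid, is more work than is needed.
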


\begin{remark}
\label{rem_conv_unif}
Thanks to Lemma \ref{holder_local_time}, the previous lemma applies to the family of local times of the Brownian excursion, \textit{i.e.}\ $x\mapsto \bm{\mathrm{L}}_t^x$. For $t\in [0,\ell]$, we have $\nn^B_{+}$-a.e,
\[
\lim_{h\downarrow 0}\int_{\bbR_+} \bm{\mathrm{L}}_t^x \mm_h(\dr x) = c_\alpha \int_{\bbR_+} \bm{\mathrm{L}}_t^x\, x^{1/\alpha -2} \dr x.
\]
Note also that since $t\mapsto \int_{\bbR_+} \bm{\mathrm{L}}_t^x \mm_h(\dr x)$ is non-decreasing and the limiting function $t\mapsto  \int_{\bbR_+} \bm{\mathrm{L}}_t^x x^{1/\alpha -2} \dr x$ is continuous, the convergence holds in uniform norm on compact set. 
\end{remark}

We will use the following technical lemma to go from the convergence of additive functionals for an excursion to the convergence in measure (it is a truncation and domination lemma).  
\begin{lemma}\label{conv_domination}
Define $\mathbf{A}_\delta :=\{ \varepsilon \in \mathcal{C}, \sup_{s\in [0,\ell]}\vert \varepsilon_s \vert \leq \delta \}$ then for $0<\delta <1$ : 
\begin{enumerate}[label=(\roman*)]
\item If $\alpha \in (0,1)$ then $\underset{h\downarrow0}{\limsup} \; \nn^B_+ \left [  \Big (\int_{\bbR_+} \bm{\mathrm{L}}_\ell^x \mm_h(\dr x)  \Big) \mathbf{1}_{\mathbf{A}_\delta}  \right ] \leq  2 \delta^{\frac1\alpha-1}$, which goes to $0$ as $\delta\downarrow 0$.
\item If $\alpha \in [1,2)$ then 
\[
\lim_{\delta\downarrow0} \nn^B_+ \bigg [  \underset{h\in (0,1)}{\sup} \Big (  \int_{\bbR_+} \bm{\mathrm{L}}_\ell^x \mm_h(\dr x) \Big )^2 \mathbf{1}_{\mathbf{A}_\delta } \bigg ]  = 0.
\]
\item If $\alpha \in (1,2)$ then
\[
\nn_+^B \bigg [ \underset{h\in (0,1)}{\sup} \Big(  \int_{\bbR_+} \bm{\mathrm{L}}_\ell^x \mm_h(\dr x) \Big)   \mathbf{1}_{\mathbf{A}_\delta^c}\bigg ] <+\infty.
\]
\item If $\alpha=1$ then
\[
\nn_+^B \bigg [ \underset{h\in (0,1)}{\sup} \Big(  \int_0^{1} \bm{\mathrm{L}}_\ell^x \mm_h(\dr x) \Big)   \mathbf{1}_{\mathbf{A}_\delta^c}\bigg ] <+\infty.
\]
\end{enumerate}
\end{lemma}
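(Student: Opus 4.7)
My plan relies on three standard ingredients for the Brownian excursion measure $\nn_+^B$:
(a) the first-moment identity $\nn_+^B(\bm{\mathrm{L}}_\ell^x)=2$ for every $x>0$;
(b) the second-moment bound $\nn_+^B(\bm{\mathrm{L}}_\ell^x\bm{\mathrm{L}}_\ell^y)\leq C(x\wedge y)$ (obtained by decomposing the excursion at the first passage through $x\wedge y$ and using Williams/Imhof);
and (c) the H\"older continuity of $x\mapsto \bm{\mathrm{L}}_\ell^x$ near $0$ from Lemma~\ref{holder_local_time}, with $\nn_+^B$-moment bounds on the H\"older constant restricted to $\{M>\delta\}$ by Kolmogorov-type estimates. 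I also use Potter's bounds to transfer the regular variation of $\mm$ at infinity into uniform-in-$h$ estimates for $\mm_h$ at the origin.

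For \textit{(i)}, on $\mathbf{A}_\delta$ the local times vanish for $x>\delta$, so $\int_{\bbR_+}\bm{\mathrm{L}}_\ell^x\mm_h(\dr x)\mathbf 1_{\mathbf A_\delta}\leq \int_{(0,\delta]}\bm{\mathrm{L}}_\ell^x\mm_h(\dr x)$, and Fubini together with (a) yields $2\mm_h((0,\delta])$. Regular variation gives $\mm_h((0,\delta])\to \delta^{1/\alpha-1}$ as $h\downarrow 0$, which is the asserted $\limsup$. For \textit{(ii)}, I will exploit (c): the H\"older bound $\bm{\mathrm{L}}_\ell^x\leq H(\varepsilon)x^\gamma$ for any $\gamma\in(1-1/\alpha,\tfrac{1}{2})$ (non-empty since $\alpha<2$) produces
\[
\int_0^\delta \bm{\mathrm{L}}_\ell^x\,\mm_h(\dr x)\;\leq\; H(\varepsilon)\int_0^\delta x^\gamma\,\mm_h(\dr x).
\]
A Fubini argument combined with Potter's bound on $\mm_h$ yields $\int_0^\delta x^\gamma\mm_h(\dr x)\leq C_\gamma \delta^{\gamma+1/\alpha-1}$ uniformly for $h\in(0,h_0)$, and this vanishes as $\delta\downarrow 0$ since $\gamma+1/\alpha-1>0$ by the choice of $\gamma$. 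Squaring, taking $\nn_+^B$-expectation, and using $\nn_+^B[H(\varepsilon)^2\mathbf 1_{\mathbf A_\delta}]<\infty$ give the vanishing in (ii); the finite range $h\in[h_0,1)$ is handled by the same bound with $h$-dependent but finite constants, which still vanish with $\delta$.

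For \textit{(iii)} and \textit{(iv)}, I will split $\int_0^\infty=\int_0^\delta+\int_\delta^\infty$ (respectively $\int_0^1=\int_0^\delta+\int_\delta^1$). On $\{M>\delta\}$ the measure $\nn_+^B$ is finite, so I only need to produce integrands with finite $\nn_+^B$-expectation on that set. The small-$x$ piece is controlled exactly as in (ii): the H\"older bound gives $H(\varepsilon)C_\gamma\delta^{\gamma+1/\alpha-1}$ uniformly in $h$, and $\nn_+^B[H(\varepsilon);M>\delta]<\infty$ by standard Kolmogorov-type estimates for local times of Bessel bridges. The large-$x$ piece is bounded by $\sup_x \bm{\mathrm{L}}_\ell^x\cdot\sup_{h\in(0,1)}\mm_h(J)$, where $J=[\delta,\infty)$ in (iii) and $J=[\delta,1]$ in (iv). Potter's bound ensures that $\sup_{h}\mm_h(J)<\infty$: for $\alpha\in(1,2)$ because $1/\alpha-1<0$ forces the tail of $\mm_h$ to remain bounded, for $\alpha=1$ because the restriction to $[\delta,1]$ amounts to a slowly varying factor. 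Finally, $\nn_+^B[\sup_x\bm{\mathrm{L}}_\ell^x;M>\delta]<\infty$ by standard excursion theory (Imhof's description), which closes the argument. The main technical hurdle will be making the uniform-in-$h$ Potter estimates precise and establishing the moment control $\nn_+^B[H(\varepsilon)^2\mathbf 1_{\mathbf A_\delta}]<\infty$, both of which are classical but require careful bookkeeping.
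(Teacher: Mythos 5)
Your overall strategy (reduce to the local-time field of the excursion, control it near $0$ by a H\"older bound with an integrable random constant, and use Potter's bounds on $\mm_h$) is the same as the paper's; the paper implements it by dominating $\bm{\mathrm{L}}_\ell^x$ by a $\mathrm{BESQ}(4)$ process $R_x$ via Williams' decomposition and the second Ray--Knight theorem, which packages the moment bounds you defer (``$\nn_+^B[H(\varepsilon)^2\mathbf 1_{\mathbf A_\delta}]<\infty$'') into a Kolmogorov estimate for a single fixed process. Items \textit{(i)}, \textit{(ii)} and \textit{(iv)} of your sketch are workable (note only that $\nn_+^B(\bm{\mathrm{L}}_\ell^x)=1$, not $2$, by the master formula --- harmless here since you only need an upper bound --- and that for \textit{(ii)} you must also check that $\nn_+^B[H(\varepsilon)^2\mathbf 1_{\mathbf A_\delta}]$ stays bounded as $\delta\downarrow0$, which follows from the scaling $H(\varepsilon)\approx H_1 M(\varepsilon)^{1-\gamma}$ and $\int_0^\delta y^{-2\gamma}\dr y<\infty$ for $\gamma<1/2$).

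There is, however, a genuine gap in item \textit{(iii)}. You bound the large-$x$ piece by $\sup_x\bm{\mathrm{L}}_\ell^x\cdot\sup_h\mm_h([\delta,\infty))$ and claim $\nn_+^B\big[\sup_x\bm{\mathrm{L}}_\ell^x\,;\,M>\delta\big]<\infty$. This is false. Conditionally on $M(\varepsilon)=y$, the local-time profile is comparable to a $\mathrm{BESQ}(4)$ on $[0,y]$ started at $0$ (indeed $\mathbb{E}[\bm{\mathrm{L}}_\ell^{y/2}\,|\,M=y]\geq cy$ by the Green's function of the $3$-dimensional Bessel process killed at $T_y$), so $\mathbb{E}[\sup_x\bm{\mathrm{L}}_\ell^x\,|\,M=y]\asymp y$, and since $\nn_+^B(M\in\dr y)=\tfrac12 y^{-2}\dr y$ one gets
\[
\nn_+^B\Big[\sup_x\bm{\mathrm{L}}_\ell^x\,\mathbf 1_{\{M>\delta\}}\Big]\;\asymp\;\int_\delta^{\infty}y\cdot y^{-2}\,\dr y\;=\;+\infty .
\]
The crude supremum discards exactly the cancellation that makes \textit{(iii)} true: for $\alpha\in(1,2)$ the measure $\mm_h(\dr x)$ has total mass of order $y^{1/\alpha-1}\to0$ on $[y,\infty)$, while $\bm{\mathrm{L}}_\ell^x$ grows only linearly in $x$. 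You must keep the $x$-dependence, e.g.\ use $\bm{\mathrm{L}}_\ell^x\leq C_\eta\,(x^{1-\eta}\vee x^{1+\eta})$ for all $x\geq0$ with $C_\eta$ integrable (obtained from the $\mathrm{BESQ}(4)$ domination together with time inversion to handle large $x$, as in the paper), which gives $\int_0^{y}(x^{1-\eta}\vee x^{1+\eta})\,\mm_h(\dr x)\leq C+C(y\vee1)^{1/\alpha+2\eta}$ uniformly in $h$; this is integrable against $y^{-2}\dr y$ on $[\delta,\infty)$ precisely because $1/\alpha<1$. Your item \textit{(iv)} escapes this problem only because the integration in $x$ is cut off at $1$, so $\sup_{x\in[\delta,1]}\bm{\mathrm{L}}_\ell^x$ has finite $\nn_+^B$-mean on $\{M>\delta\}$.
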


Although these results are stated for functionals of positive Brownian excursions, they obviously hold for functionals of negative excursions as the measure $\nn^B$ is invariant by the application $\varepsilon \mapsto - \varepsilon$. In the following, we apply these results to the strings $\mm^\sca$ and $\mm^f$. Indeed, since we assumed that $\mm^\sca(0) = 0$ and $\mm^f(0) = 0$, we can first apply the results to $\mm^\sca$ and $\mm^f$ restricted to $\bbR_+$, denoted $\mm^{\sca}_+$ and $\mm^f_+$. Then we can also apply them to $\mm^\sca_-$ and $\mm^f_-$ defined on $\bbR_-$ as $\mm^\sca_- = \mm^\sca - \mm^\sca(0-)$, $\mm^f_- = \mm^f - \mm^f(0-)$ on $(-\infty, 0)$ and $\mm^\sca_-(0) = 0$, $\mm^f_-(0) = 0$.

\subsection{Proof of Proposition \ref{prop_assump_true}}
We finally prove here Proposition \ref{prop_assump_true}. 
We let the reader recall Assumptions \ref{assump_phi_beta_ito} and \ref{assump_m_f_alpha_ito}, which are assumed throughout this proof (they tell that $\mm^{\sca}_{\pm}$ and $\mm^f_{\pm}$ are \textit{strings} with regular variation).
Let us also introduce the following notation: for $a,b, x \in\mathbb{R}$, we set $\text{sgn}_{a,b}(x) = a\bm{1}_{\{x > 0\}} + b\bm{1}_{\{x < 0\}}$. The function $\text{sgn}$ will denote the usual sign function, \textit{i.e.}\ $\text{sgn}(x) = \text{sgn}_{1,-1}(x)$. Then, for $\varepsilon\in\mathcal{C}$, let us denote:
\[
A_\ell^\beta(\varepsilon) = c_{\beta}\int_0^{\ell}\text{sgn}_{m_+,m_-}(\varepsilon_s)|\varepsilon_s|^{1/\beta - 2}\dr s, \qquad A_\ell^\alpha(\varepsilon) = c_{\alpha}\int_0^\ell \text{sgn}_{f_+,-f_-}(\varepsilon_s)|\varepsilon_s|^{1/\alpha - 2}\dr s \,,
\]
where $c_{\alpha}$ is as in Lemma~\ref{lemconv_excursion}.
In the case $\alpha=1$, $f_+ =f_-=1$ so $A_\ell^1(\varepsilon) = \int_0^\ell \frac{\dr s}{\varepsilon_s}.$

We proceed in two steps: first, we prove that the rescaled Lévy process $(\tau^h_t, Z^h_t)_{t\geq 0}$ converges as $h\downarrow0$; then we prove the convergence of the other term $a(b^{-1}(q))I_e$ as $q\downarrow 0$.

\paragraph*{Step 1: Convergence of $(\tau_t^h,Z_t^{h})$.}
Let us set $b(h) = h^{1/\beta} / \Lambda_\sca(1/h)$ and $a(h) = h^{1/\alpha}/ \Lambda_f(1/h)$.
We show that  $(\tau^{h}_t, Z_{t}^{h})_{t\geq0} :=( b(h)\tau_{t/h}, a(h) Z_{t/h})_{t\geq0}$ converges in distribution to $(\tau^0_t, Z^0_{t})_{t\geq0}$,
where $(\tau^0_t, Z^0_{t})_{t\geq0}$ is a L\'evy process, without drift (except in the case $\alpha=1$ where the drift is some constant $\mathbf{\tilde{c}}$) and without Brownian component, whose L\'evy measure $\pi^0(\dr r, \dr z)$ is defined as
\[
 \pi^0(\dr r, \dr z) = \nn^B\left(A_\ell^\beta(\varepsilon) \in \dr r, A_\ell^\alpha(\varepsilon) \in \dr z\right).
\]

\noindent
To this end, we will show that the following convergence holds for every $\lambda > 0$ and $\mu\in\bbR$:
\begin{equation}
 \lim_{h\downarrow0}\mathbb{E}\left[\e^{-\lambda\tau_t^h + i\mu Z_t^h}\right] = \mathbb{E}\left[\e^{-\lambda\tau_t^o + i\mu Z_t^o}\right] \,.
\end{equation}

\noindent
Let us set for any $\lambda,h >0$, and any $\mu\in\bbR$, $\psi_h(\lambda, \mu)=-\log \mathbb{E}[\e^{-\lambda\tau_1^h + i\mu Z_1^h}]$. Then we get by Lemma \ref{lemma_charac_brown}-\textit{(i)} that
\[
 \psi_h(\lambda, \mu) = \frac{1}{h}\nn^B\left( 1 - \exp\left(-\lambda b(h)\int_{\bbR}\bm{\mathrm{L}}_\ell^x \mm^\sca(\dr x) + i\mu a(h)\int_{\bbR}\bm{\mathrm{L}}_\ell^x \mm^f(\dr x)\right)   \right).
\]

\noindent
We will now use the scaling property of the Brownian excursion measure: for every $h>0$, we have $\nn_B = h\,\nn_B \circ \lambda_h^{-1}$, where $\lambda_h:\mathcal{E}\to\mathcal{E}$ is defined by $\lambda_h(\varepsilon) = (h^{-1}\varepsilon_{th^2})_{t\geq0}$. Now if $(\bm{\mathrm{L}}_t^x)_{x\in\mathbb{R}, t\in[0,\ell]}$ denotes the family of local times of $\varepsilon$, then $(h^{-1}\bm{\mathrm{L}}_{th^2}^{hx})_{x\in\mathbb{R}, t\in[0,\ell/h^2]}$ is the family of local times of $\lambda_h(\varepsilon)$. Then we get
\[
 \psi_h(\lambda, \mu) = \nn^B\left(1 - \exp\left(-\lambda \int_{\bbR}\bm{\mathrm{L}}_\ell^x \mm_h^\sca(\dr x) + i\mu \int_{\bbR}\bm{\mathrm{L}}_\ell^x \mm_h^f(\dr x)\right)  \right),
\]
where the measures $\mm_h^\sca$ and $\mm_h^f$ are defined as follows:
\begin{itemize}[leftmargin=*]
 \item For every $x\in\mathbb{R}$, $\mm_h^\sca(x) = h^{1/\beta - 1}\mm^\sca(x/h) / \Lambda_\sca(1/h)$ (recall $\beta\in (0,1)$).
 \item According to the values of $\alpha$, we have:
 \begin{enumerate}[label=(\roman*)]
 \item If $\alpha\in(0,1)$, for every $x\in\mathbb{R}$, $\mm_h^f(x) = h^{1/\alpha - 1}\mm^f(x/h) / \Lambda_f(1/h)$
 \item If $\alpha = 1$, for every $x\geq 0$, $\mm_h^f(x) = (\mm^f(x/h) - \mm^f(1/h)) / \Lambda_f(1/h)$ and for every $x< 0$, $\mm_h^f(x) = (\mm^f(x/h) - \mm^f(-1/h)) / \Lambda_f(1/h)$.
 \item If $\alpha \in (1,2)$, then for every $x\in \bbR$, $\mm_h^f(x) = h^{1/\alpha - 1}(\mm^f(x/h) - \mm^f(\infty)) / \Lambda_f(1/h)$.\end{enumerate}
\end{itemize}

\noindent
Before going further, we introduce the following notation, analogous to~\eqref{eq:Atsca}-\eqref{eq:Atf}
\begin{equation}
\label{eq:Ahscaf}
A_\ell^{\sca,h}(\varepsilon) = \int_{\bbR}\bm{\mathrm{L}}_\ell^x \mm_h^\sca(\dr x), \qquad A_\ell^{f,h}(\varepsilon) = \int_{\bbR}\bm{\mathrm{L}}_\ell^x \mm_h^f(\dr x),
\end{equation}
so that 
$\psi_h(\lambda,\mu) = \nn^B\big(\varphi(A_\ell^{\sca,h},A_\ell^{f,h}) \big)$ with $\varphi(x,y)= 1-e^{-\lambda x+i\mu y}$.

Then, thanks to Assumptions~\ref{assump_phi_beta_ito} and~\ref{assump_m_f_alpha_ito} we can apply Lemma \ref{lemconv_excursion}: we have for $\nn^B$-almost every excursion $\varepsilon\in\mathcal{C}$,
\begin{equation}
\label{eq:limitAh}
\lim_{h\downarrow0} A_\ell^{\sca,h}(\varepsilon)= A_\ell^\beta(\varepsilon) \qquad \text{and} \qquad \lim_{h\downarrow0} A_\ell^{f,h}(\varepsilon)= A^\alpha(\varepsilon).
\end{equation}
It remains to prove that we can exchange the limits inside $\psi_h(\lambda,\mu)$.
Recall that we set, for $\delta>0$, $\mathbf{A}_\delta = \{ \varepsilon \in \mathcal{C}, M(\vert \varepsilon \vert) \leq \delta \}$ and that $\nn^B(\mathbf{A}_\delta^c)<+\infty$.

\medskip\noindent
\textit{Case $\alpha\in(0,1)$.\ } We need to prove that $\lim_{h\downarrow0} \psi_h(\lambda,\mu) =\psi_0(\lambda, \mu):= \nn^B [ \varphi(A_\ell^\beta, A_\ell^\alpha)]$, where $\psi_0$ is the characteristic exponent of the limit process $(\tau^0_t, Z^0_{t})_{t\geq0}$. We have
\begin{align*}
 \left|\psi_h(\lambda, \mu) -\psi_0(\lambda, \mu)\right| \leq  \nn^B \left [ \big \vert \varphi(A_\ell^{\sca,h} ,A_\ell^{f,h})-\varphi(A_\ell^\beta,A_\ell^\alpha) \big \vert  \mathbf{1}_{\mathbf{A}_\delta^c}\right ]  + \nn^B &\left [ \vert \varphi(A_\ell^{\sca,h} ,A_\ell^{f,h}) \vert \mathbf{1}_{\mathbf{A}_\delta } \right ] \\
  & + \nn^B \left [ \vert \varphi(A_\ell^\beta , A_\ell^\alpha) \vert \mathbf{1}_{\mathbf{A}_\delta } \right ] .
\end{align*}
Since $\varphi$ is bounded and $\nn^B(\mathbf{A}_{\delta}) <+\infty$, by~\eqref{eq:limitAh} and the dominated convergence theorem, it comes
\[
\underset{h \to 0}{\limsup} \;  \nn^B \left [ \big \vert \varphi(A_\ell^{\sca,h} ,A_\ell^{f,h})-\varphi(A_\ell^\beta,A_\ell^\alpha) \big \vert  \mathbf{1}_{\mathbf{A}_\delta^c}\right ] =0.
\]
Since there is some $C_{\lambda, \mu}>0$ such that $\vert \varphi (x,y) \vert \leq C_{\lambda,\mu}(x+ \vert y\vert)$ for all $x\geq0, y\in \bbR$, from item~\textit{(i)} of Lemma \ref{conv_domination} (decomposing for positive and negative excursions), we get that 
\[
\underset{h \to 0}{\limsup} \; \nn^B \left [ \vert \varphi(A_\ell^{\sca,h} , A_\ell^{f,h}) \vert \mathbf{1}_{\mathbf{A}_\delta }\right] \longrightarrow 0 \qquad \text{ as } \quad \delta\downarrow 0.
\]
By Fatou's lemma, we also get that $\nn^B \big [ \vert \varphi(A_\ell^\beta , A_\ell^\alpha) \vert \mathbf{1}_{\mathbf{A}_\delta }\big]$ converges to $0$ as $\delta \downarrow 0$. We now quickly explain the value of the constant $\rho = \mathbb{P}(Z_t^0 \geq0)$ in Proposition \ref{prop_assump_true}. One can easily see that a representation of the limiting $\alpha$-stable process $(Z_t^0)_{t\geq0}$ is
\[
 Z_t^0 = \int_0^{\tau_t^B}\text{sgn}_{f_+,-f_-}(B_s)|B_s|^{1/\alpha - 2}\dr s
\]
where we recall that $(B_t)_{t\geq0}$ is a Brownian motion and $(\tau_t^B)_{t\geq0}$ is its inverse local time at $0$. The characteristic function of $(Z_t^0)_{t\geq0}$ is computed in \cite[Lemma 11]{bethencourt2021stable} and we can deduce the value of $\rho$ using a formula due to Zolotarev \cite[\S2.6]{zolotarev1986one}.

\medskip\noindent
\textit{Case $\alpha\in(1,2)$.\ } Recall that there exists a positive constant $\mm^{f}(\infty) < \infty$ such that $\mm^f(x) \to \mm^{f}(\infty)$ as $x\to\pm\infty$. It follows that $\mm^f(\bm{1}) = 0$ and by Lemma \ref{lemma_moy} below (whose proof is postponed to Appendix~\ref{appendix_loc_times}), we have that $\nn\big( \int_0^\ell f(\varepsilon_s)\dr s \big) = 0$, which in turn implies $\nn^B(A_\ell^{f}) = 0$. 
Obviously, we also have $\nn^B(A_\ell^{f, h}) = 0$ for any $h>0$.

\begin{lemma}\label{lemma_moy}
 Let $g$ be a positive Borel function such that $\mm^f_+(g) < \infty$ and $|\mm^f_-(g)| < \infty$. Then we have $\mm^f(g) = \nn\big(\int_0^{\ell} ((g\circ\sca)\times f)(\epsilon_s) \dr s\big)$.
\end{lemma}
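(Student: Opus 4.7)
The plan is to compute $\bbE\bigl[\int_0^{\tau_t} h(X_s)\,\dr s\bigr]$ for $h := (g\circ\sca)\times f$ in two different ways and equate the outcomes. Since $h(0) = g(0)f(0) = 0$ by Assumption~\ref{assump-f}, the time that $X$ spends at $0$ does not contribute to the integral, so the compensation formula applied to the excursion Poisson point process (of intensity $\nn$) should directly yield
\[
\bbE\Big[\int_0^{\tau_t} h(X_s)\,\dr s\Big] \;=\; t \cdot \nn\Big(\int_0^{\ell} (g\circ\sca)(\epsilon_u)\, f(\epsilon_u)\,\dr u\Big).
\]

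For the second evaluation, I would invoke the occupation time formula of Proposition~\ref{prop_ok_gen}-(i), perform the change of variable $y=\sca(x)$ which turns $\mm$ into its image $\mm^\sca$, and then use the identity $\rho_{\tau_t}=\tau_t^B$ from Proposition~\ref{prop_ok_gen}-(iv). This should give
\[
\int_0^{\tau_t} h(X_s)\,\dr s \;=\; \int_{\bbR} g(y)\, L_{\tau_t^B}^{y}\, \mm^f(\dr y).
\]
Taking expectations and applying Fubini (justified by $\mm^f_+(g) + |\mm^f_-(g)| < \infty$, which makes the integrand absolutely integrable against $|\mm^f|$), together with the identity $\bbE[L_{\tau_t^B}^{y}] = t$ for every $y\in\bbR$ (a classical consequence of the second Ray--Knight theorem in the density-of-occupation normalization adopted here), should produce $t\cdot\mm^f(g)$. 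Equating the two expressions and dividing by $t$ then yields the claimed identity.

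The main (and essentially only) subtlety is to handle $\mm^f$ as a \emph{signed} Radon measure. Since $f$ preserves the sign (Assumption~\ref{assump-f}), the integrand on the right is non-negative on positive excursions and non-positive on negative ones; splitting according to the sign of $\epsilon$ (equivalently, restricting $\mm^f$ to $\bbR_+$ and $\bbR_-$) reduces the analysis to two well-defined integrals of definite sign, finite by assumption, and the exchange of integrals via Fubini is then unproblematic. I do not expect any genuine obstacle beyond this bookkeeping.
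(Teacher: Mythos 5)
Your proposal is correct and follows essentially the same route as the paper: the paper also computes $\mathbb{E}[Z_t^g]$ for $Z_t^g=\int_0^{\tau_t}((g\circ\sca)\times f)(X_s)\,\dr s$ in two ways, once through the excursion/L\'evy-measure representation (your compensation-formula step) and once through the occupation time formula of Proposition~\ref{prop_ok_gen} combined with $\rho_{\tau_t}=\tau_t^B$ and $\mathbb{E}[L_{\tau_t^B}^x]=t$ from Ray--Knight. Your remark on splitting the signed measure $\mm^f$ by the sign of the excursion is exactly the bookkeeping the finiteness hypotheses are there for.
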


\noindent
Let us introduce $\bar\varphi(x,y)= 1-e^{-\lambda x +i \mu y}-i \mu \chi(y)$ where $\chi(y)=y$ on $[-1,1]$ and is continuous with compact support and odd. Thus
\[
 \psi_h(\lambda, \mu) = \nn^B\left [ \bar\varphi(A_\ell^{\sca,h},A_\ell^{f,h}) \right ] + i \mu \,\nn^B \left [ \chi(A_\ell^{f,h}) \right ].
\]
Since $\bar \varphi$ is bounded and since there exists a constant $C_{\lambda,\mu} > 0$ such that  $\vert \bar \varphi(x,y)\vert \leq C_{\lambda,\mu}(x+ y^2)$ for any $x \geq0$, $y\in\bbR$, we obtain by points~\textit{(i)}-\textit{(ii)} of Lemma ~\ref{conv_domination} and decomposing as previously on $\mathbf{A}_\delta$ and $\mathbf{A}_\delta^c$ that 
\[
\lim_{h\downarrow0}\nn^B\left [ \bar\varphi(A_\ell^{\sca,h},A_\ell^{f,h}) \right ]  = \nn^B\left [ \bar\varphi(A_\ell^\beta, A_\ell^\alpha) \right ].
\]
It remains to check that $ \nn^B [ \chi(A_\ell^{f,h} ) ]$ converges as $h\downarrow 0$. Recall that $\nn^B(A_\ell^{f,h}) = 0$, so that $\nn^B [ \chi(A_\ell^{f,h} ) ]=- \nn^B [A_\ell^{f,h} -\chi(A_\ell^{f,h})]$. Note that there exists $C>0$ such that $ \vert x-\chi(x) \vert \leq C x^2$,  by item~\textit{(ii)} of Lemma \ref{conv_domination}, it comes that 
\[
\limsup_{h\downarrow 0} \Big \vert  \nn^B \Big [ \big (A_\ell^{f,h} -\chi(A_\ell^{f,h} ) \big) \mathbf{1}_{\mathbf{A}_\delta} \Big ]  \Big \vert
\longrightarrow  0 \qquad \text{ as } \delta \downarrow 0.
\]
Moreover, for any $\delta >0$, by item~\textit{(iii)} of Lemma \ref{conv_domination} and since $\vert x -\chi(x) \vert \leq C \vert x \vert$, we get by the dominated convergence theorem that for any $\delta > 0$,
\[
\lim_{h\downarrow 0} \nn^B \Big [ \big (A_\ell^{f,h} -\chi(A_\ell^{f,h} ) \big) \mathbf{1}_{\mathbf{A}_\delta^c} \Big ] =  \nn^B \Big [ \big ( A_\ell^\alpha-\chi(A_\ell^\alpha ) \big) \mathbf{1}_{\mathbf{A}_\delta^c} \Big ].
\]
We conclude that that 
\[
\lim_{h\downarrow 0} \nn^B \Big [A_\ell^{f,h} -\chi(A_\ell^{f,h} )\Big ] = \nn^B \Big [A_\ell^\alpha -\chi(A_\ell^\alpha )\Big ],
\]
so that $\nn^B[\chi(A_\ell^{f,h} )]$ converges  to $\nn^B[\chi(A_\ell^{\alpha})]$ as $h\downarrow 0$. Again, let us quickly explain the value of $\rho$. In the case $\alpha\in(1,2)$, the limiting process has the following representation
\[
 Z_t^0 = \int_{\bbR}\text{sgn}_{f_+, -f_-}(x)|x|^{1/\alpha - 2} (L_{\tau_t^B}^x - t)\dr x
\]
where we recall that $(L_t^x)_{t\geq0, x\in\bbR}$ denotes the family of local times of $(B_t)_{t\geq0}$. Again, the characteristic function of $(Z_t^0)_{t\geq0}$ is computed in \cite[Lemma 12]{bethencourt2021stable} which is sufficient to deduce the value of $\rho$.

\medskip\noindent
\textit{Case $\alpha=1$.\ } Using the same notation as in the previous case and with the same reasoning, we also have that $\nn^B [\bar \varphi(A_\ell^{\sca,h}, A_\ell^{f,h})]$ converges to $\nn^B [ \bar \varphi(A_\ell^\beta, A_\ell^\alpha) ]$ when $h$ goes to $0$. It remains to prove that the extra assumption 
$\lim_{h\downarrow 0}\frac{1}{\Lambda_f(1/h)}(\mm^f(1/h)-\mm^f(-1/h)) = \mathbf{c}$
implies that $\nn^B [ \chi(A_\ell^{f,h}) ]$ converges. Note that the limit is not $\nn^B \left [ \chi(A_\ell^\alpha)\right ]$ which is infinite when $\alpha=1$. Writing that $\mm^f(1/h)-\mm^f(-1/h) = \int_\bbR \bm{1}_{\{x\in (-1/h,1/h]\}}\mm^f(\dr x)$ and using Lemma \ref{lemma_moy} with $g(x) = \bm{1}_{\{x\in (-1/h,1/h]\}}$, we get
\[
 \mm^f(1/h)-\mm^f(-1/h) = \nn\left(\int_0^\ell((g\circ\sca) \times f)(\varepsilon_s) \dr s\right).
\]
Then, using Proposition \ref{pushforward_brownian} and the occupation time formula for the Brownian excursion, we easily get that
\[
 \frac{1}{\Lambda_f(1/h)}\big (\mm^f(1/h)-\mm^f(-1/h) \big ) = \nn^B\left[\tilde{A}_\ell^{f,h}\right]
\]
where $\tilde{A}_\ell^{f,h} := \int_\bbR \bm{\mathrm{L}}_\ell^x \bm{1}_{\{x\in (-1,1]\}}\mm_h^f(\dr x)$.
Note that as in~\eqref{eq:limitAh}, thanks to Lemma~\ref{lemconv_excursion} (and Assumption~\ref{assump_m_f_alpha_ito}) we have for $\nn^B$-almost every excursion $\varepsilon\in\mathcal{C}$
\[
\lim_{h\downarrow 0} \tilde{A}_\ell^{f,h}  = \tilde{A}^\alpha_\ell := c_\alpha \int_{(-1,1]} \bm{\mathrm{L}}_\ell^x \; \text{sgn}_{f_+,f_-}(x) x^{1/\alpha -2} \dr x \,.
\]
(Recall $f_+=f_-$ in the case $\alpha=1$).
Remark that we have $ \tilde{A}_\ell^{f,h}=A_\ell^{f,h}$ for every $\varepsilon \in \mathbf{A}_{1}$: we can then decompose
\begin{align*}
\nn^B [ \chi(A_\ell^{f,h})] &= \nn^B [ \chi \big (\tilde{A}_\ell^{f,h} \big ) \mathbf{1}_{\mathbf{A}_1}]+ \nn^B [ \chi \big (A_\ell^{f,h} \big ) \mathbf{1}_{\mathbf{A}_1^c} ] \\
&=  \nn^B \Big [ \Big ( \chi \big (  \tilde{A}_\ell^{f,h} \big ) - \tilde{A}_\ell^{f,h} \Big )\mathbf{1}_{\mathbf{A}_1}\Big ] + \nn^B \big [ \tilde{A}_\ell^{f,h} \big ] - \nn^B \big [ \tilde{A}_\ell^{f,h}  \mathbf{1}_{\mathbf{A}_1^c}\big ] + \nn^B \big [ \chi(A_\ell^{f,h}) \mathbf{1}_{\mathbf{A}_1^c} \big ].
\end{align*}
The second term converges (to $\mathbf{c}$) by assumption. It remains to check that the other three terms converge. Regarding the first term, using item~\textit{(ii)} in Lemma \ref{conv_domination}, and the fact that $\vert \chi(x)-x\vert \leq C x^2$ for some constant $C > 0$, we get that $\nn^B [\sup_{h\in(0,1)}  ( \chi (  \tilde{A}_\ell^{f,h})- \tilde{A}_\ell^{f,h})\mathbf{1}_{\mathbf{A}_1}]<+\infty$. By the dominated convergence theorem, it comes that
\[
 \lim_{h\downarrow0} \nn^B \Big [ \Big ( \chi \big ( \tilde{A}_\ell^{f,h} \big)- \tilde{A}_\ell^{f,h} \Big )\mathbf{1}_{\mathbf{A}_1}\Big ]  = \nn^B \Big [ \Big ( \chi \big ( \tilde{A}_\ell^\alpha \big )- \tilde{A}_\ell^\alpha \Big )\mathbf{1}_{\mathbf{A}_1}\Big ] \,.
\]
We obtain the convergence of the third term applying point~\textit{(iv)} in Lemma \ref{conv_domination} (and dominated convergence). As a conclusion, we have
\[
 \lim_{h\downarrow0}\nn^B [ \chi(A_\ell^{f,h})] = \nn^B \Big [ \Big ( \chi \big ( \tilde{A}_\ell^\alpha \big )- \tilde{A}_\ell^\alpha \Big )\mathbf{1}_{\mathbf{A}_1}\Big ] - \nn^B \big [ \tilde{A}_\ell^\alpha  \mathbf{1}_{\mathbf{A}_1^c}\big ] + \nn^B \big [ \chi(A_\ell^\alpha) \mathbf{1}_{\mathbf{A}_1^c} \big ] + \mathbf{c}.
\]
Now we point out that, since $\chi$ is odd and $f_+ = f_-$, the first three terms in the above limit are null by symmetry. Hence the limiting process $(Z_t^0)_{t\geq0}$ is a Cauchy process drifted by $\mathbf{c}$ (whence the value of $\rho$ from the statement).

\paragraph*{Step 2: Convergence of $a(b^{-1}(q))I_e$.}
Let us stress that since $b(h) \tau_{t/h}$ converges in distribution, $b$ is an asymptotic inverse of $\Phi$ (see Remark~\ref{rem:bPhi}); in particular, $\Phi$ is regularly varying at $0$ with index $\beta \in(0,1)$.

By Lemma \ref{lemma_charac_brown}-\textit{(ii)}, we have that for any $\mu\in\bbR$,
\[
 \mathbb{E}\left[\e^{ia(b^{-1}(q))\mu I_e}\right] = \frac{q}{\Phi(q)}\bigg(\mathrm{m} +\nn^B\bigg(\int_0^\ell\exp\big(-q A_t^{\sca} + i a(b^{-1}(q))\mu A_t^f\big) \dr A_t^\sca\bigg)\bigg) =: \frac{q}{\Phi(q)}(\mathrm{m} + G(\mu, q)),
\]
where we recall that $\mathrm{m}$ is the drift coefficient of $(\tau_t)_{t\geq0}$ and $A_t^\sca$, $A_t^f$ are defined in~\eqref{eq:Atsca} and~\eqref{eq:Atf} respectively.
Since $\Phi$ is regularly varying with index $\beta \in(0,1)$ we have $\lim_{q\downarrow0}\mathrm{m} / \Phi(q) = 0$. 

Therefore, it remains to show that $qG(\mu, q) / \Phi(q)$ converges as $q\downarrow 0$, or equivalently that $qG(\mu, q) / b^{-1}(q)$ converges.
Using the scaling property of the Brownian excursion measure, \textit{i.e.}\ $\nn_B = h\, \nn_B \circ \lambda_h^{-1}$, with $h = b^{-1}(q)$, we get 
\begin{equation}\label{fourier_ie}
 \frac{q}{b^{-1}(q)}G(\mu,q) = \nn^B\left(\int_0^{\ell} \exp\left(-A_t^{\sca,q} +i\mu A_t^{f,q}\right)\dr A_t^{\sca,q}\right),
\end{equation}
where, similarly as in~\eqref{eq:Atsca}-\eqref{eq:Atf}, we have set
\[
 A_t^{\sca,q} = \int_{\bbR}\bm{\mathrm{L}}_t^x \mm^\sca_{b^{-1}(q)}(\dr x), \quad \quad A_t^{f,q} = \int_{\bbR}\bm{\mathrm{L}}_t^x \mm^f_{b^{-1}(q)}(\dr x),
\]
and the measures $\mm_h^\sca$ and $\mm^f_{h}$ are the measures defined in Step~1, see above~\eqref{eq:Atsca}. By Lemma~\ref{lemconv_excursion} (and Assumptions~\ref{assump_phi_beta_ito} and~\ref{assump_m_f_alpha_ito}), we have that for $\nn^B$-almost every excursion $\varepsilon\in\mathcal{C}$, for any $t\in[0,\ell]$,
$\lim_{q\downarrow 0}A_t^{\sca,q}  = A_t^\beta$  and  $\lim_{q\downarrow 0}A_t^{f,q} = A_t^\alpha$.

Let us show that for $\nn^B$-almost every excursion,
\begin{equation}\label{conv_int_petit_bout}
\lim_{q\downarrow 0} \int_0^{\ell} \exp\left(- A_t^{\sca,q} +i\mu A_t^{f,q}\right)\dr A_t^{\sca,q}  = \int_0^{\ell} \exp\left(- A_t^{\beta} +i\mu A_t^{\alpha}\right)\dr A_t^{\beta}.
\end{equation}

\noindent
First, the finite measures $\dr A^{\sca, q}$ on $([0,\ell], \mathcal{B}([0,\ell]))$ converge weakly to the finite measure $\dr A^\beta$ as $q\to0$. Since $t\mapsto\exp(-cA_t^{\beta} +i\mu A_t^{\alpha})$ is continuous and bounded, we deduce that
\[
\lim_{q\downarrow 0} \int_0^{\ell}\exp\left(-A_t^{\beta} +i\mu A_t^{\alpha}\right)\dr A_t^{\sca,q} = \int_0^{\ell} \exp\left(-A_t^{\beta} +i\mu A_t^{\alpha}\right)\dr A_t^{\beta}.
\]
Next, we bound
\begin{align*}
 \Big|\int_0^{\ell} \Big(\exp\Big(-&  A_t^{\sca,q} +i\mu A_t^{f,q}\Big) - \exp\Big(-A_t^{\beta} +i\mu A_t^{\alpha}\Big) \Big)\dr A_t^{\sca,q}\Big| \\
  & \leq A_\ell^{\sca, q} \sup_{t\in[0,\ell]}\Big|\exp\Big(-A_t^{\sca,q} +i\mu A_t^{f,q}\Big) - \exp\Big(-A_t^{\beta} +i\mu A_t^{\alpha}\Big)\Big|.
\end{align*}
By Remark~\ref{rem_conv_unif}, the convergence of $A_t^{\sca, q}$ and $A_t^{f, q}$ is uniform on $[0,\ell]$, \textit{i.e.}\  $\sup_{t\in[0,\ell]}|A_t^{\sca, q} - A_t^{\beta}| + |A_t^{f, q} - A_t^{\alpha}|$ vanishes as $q\downarrow 0$. Since $(x,y) \mapsto\exp(-x + iy)$ is Lipschitz, it is clear the above quantities converges to $0$ as $q\downarrow 0$, which proves \eqref{conv_int_petit_bout}.

Finally, we bound the integrand in \eqref{fourier_ie} to apply dominated convergence. We have
\begin{equation}
  \label{bound}
\begin{split}
 \Big \vert \int_0^{\ell} \exp\big(- A_t^{\sca,q} +i\mu A_t^{f,q}\big)\dr A_t^{\sca,q}\Big|  &\leq \int_0^\ell \exp\big(-A_t^{\sca,q}\big)\dr A_t^{\sca,q} \\ 
  & = 1 - \exp\big(-A_\ell^{\sca,q}\big) \leq 1 \wedge A_\ell^{\sca,q} \,.
  \end{split}
\end{equation}
We conclude as before, by introducing some arbitrary small $\delta >0$ and by splitting the integral~\eqref{fourier_ie} into two parts, on the sets $\mathbf{A}_\delta$ and $\mathbf{A}_\delta^c$. Using the bound~\eqref{bound}, we obtain thanks to item~\textit{(i)} in Lemma~\ref{conv_domination} (recall $\beta \in (0,1)$) that
\[
\limsup_{q \to 0}\nn^B\bigg [ \Big (\int_0^{\ell} \exp \Big(-A_t^{\sca,q} +i\mu A_t^{f,q}\Big)\dr A_t^{\sca,q} \bigg) \mathbf{1}_{\mathbf{A}_\delta}\Big ] \longrightarrow 0 \qquad \text{ as } \quad \delta\downarrow 0 \,.
\]
Moreover, the bound \eqref{bound} dominates uniformly in $q$ the integrand and we conclude using the dominated convergence theorem (recall that $\nn^B$ is finite on $\mathbf{A}_\delta^c$) that
\[
\lim_{q\downarrow 0} \nn^B\bigg [ \Big (\int_0^{\ell} \exp\Big(- A_t^{\sca,q} +i\mu A_t^{f,q}\Big)\dr A_t^{\sca,q} \Big) \mathbf{1}_{\mathbf{A}_\delta^c}\bigg ] 
=  \nn^B\bigg [ \Big (\int_0^{\ell} \exp\Big(- A_t^{\beta} +i\mu A_t^{\alpha}\Big)\dr A_t^{\beta} \Big) \mathbf{1}_{\mathbf{A}_\delta^c}\bigg ].
\]
From this, we get that
\[
\lim_{q\downarrow 0}\nn^B\bigg [ \int_0^{\ell} \exp\Big(- A_t^{\sca,q} +i\mu A_t^{f,q}\Big)\dr A_t^{\sca,q} \bigg ]  
=\nn^B\bigg [ \int_0^{\ell} \exp\Big(-cA_t^{\beta} +i\mu A_t^{\alpha}\Big)\dr A_t^{\beta} \bigg ],
\]
which completes the proof of Step 2. 
\qed

\subsection{Proof of Proposition \ref{prop_brow_null_rec}}
In this section, we prove Proposition \ref{prop_brow_null_rec}. We let the reader recall Assumptions~\ref{assump_phi_beta_ito} and~\ref{assump_gaus_ito} (which corresponds to the case~$\alpha=2$). 
Our goal is to show that Assumption \ref{assum_gauss} holds, \textit{i.e.}\ that $\Phi$ is regularly varying at $0$ with index $\beta \in (0,1)$, that $\nn(\f) =0$ and that $\nn(\f^2) < \infty$. First, as in the proof of Proposition \ref{prop_assump_true}, we can show that, by setting $b(h) = h^{1/\beta} / \Lambda_\sca(1/h)$, the rescaled process $(b(h)\tau_{t/h})_{t\geq0}$ converges as $h\to0$ toward a $\beta$-stable subordinator. This entails that $\Phi$ is an asymptotic inverse of $b$ (see Remark~\ref{rem:bPhi}) and in particular that it is regularly varying with index $\beta$. 
 
 Next, we apply Lemma \ref{lemma_moy} with $g = \bm{1}$. This tells us that $\mm^f(\bm{1}) = \nn(\int_0^\ell f(\varepsilon_s) \dr s) = \nn(\f)$. Since $\mm^f(\bm{1}) = \lim_{x\to\infty}(\mm^f(x) - \mm^f(-x)) = 0$, it comes that $\nn(\f) = 0$. It remains to show that $\nn(\f^2) < \infty$. Recalling that the Lévy measure of $(Z_t)_{t\geq0}$ is $\nn(\f \in \dr x)$, this is equivalent to having $\mathbb{E}[Z_t^2] < \infty$ for any $t\geq0$, see for instance Sato \cite[Thm. 25.3]{ken1999levy}. We will show that the condition $\mm^f\in \mathrm{L}^2(\dr x)$ implies the latter one (it is actually equivalent). To do so, let us first define the function $g:\bbR \to \bbR$ as
 \[
  g(x) = \int_0^x \sca'(v)\int_v^{\infty} f(u) \mm(\dr u) \dr v.
 \]
Making two changes of variables, it holds that
\[
 g(\sca^{-1}(x)) = \int_0^x h(v) \dr v ,\quad 
 \text{ where }\quad h(v) = \int_v^{\infty} f\circ\sca^{-1}(u) \mm^\sca(\dr u) = \mm^f(\infty) - \mm^f(v) .
\]
Since $\mm^f$ in non-decreasing on $\bbR_+$ and non-increasing on $\bbR_-$, $g\circ \sca^{-1}$ is a difference of two convex function and we can apply the Itô-Tanaka formula, see \cite[Ch. VI Thm. 1.5]{ry}, which tells us that
\[
 g(\sca^{-1}(B_t)) = \int_0^t h(B_s)\dr B_s - \frac{1}{2}\int_{\bbR} f(\sca^{-1}(x))L_t^x \mm^\sca(\dr x) \,.
\]
Substituting $\rho_t$ in the preceding equation, and using the occupation time formula from item~\textit{(i)} of Proposition \ref{prop_ok_gen}, we see that (with a change of variable $y=\sca^{-1}(x)$)
\[
 g(X_t) = \int_0^{\rho_t}h(B_s)\dr B_s - \frac{1}{2}\int_0^t f(X_s) \dr s.
\]
We now substitute $\tau_t$ in the preceding equation, and using that $X_{\tau_t} = 0$ so $g(X_{\tau_t})=0$ and $\rho_{\tau_t} = \tau_t^B$ (see Proposition~\ref{prop_ok_gen}-\textit{(iv)}), we get that
\[
 Z_t = \int_0^{\tau_t} f(X_s) \dr s = 2\int_0^{\tau_t^B}h(B_s)\dr B_s.
\]
Let us set for any fixed $t\geq0$, $(M_u^t)_{u\geq0} = (\int_0^{u\wedge\tau_t^B}h(B_s)\dr B_s)_{u\geq0}$ which is a (centered) local martingale. Its quadratic variation is such that, for any $u\geq0$,
\[
 \mathbb{E}\left[\langle M^t\rangle_u\right] = \mathbb{E}\bigg[\int_{\bbR}L^x_{u\wedge\tau_t^B} \, h(x)^2 \dr x\bigg] \leq \int_{\bbR}\mathbb{E}\big[L_{\tau_t^B}^x\big][h(x)]^2 \dr x = t\int_{\bbR} h(x)^2 \dr x,
\]
where in the last equality, we used that by Ray-Knight's theorem, $\mathbb{E}[L_{\tau_t^B}^x] = t$ for any $x\in \bbR$. Since $h\in\mathrm{L}^2(\dr x)$, $(M_u^t)_{u\geq0}$ is a martingale bounded in $\mathrm{L}^2(\bbP)$, \textit{i.e.}\ $\sup_{u\geq0}\mathbb{E}[(M_u^t)^2] < \infty$. Therefore, it converges in $\mathrm{L}^2(\bbP)$, which entails that
\[
 \mathbb{E}\bigg[\bigg(\int_0^{\tau_t^B}h(B_s)\dr B_s\bigg)^2\bigg] = \mathbb{E}\bigg[\int_0^{\tau_t^B}h(B_s)^2\dr s\bigg] = t\int_{\bbR} h(x)^2 \dr x.
\]
We have shown that for any $t\geq0$, $\mathbb{E}[Z_t^2] = 4t\int_{\bbR}h(x)^2 \dr x <+\infty$, which completes the proof.
\qed

\section{Hitting time of zero}\label{sec:hitting}

In this section, we will assume for simplicity that the process $(X_t)_{t\geq0}$ is a regular diffusion valued in an open interval $\mathrm{I} = (a,b)$ with $a \in (-\infty,0)\cup\{-\infty\}$ and $b \in (0,\infty)\cup\{\infty\}$. We will consider the process $(\zeta_t, X_t)_{t\geq0}$ as a strong Markov process. For a pair $(z,x)\in\mathbb{R}\times \mathrm{I}$, we will denote by $\bm{\mathrm{P}}_{(z,x)}$ the law of $(\zeta_t, X_t)_{t\geq0}$ when started at $(z,x)$ i.e. the law of $(z + \int_0^t f(X_s)\dr s, X_t)_{t\geq0}$ under $\mathbb{P}_x$, where $\mathbb{P}_x$ denotes the law of $(X_t)_{t\geq0}$ started at $x$.

\medskip
The aim of this section is to describe the asymptotic behavior of $\bm{\mathrm{P}}_{(z,x)}(T_0 > t)$ as $t\to\infty$ when $(z,x)\neq(0,0)$. We will naturally place ourselves under Assumption \ref{assump_pos} or \ref{assump_null}. As a rather classical application, we will identify a harmonic function for the killed process, which will in turn enable us to construct the additive functional conditionned to stay negative through Doob's h-transform. 

First, we recall that a regular diffusion is a continuous strong Markov process such that, if set $\eta_x = \inf\{t > 0, \: X_t = x\}$, then for any $(x,y)\in \mathrm{I}^2$, $\mathbb{P}_x(\eta_y < \infty) > 0$. It is well-know that regular diffusions are space and time changed Brownian motion. First, there exists a continuous and increasing function $\sca : \mathrm{I}\to\mathbb{R}$ such that $(\sca(X_t))_{t\geq0}$ is a local martingale, see Kallenberg \cite[Chapter 23, Theorem 23.7]{kallenberg2006foundations}. We will assume that $\sca$ is such that $\sca(\mathrm{I}) = \mathbb{R}$ and will denote by $\sca^{-1}$ its inverse, defined on $\mathbb{R}$ and valued in $\mathrm{I}$. Let $(W_t)_{t\geq0}$ be a Brownian on some probability space and denote by $(L_t^x)_{t\geq0, x\in\mathbb{R}}$ its family of local times. Then there exists an increasing function $\mm^\sca: \mathbb{R}\to\mathbb{R}$ such that, if we set
\[
 A_t = \int_{\mathbb{R}}L_t^x \mm^\sca(\dr x) \quad \text{and} \quad \rho_t = \inf\{s\geq0, \: A_s > t\},
\]

\noindent
then for any $x\in\mathrm{I}$, the probability measure $\mathbb{P}_x$ coincides with the law of $(\sca^{-1}(W_{\rho_t}))_{t\geq0}$ with $(W_t)_{t\geq0}$ started at $\sca(x)$. We refer to Kallenberg \cite[Chapter 23, Theorem 23.9]{kallenberg2006foundations} for more details. We will first show the following lemma. Let us also define 
\[ 
\mathrm{I}_+^*:=\mathrm{I}\cap(0,\infty), \quad \mathrm{and} \ \ \mathrm{I}_-^* = \mathrm{I}\cap(-\infty, 0).
\]
\begin{lemma}\label{lemma_max_exc}
 Recall that we denote by $\f(\gep)=\int_0^\ell f(\gep_s) \dr s $ for some excursion $\epsilon\in\mathcal{E}$ of $(X_t)_{t\geq 0}$.  The two following assertions hold.
 \begin{enumerate}[label=(\roman*)]
  \item For any $x\in \mathrm{I}_+^*$, we have
  \[
   \nn(M > x, \f < 0) = 1 / (2|\sca(x)|) \quad \text{and} \quad \nn(M > x, \f > 0) = 1 / (2|\sca(x)|),
  \]

  \noindent
  where $M = M(\varepsilon) = \sup_{s\geq0}|\varepsilon_s|$.
  
  \item For any $x\in \mathrm{I}_+^*$ (respectively any $x\in \mathrm{I}_-^*$), the law of $(X_{g_{\eta_x} + t})_{t\in[0,d_{\eta_x} - g_{\eta_x}]}$ under $\mathbb{P}$ is exactly $\nn(\cdot | M > x, \:\f > 0)$ (respectively $\nn(\cdot | M > x, \:\f < 0)$).
 \end{enumerate}
\end{lemma}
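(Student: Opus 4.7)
\medskip

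The plan is to transfer classical excursion-theoretic identities for the Brownian motion $(W_t)_{t\geq 0}$ to the generalized diffusion $(X_t)_{t\geq 0}$ via the representation $X_t = \sca^{-1}(W_{\rho_t})$ and the pushforward identity $\nn = T_*\nn^B$ established in Proposition~\ref{pushforward_brownian}.

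For part~(i), I would first observe that the map $T$ preserves signs of excursions: since $\sca^{-1}$ is increasing and maps $0$ to $0$, positive (resp.\ negative) Brownian excursions $\gep^B$ are sent to positive (resp.\ negative) $X$-excursions $T(\gep^B)$. Combined with Assumption~\ref{assump-f} (so that $f\geq 0$ on $(0,\infty)$ and $f\leq 0$ on $(-\infty, 0)$), this shows that, $\nn$-a.e.\ on $\{M > x\}$, the event $\{\f > 0\}$ coincides with the set of positive excursions, and $\{\f < 0\}$ with the set of negative excursions. Next, for a positive $X$-excursion, the monotonicity of $\sca^{-1}$ yields $M(T(\gep^B)) = \sca^{-1}(M(\gep^B))$, so the event $\{M > x\}$ corresponds to $\{M(\gep^B) > \sca(x)\}$. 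Applying the classical Brownian excursion formula $\nn^B(\gep^B > 0, M > a) = 1/(2a)$ then gives
\[
\nn(M > x, \f > 0) = \nn^B\bigl(\gep^B > 0,\, M(\gep^B) > \sca(x)\bigr) = \frac{1}{2|\sca(x)|},
\]
and the computation for $\nn(M > x, \f < 0)$ is symmetric, using the corresponding identity for negative Brownian excursions.

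For part~(ii), I would invoke Itô's excursion theory: the excursions of $X$ away from $0$ form a Poisson point process on $\mathcal{E}$ with intensity measure $\nn$. By part~(i), the measurable set $A_x := \{M > x,\,\f > 0\} \subset \mathcal{E}$ has positive and finite $\nn$-measure. The excursion straddling $\eta_x$ is then precisely the first atom of the Poisson point process falling in $A_x$, and by a standard property of Poisson point processes (see e.g.~\cite[Ch.~IV]{b}) this first atom has conditional distribution $\nn(\cdot \mid A_x)$, which is the desired statement. The claim for $x\in \mathrm{I}_-^*$ is symmetric, using the set $\{M > |x|,\,\f < 0\}$.

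The main technical point is the rigorous identification, at the level of $\nn$, of $\{\f > 0\}$ with the set of positive excursions (restricted to $\{M > x\}$): beyond the inequality $\f \geq 0$ on positive excursions provided by Assumption~\ref{assump-f}, strict positivity must be verified for $\nn$-almost every positive excursion reaching level $x$, which requires ruling out the degenerate situation where $f$ vanishes identically on $(0, x)$; this is implicit in any setting where $(\zeta_t)_{t\geq 0}$ is a nontrivial additive functional, and could be proven using the sign-preserving properties of $T$ together with the occupation-time formula for Brownian excursions.
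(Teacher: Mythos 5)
Your proof takes essentially the same route as the paper's: for part (i), pushing $\nn^B$ forward through $T$, using sign preservation and $M(T(\gep))=\sca^{-1}(M(\gep))$ together with $\nn^B_{\pm}(M>a)=1/(2a)$; for part (ii), the first-atom identity for the excursion Poisson point process (the paper cites Revuz--Yor, Ch.~XII, Lem.~1.13, while you cite Bertoin — same fact).

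The caveat you raise in your last paragraph is a fair one and worth noting: the paper asserts without discussion that $\f(\gep)>0$ if and only if $\gep$ is a positive excursion, whereas Assumption~\ref{assump-f} only yields $\f(\gep)\geq 0$ on positive excursions, so one really needs $\nn(M>x,\,\gep>0,\,\f=0)=0$. Your suggested resolution via the occupation-time formula is the right one: writing $\f(T(\gep^B))=\int_{\bbR_+}\bm{\mathrm{L}}^y_\ell\,\mm^f(\dr y)$ and using that $\bm{\mathrm{L}}^y_\ell>0$ for $y\in(0,\sca(x))$, the strict positivity reduces to $\mm^f((0,\sca(x)))>0$. Be careful, though, with the claim that this is automatic whenever $\zeta$ is a nontrivial additive functional: $f$ could vanish identically on a neighborhood of $0$ without trivializing $\zeta$, and then both equalities in part (i) would genuinely fail for small $x$ (each positive excursion confined to that neighborhood has $\f=0$). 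So a standing non-degeneracy hypothesis on $\mm^f$ near $0$ is actually needed for the lemma as stated.
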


\begin{proof}
 Remember first that, since excursions have constant sign, $\f(\varepsilon) > 0$ if and only if $\varepsilon$ is a positive excursion. By Proposition \ref{pushforward_brownian} and since $A_t$ is increasing  it comes that for any $x\in\mathrm{I}_+^*$, we have $\nn(M > x, \f > 0) = \nn_B^+(M > \sca(x))$ (respectively $\nn(M > x, \f < 0) = \nn_B^-(M > \sca(x))$), where $\nn_B^+$ is the Brownian excursion measure restricted to the set of positive excursions (respectively $\nn_B^-$ is the Brownian excursion measure restricted to the set of negative excursions). The first result follows immediately since $\nn_B^+(M > x) = \nn_B^-(M > x)  = 1/(2x)$ for any $x > 0$.
 
 \medskip
 Regarding the second point, let us denote for $x\in\mathrm{I}_+^*$, $U_x^+ = \{\varepsilon\in\mathcal{E},\: M(\varepsilon) > x,\: \f(\varepsilon) > 0\}$. Recall that $(e_t)_{t\geq0}$ denote the excursion process and set $T_{U_x^+} = \inf\{t > 0, e_t \in U_x^+\}$. Clearly, we have $e_{T_{U_x^+}} = (X_{g_{\eta_x} + t})_{t\in[0,d_{\eta_x} - g_{\eta_x}]}$ and since $\nn(U_x^+) < \infty$ by the first point, the result follows from \cite[Chapter XII Lemma 1.13]{ry} which tells us that for any measurable set $\Gamma$, we have
\[
 \nn(\Gamma \cap U_x^+) = \mathbb{P}(e_{T_{U_x^+}} \in \Gamma) \nn(U_x^+).
\]

\noindent
The same proof holds for $x\in \mathrm{I}_-^*$.
\end{proof}

\medskip
Let us consider the set $\mathrm{H} := [(-\infty,0)\times \mathrm{I}]\cup [\{0\}\times \mathrm{I}_-^*]$. We will only consider starting points $(z,x)\in \mathrm{H}$ so that $\bm{\mathrm{P}}_{(z,x)}(T_0 > 0) = 1$ and the process $(\zeta_{t\wedge T_0})_{t\geq0}$ is negative under $\bm{\mathrm{P}}_{(z,x)}$. We recall that $\mathcal{V}$ denotes the renewal function and that for any $x \geq0$, $\mathcal{V}(x) = \int_0^\infty\mathbb{P}(H_t \leq x) \dr t$. Finally, we set $\eta_0 = \inf\{t > 0, X_t = 0\}$, the first hitting time of zero of $(X_t)_{t\geq0}$, and we consider the positive function $h:\mathrm{H}\to\mathbb{R}_+\cup\{\infty\}$ defined for every $(z,x)\in \mathrm{H}$ as 
\[
 h(z,x) = \bm{\mathrm{E}}_{(z,x)}\left[\mathcal{V}(-\zeta_{\eta_0})\bm{1}_{\{\zeta_{\eta_0} < 0\}}\right].
\]

\begin{proposition}
 For any $(z,x)\in\mathrm{H}$, $h(z,x) < \infty$.
\end{proposition}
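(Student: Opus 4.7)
The plan is to split the analysis by the sign of the initial velocity $x$, treating the case $x<0$ as the delicate one.

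First, when $x\geq 0$ (with $z<0$), I would exploit continuity of $(X_t)_{t\geq 0}$: $X_s\geq 0$ for all $s\leq\eta_0$, and Assumption~\ref{assump-f} then gives $f(X_s)\geq 0$, so $\zeta_{\eta_0}\geq z$. On $\{\zeta_{\eta_0}<0\}$ this yields $-\zeta_{\eta_0}\leq |z|$, and monotonicity of $\mathcal V$ gives $h(z,x)\leq \mathcal V(|z|)<\infty$. The subcase $x=0$, $z<0$ is immediate: by regularity of $0$ for itself one has $\eta_0=0$, so $\zeta_{\eta_0}=z$ and $h(z,0)=\mathcal V(|z|)$.

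When $x<0$, continuity gives $X_s\leq 0$ on $[0,\eta_0)$, so $I:=\int_0^{\eta_0}f(X_s)\dd s\leq 0$ and $-\zeta_{\eta_0}=|z|+|I|$. I would then invoke sub-additivity of the renewal function $\mathcal V$ (a classical property of subordinators: a strong-Markov argument at the first passage of $(H_t)_{t\geq 0}$ above $|z|$ gives $\mathcal V(a+b)\leq \mathcal V(a)+\mathcal V(b)$), reducing the problem to showing $\mathbb E_x[\mathcal V(|I|)]<\infty$. To that end, Lemma~\ref{lemma_max_exc}-(ii) identifies the law of $(X_t)_{t\leq\eta_0}$ under $\mathbb P_x$ with the post-hitting-$x$ part of an excursion sampled from $\nn(\cdot\mid M>|x|,\f<0)$. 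Writing $\sigma$ for the within-excursion hitting time of $x$ and $\f^\sigma:=\int_0^\sigma f(\varepsilon_s)\dd s$, the fact that both $\f$ and $\f^\sigma$ are non-positive on $\{\f<0\}$ yields $|\f-\f^\sigma|\leq |\f|$. Combined with $\nn(M>|x|,\f<0)=1/(2|\sca(x)|)$ from Lemma~\ref{lemma_max_exc}-(i), this gives
\[
\mathbb E_x[\mathcal V(|I|)]\ \leq\ 2|\sca(x)|\,\nn\big[\mathcal V(|\f|)\,\bm 1_{\{M>|x|,\,\f<0\}}\big].
\]
I would then split the right-hand side according to the size of $|\f|$: the part $\{|\f|\leq 1\}$ is bounded by $\mathcal V(1)\nn(M>|x|)=\mathcal V(1)/|\sca(x)|$ (Lemma~\ref{lemma_max_exc}-(i) again), while the part $\{|\f|>1\}$ is dominated by $\int_{1}^{\infty}\mathcal V(u)\,\nu^{-}(\dd u)$, where $\nu^{-}(\dd u):=\nn(-\f\in\dd u,\f<0)$ is the downward part of the Lévy measure of $(Z_t)_{t\geq 0}$.

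The hard part will be establishing the integrability $\int_{1}^{\infty}\mathcal V(u)\,\nu^{-}(\dd u)<\infty$, since no moment condition on $\nu^{-}$ is assumed \emph{a priori} and in general $\int \mathcal V(u)\nu^{-}(\dd u)=+\infty$ (when the descending ladder-height Lévy measure has infinite mass). Here one must leverage the structural assumptions of the paper: under Assumption~\ref{assump_null}, $\mathcal V$ is regularly varying at infinity of index $\alpha\rho$, while $\nu^{-}$ inherits an $\alpha$-stable tail from the scaling limit $(Z_t^0)$, so the integral converges at infinity precisely because $\alpha\rho<\alpha$, i.e., $\rho<1$. Under Assumption~\ref{assump_pos}, $\mathcal V$ grows at most linearly by sub-additivity and a first-moment estimate on large excursions of $\f$ (using $\nn(\ell)<\infty$ together with local boundedness of $f$ away from $0$) closes the argument. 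Assembling these pieces one concludes $h(z,x)<\infty$ in all cases.
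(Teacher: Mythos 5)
Your reduction is sound and in fact mirrors the paper's own proof almost step by step: the case $x\geq 0$ is handled identically, and for $x<0$ you use the same ingredients (Lemma~\ref{lemma_max_exc}, comparison of $\int_0^{\eta_0}f(X_s)\dd s$ with the full excursion integral $\f$, subadditivity of $\mathcal V$, and the split according to whether $|\f|$ is small or large) to arrive at the same crux, namely $\int_1^{\infty}\mathcal V(u)\,\nu^-(\dd u)<\infty$ with $\nu^-$ the negative part of the L\'evy measure of $(Z_t)_{t\geq0}$.

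The genuine gap is in how you close this last step. The paper does it with no extra hypotheses via Vigon's \emph{\'equations amicales}: the convolution $\nu\ast\mathcal V$ coincides on $(-\infty,0)$ with the L\'evy measure of the dual ladder height process $(-\hat H_t)_{t\geq0}$, hence assigns finite mass to $(-\infty,-1/2]$ --- the balance between the growth of $\mathcal V$ and the tail of $\nu^-$ is automatic. Your substitute argument is both less general and partly incorrect. Under Assumption~\ref{assump_pos}, the ``first-moment estimate'' $\nn(|\f|\,\bm 1_{\{\f<-1\}})<\infty$ that you need (since subadditivity only gives $\mathcal V(u)\lesssim u$) does \emph{not} follow from $\nn(\ell)<\infty$ and local boundedness of $f$: Assumption~\ref{assump-f} allows $f$ to be unbounded at infinity, and $\f$ may well have infinite first moment even for a positive recurrent diffusion (the proposition must hold in that case too, and it does via Vigon). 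Under Assumption~\ref{assump_null}, the regular variation of $\mathcal V$ at $+\infty$ with index $\alpha\rho$ is a substantial fact (essentially that the ascending ladder height lies in the domain of attraction of an $\alpha\rho$-stable subordinator) that is asserted but nowhere established, and the paper never proves it. So the decisive missing idea is the identification of $\nu\ast\mathcal V$ on the negative half-line with a ladder-height L\'evy measure; without it, the final integrability does not follow from what you have written.
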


\noindent
This result is not obvious at first sight. However since $\zeta_{\eta_0}$ is a piece a integrated excursion, its law is directly linked with the Lévy measure of $(Z_t)_{t\geq0}$, and so is the function $\mathcal{V}$. As we shall see, this link is given by the équations amicales of Vigon. 

\begin{proof}
 Let $(z,x)\in\mathrm{H}$ and remember that
\[
 h(z,x) = \mathbb{E}_x\left[\mathcal{V}\Big(-z - \int_0^{\eta_0}f(X_s) \dr s\Big)\bm{1}_{\{z + \int_0^{\eta_0}f(X_s) \dr s < 0\}}\right].
\]

\noindent
First case: $z < 0$ and $x \geq 0$. If $x = 0$, then $\eta_0 = 0$ a.s. and $h(z,x) = \mathcal{V}(-z)$. If $x > 0$, then $\int_0^{\eta_0}f(X_s) \dr s > 0$ a.s. and since $\mathcal{V}$ is non-decrasing, $h(z,x) \leq \mathcal{V}(-z)$.

\medskip\noindent
Second case: $z \leq 0$ and $x < 0$. Then in this case $\int_0^{\eta_0}f(X_s) \dr s < 0$ a.s. By the strong Markov property, and since $(X_t)_{t\geq0}$ is continuous, the law of $\int_0^{\eta_0}f(X_s) \dr s$ under $\mathbb{P}_x$ is equal to the law of $\int_{\eta_x}^{d_{\eta_x}}f(X_s) \dr s$ under $\mathbb{P}_0$. But since $\int_{\eta_x}^{d_{\eta_x}}f(X_s) \dr s \geq \int_{g_{\eta_x}}^{d_{\eta_x}}f(X_s) \dr s$ and by Lemma \ref{lemma_max_exc}-\textit{(ii)}, the law of $\int_{g_{\eta_x}}^{d_{\eta_x}}f(X_s) \dr s$ under $\mathbb{P}_0$ is equal to the law of $\f(\gep)$ under $\nn(\cdot | M > x, \:\f < 0)$ we get the following bound:
\[
 h(z,x) \leq \frac{\nn\left(\mathcal{V}(-z - \f)\bm{1}_{\{M > x\}}\bm{1}_{\{\f < 0\}}\right)}{\nn(M > x, \:\f < 0)}.
\]

\noindent
Since $\nn(M > x, \:\f< 0) < \infty$, it remains to show that $\nn(\mathcal{V}(-z - \f)\bm{1}_{\{M > x\}}\bm{1}_{\{\f < 0\}}) < \infty$. We split this quantity in two parts:
\begin{align*}
 \nn\left(\mathcal{V}(-z - \f)\bm{1}_{\{M > x\}}\bm{1}_{\{\f < 0\}}\right) = & \:\nn\left(\mathcal{V}(-z - \f)\bm{1}_{\{M > x\}}\bm{1}_{\{-1 \leq \f <0\}}\right) \\
  & + \nn\left(\mathcal{V}(-z - \f)\bm{1}_{\{M > x\}}\bm{1}_{\{\f < -1\}}\right).
\end{align*}

\noindent
The first term on the right-hand-side is smaller than $\mathcal{V}(-z+1)\nn(M > x) < \infty$ (since $\mathcal{V}$ is non-decreasing). Regarding the second term, we first recall that $\mathcal{V}$ is subadditive, see for instance \cite[Chapter III]{b}, i.e. for any $x,y\geq0$, $\mathcal{V}(x+y) \leq \mathcal{V}(x) + \mathcal{V}(y)$, so that we can write
\[
 \nn\left(\mathcal{V}(-z - \f)\bm{1}_{\{M > x\}}\bm{1}_{\{\f < -1\}}\right) \leq \mathcal{V}(-z + 1/2)\nn(M > x) + \nn\left(\mathcal{V}(-1/2 - \f)\bm{1}_{\{\f < -1\}}\right).
\]

\noindent
We finally show that $\nn(\mathcal{V}(-1/2 - \f)\bm{1}_{\{\f < -1\}}) < \infty$. Recall now that the Lévy measure $\nu$ of $(Z_t)_{t\geq0}$ is $\nu(\dr u) = \nn(\f \in \dr u)$ so that
\begin{align*}
 \nn\left(\mathcal{V}(-1/2 - \f)\bm{1}_{\{\f < -1\}}\right) = & \int_{(-\infty, - 1)}\mathcal{V}(-1/2 - u) \nu(\dr u) \\
 = & \int_{\mathbb{R}}\int_{\mathbb{R}_+}\bm{1}_{\{u < -1\}}\bm{1}_{\{y \leq -1/2 - u\}}\mathcal{V}(\dr y)\nu(\dr u),
\end{align*}

\noindent
where $\mathcal{V}(\dr y)$ stands for the Stieltjes measure associated to the non-increasing function $\mathcal{V}$. We then have the following bound
\[
 \nn\left(\mathcal{V}(-1/2 - \f)\bm{1}_{\{\f < -1\}}\right) \leq \int_{\mathbb{R}}\int_{\mathbb{R}_+}\bm{1}_{\{y + u \leq -1/2\}}\nu(\dr u)\mathcal{V}(\dr y) = (\nu \ast \mathcal{V})((- \infty, -1/2]),
\]

\noindent
where $\nu \ast \mathcal{V}$ is the convolution of the measures $\nu$ and $\mathcal{V}$. The équations amicales of Vigon, see for instance \cite{vigon2002votre, vigon2002simplifiez}, states that the measure $\nu\ast\mathcal{V}$ coincides on $(-\infty,0)$ with the Lévy measure of the dual ladder height process $(-\hat{H}_t)_{t\geq0}$. Therefore $(\nu \ast \mathcal{V})((- \infty, -1/2]) < \infty$, which completes the proof.
\end{proof}

We will now show the following technical result, which describes the asymptotic behavior of the Laplace transform $\bm{\mathrm{P}}_{(z,x)}(T_0 > t)$ and recall that $e=e(q)$ is an independent exponential random variable of parameter $q$.

\begin{proposition}\label{prop_domi_return}
 Grant Assumption \ref{assump_pos} or \ref{assump_null}. There exists a constant $c_0 > 0$ such that for any $(z,x)\in \mathrm{H}$, we have
 \[
  \bm{\mathrm{P}}_{(z,x)}(T_0 > e) \sim c_0h(z,x)\kappa(0,q,0)  \quad \text{as }q\to0.
 \]

\noindent
Moreover, there exists a constant $M > 0$ such that for any $(z,x)\in \mathrm{H}$, for any $q\in(0,1)$,
\[
 \bm{\mathrm{P}}_{(z,x)}(T_0 > e) \leq M\kappa(0,q,0)(|\sca(x)| + h(z,x)).
\]
\end{proposition}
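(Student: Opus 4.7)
The plan is to decompose the event $\{T_0 > e\}$ according to whether the diffusion has already returned to $0$ by time $e$, and treat each piece separately. Since $(X_t)_{t\geq 0}$ has continuous paths and $0$ strictly separates $\mathrm{I}_+^*$ from $\mathrm{I}_-^*$, the process $X$ has constant sign on $[0,\eta_0]$, hence by Assumption~\ref{assump-f} the map $s\mapsto \zeta_s$ is monotone on that interval; in particular $\{T_0 > \eta_0\} = \{\zeta_{\eta_0} < 0\}$. I write
\[
\bm{\mathrm{P}}_{(z,x)}(T_0 > e) = A_q(z,x) + B_q(z,x),
\]
with $A_q := \bm{\mathrm{P}}_{(z,x)}(T_0 > e,\,\eta_0 \leq e)$ and $B_q := \bm{\mathrm{P}}_{(z,x)}(T_0 > e,\,\eta_0 > e)$.

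On the event $\{\eta_0 \leq e,\, T_0 > e\}$ one has $\zeta_{\eta_0} < 0$ and $T_0 = \eta_0 + T_0'$, where the strong Markov property at $\eta_0$ makes $T_0'$ conditionally (on $\mathcal{F}_{\eta_0}$) distributed as $T_{-\zeta_{\eta_0}}$ under $\bbP$ and independent of $\mathcal{F}_{\eta_0}$. Combining this with the memoryless property of $e$ (conditional on $\{e \geq \eta_0\}$, the variable $e - \eta_0$ is exponential of parameter $q$ independent of $\mathcal{F}_{\eta_0}$) yields
\[
A_q(z,x) = \bm{\mathrm{E}}_{(z,x)}\!\left[\bm{1}_{\{\zeta_{\eta_0}<0\}}\, e^{-q\eta_0}\, \phi_q(-\zeta_{\eta_0})\right], \qquad \phi_q(y) := \bbP(\xi_{e'} < y),
\]
with $e' = e(q)$ an independent copy. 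From Corollary~\ref{coro_exp} together with $\xi_{e'} \geq \xi_{g_{e'}}$ I get the uniform bound $\phi_q(y) \leq \kappa(0,q,0)\,\mathcal{V}(y)$, while the proofs of Theorems~\ref{main_thm_rec_pos} and~\ref{main_theorem} deliver the pointwise asymptotic $\phi_q(y)/\kappa(0,q,0) \to c_0\,\mathcal{V}(y)$ as $q\downarrow 0$, with $c_0 = \lim_q \bbP(\Delta_e \leq 0) \in (0,1]$ not depending on $y$. Since $h(z,x)<\infty$, dominated convergence then yields $A_q(z,x) \sim c_0\,h(z,x)\,\kappa(0,q,0)$ and the bound $A_q(z,x) \leq \kappa(0,q,0)\,h(z,x)$ for all $q$.

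For $B_q$, monotonicity of $\zeta$ on $[0,\eta_0]$ gives $B_q(z,x) \leq \bbP_x(\eta_0 > e) = 1 - \bbE_x[e^{-q\eta_0}]$. By the strong Markov property at $\eta_x$, under $\bbP_x$ (for $x>0$) the variable $\eta_0$ has the law of $\ell(\gep) - \sigma_x(\gep)$ under $\nn(\,\cdot\mid M > x,\,\f>0)$, with $\sigma_x(\gep) := \inf\{t:\gep_t = x\}$; since $\sigma_x \geq 0$,
\[
1 - \bbE_x[e^{-q\eta_0}] = \frac{\nn\!\left[1 - e^{-q(\ell - \sigma_x)};\, M > x,\,\f>0\right]}{\nn(M > x,\,\f>0)} \leq \frac{\nn(1 - e^{-q\ell})}{\nn(M > x,\,\f>0)} \leq \frac{\Phi(q)}{\nn(M > x,\,\f>0)},
\]
which by Lemma~\ref{lemma_max_exc}(i) is bounded by $2\,|\sca(x)|\,\Phi(q)$, with a symmetric argument for $x<0$. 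Under Assumption~\ref{assump_pos}, $\Phi(q) \sim \mathrm{m}_1 q$ while $\kappa(0,q,0)$ is regularly varying at $0$ with index $\rho \in(0,1)$; under Assumption~\ref{assump_null}, $\Phi$ has index $\beta$ and by Proposition~\ref{prop_rec_null_kappa} $\kappa(0,q,0)$ has index $\beta\rho < \beta$. In both cases $\Phi(q) = o(\kappa(0,q,0))$ and continuity on $(0,1]$ provides a constant $M$ with $\Phi(q) \leq M\,\kappa(0,q,0)$ throughout. Hence $B_q(z,x) = o(\kappa(0,q,0))$ pointwise and $B_q(z,x) \leq 2M\,|\sca(x)|\,\kappa(0,q,0)$ uniformly, which combined with the $A_q$-estimates yields both assertions of the proposition.

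The most delicate step will be to extract the pointwise asymptotic $\phi_q(y) \sim c_0\,\mathcal{V}(y)\,\kappa(0,q,0)$ with a single constant $c_0$ independent of $y$: this is implicit in Sections~\ref{sec_main_thm_rec_pos} and~\ref{sec:proof_null} and must be assembled from the identity $\bbP(\xi_{g_e}<y) = \kappa(0,q,0)\,\mathcal{V}_q(y)$ of Corollary~\ref{coro_exp}, the monotone convergence $\mathcal{V}_q(y) \uparrow \mathcal{V}(y)$ as $q\downarrow 0$, and the convergence $\bbP(\Delta_e \leq 0) \to c_0$ via the product decomposition of Proposition~\ref{prop_indep}.
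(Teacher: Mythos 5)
Your proof is correct and rests on the same two pillars as the paper's: the reduction, via the strong Markov property at $\eta_0$ and Lemma~\ref{lemma_max_exc}, of the main term to the asymptotics $\bbP(\xi_e<y)\sim c_0\mathcal{V}(y)\kappa(0,q,0)$ established in Sections~\ref{sec_main_thm_rec_pos} and~\ref{sec:proof_null}, together with the comparison of regular-variation indices showing $\Phi(q)=o(\kappa(0,q,0))$, which kills the contribution of $\{\eta_0>e\}$. The one genuine difference is in how the two pieces are glued. The paper works with $\bm{\mathrm{P}}_{(z,x)}(T_0-\eta_0>e)$, which is only a lower bound for $\bm{\mathrm{P}}_{(z,x)}(T_0>e)$, and therefore needs the sandwich $\bm{\mathrm{P}}_{(z,x)}(T_0-\eta_0>e)\leq\bm{\mathrm{P}}_{(z,x)}(T_0>e)\leq\bm{\mathrm{P}}_{(z,x)}(T_0-\eta_0>\delta e)+\bbP_x(\eta_0>(1-\delta)e)$ followed by $\delta\to1$, exploiting the regular variation of $\kappa(0,\cdot,0)$ to control the resulting $\delta^{-\rho}$ factor. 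You instead partition exactly on $\{\eta_0\leq e\}$ and use the memoryless property of $e$, which places the extra factor $\e^{-q\eta_0}$ inside the expectation defining $A_q$; since $\e^{-q\eta_0}\to1$ a.s.\ and the integrand is dominated by $\mathcal{V}(-\zeta_{\eta_0})\bm{1}_{\{\zeta_{\eta_0}<0\}}$, which is integrable precisely because $h(z,x)<\infty$, dominated convergence yields the equivalent directly and the $\delta$-trick is avoided --- a mild but real streamlining of Step~3 of the paper's argument, at no extra cost since the uniform bounds $A_q\leq\kappa(0,q,0)h(z,x)$ and $B_q\leq C|\sca(x)|\Phi(q)\leq CM|\sca(x)|\kappa(0,q,0)$ come out of the same computation. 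Two small remarks: (a) the input $\bbP(\xi_e<y)\sim c_0\mathcal{V}(y)\kappa(0,q,0)$ with a single constant $c_0=\lim_{q\to0}\bbP(\Delta_e\leq0)$ independent of $y$ is exactly what the paper also invokes here, so your reliance on it is no heavier than the paper's; (b) the conclusion ``$\sim c_0h(z,x)\kappa(0,q,0)$'' implicitly requires $h(z,x)>0$, a point shared with the paper and easily checked for $(z,x)\in\mathrm{H}$.
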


This proposition, combined with the Tauberian theorem, the monotone density theorem, Theorem \ref{equivalence_theorem} and Proposition \ref{prop_rec_null_kappa} leads to the following theorem.

\begin{theorem}\label{main_thm_hitting}
 Grant Assumption \ref{assump_pos} or \ref{assump_null}. Then there exists a slowly varying function $\Lambda$ such that for any $(z,x) \in \mathrm{H}$,
 \[
  \bm{\mathrm{P}}_{(z,x)}(T_0 > t) \sim h(z,x)\Lambda(t)t^{-\rho} \quad \text{as }t\to\infty,
 \]
\noindent
where $\rho \in(0,1)$ is either given by Assumption \ref{assump_pos}, or $\rho = \beta \mathbb{P}(Z_t^0 \geq0)$ if Assumption \ref{assump_null} is in force.
\end{theorem}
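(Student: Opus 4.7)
The plan is to derive Theorem~\ref{main_thm_hitting} from Proposition~\ref{prop_domi_return} via a standard Tauberian--monotone density argument, using the regular variation of $\kappa(0,q,0)$ already established in the proofs of the main theorems.

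Setting $F_{(z,x)}(t) := \bm{\mathrm{P}}_{(z,x)}(T_0 > t)$, observe that
\[
 \bm{\mathrm{P}}_{(z,x)}(T_0 > e) = q\int_0^\infty \e^{-qt} F_{(z,x)}(t)\,\dr t,
\]
where $e = e(q)$, so Proposition~\ref{prop_domi_return} is equivalent to the Laplace-transform asymptotic
\[
 \int_0^\infty \e^{-qt} F_{(z,x)}(t)\,\dr t \sim c_0\, h(z,x)\, q^{-1}\kappa(0,q,0) \qquad \text{as } q\downarrow 0.
\]
By Theorem~\ref{equivalence_theorem} under Assumption~\ref{assump_pos} (with the given $\rho$), and by Proposition~\ref{prop_rec_null_kappa} under Assumption~\ref{assump_null} (with $\rho=\beta\,\mathbb{P}(Z^0_t\geq 0)$), the function $q\mapsto \kappa(0,q,0)$ is regularly varying at $0$ with index $\rho\in (0,1)$. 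Write $\kappa(0,q,0) = q^\rho L(1/q)$ with $L$ slowly varying at $\infty$; then the Laplace transform $\widetilde F_{(z,x)}(q) := \int_0^\infty \e^{-qt} F_{(z,x)}(t)\dr t$ is regularly varying at $0$ with index $\rho-1\in(-1,0)$.

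Next, I apply Karamata's Tauberian theorem (\cite[Thm.~1.7.1]{bgt89}) to the non-decreasing function $U(t):=\int_0^t F_{(z,x)}(s)\,\dr s$, whose Laplace--Stieltjes transform is precisely $\widetilde F_{(z,x)}(q)$. This yields
\[
 U(t) \sim \frac{c_0\, h(z,x)}{\Gamma(2-\rho)}\, t^{1-\rho} L(t) \qquad \text{as } t\to\infty.
\]
Since $F_{(z,x)}=U'$ is monotone (non-increasing), the monotone density theorem (\cite[Thm.~1.7.2]{bgt89}) applies and gives
\[
 F_{(z,x)}(t) \sim \frac{c_0(1-\rho)}{\Gamma(2-\rho)}\, h(z,x)\, t^{-\rho} L(t) \qquad \text{as } t\to\infty,
\]
which is the claimed asymptotic with $\Lambda(t):=\frac{c_0(1-\rho)}{\Gamma(2-\rho)}L(t)$, a slowly varying function independent of $(z,x)$.

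The only genuine point to verify is that the applications of the Tauberian and monotone density theorems are legitimate: $F_{(z,x)}$ is non-negative and non-increasing (so $U$ is non-decreasing and concave), and $h(z,x)$ is finite on $\mathrm{H}$ by the preceding proposition, which ensures that the limiting constant is finite and that the asymptotic is genuinely of the stated order (not degenerate). The main technical input, namely Proposition~\ref{prop_domi_return}, already contains the real work; here only the slowly varying factor $\Lambda$ needs to be identified, and the independence of $\Lambda$ from $(z,x)$ is immediate since all the $(z,x)$-dependence in Proposition~\ref{prop_domi_return} is concentrated in the factor $h(z,x)$.
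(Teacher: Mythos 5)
Your argument is correct and is exactly the route the paper takes: the paper derives Theorem~\ref{main_thm_hitting} from Proposition~\ref{prop_domi_return} by combining it with the Tauberian theorem, the monotone density theorem, and the regular variation of $q\mapsto\kappa(0,q,0)$ established in Theorem~\ref{equivalence_theorem} (positive recurrent case) and Proposition~\ref{prop_rec_null_kappa} (null recurrent case). You have simply written out the details (the identification of the Laplace transform, the passage through $U(t)=\int_0^t F_{(z,x)}$, and the monotonicity needed for the monotone density theorem) that the paper leaves implicit.
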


\begin{proof}[Proof of Proposition \ref{prop_domi_return}]
 \textit{Step 1:} We first focus on the quantity $\mathbb{P}_x(\eta_0 > e)$ where $x\in \mathrm{I}$. We will first show that there exists a constant $C > 0$ such that for any $q\in(0,1)$ and any $x\in \mathrm{I}$,
 \begin{equation}\label{eq_lim_eta0}
 \mathbb{P}_x(\eta_0 > e) \leq C \kappa(0,q,0) |\sca(x)|.
 \end{equation}
 
\noindent
Let $x\in\mathrm{I}_+^*$ and remember that $\eta_x = \inf\{t>0, X_t = x\}$, and that by the strong Markov property the law of $\eta_0$ under $\mathbb{P}_x$ is equal to the law of $d_{\eta_x} - \eta_x$ under $\mathbb{P}_0$. Therefore we get
\[
 \mathbb{P}_x(\eta_0 > e) \leq \mathbb{P}_0(d_{\eta_x} - g_{\eta_x} > e) = q\int_0^{\infty} \e^{-qt}\mathbb{P}_0(d_{\eta_x} - g_{\eta_x} > t)\dr t.
\]

\noindent
By Lemma \ref{lemma_max_exc}-\textit{(ii)}, the law of $d_{\eta_x} - g_{\eta_x}$ under $\mathbb{P}_0$ is the law of $\ell$ under $\nn(\cdot | M > x, \f > 0)$ and by Lemma \ref{lemma_max_exc}-\textit{(i)}, $\nn(M > x, \f > 0) = 1/(2|\sca(x)|)$ so that

\begin{equation}\label{inter_domi}
 \mathbb{P}_x(\eta_0 > e) \leq  \frac{|\sca(x)|}{2}\nn(1-\e^{-q\ell})
\end{equation}

\noindent
This bound also holds for $x\in\mathrm{I}_-^*$ by Lemma \ref{lemma_max_exc} again, and for $x = 0$ since $\mathbb{P}_x(\eta > e) = 0$. 
Remember now that $\nn(1-\e^{-q\ell}) = \Phi(q) - \mathrm{m}q$ and that under Assumption \ref{assump_pos} or \ref{assump_null}, $\Phi$ is regularly varying at $0$ with index $\beta \in(0,1]$ ($\beta = 1$ under Assumption~\ref{assump_pos} and $\beta\in(0,1)$ under Assumption \ref{assump_null}). 
By Theorem \ref{equivalence_theorem} and Proposition \ref{prop_rec_null_kappa}, $q\mapsto \kappa(0,q,0)$ is regularly varying at $0$ with some index $\rho \in (0, \beta)$. It is clear that $\nn(1-\e^{-q\ell}) / \kappa(0,q,0) \to 0$ as $q\to0$, which shows that \eqref{eq_lim_eta0} holds and that for any $x\in\mathrm{I}$,
\begin{equation}\label{lim_eta}
 \frac{\mathbb{P}_x(\eta_0 > e)}{\kappa(0,q,0)} \to 0 \quad \text{as }q\to0.
\end{equation}

\noindent
This completes the first step.

\medskip\noindent
\textit{Step 2:} We now focus on the quantity $\bm{\mathrm{P}}_{(z,x)}(T_0 - \eta_0 > e)$ for $(z,x)\in\mathrm{H}$. We first remark that $\bm{\mathrm{P}}_{(z,x)}$-almost surely, $T_0 > \eta_0$ if and only if $\zeta_{\eta_0} < 0$. First, if $x < 0$, then $\bm{\mathrm{P}}_{(z,x)}$-almost surely, $(\zeta_t)_{t\geq0}$ is decreasing on $[0,\eta_0]$ so that $\bm{\mathrm{P}}_{(z,x)}(\zeta_{\eta_0} < 0) = \bm{\mathrm{P}}_{(z,x)}(T_0 > \eta_0) = 1$. Next, if $x \geq0$, then $\bm{\mathrm{P}}_{(z,x)}$-almost surely, $(\zeta_t)_{t\geq0}$ is non-increasing on $[0,\eta_0]$ and thus, we see that if $\zeta_{\eta_0} \geq 0$, then $T_0 \leq \eta_0$ whereas if $\zeta_{\eta_0} < 0$, then $T_0 > \eta_0$.

\medskip
We define the processes $(\Bar{\zeta}_t, \Bar{X}_t)_{t\geq0} = (\zeta_{t+\eta_0} - \zeta_{\eta_0}, X_{t+ \eta_0})_{t\geq0}$ and $\Bar{\xi}_t = \sup_{s\in[0,t]}\bar{\zeta_s}$. By the strong Markov property again, $(\Bar{\zeta}_t, \Bar{X}_t)_{t\geq0}$ and $\zeta_{\eta_0}$ are independent under $\bm{\mathrm{P}}_{(z,x)}$. Moreover the law of $(\Bar{\zeta}_t, \Bar{X}_t)_{t\geq0}$ under $\bm{\mathrm{P}}_{(z,x)}$ is equal to the of $(\zeta_t, X_t)_{t\geq0}$ under $\bm{\mathrm{P}}_{(0,0)} = \mathbb{P}$. We see that $\bm{\mathrm{P}}_{(z,x)}$-almost surely, $T_0 - \eta_0 > e$ if and only if $\xi_e < - \zeta_{\eta_0}$ and $\zeta_{\eta_0} < 0$. By independence, we get
\begin{equation}\label{aaa}
 \bm{\mathrm{P}}_{(z,x)}(T_0 - \eta_0 > e) = \int_{(-\infty, 0)}\mathbb{P}(\xi_e < -u)\bm{\mathrm{P}}_{(z,x)}(\zeta_{\eta_0} \in \dr u).
\end{equation}

\noindent
We will first show that for any $q\in(0,1)$, for any $(z,x) \in \mathrm{E}$, we have
\begin{equation}\label{ineq_h_domi}
 \bm{\mathrm{P}}_{(z,x)}(T_0 - \eta_0 > e) \leq h(z,x) \kappa(0,q,0).
\end{equation}

\noindent
Recall from \eqref{split} that $\xi_e= \xi_{g_e}+ \max( \Delta_e ,0)$ and
that $\xi_{g_e}$ and $\Delta_e$ are independent.  Since moreover $\mathbb{P}(\xi_{g_e} < -u) = \kappa(0,q,0)\mathcal{V}_q(-u) \leq \kappa(0,q,0)\mathcal{V}(-u)$ we get the following bound:
\[
 \mathbb{P}(\xi_e < -u) \leq \mathbb{P}(\xi_{g_e} < -u)\mathbb{P}(\Delta_e < -u) \leq \kappa(0,q,0)\mathcal{V}(-u),
\]

\noindent
which, combined with \eqref{aaa}, leads to \eqref{ineq_h_domi}. We will now show that there exists a constant $c_0 > 0$ such that for any $(z,x) \in \mathrm{E}$,
\begin{equation}\label{conv_interm}
 \bm{\mathrm{P}}_{(z,x)}(T_0 - \eta_0 > e) \sim c_0 h(z,x) \kappa(0,q,0) \quad \text{as }q\to0.
\end{equation}

\noindent
It is shown in the proofs of Theorem \ref{main_theorem} that $\mathbb{P}(\xi_e < -u) \sim c_0\mathcal{V}(-u)\kappa(0,q,0)$ as $q\to0$, where $c_0 = \lim_{q\to0}\mathbb{P}(\Delta_e \leq 0)$. Since $\mathbb{P}(\xi_e < -u) \leq \kappa(0,q,0)\mathcal{V}(-u)$, it is clear that we can apply the dominated convergence theorem in \eqref{aaa} to deduce that \eqref{conv_interm} holds. This completes the second step.

\medskip\noindent
\textit{Step 3:} We conclude. First, we have by \eqref{conv_interm} that for any $(z,x)\in\mathrm{H}$,
\[
 c_0 h(z,x) = \liminf_{q\to0}\frac{\bm{\mathrm{P}}_{(z,x)}(T_0 - \eta_0 > e)}{\kappa(0,q,0)} \leq \liminf_{q\to0} \frac{\bm{\mathrm{P}}_{(z,x)}(T_0 > e)}{\kappa(0,q,0)}.
\]

\noindent
Let $\delta \in(0,1)$ and write
\begin{align*}
 \bm{\mathrm{P}}_{(z,x)}(T_0 > e) = & \bm{\mathrm{P}}_{(z,x)}(T_0 > e, T_0 - \eta_0 > \delta e) + \bm{\mathrm{P}}_{(z,x)}(T_0 > e, T_0 - \eta_0 \leq \delta e) \\
  & \leq \bm{\mathrm{P}}_{(z,x)}(T_0 - \eta_0 > \delta e) + \mathbb{P}_x(\eta_0 > (1 - \delta) e).
\end{align*}

\noindent
By \eqref{lim_eta}, and since $q\mapsto \kappa(0,q,0)$ is regularly varying at $0$ with index $\rho \in(0,1)$, the second term divided by $\kappa(0,q,0)$ vanishes as $q\to0$. Since $\delta e$ is an exponential random variable of parameter $q/ \delta$, we have by \eqref{conv_interm} that
\[
 \bm{\mathrm{P}}_{(z,x)}(T_0 - \eta_0 > \delta e) \sim c_0\delta^{-\rho} h(z,x) \kappa(0,q,0) \quad \text{as }q\to0.
\]

\noindent
Therefore, we get that
\[
 \limsup_{q\to0}\frac{\bm{\mathrm{P}}_{(z,x)}(T_0 > e)}{\kappa(0,q,0)} \leq c_0\delta^{-\rho} h(z,x).
\]

\noindent
Letting $\delta\to1$ shows the first part of the proposition. Regarding the second part, it comes by \eqref{eq_lim_eta0} that there exists a constant $C > 0$ such that for any $q\in(0,1)$ and any $(z,x) \in \mathrm{H}$, we have $\mathbb{P}_x(\eta_0 > (1 - \delta) e) \leq M|\sca(x)| \kappa(0,q,0)$. Then, by \eqref{ineq_h_domi}, we see that for $q\in(0,1)$ and any $(z,x) \in \mathrm{H}$, we have
\[
 \bm{\mathrm{P}}_{(z,x)}(T_0 - \eta_0 > \delta e) \leq h(z,x) \kappa(0,q/\delta, 0) \leq h(z,x) \kappa(0,q,0) \times \sup_{q\in(0,1)} \frac{\kappa(0,\delta / q, 0)}{\kappa(0,q,0)}.
\]

\noindent
This finishes the proof.
\end{proof}

From this proposition, we are able to show that the function $h$ is harmonic and thus, we are able to define the additive functional conditionned to stay negative.
\begin{corollary}\label{coro_harmonic}
 Grant Assumption \ref{assump_pos} or \ref{assump_null} and assume that for any $x\in\mathrm{I}$ and any $t\geq0$, $\mathbb{E}_x[|\sca(X_t)|] < \infty$. Then the function $h$ is harmonic for the killed process $(\zeta_{t\wedge T_0}, X_{t\wedge T_0})_{t\geq0}$, i.e. for any $(z,x)\in\mathrm{H}$ and any $t\geq0$, we have
 \[
  \bm{\mathrm{E}}_{(z,x)}\left[h(\zeta_t, X_t)\bm{1}_{\{T_0 > t\}}\right] = h(z,x).
 \]
\end{corollary}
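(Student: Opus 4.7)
The plan is to derive the harmonicity identity from the strong Markov property, the sharp tail asymptotic of Theorem~\ref{main_thm_hitting}, and the uniform Laplace bound of Proposition~\ref{prop_domi_return}, all combined through a dominated convergence argument.

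Fix $(z,x)\in\mathrm{H}$ and $t\geq 0$. By the strong Markov property applied at time $t$,
\begin{equation}
\bm{\mathrm{P}}_{(z,x)}(T_0>t+s)=\bm{\mathrm{E}}_{(z,x)}\bigl[\bm{\mathrm{P}}_{(\zeta_t,X_t)}(T_0>s)\,\bm{1}_{\{T_0>t\}}\bigr],\qquad s\geq 0. \label{eq:planharmmarkov}
\end{equation}
On $\{T_0>t\}$ we have $\zeta_t<0$, hence $(\zeta_t,X_t)\in\mathrm{H}$ almost surely on that event, and Theorem~\ref{main_thm_hitting} gives $\bm{\mathrm{P}}_{(\zeta_t,X_t)}(T_0>s)/(\Lambda(s)s^{-\rho})\to h(\zeta_t,X_t)$ pointwise on $\{T_0>t\}$. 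Since $\Lambda$ is slowly varying, $\Lambda(t+s)(t+s)^{-\rho}/(\Lambda(s)s^{-\rho})\to 1$ as $s\to\infty$, so dividing \eqref{eq:planharmmarkov} by $\Lambda(s)s^{-\rho}$ and using Theorem~\ref{main_thm_hitting} at $(z,x)$ yields
\[
h(z,x)=\lim_{s\to\infty}\bm{\mathrm{E}}_{(z,x)}\!\left[\frac{\bm{\mathrm{P}}_{(\zeta_t,X_t)}(T_0>s)}{\Lambda(s)s^{-\rho}}\,\bm{1}_{\{T_0>t\}}\right].
\]

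The crucial step is to upgrade the averaged bound of Proposition~\ref{prop_domi_return} into a uniform-in-$s$ envelope
\begin{equation}
\bm{\mathrm{P}}_{(y,u)}(T_0>s)\leq C\,(|\sca(u)|+h(y,u))\,\Lambda(s)s^{-\rho},\qquad s\geq 1,\ (y,u)\in\mathrm{H}, \label{eq:planbound2}
\end{equation}
for some constant $C>0$. This follows from three inputs: (i) the monotonicity of $s\mapsto\bm{\mathrm{P}}_{(y,u)}(T_0>s)$, which, by restricting the Laplace integral at $q=1/s$ to the interval $[0,s]$, gives $\bm{\mathrm{P}}_{(y,u)}(T_0>s)\leq\frac{e}{e-1}\bm{\mathrm{P}}_{(y,u)}(T_0>e(1/s))$; (ii) the uniform Laplace bound of Proposition~\ref{prop_domi_return}, namely $\bm{\mathrm{P}}_{(y,u)}(T_0>e(1/s))\leq M\kappa(0,1/s,0)(|\sca(u)|+h(y,u))$; and (iii) the asymptotic $\kappa(0,1/s,0)\sim c\,\Lambda(s)s^{-\rho}$, obtained by applying the Abelian/Karamata theorem to the sharp asymptotic of Theorem~\ref{main_thm_hitting} at some fixed $(z_0,x_0)\in\mathrm{H}$ with $h(z_0,x_0)>0$ and matching it with the Laplace-side asymptotic $\bm{\mathrm{P}}_{(z_0,x_0)}(T_0>e(q))\sim c_0h(z_0,x_0)\kappa(0,q,0)$ from the first part of Proposition~\ref{prop_domi_return}.

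Given \eqref{eq:planbound2}, the integrand in the previous display is dominated by $C(|\sca(X_t)|+h(\zeta_t,X_t))\bm{1}_{\{T_0>t\}}$. The first summand is integrable by the hypothesis $\mathbb{E}_x[|\sca(X_t)|]<\infty$, and the second is integrable because Fatou's lemma applied to \eqref{eq:planharmmarkov} (after division by $\Lambda(s)s^{-\rho}$) already delivers $\bm{\mathrm{E}}_{(z,x)}[h(\zeta_t,X_t)\bm{1}_{\{T_0>t\}}]\leq h(z,x)<\infty$. The dominated convergence theorem then upgrades the inequality to the equality $\bm{\mathrm{E}}_{(z,x)}[h(\zeta_t,X_t)\bm{1}_{\{T_0>t\}}]=h(z,x)$, which is the required harmonicity. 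The main obstacle is producing the pointwise-in-$s$ domination \eqref{eq:planbound2}: the monotonicity trick is elementary, but one must check carefully through a Tauberian identification that the slowly varying functions coming from $\kappa(0,\cdot,0)$ and from Theorem~\ref{main_thm_hitting} are compatible, so that the constant $C$ in \eqref{eq:planbound2} may indeed be taken uniform in $(y,u)\in\mathrm{H}$ and in $s\geq 1$.
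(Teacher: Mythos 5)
Your argument is correct, and it follows the same skeleton as the paper's proof: strong Markov property at time $t$, Fatou's lemma to get $\bm{\mathrm{E}}_{(z,x)}[h(\zeta_t,X_t)\bm{1}_{\{T_0>t\}}]\leq h(z,x)<\infty$, then dominated convergence with the envelope $M(|\sca(X_t)|+h(\zeta_t,X_t))$ to upgrade to equality. The difference is the domain in which you pass to the limit. The paper stays entirely in the Laplace variable: it shows directly that $\bm{\mathrm{P}}_{(z,x)}(T_0>e+t)\sim c_0h(z,x)\kappa(0,q,0)$ as $q\to0$ via the elementary estimate $\bm{\mathrm{P}}_{(z,x)}(T_0>e)-\bm{\mathrm{P}}_{(z,x)}(T_0>e+t)\leq qt+\e^{qt}-1=o(\kappa(0,q,0))$ (using $\rho<1$), and the pointwise-in-$q$ bound of Proposition~\ref{prop_domi_return} is then exactly the domination needed — no further work. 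You instead take $s\to\infty$ in real time, invoking Theorem~\ref{main_thm_hitting}, which forces you to manufacture the pointwise-in-$s$ envelope: your monotonicity trick $\bm{\mathrm{P}}(T_0>s)\leq\frac{\e}{\e-1}\bm{\mathrm{P}}(T_0>e(1/s))$ is fine, and the Tauberian matching of $\kappa(0,1/s,0)$ with $\Lambda(s)s^{-\rho}$ at a reference point with $h(z_0,x_0)>0$ (e.g.\ $x_0=0$, $z_0<0$, where $h=\mathcal V(-z_0)>0$) does close that step. Two minor remarks: you only need the envelope for $s\geq s_0$ with $s_0$ large, since dominated convergence along $s\to\infty$ requires eventual domination, so the uniformity down to $s=1$ is more than necessary; and the time-shift $s\mapsto s+t$ is absorbed by the uniform convergence theorem for slowly varying functions, which plays the role of the paper's $qt+\e^{qt}-1$ estimate. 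Net effect: your route makes harmonicity a consequence of the real-time tail asymptotics at the cost of one extra Abelian/Tauberian identification, whereas the paper's Laplace-domain route avoids it.
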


We emphasize that the assumption $\mathbb{E}_x[ \vert \sca(X_t)\vert ] < \infty$ for any $x\in\mathrm{I}$ and any $t\geq0$ is satisfied by a large class of processes (as for example Brownian motion or Ornstein-Uhlenbeck processes). However one can easily find a recurrent process for which it is not satisfied.

\begin{proof}
 \textit{Step 1:} We first show that for any $(z,x)\in \mathrm{H}$, $\bm{\mathrm{E}}_{(z,x)}[h(\zeta_t, X_t)\bm{1}_{\{T_0 > t\}}] \leq  h(z,x) < \infty$. This is a direct application of Fatou's Lemma, Proposition \ref{prop_domi_return} and the Markov property. Let $t\geq0$ and note that as long as $T_0 > t$, $(\zeta_t, X_t) \in \mathrm{H}$. We have
 \begin{align*}
  \bm{\mathrm{E}}_{(z,x)}\left[h(\zeta_t, X_t)\bm{1}_{\{T_0 > t\}}\right] = & \bm{\mathrm{E}}_{(z,x)}\left[\liminf_{q\to0} \frac{\bm{\mathrm{P}}_{(\zeta_t,X_t)}(T_0 >  e)}{c_0 \kappa(0,q,0)}\bm{1}_{\{T_0 > t\}}\right] \\
   & \leq \liminf_{q\to0}\bm{\mathrm{E}}_{(z,x)}\left[\frac{\bm{\mathrm{P}}_{(\zeta_t,X_t)}(T_0 >  e)}{c_0 \kappa(0,q,0)}\bm{1}_{\{T_0 > t\}}\right] \\
   & \leq  \liminf_{q\to0} \frac{\bm{\mathrm{P}}_{(z,x)}(T_0 >  e + t)}{c_0 \kappa(0,q,0)}.
 \end{align*}

\noindent
For any $(z,x)\in\mathrm{H}$ and for any $t\geq0$, we have
\[
 \bm{\mathrm{P}}_{(z,x)}(T_0 >  e + t) = q\int_0^{\infty}\e^{-qs}\bm{\mathrm{P}}_{(z,x)}(T_0 >  s + t)\dr s = \bm{\mathrm{E}}_{(z,x)}\left[(1 - \e^{-q(T_0 - t)})\bm{1}_{\{T_0 > t\}}\right].
\]

\noindent
Since $\bm{\mathrm{P}}_{(z,x)}(T_0 >  e) = \bm{\mathrm{E}}_{(z,x)}[1 - \e^{-qT_0}]$, we get that
\begin{align*}
 \bm{\mathrm{P}}_{(z,x)}(T_0 >  e) - \bm{\mathrm{P}}_{(z,x)}(T_0 >  e + t) = & \bm{\mathrm{E}}_{(z,x)}\left[(1 - \e^{-qT_0})\bm{1}_{\{T_0 \leq t\}}\right] + \bm{\mathrm{E}}_{(z,x)}\left[\e^{-qT_0}(\e^{qt} - 1)\bm{1}_{\{T_0 > t\}}\right] \\
  & \leq qt + \e^{qt} - 1.
\end{align*}

\noindent
At this point we conclude that for any $t\geq0$ and for any $(z,x) \in \mathrm{H}$, $\bm{\mathrm{P}}_{(z,x)}(T_0 >  e + t) \sim c_0 h(z,x) \kappa(0,q,0)$ as $q\to0$ and thus
\[
 \bm{\mathrm{E}}_{(z,x)}\left[h(\zeta_t, X_t)\bm{1}_{\{T_0 > t\}}\right] \leq h(z,x).
\]
\noindent
\textit{Step 2:} We now use the dominated convergence theorem. Indeed, we have
\[
 \bm{\mathrm{E}}_{(z,x)}\left[h(\zeta_t, X_t)\bm{1}_{\{T_0 > t\}}\right] = \bm{\mathrm{E}}_{(z,x)}\left[\lim_{q\to0}\frac{\bm{\mathrm{P}}_{(\zeta_t,X_t)}(T_0 >  e)}{c_0 \kappa(0,q,0)}\bm{1}_{\{T_0 > t\}}\right],
\]

\noindent
and by Proposition \ref{prop_domi_return}, there exists a constant $M > 0$ such that for any $t\geq 0$ and any $q\in(0,1)$,
\[
 \frac{\bm{\mathrm{P}}_{(\zeta_t,X_t)}(T_0 >  e)}{c_0 \kappa(0,q,0)}\bm{1}_{\{T_0 > t\}} \leq M\left(|\sca(X_t)| + h(\zeta_t, X_t)\right)\bm{1}_{\{T_0 > t\}}.
\]

\noindent
By assumption $\mathbb{E}_x[\sca(X_t)] < \infty$ and by the first step, $\bm{\mathrm{E}}_{(z,x)}[h(\zeta_t, X_t)\bm{1}_{\{T_0 > t\}}] < \infty$ so that we have by the strong Markov property and by the computations from the first step,
\[
 \bm{\mathrm{E}}_{(z,x)}\left[h(\zeta_t, X_t)\bm{1}_{\{T_0 > t\}}\right] = \lim_{q\to0} \frac{\bm{\mathrm{P}}_{(z,x)}(T_0 >  e + t)}{c_0 \kappa(0,q,0)} = h(z,x).
\]

\noindent
This completes the proof.
\end{proof}

Let $(\mathcal{F}_t)_{t\geq0}$ be the filtration generated by $(X_t)_{t\geq0}$. For $(z,x)\in\mathrm{H}$, we introduce a new probability measure $\bm{\mathrm{Q}}_{(z,x)}$ such that for every $t\geq0$, for every $A\in\mathcal{F}_t$,
\[
 \bm{\mathrm{Q}}_{(z,x)}(A) = \frac{1}{h(z,x)}\bm{\mathrm{E}}_{(z,x)}\left[h(\zeta_t, X_t) \bm{1}_{A\cap\{T_0 > t\}}\right].
\]

\noindent
The preceding proposition ensures that this measure is indeed a probability measure. It is in fact the law of $(\zeta_t, X_t)_{t\geq0}$ where $(\zeta_t)_{t\geq0}$ is conditioned to remain negative. Using similar arguments as in the previous proof, we could easily show the following result, which justifies the terminology.

\begin{proposition}
\label{prop:conditioned}
For any $(z,x)\in\mathrm{H}$, for any $t\geq0$ and any $A\in\mathcal{F}_t$, we have
\[
 \bm{\mathrm{Q}}_{(z,x)}(A) = \lim_{q\to0} \bm{\mathrm{P}}_{(z,x)}(A | T_0 > e),
\]
\end{proposition}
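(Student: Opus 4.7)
The plan is to prove the identity by splitting the event $\{T_0>e\}$ according to the position of $e$ relative to $t$, handling the regime $\{e\leq t\}$ as a negligible error and the regime $\{e>t\}$ via the Markov property combined with the sharp asymptotics already established in Proposition~\ref{prop_domi_return}. Concretely, I would write
\[
\bm{\mathrm{P}}_{(z,x)}(A\cap\{T_0>e\}) = \bm{\mathrm{P}}_{(z,x)}(A\cap\{T_0>e\},\,e\leq t) + \bm{\mathrm{P}}_{(z,x)}(A\cap\{T_0>e\},\,e>t),
\]
and bound the first term by $\bm{\mathrm{P}}(e\leq t)=1-e^{-qt}=O(q)$ as $q\downarrow 0$. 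Since $\kappa(0,q,0)$ is regularly varying at $0$ with index $\rho\in(0,1)$, this term is $o(\kappa(0,q,0))$ and hence negligible when divided by the denominator $\bm{\mathrm{P}}_{(z,x)}(T_0>e)\sim c_0 h(z,x)\kappa(0,q,0)$ given by Proposition~\ref{prop_domi_return}.

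The heart of the argument is the second term. Using the lack of memory of the exponential variable and the strong Markov property of $(\zeta_t, X_t)_{t\geq 0}$ at time $t$, I would rewrite
\[
\bm{\mathrm{P}}_{(z,x)}(A\cap\{T_0>e\},\,e>t) = e^{-qt}\,\bm{\mathrm{E}}_{(z,x)}\!\left[\mathbf{1}_A\mathbf{1}_{\{T_0>t\}}\,\bm{\mathrm{P}}_{(\zeta_t,X_t)}(T_0>e')\right],
\]
where $e'=e(q)$ is an independent exponential. Dividing by $\kappa(0,q,0)$ and applying Proposition~\ref{prop_domi_return} pointwise yields that the integrand converges $\bm{\mathrm{P}}_{(z,x)}$-almost surely on $\{T_0>t\}$ to $c_0\,h(\zeta_t,X_t)$ as $q\downarrow 0$.

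To interchange limit and expectation I would appeal to dominated convergence using the second, uniform-in-$q$ estimate of Proposition~\ref{prop_domi_return}: for some constant $M>0$ and all $q\in(0,1)$,
\[
\frac{\bm{\mathrm{P}}_{(\zeta_t,X_t)}(T_0>e')}{\kappa(0,q,0)}\mathbf{1}_{\{T_0>t\}} \leq M\bigl(|\sca(X_t)|+h(\zeta_t,X_t)\bigr)\mathbf{1}_{\{T_0>t\}}.
\]
The $\sca(X_t)$ term is integrable by the standing hypothesis $\mathbb{E}_x[|\sca(X_t)|]<\infty$, while the term $h(\zeta_t,X_t)\mathbf{1}_{\{T_0>t\}}$ is integrable (in fact bounded in $L^1$ by $h(z,x)$) thanks to Step~1 of the proof of Corollary~\ref{coro_harmonic}. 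Putting the two pieces together gives
\[
\lim_{q\downarrow 0}\frac{\bm{\mathrm{P}}_{(z,x)}(A\cap\{T_0>e\})}{\kappa(0,q,0)} = c_0\,\bm{\mathrm{E}}_{(z,x)}\!\left[\mathbf{1}_A\,h(\zeta_t,X_t)\mathbf{1}_{\{T_0>t\}}\right] = c_0\,h(z,x)\,\bm{\mathrm{Q}}_{(z,x)}(A),
\]
and dividing by $\bm{\mathrm{P}}_{(z,x)}(T_0>e)/\kappa(0,q,0)\to c_0 h(z,x)$ concludes.

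The main obstacle is the justification of the dominated convergence step, which is why the two estimates of Proposition~\ref{prop_domi_return} (the pointwise asymptotic and the uniform-in-$q$ bound) were designed together, and why the assumption $\mathbb{E}_x[|\sca(X_t)|]<\infty$ appears. Once these integrability inputs are available, the rest of the argument is essentially a careful bookkeeping of the Markov and memoryless properties.
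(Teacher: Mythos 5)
Your proof is correct and is precisely the argument the paper has in mind: the paper states the result with the remark ``Using similar arguments as in the previous proof,'' and your proof does exactly that, reusing the pointwise asymptotic and the uniform-in-$q$ bound of Proposition~\ref{prop_domi_return}, the integrability facts from Step~1 of the proof of Corollary~\ref{coro_harmonic}, and dominated convergence under the standing hypothesis $\mathbb{E}_x[|\sca(X_t)|]<\infty$. The only cosmetic difference is that you split on $\{e\le t\}$ versus $\{e>t\}$ and then use memorylessness, whereas the proof of Corollary~\ref{coro_harmonic} compares $\bm{\mathrm{P}}_{(z,x)}(T_0>e)$ directly with $\bm{\mathrm{P}}_{(z,x)}(T_0>e+t)$; both discard the same $O(q)=o(\kappa(0,q,0))$ error.
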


\appendix

\section{Wiener--Hopf factorization}\label{appendix_wiener}

In this section, we derive the Wiener-Hopf factorization from Subsection \ref{section_WH}, \textit{i.e.}\ Theorem~\ref{WH}. We consider, in all generality, a bivariate Lévy process $(\tau_t, Z_t)_{t\geq0}$ with respect to some filtration $(\mathcal{F}_t)_{t\geq0}$, where $(\tau_t)_{t\geq0}$ is a subordinator. This section is very close to \cite[Ch.~VI]{b} and follows the same approach. However, our result requires some new arguments and we chose to establish properly Theorem~\ref{WH}.

\subsection{Preliminaries}
Let us recall briefly the notation introduced in Section~\ref{section_WH}.
Let $S_t= \sup_{[0,t]} Z_t$ the running supremum of $Z_t$ and consider the reflected process $(R_t)_{t\geq0} = (S_t - Z_t)_{t\geq0}$, which is a strong Markov process (see \cite[Prop.~VI.1]{b}) and posesses a local time $(L_t^{R})_{t\geq 0}$ at $0$. 
We denote by $(\sigma_t)_{t\geq0}$ its right-continuous inverse, and we define $(\sigma_t, \theta_t, H_t)_{t\geq0} := (\sigma_t, \tau_{\sigma_t}, S_{\sigma_t})_{t\geq0}$.

\begin{lemma}
\label{lem:trivariate}
The two following assertions hold.
\begin{enumerate}[label=(\roman*)]
 \item If $0$ is recurrent for the reflected process, then $(\sigma_t, \theta_t, H_t)_{t\geq0}$ is a trivariate subordinator.
 
 \item If $0$ is transient for the reflected process, then there exists some $\bm{q} >0$ such that $L_{\infty}^R$ has an exponential distribution with parameter $\bm{q}$. Moreover, the process $(\sigma_t, \theta_t, H_t)_{0 \leq t < L_{\infty}^R}$ is a trivariate subordinator killed at rate $\bm{q}$.
\end{enumerate}
\end{lemma}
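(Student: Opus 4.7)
My plan is to adapt the classical proof of the one-dimensional Wiener--Hopf factorization (see e.g.\ \cite[Ch.~VI, Sec.~1]{b}) to the present trivariate setting. The main ingredient will be the strong Markov property of the bivariate Lévy process $(\tau_s, Z_s)_{s\geq 0}$ applied at each of the stopping times $\sigma_t$. Since $L^R$ is a non-decreasing, $(\mathcal{F}_s)$-adapted additive functional of the reflected process $R$, its right-continuous inverse $\sigma_t$ is a stopping time of $(\mathcal{F}_s)_{s\geq 0}$, and on the event $\{\sigma_t < \infty\}$ the shifted bivariate process $(\tau_{\sigma_t+s} - \tau_{\sigma_t},\, Z_{\sigma_t+s} - Z_{\sigma_t})_{s\geq 0}$ will be independent of $\mathcal{F}_{\sigma_t}$ and distributed as $(\tau_s, Z_s)_{s\geq 0}$. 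Moreover, the characterisation of $L^R$ as a local time forces it to increase only on the closure of $\{s : R_s = 0\}$, so that on $\{\sigma_t < \infty\}$ one has $R_{\sigma_t} = 0$, i.e.\ $S_{\sigma_t} = Z_{\sigma_t}$.

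For assertion~(i), recurrence of $0$ for $R$ ensures $L^R_\infty = \infty$, so $\sigma_t < \infty$ a.s.\ for all $t\geq 0$. I will then express the increments of $(\sigma, \theta, H)$ between times $t$ and $t+u$ as measurable functionals of the shifted bivariate process alone:
\[
\sigma_{t+u} - \sigma_t = \inf\{s \geq 0 : L^R_{\sigma_t + s} - L^R_{\sigma_t} > u\},\qquad
\theta_{t+u} - \theta_t = \tau_{\sigma_t + (\sigma_{t+u}-\sigma_t)} - \tau_{\sigma_t},
\]
\[
H_{t+u} - H_t = \sup_{0 \leq s \leq \sigma_{t+u}-\sigma_t} (Z_{\sigma_t+s} - Z_{\sigma_t}),
\]
where in the last identity I have used $S_{\sigma_t} = Z_{\sigma_t}$. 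By the strong Markov property the triplet of increments is then independent of $\mathcal{F}_{\sigma_t}$ and distributed like $(\sigma_u, \theta_u, H_u)$; combined with the obvious non-decrease and c\`adl\`ag property of each coordinate, this yields that $(\sigma_t, \theta_t, H_t)_{t\geq 0}$ is a trivariate subordinator.

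For assertion~(ii), I will first invoke a lack-of-memory argument: writing $L^R_\infty = L^R_{\sigma_t} + (L^R_\infty - L^R_{\sigma_t})\ind_{\{\sigma_t<\infty\}}$ and applying the strong Markov property of $R$ at $\sigma_t$ yields $\bbP(L^R_\infty > t+s) = \bbP(L^R_\infty > t)\bbP(L^R_\infty > s)$ for every $s,t\geq 0$; transience of $0$ for $R$ implies that this common tail is a proper exponential with some rate $\bm{q} > 0$, and $\{\sigma_t < \infty\} = \{t < L^R_\infty\}$. Conditionally on $\{\sigma_t < \infty\}$ the reasoning from the previous paragraph applies verbatim and shows that $(\sigma_t, \theta_t, H_t)_{0\leq t < L^R_\infty}$ has independent stationary increments, so it is a trivariate subordinator killed at the independent exponential time $L^R_\infty$, i.e.\ killed at rate $\bm{q}$. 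The main technical obstacle will be a rigorous treatment of the behaviour of $L^R$ and $R$ at the times $\sigma_t$, especially in the case where $0$ is irregular for $R$: there, $\sigma$ is essentially a compound Poisson process whose jumps correspond to excursions of $R$ away from $0$, and one must justify with care the identity $R_{\sigma_t} = 0$ (equivalently $S_{\sigma_t} = Z_{\sigma_t}$) that underpins the whole decomposition.
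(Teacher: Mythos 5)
Your proposal is correct and follows essentially the same route as the paper: the strong Markov property of $(\tau_s,Z_s)$ at the stopping times $\sigma_t$, the identification of the shifted local time $L^R_{\sigma_t+\cdot}-t$ and its inverse $\sigma_{t+\cdot}-\sigma_t$, and the key identity $S_{\sigma_t}=Z_{\sigma_t}$ on $\{\sigma_t<\infty\}$. The only cosmetic difference is that the paper handles both assertions simultaneously through one Laplace-transform identity (whose $\alpha=\beta=\gamma=0$ case is exactly your lack-of-memory argument for $L^R_\infty$), whereas you treat them separately.
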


\begin{proof}
The proof is essentially the same as in Kyprianou \cite[Ch. VI Thm 6.9]{Kyprianou14} and we will show the two items at once. Let $t\geq0$, we first place ourselves on the event $\{t< L_{\infty}^R\} = \{\sigma_t < \infty\}$ so that $\sigma_t$ is a finite stopping time.

The strong Markov property tells us that the process $(\tilde{\tau}_s, \tilde{Z}_s)_{s\geq0} = (\tau_{\sigma_t + s} - \tau_{\sigma_t}, Z_{\sigma_t + s} - Z_{\sigma_t})_{s\geq0}$ is a Lévy process independent of $\mathcal{F}_{\sigma_t}$. Then it is clear that the corresponding local time $(\tilde{L}_s)_{s\geq0}$ is such that for any $s\geq0$, $\tilde{L}_s = L_{\sigma_t + s}^R - t$ so that its right-continuous inverse $(\tilde{\sigma}_s)_{s\geq0}$ is $\tilde{\sigma}_s = \sigma_{t+s} - \sigma_t$. Moreover, since $S_{\sigma_t} = Z_{\sigma_t}$, it comes that for any $s\geq0$ 
\[
 \tilde{S}_s = \sup_{u\in[0,s]}\tilde{Z}_u = \sup_{u\in[0,s]}(Z_{\sigma_t + u} - Z_{\sigma_t}) = S_{\sigma_t + s} - S_{\sigma_t}.
\]

\noindent
This shows that, on the event $\{t< L_{\infty}^R\}$, the shifted process
\[
 (\tilde{\sigma}_s, \tilde{\theta}_s, \tilde{H}_s)_{s\geq0} = (\tilde{\sigma}_s, \tilde{\tau}_{\tilde{\sigma}_s}, \tilde{S}_{\tilde{\sigma}_s})_{s\geq0} = (\sigma_{t+s} - \sigma_t, \theta_{t+s} - \theta_t, H_{t+s} - H_t)_{s\geq0}
\]
is independent of $\mathcal{F}_{\sigma_t}$ and has the same law as $(\sigma_s, \theta_s, H_s)_{s\geq0}$.
Finally, we see that for any $s,t \geq0$ and any $\alpha,\beta,\gamma \geq0$,
\begin{align*}
 \mathbb{E}\bigg[\e^{-\alpha \sigma_{t+s} - \beta \theta_{t+s} - \gamma H_{t+s}}&\bm{1}_{\{t+s < L_{\infty}^R\}}\bigg] \\
  & = \mathbb{E}\bigg[\e^{-\alpha \sigma_{t} - \beta \theta_{t} - \gamma H_{t}}\bm{1}_{\{t < L_{\infty}^R\}}\mathbb{E}\bigg[\e^{-\alpha \tilde{\sigma}_s - \beta \tilde{\theta}_{s} - \gamma \tilde{H}_{s}}\bm{1}_{\{s < \tilde{L}_{\infty}\}}|\mathcal{F}_{\sigma_t}\bigg]\bigg] \\
  & = \mathbb{E}\bigg[\e^{-\alpha \sigma_{t} - \beta \theta_{t} - \gamma H_{t}}\bm{1}_{\{t < L_{\infty}^R\}}\bigg]\mathbb{E}\bigg[\e^{-\alpha \sigma_{s} - \beta \theta_{s} - \gamma H_{s}}\bm{1}_{\{s < L_{\infty}^R\}}\bigg].
\end{align*}

\noindent
We classicaly deduce from this, and the right-continuity of $(\sigma_t, \theta_t, H_t)_{s\geq0}$, that for any $t\geq0$ and any $\alpha, \beta, \gamma \geq0$, we have
\[
 \mathbb{E}\left[\e^{-\alpha \sigma_{t} - \beta \theta_{t} - \gamma H_{t}}\bm{1}_{\{t < L_{\infty}^R\}}\right] = \e^{-\kappa(\alpha,\beta,\gamma)t},
\]

\noindent
where
\[
 \kappa(\alpha,\beta,\gamma) = -\log \mathbb{E}\left[\e^{-\alpha \sigma_{1} - \beta \theta_{1} - \gamma H_{1}}\bm{1}_{\{1 < L_{\infty}^R\}}\right] \geq0.
\]

\noindent
We also see that for any $t\geq0$, $\mathbb{P}(L_{\infty}^R  > t) = \e^{-\kappa(0,0,0)t}$ so that $L_{\infty}^R$ is exponentially distributed with parameter $\kappa(0,0,0) \geq0$ (if $\kappa(0,0,0) = 0$, then $L_{\infty}^R = \infty$ a.s.). It is a well-known fact that $0$ is recurrent for the reflected process if and only if $L_{\infty}^R = \infty$ a.s., which completes the proof.
\end{proof}

Using the convention $\e^{-\infty} = 0$, we have for any $t\geq0$ and any $\alpha, \beta, \gamma \geq 0$ such that $\alpha + \beta + \gamma > 0$,
\[
 \mathbb{E}\left[\e^{-\alpha \sigma_{t} - \beta \theta_{t} - \gamma H_{t}}\right] = \e^{-\kappa(\alpha,\beta,\gamma)t},
\]


From now on, we will assume for simplicity that $0$ is recurrent for the reflected process but the proof carries through if it is not the case, with minor adaptations. Recall now that we have defined $G_t = \sup\{s<t, Z_s = S_s\}$ the last return to $0$ before $t$ of $(R_t)_{t\geq0}$.
Let us state the following lemma, of which we omit the proof since it is no different from the proof of \cite[Ch.~VI Lem.~6]{b}.
\begin{lemma}\label{indep_wiener}
 Let $e = e(q)$ be an exponential random variable of parameter $q$, independent of $(\tau_t,Z_t)_{t\geq 0}$.
 \begin{enumerate}[label=(\roman*)]
  \item If $0$ is irregular for the reflected process $(R_t)_{t\geq0}$, then the processes $(\tau_t, Z_t)_{t\in[0,G_e]}$ and $(\tau_{G_e +t} - \tau_{G_e}, Z_{t + G_e} - Z_{G_e})_{t\in[0,e-G_e)}$ are independent.
  
  \item If $0$ is regular for the reflected process $(R_t)_{t\geq0}$, then the processes $(\tau_t, Z_t)_{t\in[0,G_e)}$ and $(\tau_{G_e +t} - \tau_{G_e-}, Z_{t + G_e} - Z_{G_e-})_{t\in[0,e-G_e)}$ are independent. 
 \end{enumerate}
\end{lemma}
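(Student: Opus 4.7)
The plan is to reduce the statement to a consequence of the Poisson point process structure of the excursions of the reflected process $(R_t)_{t\geq 0}$ away from $0$, combined with the lack-of-memory property of the exponential random variable $e$. The main difficulty is that $G_e$ is not a stopping time in the natural filtration of $(\tau, Z)$, so the strong Markov property cannot be applied directly at $G_e$. Instead, I will exploit the fact that $G_e$ is precisely the left endpoint of the excursion of $R$ straddling $e$, which is the canonical framework in which the compensation formula of excursion theory yields path decompositions of the desired form.

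First I would set up the excursion framework: to every jump time $t$ of the inverse local time $(\sigma_t)$ one associates the corresponding excursion of $R$ marked with the increments of $(\tau, Z)$ on the interval $[\sigma_{t-}, \sigma_t]$. By It\^o's theorem, this marked point process is Poisson on $[0, L_\infty^R)$ with some intensity measure. In the irregular case there is moreover a drift contribution, corresponding to the times $s$ at which $R_s = 0$. In these terms, the event $\{\sigma_{t-} \leq e < \sigma_t\}$ says that $e$ falls in the excursion indexed by local time $t$; the pre-$G_e$ trajectory of $(\tau, Z)$ is then a measurable function of the marked point process on $[0, t)$, while the post-$G_e$ trajectory is the current excursion truncated at the time $e - G_e = e - \sigma_{t-}$.

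To prove the stated independence, I would test against bounded nonnegative functionals $F$ and $G$, write
\[
\mathbb E\big[F\big((\tau_s, Z_s)_{s \leq G_e}\big)\, G\big((\tau_{G_e+s}-\tau_{G_e}, Z_{G_e+s}-Z_{G_e})_{s < e - G_e}\big)\big] = q\int_0^{\infty} e^{-q t}\,\mathbb E\big[F(\cdots)\,G(\cdots)\big]\,\mathrm d t,
\]
and decompose the inner expectation according to the excursion straddling $t$. Applying the compensation formula for the excursion Poisson point process and integrating the exponential factor, the memoryless property of $e$ makes $e^{-q \sigma_{r-}}$ cleanly split the ladder contribution from the current excursion contribution, yielding a product of a functional depending only on the pre-$G_e$ paths and one depending only on the marked excursion at local time $L_{G_e}^R$. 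This is exactly the announced independence. The only distinction between the regular and irregular cases lies in whether $G_e$ belongs to the pre-part of the trajectory: in the irregular case, $G_e$ is the right endpoint of a holding interval and $\tau_{G_e}$ is naturally included there, whereas in the regular case one must rather use $\tau_{G_e-}$ as the reference for the post-$G_e$ increments.

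The main obstacle is purely bookkeeping: handling correctly the drift part of $(\sigma_t)$ in the irregular case, parametrizing the excursion measure of $R$ jointly with the associated increments of $(\tau, Z)$, and verifying measurability of the straddling excursion. Since this is essentially the classical Greenwood--Pitman factorization adapted to our bivariate L\'evy process setting, I would follow \cite[Ch.~VI Lem.~6]{b} closely for the formal verification.
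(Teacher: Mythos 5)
Your proposal is correct and follows the same route as the paper, which itself omits the proof and defers to the classical excursion-theoretic argument of \cite[Ch.~VI Lem.~6]{b}: decompose at the excursion of the reflected process straddling the independent exponential time, apply the compensation formula, and use the memoryless property to factor the pre- and post-$G_e$ contributions. Your handling of the regular/irregular distinction via $\tau_{G_e}$ versus $\tau_{G_e-}$ matches the statement as well.
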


Recalling Remark~\ref{rem_dual}, we also introduce the same objects for the dual process $(\hat{Z}_t)_{t\geq0} = (-Z_t)_{t\geq0}$. If we set $(\hat S_t)_{t\geq 0} = (\sup_{[0,t]} \hat Z_t)_{t\geq 0}$, then the dual reflected process $(\hat{R}_t)_{t \geq 0}= (\hat{S}_t-\hat{Z}_t)_{t\geq 0}$ also posesses a local time at $0$ denoted by $(Lt^{\hat{R}})_{t\geq 0}$, with inverse $(\hat{\sigma}_t)_{t\geq0}$.
Lemma~\ref{lem:trivariate} also holds and so the process $(\hat{\sigma}_t, \hat{\theta}_t, \hat{H}_t)_{t\geq0} = (\hat{\sigma}_t, \tau_{\hat{\sigma}_t}, \hat{S}_{\hat{\sigma}_t})_{t\geq0}$ is a trivariate subordinator with Laplace exponent that we denote by $\hat{\kappa}$.

\subsection{Laplace transform of $(G, \tau_G , S)$}

An crucial step in the proof of Theorem~\ref{WH} is the following result, where we recall that $\kappa$ is the Laplace exponent of $(\sigma_t, \theta_t, H_t)_{t\geq0}$, defined in~\eqref{def:kappa}.

\begin{proposition}\label{lemWH_app}
Let $e = e(q)$ be an exponential random variable of parameter $q$, independent of $(\tau_t,Z_t)_{t\geq 0}$.

\noindent
(A) If $0$ is irregular for the reflected process $(R_t)_{t\geq0}$, then
\[
  \mathbb{E}\left[\e^{-\alpha G_{e} - \beta \tau_{G_{e }} - \gamma S_{e}}\right] = \frac{\kappa(q,0,0)}{\kappa(\alpha+q, \beta, \gamma)}.
\]
\noindent
(B) If $0$ is regular for the reflected process, then the same result holds with $\tau_{G_e}$ replaced by $\tau_{G_e-}$.
\end{proposition}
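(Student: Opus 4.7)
The plan is to evaluate the quantity by writing
\[
\mathbb{E}\left[\e^{-\alpha G_{e} - \beta \tau_{G_{e}} - \gamma S_{e}}\right] = q \int_0^{\infty} \e^{-qt} \mathbb{E}\left[\e^{-\alpha G_t - \beta \tau_{G_t} - \gamma S_t}\right] \dr t
\]
via Fubini, and then decomposing the time axis according to the excursions of the reflected process $(R_t)_{t\geq 0}$ away from $0$. The random set $\{t : R_t > 0\}$ is the disjoint union of open excursion intervals $(\sigma_{s-}, \sigma_s)$, on which $G_t \equiv \sigma_{s-}$, $\tau_{G_t} \equiv \theta_{s-}$ and $S_t \equiv H_{s-}$ are constant. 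The complementary set $\{R_t = 0\}$ coincides with the closure of the range of $\sigma$ and on it $G_t = t$, $\tau_{G_t} = \tau_t$, $S_t = Z_t$.

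On the excursion intervals, the elementary identity $\int_{\sigma_{s-}}^{\sigma_s} \e^{-qt}\dr t = q^{-1} \e^{-q\sigma_{s-}}(1 - \e^{-q\Delta\sigma_s})$ reduces the excursion contribution to a sum over the jumps of the trivariate subordinator $(\sigma, \theta, H)$ with Lévy measure $\Pi$. Applying the compensation formula for the associated Poisson point process together with the identity $\mathbb{E}[\e^{-(q+\alpha)\sigma_{s} - \beta\theta_{s} - \gamma H_{s}}] = \e^{-s\kappa(q+\alpha,\beta,\gamma)}$ yields
\[
q \, \mathbb{E}\bigg[\int_{\{R_t>0\}} \e^{-qt - \alpha G_t-\beta\tau_{G_t}-\gamma S_t}\dr t\bigg] = \frac{1}{\kappa(\alpha+q,\beta,\gamma)}\int (1-\e^{-qr})\,\Pi(\dr r,\dr u,\dr v).
\]
Likewise, parametrizing $\{R_t = 0\}$ via $t = \sigma_s$ at continuity points of $\sigma$ (with $\dr t = \mathbf{d}_\sigma \dr s$, where $\mathbf{d}_\sigma$ is the drift of $\sigma$) produces the contribution $q\,\mathbf{d}_\sigma/\kappa(\alpha+q,\beta,\gamma)$. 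Summing the two contributions and invoking the Lévy--Khintchine formula $\kappa(q,0,0) = q\mathbf{d}_\sigma + \int(1-\e^{-qr})\Pi(\dr r,\dr u,\dr v)$ (augmented by the killing rate when $R$ is transient, which accounts for both the recurrent and transient settings uniformly) delivers the announced identity.

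The main obstacle is the distinction between the regular and irregular settings, which boils down to tracking left versus right limits at the random time $G_e$. In the irregular case, $0$ is holding for $R$, the drift $\mathbf{d}_\sigma$ is strictly positive, and both contributions above are genuinely present; since $G_e$ always lies in the range of $\sigma$ (either as an endpoint of an excursion or because $R_e = 0$ and then $G_e = e$), the value $\tau_{G_e}$ is unambiguous. In the regular case, $\mathbf{d}_\sigma = 0$ and the set $\{R_t = 0\}$ has zero Lebesgue measure, so only the excursion sum survives; however $G_e$ is then a limit point of right endpoints of excursions at which $\tau$ may jump, forcing the appearance of the left limit $\tau_{G_e-}$ in the statement. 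Reassuringly, the compensation formula naturally produces $\theta_{s-}$ at each jump of $\sigma$, which matches precisely $\tau_{G_e-}$ in the regular case and $\tau_{G_e}$ in the irregular one.
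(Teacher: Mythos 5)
Your computational skeleton is sound and in fact unifies the two cases: decomposing $q\int_0^\infty \e^{-qt}\,\mathbb E[\e^{-\alpha G_t-\beta\tau_{G_t}-\gamma S_t}]\,\dr t$ over the excursion intervals of $R$ and the zero set, applying the compensation formula for the excursion point process of $R$ (equivalently, the jumps of the trivariate ladder subordinator), adding the drift/local-time contribution from $\{R_t=0\}$, and closing with the L\'evy--Khintchine identity $\kappa(q,0,0)=q\,\mathrm m_R+\int(1-\e^{-qr})\,\Pi(\dr r,\dr u,\dr v)$ is exactly the paper's argument in the regular case (B). The paper handles the irregular case (A) separately and more elementarily, exploiting that the zero set of $R$ is then discrete so that $(T_n,\tau_{T_n},S_{T_n})_{n\ge0}$ is a random walk and the expectation is a geometric series; your unified treatment is a legitimate alternative. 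Your observation that the compensation formula produces $\theta_{s-}$, which equals $\tau_{G_e}$ when $\sigma$ is a compound Poisson process and $\tau_{G_e-}$ when $\sigma$ is strictly increasing, is also correct and is precisely how the paper explains the left limit in case (B).

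However, your final paragraph inverts the regular/irregular dichotomy, and this is not cosmetic. If $0$ is \emph{irregular} for $R$, then $0$ is not a holding point: the zero set of $R$ is discrete, hence Lebesgue-null, the ladder time process $\sigma$ is a \emph{driftless} compound Poisson process, and the contribution of $\{R_t=0\}$ vanishes --- the identity is carried entirely by the excursion sum, with $\kappa(q,0,0)=\mathbb E[1-\e^{-qT_1}]$. Conversely, if $0$ is \emph{regular} for $R$ the drift $\mathrm m_R$ of $\sigma$ may be strictly positive (this happens exactly when $\{R_t=0\}$ has positive Lebesgue measure, e.g.\ when $Z$ is a compound Poisson process, a case the paper explicitly needs in Step 4 of the proof of Theorem \ref{WH}); there the term $q\,\mathrm m_R/\kappa(\alpha+q,\beta,\gamma)$ must be retained alongside $\nn_R(1-\e^{-q\ell})/\kappa(\alpha+q,\beta,\gamma)$. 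Taking your claim ``in the regular case $\mathbf d_\sigma=0$ and only the excursion sum survives'' at face value would yield $\kappa(q,0,0)=\nn_R(1-\e^{-q\ell})$, which is false whenever $\mathrm m_R>0$, so the proof as written fails in that sub-case. Swap the two descriptions and keep both contributions in the regular case, and the argument is complete.
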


We split the proof into two parts: we first treat the case where $0$ is irregular for the reflected process and then we treat the case where $0$ is regular (which is more involved and needs some preliminary estimates).

\begin{remark}
\label{rem:regular}
We mention the following classification, see for instance Bertoin \cite{MR1465812}: if $(Z_t)_{t\geq0}$ has infinite variations, then $0$ is regular for $(R_t)_{t\geq0}$. If it has finite variation, then denote by $\dr$ its drift coefficient. If $\dr >0$ then $0$ is regular for $(R_t)_{t\geq0}$ whereas if $\dr <0$, it is irregular. In the remaining case $\dr = 0$, let us set $Z_t^+ = \sum_{s\leq t}\Delta Z_s \bm{1}_{\{\Delta Z_s > 0\}}$ and $Z_t^- = -\sum_{s\leq t}\Delta Z_s \bm{1}_{\{\Delta Z_s < 0\}}$ so that $Z_t = Z_t^+ - Z_t^-$. Then $0$ is irregular for $(R_t)_{t\geq0}$ if and only if $\lim_{t\to0}Z_t^+ / Z_t^- = 0$ a.s.
\end{remark}

\paragraph*{Case (A): assume $0$ is irregular for the reflected process}

\begin{proof}[Proof of Proposition \ref{lemWH_app} in case (A)]
We assume here that $0$ is irregular for the reflected process. Therefore, the zero set of $(R_t)_{t\geq0}$ is discrete (without accumulation points) and we can define for any $n\geq0$, $T_{n+1} = \inf\{t > T_n, \: R_t = 0\}$ with $T_0 = 0$. Then the sequence $(T_n)_{n\geq0}$ is an increasing random walk. Moreover, for any $n\geq0$ and any $t\in[T_n, T_{n+1})$, $G_t = T_n$ and $S_t = S_{T_n}$.

\smallskip
Note that the the ladder time process $(\sigma_t)_{t\geq0}$ is a compound Poisson process and its Lévy measure is proportional to the law of $T_1$, where $T_1 = \inf\{t > 0, \: R_t = 0\}$. This forces the trivariate subordinator $(\sigma_t, \theta_t, H_t)_{t\geq0}$ to be a trivariate compound Poisson process with Lévy measure proportional to the law of $(T_1, \tau_{T_1}, S_{T_1})$. For any non-negative $\alpha, \beta,\gamma$, we have
\begin{align*}
 \mathbb{E}\left[\e^{-\alpha G_{e} - \beta \tau_{G_{e }} - \gamma S_{e}}\right] & = \mathbb{E}\bigg[q\int_0^{\infty}\e^{-qt -\alpha G_{t} - \beta \tau_{G_{t}} - \gamma S_{t}}\dr t\bigg] \\
 & = \sum_{n\geq0}\mathbb{E}\bigg[\e^{-(\alpha + q)T_n - \beta\tau_{T_n} - \gamma S_{T_n}}\int_{0}^{T_{n+1}-T_n}q\e^{-qt}\dr t\bigg].
\end{align*}

\noindent
By the strong Markov property, $(\tau_{t+T_n} - \tau_{T_n}, R_{t+T_n})_{t\in[0, T_{n+1} - T_n]}$ is independent of $\mathcal{F}_{T_n}$ and is equal in law to $(\tau_t, R_t)_{t\in[0,T_1]}$. Therefore, we get
\[
 \mathbb{E}\left[\e^{-\alpha G_{e} - \beta \tau_{G_{e }} - \gamma S_{e}}\right] = \mathbb{E}\left[1 - \e^{-q T_1}\right]\sum_{n\geq0}\mathbb{E}\left[\e^{-(\alpha + q)T_n - \beta\tau_{T_n} - \gamma S_{T_n}}\right].
\]

\noindent
By the strong Markov property, the sequence $(T_n, \tau_{T_n}, S_{T_n})_{n\geq0}$ is a random walk and thus, we get
\[
 \mathbb{E}\left[\e^{-\alpha G_{e} - \beta \tau_{G_{e}} - \gamma S_{e}}\right] = \frac{\mathbb{E}\left[1 - \e^{-q T_1}\right]}{\mathbb{E}\left[1 - \e^{-(\alpha + q)T_1 - \beta\tau_{T_1} - \gamma S_{T_1}}\right]} = \frac{\kappa(q,0,0)}{\kappa(\alpha + q, \beta, \gamma)}, 
\]
where we recall that $\kappa$ is the Laplace exponent of $(\sigma_t, \theta_t, H_t)_{t\geq0}$.
\end{proof}

\paragraph*{Case (B): assume $0$ is regular for the reflected process.}

Before proving Proposition \ref{lemWH_app} in the regular case, we first recall and show a few resuts when~$0$ is regular.

\begin{lemma}\label{lemma_continuity}
If $0$ is regular for the reflected process $(R_t)_{t\geq 0}$, then a.s.\ we have $S_e = S_{G_e-} = Z_{G_e-}$. Moreover, if $(Z_t)_{t\geq0}$ is \emph{not} a compound Poisson process and $0$ is also regular for the dual reflected process $(\hat R_t)_{t\geq 0}$, we have $Z_{G_e-} = Z_{G_e}$ and $\tau_{G_e-} = \tau_{G_e}$.
\end{lemma}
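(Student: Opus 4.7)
The plan is to prove the two statements in sequence, handling the delicate interplay between the càdlàg paths of $Z$, the running supremum $S$, and the reflected process $R = S - Z$.

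For the first statement, I would start from the definition $G_e = \sup\{s < e : R_s = 0\}$ and extract, from the defining supremum, a sequence $s_n$ of zeros of $R$ with $s_n \uparrow G_e$ (strictly, which is what regularity of $R$ ultimately guarantees once one rules out the sup being attained at a right-isolated zero). Since $Z_{s_n} = S_{s_n}$ for every $n$, taking left limits (using that $Z$ is càdlàg and $S$ is non-decreasing) yields $Z_{G_e-} = S_{G_e-}$. To obtain $S_e = S_{G_e-}$, I would split on whether $G_e = e$ or $G_e < e$. In the case $G_e = e$, we have $Z_{e-} = S_{e-}$ from the preceding step, and since $e$ is an independent exponential time, $Z$ has almost surely no jump at $e$, so $Z_e = Z_{e-}$ and hence $S_e = \max(S_{e-}, Z_e) = S_{e-}$. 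In the case $G_e < e$, the definition of $G_e$ forces $R > 0$ on $(G_e, e)$, so $Z < S$ on this interval and $S$ is constant there, giving $S_{e-} = S_{G_e+} = S_{G_e}$; that $S$ does not jump at $G_e$ follows by contradiction, since $\Delta S_{G_e} > 0$ would force $Z_{G_e} > S_{G_e-}$, hence $Z_{G_e} = S_{G_e}$ and $R_{G_e} = 0$, which combined with the regularity of $R$ at $0$ would produce zeros of $R$ in $(G_e, G_e+\varepsilon)$, contradicting the definition of $G_e$ once $\varepsilon < e - G_e$.

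For the second statement, the core observation (used implicitly in the first part) is that on the event $\{G_e < e\}$, the jump analysis forces $\Delta Z_{G_e} < 0$: indeed, from $Z_{G_e-} = S_{G_e-}$ together with $R_{G_e} > 0$ (which holds for the reasons above), and the identities $\Delta S_{G_e} = (\Delta Z_{G_e})_+$ and $\Delta R_{G_e} = (\Delta Z_{G_e})_-$, we read off that $Z$ has a strictly negative jump at $G_e$. The strategy is therefore to show $\mathbb{P}(G_e < e) = 0$ under the additional hypotheses. I would invoke the duality identity for Lévy processes (the time-reversal $\bar Z_s := Z_e - Z_{(e-s)-}$ has the same law as $(Z_s)_{0\leq s\leq e}$): the non-accumulation of zeros of $R$ at $e$ from the left translates into a condition on the behaviour of $\bar Z$ in a right-neighborhood of $0$, which is precisely a negation of regularity of $\hat R$ for the law of $\bar Z$. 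Since by hypothesis $0$ is regular for $\hat R$, this condition has probability zero, hence $G_e = e$ a.s. Once $G_e = e$, the almost-sure continuity of $Z$ and $\tau$ at the independent exponential time $e$ (using that $Z$ is not compound Poisson, so the set of fixed jump times is empty and the independent continuous distribution of $e$ avoids them) yields $Z_{G_e-} = Z_{G_e}$ and $\tau_{G_e-} = \tau_{G_e}$.

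The main obstacle is the duality step in the second statement. The careful translation of ``zeros of $R$ accumulate at $e$ from the left'' into a condition on $\bar Z$ near $0$, and then the matching of this condition with the regularity of $\hat R$ (through the correspondence between the supremum of the direct process and the infimum of the dual), requires handling càdlàg subtleties and is the only place where both hypotheses (non-CPP and regularity of $\hat R$) are genuinely used. The rest of the argument is essentially bookkeeping with the definitions and monotonicity of $S$.
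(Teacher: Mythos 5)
Your first part is essentially the standard argument (the paper simply cites Bertoin, Ch.~VI, Thm.~5, for $S_e=S_{G_e-}=Z_{G_e-}$ and for the fact that $Z$ has no positive jump at $G_e$), but the second part contains a fatal error. The strategy of proving $\mathbb{P}(G_e<e)=0$ cannot work: for a standard Brownian motion, $0$ is regular for both $R$ and $\hat R$ and $Z$ is not a compound Poisson process, yet $G_e/e$ follows the arcsine law, so $G_e<e$ almost surely. The step that leads you there is also wrong: on $\{G_e<e\}$ one does not have $R_{G_e}>0$ in general (for continuous paths the zero set of $R$ is closed, so $R_{G_e}=0$), hence there is no forced strictly negative jump of $Z$ at $G_e$; and your contradiction argument ``regularity of $R$ would produce zeros in $(G_e,G_e+\varepsilon)$'' implicitly applies the Markov property at $G_e$, which is a last-exit time and not a stopping time, so it proves nothing — for Brownian motion $G_e$ is a zero of $R$ that is by construction not followed by other zeros before $e$.

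The correct use of duality — and the paper's route — is not to kill the event $\{G_e<e\}$ but to rule out a \emph{negative} jump of $Z$ at $G_e$. Applying the first part (no positive jump at the last supremum time) to the time-reversed process $\tilde Z_t=Z_{(e-t)-}-Z_e$, which has the law of $-Z$ on $[0,e]$ and whose last supremum time is $e-G_e$ (uniqueness of the argmax, i.e.\ the non-CPP hypothesis, is exactly what guarantees $\tilde G_e=e-G_e$), gives $Z_{G_e}\geq Z_{G_e-}$ when $0$ is regular for $\hat R$; combined with $Z_{G_e-}\geq Z_{G_e}$ from the direct process this yields $Z_{G_e-}=Z_{G_e}$. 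Finally, $\tau_{G_e-}=\tau_{G_e}$ cannot be deduced from continuity of $\tau$ at $e$, since $G_e\neq e$: one must decompose $\tau$ into its drift, the part jumping simultaneously with $Z$, and the part jumping at times where $Z$ is continuous; the latter is independent of $Z$, hence a.s.\ does not jump at the $Z$-measurable time $G_e$, while the former cannot jump at a continuity point of $Z$. This last argument is entirely missing from your proposal.
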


\begin{proof}
 It is proved properly in \cite[Ch.~VI Thm.~5-(i)]{b} that if $0$ is regular, then $S_e = S_{G_e-} = Z_{G_e-}$, so we refer the reader to it. It is also proved there that $(Z_t)_{t\geq0}$ cannot make a positive jump at time $G_e$, i.e. that $Z_{G_e-} \geq Z_{G_e}$.
 
\smallskip
We now turn to the second part of the lemma: we assume now that $(Z_t)_{t\geq0}$ is not a compound Poisson process. Let us define the process $(\tilde{Z}_t)_{t\geq0} = (Z_{(e-t)-} - Z_e)_{t\in[0,e]}$, which, by duality, is equal in law to $(-Z_t)_{t\in[0,e]}$.
 Since $(Z_t)_{t\geq0}$ is not a compound Poisson process, \cite[Proposition 4 Chapter VI]{b} implies that the supremum is reached at a unique time. This ensures that, in the obvious notations, we have $\tilde{G}_e = e - G_e$ and $\tilde{S}_e = S_e - Z_e$. Now if $0$ is regular for the dual reflected process, we have $\tilde{Z}_{\tilde{G}_e-} \geq \tilde{Z}_{\tilde{G}_e}$, i.e. $Z_{G_e} \geq Z_{G_e-}$ which shows that $Z_{G_e-} = Z_{G_e}$.
 
Finally, we show that if $G_e$ is a point of continuity of $(Z_t)_{t\geq0}$, then it is also a point of continuity of $(\tau_t)_{t\geq0}$. We denote by $\Pi$ the Lévy measure of $(\tau_t, Z_t)_{t\geq0}$, and $\mathrm{m} \geq0$ the drift coefficient of $(\tau_t)_{t\geq0}$. Now remark that the Lévy measure $\Pi_\tau$ of $(\tau_t)_{t\geq0}$ can be decomposed into two parts:
\[
 \Pi_\tau(\dr r) = \int_{z\in\mathbb{R}}\Pi(\dr r, \dr z) = \int_{z\in\mathbb{R}^*}\Pi(\dr r, \dr z) + \int_{z=0}\Pi(\dr r, \dr z)\,.
\]
In other words, we can decompose the subordinator $(\tau_t)_{t\geq0}$ as
\[
 \tau_t = \mathrm{m} t + \sum_{s < t}\Delta\tau_s \bm{1}_{\{|\Delta Z_s| > 0\}} + \sum_{s < t}\Delta\tau_s \bm{1}_{\{|\Delta Z_s| = 0\}} = \mathrm{m} t + \tau_t^* + \tau_t^0,
\]
which is then a sum of a drift and two independent indenpendent subordinator (since they do not jump at the same time).
We see first that if $t$ is a jumping time of $(\tau_s^*)_{s\geq0}$, then it is also a jumping time of $(Z_s)_{s\geq0}$ so that any point of continuity of $(Z_s)_{s\geq0}$ is a point of continuity of $(\tau_s^*)_{s\geq0}$. Next, we see that $(\tau_t^0)_{t\geq0}$ and $(Z_t)_{t\geq0}$ are independent since they do not jump at the same time and since $G_e$ is a functional of $(Z_t)_{t\geq0}$, it is also independent from $(\tau_t^0)_{t\geq0}$ and therefore it can not be a point of discontinuity of $(\tau_t^0)_{t\geq0}$. This completes the proof.
\end{proof}

When $0$ is regular for $(R_t)_{t\geq0}$, the ladder time process $(\sigma_t)_{t\geq0}$ is a strictly increasing subordinator.
For the local time $(L_t^R)_{t\geq0}$ of the reflected process at the level $0$, we have that there exists some $\mathrm m_R \geq0$ such that a.s., for any $t\geq0$, 
\[
 \mathrm m_R L_t^{R} = \int_0^t\bm{1}_{\{R_s = 0\}} \dr s \quad \text{and} \quad \sigma_t = \mathrm m_R t + \sum_{s\leq t}\Delta\sigma_s.
\]

\noindent
Then, it is well-known that the excursion process $(e_t^R)_{t\geq0}$ defined by 
\begin{equation*}
 e_t^R = \begin{cases}
         (R_{\sigma_{t-} + s})_{s\in[0,\Delta\sigma_t]}&\text{if } \Delta\sigma_t >0,  \\
         \Upsilon &\text{otherwise,}  
        \end{cases}
\end{equation*}
is a Poisson point process valued in the excursion space $\mathcal{E}_0$ (with the notation introduced in Section~\ref{sec:exc_theory}). We will denote by $\nn_R$ its characteristic measure. Finally, since $(\sigma_t)_{t\geq0}$ is strictly increasing if $0$ is regular, we have almost surely for every $t\geq0$, $\theta_{t-} = \tau_{(\sigma_{t-})-}$ and $H_{t-} = S_{(\sigma_{t-})-}$.

\begin{proof}[Proof of Proposition \ref{lemWH_app} in case (B)]
The idea of the proof is similar to that in case (A), although it is a little bit more tedious.
Using that $S_e = S_{G_e -}$, and summing over all excursions of $(R_t)_{t\geq0}$, indexed by their left end-point $g$ and their right end-point $d$ (using the same decomposition and similar notation as in the proof of Proposition~\ref{indep}), we have for any non-negative $\alpha, \beta, \gamma$
\begin{align*}
 & \mathbb{E}\Big[\e^{-\alpha G_e - \beta\tau_{G_e -} - \gamma S_e}\Big]  =  \mathbb{E}\left[\int_0^{\infty}q\e^{-qt - \alpha G_t - \beta \tau_{G_t-} - \gamma S_{G_t-}}\dr t\right] \\
  & \qquad =  \mathbb{E}\bigg[\int_0^{\infty}q\e^{-(\alpha +q)t - \beta \tau_{t-} - \gamma S_{t-}}\bm{1}_{\{R_t = 0\}}\dr t\bigg] + \mathbb{E}\bigg[\sum_{g \in \mathcal G_R}\e^{-(\alpha + q)g - \beta\tau_{g-} -\gamma S_{g-}}\left(1-\e^{-q(d-g)}\right)\bigg].
\end{align*}
Since for every $t\geq0$, almost surely $t$ is not a discontinuity of $(\tau_t)_{t\geq0}$ and $(S_t)_{t\geq0}$, the first term on the right-hand-side is equal to
\begin{align*}
 \mathbb{E}\left[\int_0^{\infty}q\e^{-(\alpha +q)t - \beta \tau_{t-} - \gamma S_{t-}}\bm{1}_{\{R_t = 0\}}\dr t\right] & =  q \mathrm m_R\mathbb{E}\left[\int_0^{\infty}\e^{-(\alpha +q)t - \beta \tau_{t} - \gamma S_{t}}\dr L_t^R\right] \\ 
 & =  q\mathrm m_R \int_0^{\infty} \mathbb{E}\left[\e^{-(\alpha + q)\sigma_t - \beta\theta_t - \gamma H_t}\right] \dr t 
  = \frac{q\mathrm m_R}{\kappa(\alpha + q, \beta, \gamma)} \,,
\end{align*}
where we have also used that $(\sigma_t)_{t\geq 0}$ is the right-continuous inverse of $(L_t^R)_{t\geq 0}$ for the second identity.
For the second term, we use the Master formula for Poisson point processes which tells us that
\[
\mathbb{E}\bigg[\sum_{g\in \mathcal G_R}\e^{-(\alpha + q)g - \beta\tau_{g-} -\gamma S_{g-}}\big(1-\e^{-q(d-g)}\big)\bigg] =  \nn_R(1 - \e^{-q\ell})\mathbb{E}\left[\int_0^{\infty}\e^{-(\alpha + q)\sigma_{t-} - \beta \tau_{\sigma_{t-}-} - \gamma S_{\sigma_{t-}-}}\dr t\right]\,.
\]
Then, using that almost surely for every $t\geq0$, $\theta_{t-} = \tau_{(\sigma_{t-})-}$ and $H_{t-} = S_{(\sigma_{t-)}-}$, and that for every $t\geq0$, almost surely, $t$ is not a discontinuity of $(\sigma_t, \theta_t, H_t)_{t\geq0}$, this is equal to
\[
 \nn_R(1 - \e^{-q\ell})\int_0^{\infty}\mathbb{E}\left[\e^{-(\alpha + q)\sigma_t - \beta\theta_t - \gamma H_t}\right] \dr t 
   = \frac{\nn_R(1-\e^{-q\ell})}{\kappa(\alpha + q, \beta, \gamma)}.
\]
Since $\kappa(q,0,0)$ is the Laplace exponent of $(\sigma_t)_{t\geq0}$, it follows from the exponential formula for Poisson point processes that $\kappa(q,0,0) = q\mathrm m_R + \nn_R(1-\e^{-q\ell})$,  which gives the desired result.
\end{proof}

\subsection{Proof of Theorem~\ref{WH} and Proposition~\ref{wiener_prop_corps}}

We are finally ready to give the proof of Theorem~\ref{WH}.
Afterwards, we deduce Proposition~\ref{wiener_prop_corps} from it.

\begin{proof}[Proof of Theorem \ref{WH}]
The independence of the two triplets follows from Lemma \ref{indep_wiener} and that $S_e = Z_{G_e-}$ in the regular case and $S_e = Z_{G_e}$ in the irregular case (recall Lemma~\ref{lemma_continuity}).
We now turn to proving items \textit{(ii)}-\textit{(iii)}: we first deal with the case where $(Z_t)_{t\geq 0}$ is not a compound Poisson process, and then we treat the case of a compound Poisson process by approximation.
The main idea is to prove that the law of $(G_e, \tau_{G_e}, S_e)$ and $(e - G_e, \tau_e - \tau_{G_e}, S_e - Z_e)$ (resp.\ of $(G_e, \tau_{G_e-}, S_e)$ and $(e - G_e, \tau_e - \tau_{G_e-}, S_e - Z_e)$ in the irregular case) are infinitely divisible; we then deduce their Lévy measure.
 
\medskip\noindent
\textit{Step 1:} We show that the law of $(G_e, \tau_{G_e}, S_e)$, resp.\ of $(G_e, \tau_{G_e-}, S_e)$, is infinitely divisible if $0$ is irregular, resp.\ regular, for the reflected process $(R_t)_{t\geq 0}$.
We only prove it in the irregular case as the proofs are identical.

For any positive $q,\alpha,\beta,\gamma$, the Lévy-Khintchine formula tells us that
\[
 \kappa(\alpha + q, \beta, \gamma) - \kappa(q,0,0) = \int_{[0,\infty)^3}(1 - \e^{-\alpha t - \beta r - \gamma x})\e^{-qt}\Pi(\dr t, \dr r, \dr x),
\]
where $\Pi$ is the Lévy measure of $(\sigma_t, \theta_t, H_t)_{t\geq0}$.
By Proposition~\ref{lemWH_app}, we have
\[
 \mathbb{E}\left[\e^{-\alpha G_e - \beta \tau_{G_e} - \gamma S_e}\right] = \frac{\kappa(q,0,0)}{\kappa(q,0,0) + (\kappa(\alpha + q, \beta, \gamma) - \kappa(q,0,0))},
\]
and we therefore see that the law of $(G_e, \tau_{G_e}, S_e)$ is equal to the law of a pure jump Lévy process with Lévy measure $\e^{-qt}\Pi(\dr t, \dr r, \dr x)$, evaluated at an independent exponential time of parameter $\kappa(q,0,0)$. Therefore, it is infinitely divisible and it has no drift part and no Brownian part, so it is characterized by its Lévy measure.

\medskip\noindent
\textit{Step 2:} We assume that $(Z_t)_{t\geq0}$ is not a compound Poisson process, and we show that the law of $(e - G_e, \tau_e - \tau_{G_e}, S_e - Z_e)$,
resp.\ of $(e - G_e, \tau_e - \tau_{G_e-}, S_e - Z_e)$, is infinitely divisible if $0$ is irregular, resp.\ regular for the reflected process $(R_t)_{t\geq 0}$.
Note that the corresponding distributions are again characterized by their Lévy measure.

Let us define the process $(\tilde{\tau}_t, \tilde{Z}_t)_{t\geq0} = (\tau_e - \tau_{(e-t)-}, Z_{(e-t)-} - Z_e)_{t\in[0,e]}$, which, by duality, is equal in law to $(\tau_t, -Z_t)_{t\in[0,e]}$.
Since $(Z_t)_{t\geq0}$ is not a compound Poisson process its supremum is reached at a unique time so we have $\tilde{G}_e = e - G_e$ and $\tilde{S}_e = S_e - Z_e$, with the obvious notation. Moreover, we have $\tilde{\tau}_{\tilde{G}_e} = \tau_e - \tau_{G_e-}$ and $\tilde{\tau}_{\tilde{G}_e -}=  \tau_e - \tau_{G_e}$.

If $0$ is irregular for $(R_t)_{t\geq0}$, then it is necessarily regular for $(\hat{R}_t)_{t\geq0}$, see Remark~\ref{rem:regular} and we can apply Step~1 with $(\tilde{G}_e, \tilde{\tau}_{\tilde{G}_e-}, \tilde{S}_e)$. If $0$ is regular for $(R_t)_{t\geq0}$ and irregular for $(\hat{R}_t)_{t\geq0}$, then we can apply Step 1 with $(\tilde{G}_e, \tilde{\tau}_{\tilde{G}_e}, \tilde{S}_e)$. Finally, if $0$ is regular for both sides, then $\tau_{G_e} = \tau_{G_e -}$ by Lemma \ref{lemma_continuity} and we can again apply the first step.

\medskip\noindent
\textit{Step 3:} We are now able to conclude the proof of items \textit{(ii)} and \textit{(iii)} when $(Z_t)_{t\geq0}$ is not a compound Poisson process. 
Again, we only show it in the irregular case.
It is well-known that $(e, \tau_e, Z_e)$ is infinitely divisible, see \cite[Ch.~VI Lem.~7]{b}, and that its Lévy measure is given by
\[
\mu(\dr t , \dr r, \dr x) = t^{-1} \e^{-q t} \mathbb{P}(\tau_t \in \dr r, Z_{t} \in \dr x)\dr t.
\]
Let us denote the Lévy measures of $(G_e, \tau_{G_e}, S_e)$ and $(e-G_e, \tau_e - \tau_{G_e}, Z_e - S_e)$ by $\mu_+$ and $\mu_-$ and recall that by Steps~1 and 2 that they characterize their distributions. Now, observe that
\[
(e, \tau_{e}, Z_{e}) =(G_{e}, \tau_{G_{e}}, S_{e}) +(e-G_{e}, \tau_{e} - \tau_{G_{e}}, Z_{e} - S_{e}) \,,
\]
with the triplets being independent. It follows that $\mu = \mu_+ + \mu_-$. Since $(Z_t)_{t\geq0}$ is not a coumpound Poisson process, we have $\mathbb{P}(Z_t = 0) = 0$ for every $t\geq0$, and since $\mu_+$ is supported on $[0,+\infty)^{3}$ and $\mu_-$ on $[0,+\infty)^2\times (-\infty, 0]$, we get
\begin{align*}
\mu_{+}(  \dr t , \dr r, \dr x) & = t^{-1} \e^{-q t} \mathbb{P}(\tau_t \in \dr r, Z_{t} \in \dr x) \dr t, \qquad  (t>0, r>0, x>0) \\
\mu_{-}(  \dr t , \dr r, \dr x)  &= t^{-1} \e^{-q t} \mathbb{P}(\tau_t \in \dr r, Z_{t} \in \dr x) \dr t, \qquad  (t>0, r>0, x<0) \,,
\end{align*}
which finishes this step.

\medskip\noindent
\textit{Step 4:} We now extend the proof of items \textit{(ii)} and \textit{(iii)} when $(Z_t)_{t\geq0}$ is a compound Poisson process. Note that in this case, $0$ is regular for both sides. We proceed by approximation and consider $(Z_t^\epsilon)_{t\geq0} = (Z_t + \epsilon t)_{t\geq0}$ which is a Lévy process but not a compound Poisson process. It is easy to see that $0$ is regular for $(R_t^\epsilon)_{t\geq0}$ but is irregular for $(\hat{R}_t^\epsilon)_{t\geq0}$, with the obvious notation.

\smallskip
Let us now show that a.s. $G_e^\epsilon \to G_e$, $S_e^\epsilon \to S_e$ and $\tau_{G_e^\epsilon -} \to \tau_{G_e-}$ as $\epsilon\to0$.
This is obvious on the event $\{G_e = e\}$, since we have $G_e^{\epsilon} = G_e$ and $S_e^\epsilon = S_e + \epsilon e$.
Now, on the event $\{G_e < e\}$, since $(Z_t)_{t\geq 0}$ is a compound Poisson process, we have $Z_t < Z_{G_e-} = S_e$ for any $t\in[G_e, e]$.
Let $D_e = \sup_{t\in[G_e, e]} Z_t < S_e$ and set $\epsilon_0 = (S_e - D_e) / e$: we see that for any $t\in[G_e, e]$ and any $\epsilon\in(0, \epsilon_0)$,
\[
 Z_t^{\epsilon} = Z_t + \epsilon t < S_e < Z_{G_e-}^\epsilon.
\]
Therefore, for any $\epsilon\in(0, \epsilon_0)$, we have $G_e^\epsilon = G_e$, which implies again that a.s. $G_e^\epsilon \to G_e$, $S_e^\epsilon \to S_e$ and $\tau_{G_e^\epsilon -} \to \tau_{G_e-}$ as $\epsilon\to0$.

Finally, as $\gep \to 0$, the measures
\[
 \bm{1}_{\{x > 0\}}\mathbb{P}(\tau_t \in \dr r, Z_t^\epsilon \in \dr x) \quad \text{and} \quad \bm{1}_{\{x < 0\}}\mathbb{P}(\tau_t \in \dr r, Z_t^\epsilon \in \dr x)
\]
respectively converge to
\[
 \bm{1}_{\{x \geq 0\}}\mathbb{P}(\tau_t \in \dr r, Z_t \in \dr x) \quad \text{and} \quad \bm{1}_{\{x < 0\}}\mathbb{P}(\tau_t \in \dr r, Z_t \in \dr x) \,.
\]
Then the result follows by approximation.
\end{proof}

\begin{proof}[Proof of Proposition~\ref{wiener_prop_corps}]
Paraphrasing Corollary VI.10 p 165 in \cite{b}:
for $\alpha'>1$, the formula 
\[
\kappa(\alpha', \beta, \gamma) = c \exp\bigg(\int_0^{\infty}\int_{[0,\infty)\times\mathbb{R}}\frac{\e^{-t} - \e^{-\alpha' t- \beta r - \gamma x}}{t}\bm{1}_{\{x\geq0\}}\mathbb{P}(\tau_t\in \dr r, Z_t\in \dr x)\dr t\bigg)
\]
follows from Theorem \ref{WH} and Proposition \ref{lemWH_app} applied with $q =1$ (and $\alpha'=\alpha+q$). The identity is extended by analyticity. 

Recall also the definition
 \[
  \bar{\kappa}(\alpha, \beta, \gamma) = \exp\bigg(\int_0^{\infty}\int_{[0,\infty)\times\mathbb{R}}\frac{\e^{-t} - \e^{-\alpha t- \beta r + \gamma x}}{t}\bm{1}_{\{x<0\}}\mathbb{P}(\tau_t\in \dr r, Z_t\in \dr x)\dr t\bigg).
 \]
Then, if $0$ is irregular for $(R_t)_{t\geq0}$, we have for any positive $\alpha, \beta, \gamma, \hat \alpha, \hat \beta, \hat \gamma$, we have
\begin{equation}
\label{eq2}
\mathbb{E} \left[ \e^{-\alpha G_{e} - \beta \tau_{G_{e}} - \gamma S_{e} -\hat{\alpha} (e -G_{e} ) -\hat{\beta} ( \tau_{e} -\tau_{G_{e}}) -\hat{\gamma}(Z_{e} -S_{e})} \right] = \frac{\kappa(q,0,0)}{\kappa(\alpha+q, \beta, \gamma)}   \frac{\bar{\kappa}(q,0,0)}{\bar{\kappa}(\hat{\alpha}+q, \hat{\beta} \hat{\gamma})}
\,.
\end{equation}
If $0$ is regular for $(R_t)_{t\geq0}$, then the same identity holds with $\tau_{G_e}$ replaced by $\tau_{G_e-}$ and $\tau_e - \tau_{G_e}$ replaced by $\tau_e - \tau_{G_e -}$.
This formula yields~\eqref{eq1}.

Moreover, the duality entails that, if $(Z_t)_{t\geq0}$ is not a compound Poisson process, there exists a constant $\hat{c}>0$ such that $\hat{\kappa}(\alpha, \beta, \gamma) = \hat{c}\bar{\kappa}(\alpha, \beta, \gamma)$.
\end{proof}


\section{Convergence of L\'evy processes}\label{appendix_levy}

In this section, we give some results on converging sequences of Lévy processes.

\subsection{Convergence of processes and convergence of Laplace exponents}

We consider a family of Lévy processes $\{ (\tau_t^h, Z_t^h)_{t\geq0}, h>0\}$ with $(\tau_t^h)_{t\geq0}$ a subordinator (for every $h>0$), which converges in law for the Skorokhod topology as $h\to0$, to some Lévy process $(\tau_t^0, Z_t^0)_{t\geq0}$, with $(\tau_t^0)_{t\geq0}$ also a subordinator and $(Z_t^0)_{t\geq0}$ not a compound Poisson process.
Note that, according to Jacod-Shiryaev \cite[Thm.~2.9 p 396]{js}, $(\tau_t^h, Z_t^h)_{t\geq0}$ converges in law for the Skorokhod topology if and only if $(\tau_1^h, Z_1^h)$ converges in law to $(\tau_1^0, Z_1^0)$ as $h\to0$.
For non-negative $\alpha, \beta, \gamma$ such that $\alpha + \beta + \gamma > 0$, we introduce the quantities
\[
 \phi_h(\alpha,\beta, \gamma) = \int_0^{\infty}\int_{[0,\infty)\times\mathbb{R}}\frac{\e^{-t} - \e^{-\alpha t- \beta r - \gamma x}}{t}\bm{1}_{\{x\geq0\}}\mathbb{P}(\tau_t^h\in \dr r, Z_t^h\in \dr x)\dr t
\]
and 
\[
 \bar{\phi}_h(\alpha,\beta, \gamma) = \int_0^{\infty}\int_{[0,\infty)\times\mathbb{R}}\frac{\e^{-t} - \e^{-\alpha t- \beta r + \gamma x}}{t}\bm{1}_{\{x<0\}}\mathbb{P}(\tau_t^h\in \dr r, Z_t^h\in \dr x)\dr t.
\]

\noindent
We will denote by $\phi_0(\alpha,\beta, \gamma)$ and $\bar{\phi}_0(\alpha,\beta, \gamma)$ the corresponding quantities for $(\tau_t^0, Z_t^0)_{t\geq0}$. As we should expect, we have the following result.
\begin{proposition}\label{conv_laplace_exponent}
 Let $(\tau_t^h, Z_t^h)_{t\geq0}$ and $(\tau_t^0, Z_t^0)_{t\geq0}$ be as above. Then we have for any non-negative $\alpha, \beta, \gamma$ such that $\alpha + \beta > 0$,
 \[
\lim_{h\to 0} \phi_h(\alpha,\beta, \gamma) =\phi_0(\alpha,\beta, \gamma) \quad \text{and} \quad \lim_{h\to 0}\bar{\phi}_h(\alpha,\beta, \gamma) = \bar{\phi}_0(\alpha,\beta, \gamma) \,.
 \]
\end{proposition}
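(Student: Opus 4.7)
The plan is to prove convergence of the integrand pointwise in $t$ using weak convergence, and then exhibit a uniform-in-$h$ dominating function to conclude by dominated convergence. I will only treat $\phi_h$, as the proof for $\bar\phi_h$ is identical up to exchanging $x \geq 0$ and $x<0$. Let me abbreviate
\[
I_h(t) := \mathbb{E}\big[(\e^{-t} - \e^{-\alpha t - \beta \tau_t^h - \gamma Z_t^h})\bm{1}_{\{Z_t^h \geq 0\}}\big],
\]
so that $\phi_h(\alpha,\beta,\gamma) = \int_0^\infty I_h(t)/t \,\dr t$.

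\textbf{Step 1: pointwise convergence.} Fix $t > 0$. The map $(r,x)\mapsto \e^{-\alpha t - \beta r - \gamma x}\bm{1}_{\{x\geq 0\}}$ on $[0,\infty)\times\mathbb R$ is bounded and continuous outside $\{x=0\}$. Because $(Z_t^0)_{t\geq 0}$ is not a compound Poisson process (and has no pure-drift component, since it is a genuine Lévy process), one has $\mathbb P(Z_t^0 = 0) = 0$ for all $t>0$ (this is a classical fact on Lévy processes, see e.g.\ Sato~\cite{ken1999levy}). Hence the set of discontinuity points carries zero mass under the law of $(\tau_t^0, Z_t^0)$, and Portmanteau's theorem applied to the weak convergence $(\tau_t^h, Z_t^h) \Rightarrow (\tau_t^0, Z_t^0)$ yields $I_h(t) \to I_0(t)$ as $h \to 0$.

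\textbf{Step 2: uniform domination.} Denote by $\Phi_h$ the Laplace exponent of $(\tau_t^h)$, so $\mathbb{E}[\e^{-\beta \tau_t^h}] = \e^{-t\Phi_h(\beta)}$. I decompose
\[
I_h(t) = (\e^{-t}-\e^{-\alpha t})\,\mathbb P(Z_t^h\geq 0) + \e^{-\alpha t}\,\mathbb{E}\big[(1-\e^{-\beta\tau_t^h-\gamma Z_t^h})\bm{1}_{\{Z_t^h\geq 0\}}\big].
\]
The first term divided by $t$ is bounded by $|\e^{-t}-\e^{-\alpha t}|/t$, which is integrable on $(0,\infty)$ by Frullani. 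For the second term, the subadditivity inequality $1-\e^{-(a+b)} \leq (1-\e^{-a}) + (1-\e^{-b})$ for $a,b\geq 0$ gives
\[
\mathbb{E}\big[(1-\e^{-\beta\tau_t^h-\gamma Z_t^h})\bm{1}_{\{Z_t^h\geq 0\}}\big] \leq (1-\e^{-t\Phi_h(\beta)}) + \mathbb{E}\big[(1-\e^{-\gamma Z_t^h})\bm{1}_{\{Z_t^h\geq 0\}}\big].
\]
Since $\Phi_h(\beta)\to\Phi_0(\beta)$, there is $h_0>0$ and $C_0$ such that $\Phi_h(\beta)\leq C_0$ for all $h<h_0$. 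Combined with $1-\e^{-u} \leq u$, this yields $(1-\e^{-t\Phi_h(\beta)})/t \leq C_0$ uniformly. Finally, using $1-\e^{-\gamma x}\leq 1\wedge(\gamma x)$ for $x\geq 0$, the last term is dominated by $\mathbb{E}[1\wedge(\gamma |Z_t^h|)]$; by a Lévy-Khintchine argument analogous to Lemma~\ref{tempspetit}, and using the uniform convergence on compacts of the characteristic exponent of $Z^h$ to that of $Z^0$, one gets a bound $\mathbb{E}[1\wedge|Z_t^h|]\leq C\sqrt{t}$ uniformly for $h<h_0$. The factor $\e^{-\alpha t}/t$ multiplied by any of these bounds is integrable on $(0,1]$.

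For the tail $t\geq 1$, I bound $I_h(t)$ crudely by $\e^{-t} + \e^{-\alpha t - t\Phi_h(\beta)}$. Under the assumption $\alpha+\beta>0$, the exponent $\alpha+\Phi_h(\beta)$ stays bounded away from $0$ for $h<h_0$: if $\alpha>0$ this is trivial; if $\alpha=0$ and $\beta>0$, then $\Phi_0(\beta)>0$ (since $\tau^0$ is a nondegenerate subordinator) and $\Phi_h(\beta)\geq\Phi_0(\beta)/2$ for $h$ small. Dividing by $t$ yields an integrable tail, uniformly in $h<h_0$.

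\textbf{Step 3: conclusion.} Combining Steps 1 and 2, the dominated convergence theorem yields $\phi_h(\alpha,\beta,\gamma)\to\phi_0(\alpha,\beta,\gamma)$; the same argument with $\bm{1}_{\{x<0\}}$ in place of $\bm{1}_{\{x\geq 0\}}$ (using $\e^{-\beta r+\gamma x}$ instead) gives $\bar\phi_h\to\bar\phi_0$. The main delicate point is the uniform-in-$h$ control of $\mathbb{E}[1\wedge|Z_t^h|]$ for small $t$; while for a fixed Lévy process this is standard, making the bound uniform along the converging sequence requires invoking the uniform convergence on compact sets of the characteristic exponents of $(\tau_t^h, Z_t^h)$ to that of $(\tau_t^0, Z_t^0)$, which is a consequence of the assumed weak convergence.
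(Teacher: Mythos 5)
Your proof is correct and follows essentially the same route as the paper: pointwise convergence from weak convergence together with $\mathbb{P}(Z_t^0 = 0)=0$, then a uniform-in-$h$ dominating function on $(0,1)$ via the bound $\mathbb{E}[1\wedge|Z_t^h|]\lesssim\sqrt t$ (the paper's Lemma~\ref{tempspetit}, applied to the bivariate process $(\tau^h,Z^h)$), and a separate crude exponential bound for $t\geq 1$ using $\alpha+\Phi_h(\beta)$ bounded away from $0$. Your splitting $I_h(t)=(\e^{-t}-\e^{-\alpha t})\,\mathbb P(Z_t^h\geq 0)+\e^{-\alpha t}\,\mathbb E[(1-\e^{-\beta\tau_t^h-\gamma Z_t^h})\ind_{\{Z_t^h\geq0\}}]$ is cosmetically different from the paper's direct triangle inequality $|I_h(t)|\leq(1-\e^{-t})+\mathbb E[(\beta\tau_t^h+\gamma|Z_t^h|)\wedge 1]$, but both reduce to the same ingredient; and the ``delicate point'' you flag at the end is precisely what Lemma~\ref{tempspetit} already supplies, so there is no gap.
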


\begin{proof}
 We will only show the convergence of $\phi_h$ as the proof for $\bar{\phi}_h$ is similar. We only stress that since $\mathbb{P}(Z_t^0 = 0) = 0$, by dominated convergence we have
\[
\lim_{h\to0} \mathbb{E}\left[(\e^{-t} - \e^{-\alpha t - \beta \tau_t^h - \gamma Z_t^h})\bm{1}_{\{Z_t^h \geq0\}}\right] = \mathbb{E}\left[(\e^{-t} - \e^{-\alpha t - \beta \tau_t^0 - \gamma Z_t^0})\bm{1}_{\{Z_t \geq0\}}\right] \,.
\]
Now since
\[
 \phi_h(\alpha,\beta, \gamma) = \int_0^{\infty} \frac1t \mathbb{E}\left[(\e^{-t} - \e^{-\alpha t - \beta \tau_t^h - \gamma Z_t^h})\bm{1}_{\{Z_t^h \geq0\}}\right] \dr t,
\]
it only remains to dominate the integrand. We first dominate for $t\in(0,1)$. We have
\begin{align*}
 \left|\mathbb{E}\left[(\e^{-t} - \e^{-\alpha t - \beta \tau_t^h - \gamma Z_t^h})\bm{1}_{\{Z_t^h \geq0\}}\right]\right| \leq & 1-\e^{-t} + \mathbb{E}\left[(1 - \e^{-\alpha t - \beta \tau_t^h - \gamma Z_t^h})\bm{1}_{\{Z_t^h \geq0\}}\right] \\
 \leq &  1-\e^{-t} + \mathbb{E}\left[(\beta \tau_t^h + \gamma |Z_t^h|)\wedge 1\right].
\end{align*}

\noindent
Then by Lemma \ref{tempspetit} below (applied to the sequence of bivariate Lévy process $(\tau_t^h,Z_t^h)$ valued in $\mathbb R^2$), there exists a constant $C_{\beta, \gamma} >0$ such that for any $h\in(0,1)$ and for any $t\in(0,1)$, $\mathbb{E}[(\beta \tau_t^h + \gamma |Z_t^h|)\wedge 1] \leq C_{\beta, \gamma} \sqrt{t}$. Thus we get for any $h\in(0,1)$ and any $t\in(0,1)$,
\[
 t^{-1}\left|\mathbb{E}\left[(\e^{-t} - \e^{-\alpha t - \beta \tau_t^h - \gamma Z_t^h})\bm{1}_{\{Z_t^h \geq0\}}\right]\right| \leq 1 + \frac{C_{\beta, \gamma}}{\sqrt{t}},
\]
which is integrable on $(0,1)$. Next, we dominate on $[1,\infty)$. We have
\[
 \left|\mathbb{E}\left[(\e^{-t} - \e^{-\alpha t - \beta \tau_t^h - \gamma Z_t^h})\bm{1}_{\{Z_t^h \geq0\}}\right]\right| \leq \e^{-t} + \mathbb{E}\left[\e^{-\alpha t - \beta \tau_t^h - \gamma Z_t^h}\bm{1}_{\{Z_t^h \geq0\}}\right].
\]
If $\alpha > 0$, then we can dominate by $\e^{-t} + \e^{-\alpha t}$ which, divided by $t$, is integrable on $[1,\infty)$.
If $\alpha = 0$, then $\beta > 0$ and we have
\[
 \left|\mathbb{E}\left[(\e^{-t} - \e^{-\alpha t - \beta \tau_t^h - \gamma Z_t^h})\bm{1}_{\{Z_t^h \geq0\}}\right]\right| \leq \e^{-t} + \e^{-\Phi_h(\beta)t},
\]
where $\Phi_h$ is the Laplace exponent of $(\tau_t^h)_{t\geq0}$.
Since the latter converges in law to $(\tau_t)_{t\geq0}$, we get that $\lim_{h\to0}\Phi_h(\beta) = \Phi_0(\beta) > 0$ as $h\to0$ where $\Phi_0$ is the Laplace exponent of $(\tau_t^0)_{t\geq0}$. Therefore, there exists a constant $c_{\beta}>0$ such that for any $h\in(0,1)$, $\Phi_h(\beta) \geq c_{\beta}$. Finally, we see that for any $h\in(0,1)$, for any $t\geq1$, we have 
\[
 \left|\mathbb{E}\left[(\e^{-t} - \e^{-\alpha t - \beta \tau_t^h - \gamma Z_t^h})\bm{1}_{\{Z_t^h \geq0\}}\right]\right| \leq \e^{-t} + \e^{-c_{\beta}t},
\]
which, divided by $t$, is integrable on $[1,\infty)$. This completes the proof.
\end{proof}

\begin{proposition}
 Let $(\tau_t^h, Z_t^h)_{t\geq0}$ and $(\tau_t^0, Z_t^0)_{t\geq0}$ be as above and assume that $0$ is recurrent for the reflected limiting process $(R_t^0)_{t\geq0}$ Then we also have for any $\gamma > 0$, $\phi_h(0,0,\gamma) \longrightarrow \phi_0(0,0,\gamma)$ as $h\to0$.
\end{proposition}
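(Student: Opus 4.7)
I plan to apply the dominated convergence theorem to the integral
\[
\phi_h(0,0,\gamma) = \int_0^\infty \frac{\bbE\big[(\e^{-t} - \e^{-\gamma Z_t^h})\ind_{\{Z_t^h \geq 0\}}\big]}{t}\, \dd t
\]
following the template of Proposition~\ref{conv_laplace_exponent}. Pointwise convergence of the integrand carries over, using that $\bbP(Z_t^0 = 0) = 0$ since $Z^0$ is not compound Poisson, and the small-$t$ domination by $1 + C_\gamma/\sqrt{t}$ on $(0,1)$ also goes through verbatim via Lemma~\ref{tempspetit}. The new difficulty is the large-$t$ bound, as the decay factors $\e^{-\alpha t}$ or $\e^{-c_\beta t}$ used in Proposition~\ref{conv_laplace_exponent} are no longer available; this is precisely where the recurrence hypothesis on $(R_t^0)_{t\geq 0}$ enters.

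For $t \geq 1$ I would bound the absolute value of the integrand by $\e^{-t}/t + \bbE[\e^{-\gamma Z_t^h}\ind_{\{Z_t^h \geq 0\}}]/t$. The first piece is integrable. For the second, I would exploit the elementary identity (immediate by subtracting the defining integrals)
\[
\phi_h(q, 0, \gamma) - \phi_h(0, 0, \gamma) = \int_0^\infty \frac{(1 - \e^{-qt})\,\bbE\big[\e^{-\gamma Z_t^h}\ind_{\{Z_t^h \geq 0\}}\big]}{t}\, \dd t \;\geq\; 0 \,,
\]
valid for every $q > 0$. Combined with the lower bound $1 - \e^{-qt} \geq 1 - \e^{-q}$ for $t \geq 1$, this gives the tail estimate
\[
\int_1^\infty \frac{\bbE\big[\e^{-\gamma Z_t^h}\ind_{\{Z_t^h \geq 0\}}\big]}{t}\, \dd t \;\leq\; \frac{\phi_h(q, 0, \gamma) - \phi_h(0, 0, \gamma)}{1 - \e^{-q}}\,.
\]

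To turn this into a uniform-in-$h$ control I would couple Proposition~\ref{conv_laplace_exponent} with the Wiener--Hopf identity from Corollary~\ref{laplace_transform_x_zeta}, which specialises to $\bbE[\e^{-\gamma S_e^h}] = \exp\!\big(\phi_h(q, 0, 0) - \phi_h(q, 0, \gamma)\big)$ for $e = e(q)$ an independent exponential of parameter $q$. Under recurrence of $(R^0_t)_{t\geq 0}$ one has $S_\infty^0 = +\infty$ almost surely, so $\bbE[\e^{-\gamma S_e^0}] \downarrow 0$ as $q \downarrow 0$; by the Skorokhod convergence $(Z_t^h) \to (Z_t^0)$ at any fixed time together with the monotonicity of $q \mapsto \bbE[\e^{-\gamma S_e^h}]$ (since $S_{e(q)}^h$ is stochastically non-increasing in $q$), the same smallness transfers uniformly to $h$ near $0$. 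A standard $3\epsilon$ argument would then conclude: fix $q$ small enough to make the tail $\int_T^\infty$ uniformly small in $h$, and apply dominated convergence on $(0,T)$ to handle the bulk.

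The main obstacle is converting the \emph{multiplicative} Wiener--Hopf control on $\kappa_h(q,0,0)/\kappa_h(q,0,\gamma)$ into the \emph{additive} control of the difference $\phi_h(q,0,\gamma) - \phi_h(0,0,\gamma)$ that appears in the tail bound. A clean workaround, which I would ultimately adopt, is to sandwich $\exp(\phi_h(0,0,\gamma)) = \kappa_h(0,0,\gamma)/c_h$ directly: the monotonicity of $\alpha \mapsto \kappa_h(\alpha,0,\gamma)$ and Proposition~\ref{conv_laplace_exponent} at $\alpha > 0$ immediately yield $\limsup_{h \to 0} \exp(\phi_h(0,0,\gamma)) \leq \exp(\phi_0(0,0,\gamma))$ (by letting $\alpha \downarrow 0$ in the limit process), while the matching $\liminf$ would be produced via the uniform smallness of $\bbE[\e^{-\gamma S_e^h}]$ above, which quantifies how close $\kappa_h(q, 0, \gamma)$ sits to $\kappa_h(0, 0, \gamma)$.
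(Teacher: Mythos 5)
Your upper bound is correct: the monotonicity of $\alpha \mapsto \kappa_h(\alpha,0,\gamma)$ together with Proposition~\ref{conv_laplace_exponent} at $\alpha>0$ gives $\limsup_h \kappa_h(0,0,\gamma) \le \kappa_0(\alpha,0,\gamma)$, and letting $\alpha\downarrow 0$ works by monotone convergence because recurrence of $(R_t^0)$ guarantees $\sigma_1^0 < \infty$ a.s. But the two pieces you offer for the lower bound do not close the gap. The dominated-convergence route is circular: the tail estimate you derive bounds $\int_1^\infty t^{-1}\bbE[\e^{-\gamma Z_t^h}\ind_{\{Z_t^h\ge 0\}}]\,\dd t$ by $(1-\e^{-q})^{-1}\big(\phi_h(q,0,\gamma)-\phi_h(0,0,\gamma)\big)$, but the quantity $\phi_h(0,0,\gamma)$ appearing on the right is exactly the unknown you are trying to control, so the bound provides no uniform-in-$h$ information unless you already know the answer. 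And the sandwich route misreads the Wiener--Hopf identity: $\bbE[\e^{-\gamma S_{e(q)}^h}] = \kappa_h(q,0,0)/\kappa_h(q,0,\gamma)$ compares $\kappa_h(q,0,0)$ to $\kappa_h(q,0,\gamma)$, \emph{not} $\kappa_h(q,0,\gamma)$ to $\kappa_h(0,0,\gamma)$. Showing that this Laplace transform is uniformly small (which you can do, via monotonicity in $q$ and Skorokhod convergence) only tells you that $\kappa_h(q,0,0)/\kappa_h(q,0,\gamma)$ is small; it says nothing about how close $\kappa_h(q,0,\gamma)$ sits to $\kappa_h(0,0,\gamma)$, which is what your sandwich needs. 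So the $\liminf$ step is genuinely missing.

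The paper avoids all of this by arguing at the level of the ladder-height random variables rather than the exponents. From $\kappa_h(\gamma,0,\gamma)\to\kappa_0(\gamma,0,\gamma)$ one gets $\sigma_1^h+H_1^h \to \sigma_1^0+H_1^0$ in law; recurrence of $(R_t^0)$ makes the limit a.s.\ finite, hence the pairs $(\sigma_1^h, H_1^h)$ are tight in $\bbR_+^2$. Any subsequential limit $(\bar\sigma,\bar H)$ then has, by Proposition~\ref{conv_laplace_exponent}, the same Laplace transform as $(\sigma_1^0, H_1^0)$ for all $\alpha>0$, $\gamma\ge 0$; passing to $\alpha=0$ by monotone convergence (legal because $\bar H$ and $H_1^0$ are finite) pins down the law, so $H_1^h\to H_1^0$ in law and $\kappa_h(0,0,\gamma)\to\kappa_0(0,0,\gamma)$ follows. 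If you want to rescue your sandwich instead, you would need an additive, not multiplicative, comparison such as $\kappa_h(q,0,\gamma)-\kappa_h(0,0,\gamma) \le \kappa_h(q,0,0)$, which holds by the association property of infinitely divisible nonnegative random vectors (the Lévy measure of $(\sigma^h,H^h)$ sits on $\bbR_+^2$); combined with $\kappa_0(q,0,0)\to 0$ by recurrence, this would give the lower bound. But as written, your argument does not contain this step, and the link you invoke between $\bbE[\e^{-\gamma S_e^h}]$ and the needed difference is not there.
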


Here, we cannot use the dominated convergence theorem as it is not clear how to dominate $\mathbb{E}[\e^{-\gamma Z_t^h} \bm{1}_{\{Z_t^h \geq0\}}]$. We will instead use an argument of tightness combined with the continuity of the Laplace transform.

\begin{proof}
 For every $h > 0$, let $(\sigma_t^h, \theta_t^h, H_t^h)_{t\geq0}$ be the trivariate subordinator associated with $(\tau_t^h, Z_t^h)_{t\geq0}$, as in Appendix \ref{appendix_wiener}. Since the local time of the reflected process $(R_t^h)_{t\geq0}$ is defined up to a constant, we can normalize it so that for any non-negative $\alpha, \beta, \gamma$, we have
 \[
  \kappa_h(\alpha, \beta, \gamma) = \exp(\phi_h(\alpha, \beta, \gamma)),
 \]
where $\kappa_h$ is the Laplace exponent of $(\sigma_t^h, \theta_t^h, H_t^h)_{t\geq0}$. In other words, we can choose the constant $c_h$ in Proposition \ref{wiener_prop_corps} to be equal to $1$. Similarly, we consider $(\sigma_t^0, \theta_t^0, H_t^0)_{t\geq0}$ the trivariate subordinator associated with $(\tau_t^0, Z_t^0)_{t\geq0}$, whose Laplace exponent $\kappa_0$ is such that for any non-negative $\alpha,\beta,\gamma$, we have $\kappa_0(\alpha,\beta,\gamma) = \exp(\phi_0(\alpha, \beta, \gamma))$. Then by Proposition \ref{conv_laplace_exponent}, $\kappa_h(\alpha,\beta,\gamma)$ converges to $\kappa_0(\alpha,\beta,\gamma)$ for any non-negative $\alpha,\beta,\gamma$ such that $\alpha + \beta >0$. In particular, we have for any $\gamma > 0$,
\[
 \lim_{h\to0}\mathbb{E}\left[\e^{-\gamma(\sigma_1^h + H_1^h)}\right] = \lim_{h\to0}\e^{-\kappa_h(\gamma, 0, \gamma)} = \e^{-\kappa_0(\gamma, 0, \gamma)} = \mathbb{E}\left[\e^{-\gamma(\sigma_1^0 + H_1^0)}\right],
\]
which implies that $\sigma_1^h + H_1^h$ converges in law to $\sigma_1^0 + H_1^0$ as $h\to0$. Since $0$ is recurrent for $(R_t^0)_{t\geq0}$, the random variables $\sigma_1^0$ and $H_1^0$ are a.s. finite. This implies that the family of random variables $(\sigma_1^h + H_1^h)_{h\in(0,1)}$ is tight in $\bbR_+$, which in turn implies that the family $((\sigma_1^h, H_1^h))_{h\in(0,1)}$ is tight in $\bbR_+^2$. Let $((\sigma_1^{h_k}, H_1^{h_k}))_{k\in\mathbb{N}}$ be a subsequence which converges in law towards some finite random variables $(\bar{\sigma}, \bar{H})$. Using Proposition \ref{conv_laplace_exponent} again, we see that for any $\alpha >0$ and any $\gamma\geq0$, we have
\[
 \mathbb{E}\left[\e^{-\alpha\bar{\sigma} - \gamma\bar{H}}\right]= \mathbb{E}\left[\e^{-\alpha\sigma_1^0 -\gamma H_1^0}\right].
\]
Since $\bar{H}$ and $H_1^0$ are finite, the above equality also holds for $\alpha = 0$ and $\gamma > 0$ by the monotone convergence theorem, which shows that the law of $(\bar{\sigma}, \bar{H})$ is uniquely determined and therefore $(\sigma_1^h, H_1^h)$ converges in law as $h\to0$ to $(\sigma_1^0, H_1^0)$. Hence $H_1^h$ converges in law to $H_1^0$, which completes the proof, since this implies that $\kappa_h(0,0,\gamma)$ converges to $\kappa_0(0,0,\gamma)$ for any $\gamma > 0$.
\end{proof}

\subsection{Technical lemmas}

\begin{lemma}\label{tempspetit}
Let $(Z^n_t)_{t \geq 0}$, $n\geq 1$ be a sequence of L\'evy processes on $\mathbb{R}^d$ which converges in law to a L\'evy process $(Z_t)_{t\geq0}$. Then there exists a constant $C > 0$ such that for any $n\geq1$, for any $t\geq0$, we have
\[
\mathbb{E}[ \vert Z_t^n \vert \wedge 1 ] \leq C \sqrt{t}.
\]
\end{lemma}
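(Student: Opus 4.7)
The idea is to exploit the Lévy structure: writing $\mathbb{E}[\mathrm{e}^{i\langle \xi, Z_t^n\rangle}] = \mathrm{e}^{t\psi_n(\xi)}$ for the characteristic exponent $\psi_n:\mathbb R^d\to\mathbb C$ (continuous, with $\psi_n(0)=0$ and $\mathrm{Re}\,\psi_n\leq 0$), I will show that $\mathbb{E}[|Z_t^n|\wedge 1]$ is controlled by $\sqrt{t}$ times a uniform bound on $|\psi_n|$ over the unit ball. First, observe that for $t\geq 1$ the bound is trivial since $\mathbb{E}[|Z_t^n|\wedge 1]\leq 1\leq \sqrt{t}$, so we may restrict attention to $t\in(0,1]$.

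The key analytic tool is the classical Fourier-type inequality
\[
\mathbb{E}\big[|Y|^2\wedge 1\big]\;\leq\; C_d\int_{|\xi|\leq 1}\bigl(1-\mathrm{Re}\,\phi_Y(\xi)\bigr)\,\dr\xi,
\]
valid for any $\mathbb R^d$-valued random variable $Y$ with characteristic function $\phi_Y$, which follows from the pointwise bound $\int_{|\xi|\leq 1}(1-\cos\langle\xi,x\rangle)\dr\xi\geq c_d(|x|^2\wedge 1)$ for some $c_d>0$. Applied to $Y=Z_t^n$, this yields
\[
\mathbb{E}\big[|Z_t^n|^2\wedge 1\big]\;\leq\; C_d\int_{|\xi|\leq 1}\bigl(1-\mathrm{Re}\,\mathrm{e}^{t\psi_n(\xi)}\bigr)\,\dr\xi \;\leq\; C_d\int_{|\xi|\leq 1}\bigl|1-\mathrm{e}^{t\psi_n(\xi)}\bigr|\,\dr\xi\;\leq\; C_d\, t\int_{|\xi|\leq 1}|\psi_n(\xi)|\,\dr\xi,
\]
where in the last step I use the elementary inequality $|1-\mathrm{e}^z|\leq |z|$ valid for every $z\in\mathbb C$ with $\mathrm{Re}\,z\leq 0$ (which applies since $\mathrm{Re}\,\psi_n\leq 0$).

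To conclude, I need a uniform-in-$n$ control of $\sup_{|\xi|\leq 1}|\psi_n(\xi)|$. The convergence in law of $(Z_t^n)_{t\geq 0}$ to $(Z_t)_{t\geq 0}$ (or equivalently, in the Lévy setting, the convergence in law of $Z_1^n$ to $Z_1$) combined with Lévy's continuity theorem gives that $\mathrm{e}^{\psi_n}\to \mathrm{e}^\psi$ uniformly on compact sets of $\mathbb R^d$. Since $\psi_n,\psi$ are continuous, vanish at the origin and are determined by their exponentials via the continuous logarithm along a path from $0$, this lifts to uniform convergence $\psi_n\to\psi$ on $\{|\xi|\leq 1\}$. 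In particular, $M:=\sup_n\sup_{|\xi|\leq 1}|\psi_n(\xi)|<\infty$, so the previous display gives $\mathbb{E}[|Z_t^n|^2\wedge 1]\leq C_d M|B_1|\,t$. Finally, Jensen's inequality yields
\[
\mathbb{E}\big[|Z_t^n|\wedge 1\big]\;=\;\mathbb{E}\bigl[\sqrt{(|Z_t^n|\wedge 1)^2}\bigr]\;\leq\;\sqrt{\mathbb{E}[|Z_t^n|^2\wedge 1]}\;\leq\; C\sqrt{t},
\]
with a constant $C$ independent of $n$ and $t\in(0,1]$, which together with the trivial bound for $t\geq 1$ proves the lemma. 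The only non-routine point is the passage from convergence in law to uniform control of the characteristic exponents, but this is a standard consequence of Lévy's continuity theorem together with the continuity and vanishing of $\psi_n$ at the origin.
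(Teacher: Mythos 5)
Your proof is correct, but it takes a genuinely different route from the paper's. The paper works in the ``time domain'': it decomposes $Z^n_t$ via the Lévy--Itô decomposition into a Gaussian part, a compensated small-jump martingale, a compound Poisson process for the large jumps, and a drift, then bounds $\mathbb{E}[|Z_t^n|^2\wedge 1]$ piece by piece (using Doob for the martingale part, Itô's formula for the compound Poisson part, and the $\wedge 1$ truncation for the drift) and finally appeals to Jacod--Shiryaev's Theorem~VII.2.9 to get uniform-in-$n$ control of the triplets $(b_n,\tilde c_n,\nu_n)$. You instead work entirely in the ``frequency domain'': the truncation inequality $\mathbb{E}[|Y|^2\wedge 1]\leq C_d\int_{|\xi|\leq 1}(1-\mathrm{Re}\,\phi_Y(\xi))\,\dr\xi$ together with $|1-\mathrm{e}^z|\leq|z|$ (for $\mathrm{Re}\,z\leq 0$) reduces everything to a uniform bound on the characteristic exponents $\psi_n$ over the unit ball, and that bound comes out of Lévy's continuity theorem plus the continuous-logarithm argument. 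Both proofs produce the $L^2$ estimate $\mathbb{E}[|Z_t^n|^2\wedge 1]\leq Ct$ and conclude identically by Jensen/Cauchy--Schwarz. Your version is more self-contained and avoids the Lévy--Itô surgery and the Jacod--Shiryaev convergence theorem, at the price of invoking the lifting of uniform convergence of characteristic functions to their logarithms, which you correctly flag as the one non-routine step: it is valid here because $\phi_n/\phi\to 1$ uniformly on the (connected) unit ball, so eventually lies in a slit domain where the principal logarithm is defined and Lipschitz, and since $\psi_n-\psi-\log(\phi_n/\phi)$ is a continuous $2\pi i\mathbb{Z}$-valued function vanishing at the origin, it vanishes identically.
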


\begin{proof}

We classically decompose the sequence of Levy processes as a sum of a Brownian motion, a compound Poisson process, a pure jump martingale and a drift: $Z_t^n = c_nB_t^n + C_t^n + M_t^n + b_nt$, where $b_n \in \mathbb{R}^d$, $c_n$ is a $d\times d$ matrix. Let us denote by $\nu_n$ the Levy measure of $X^n$, then the Levy measure of $C^n$ is $\bm{1}_{\{|x|\geq1\}}\nu_n(\dr x)$ and $M^n$ has Levy measure $\bm{1}_{\{|x|<1\}}\nu_n(\dr x)$. For any $n\geq1$, for any $t\geq0$, we have
\[
\mathbb{E}[ \vert Z_t^n \vert^2 \wedge 1 ] \leq 4\left(\mathbb{E}[ \vert c_nB_t^n \vert^2 ] + \mathbb{E}[ \vert M_t^n \vert^2 ] + \mathbb{E}[ \vert C_t^n \vert^2 \wedge 1 ] +  t^2\vert b_n \vert^2 \wedge 1\right).
\]
By the maximum inequality for compensated sums, we have
\[
\mathbb{E}\left[ \vert M_t^n \vert^2 \right] \leq \mathbb{E}\Big[ \sup_{[0,t]}\vert M_s^n \vert^2 \Big] \leq t\int_{\{|x|<1\}\setminus\{0\}}|x|^2\nu_n(\dr x).
\]
We introduce the truncation function $\chi : \mathbb{R}^d \to \mathbb{R}^d$ which is a bounded continuous function such that $\chi(x)=x$ if $\vert x \vert \leq 1$. Setting $\tilde{c}_n:= c_n^* c_n + \int \chi (x)\otimes \chi(x)  \nu_n (\dr x)$, which is a $d\times d$ symmetric nonnegative matrix, we get by the above inequalities $\mathbb{E}[ \vert c_nB_t^n \vert^2 ] + \mathbb{E}[ \vert M_t^n \vert^2 ] \leq \mathrm{Tr}(\tilde{c}_n)t$.

\medskip
Let us now introduce the smooth function $\varphi : \mathbb{R}^d \to \mathbb{R}^{+}$ such that $\varphi(x)= \vert x\vert^2$ if $\vert x\vert \leq 1$ and $\varphi(x)=2$ if $\vert x \vert \geq 2$ and $1\leq \varphi(x)\leq 2$ if $\vert x \vert \in [1,2]$. By Itô formula, we have
\[
\mathbb{E}[ \vert C_t^n \vert^2 \wedge 1 ] \leq \mathbb{E}[\varphi(C_t^n)] = \int_0^t \int_{|x|>1}\mathbb{E}[\varphi(C_s^n + x) - \varphi(C_s^n)]\nu_n(\dr x) \leq 2t\int_{|x|>1}\nu_n(\dr x).
\]
\noindent
By Theorem 2.9 p 396 in \cite{js}, since $(Z_t^n)_{t\geq 0}$ converges in law to $(Z_t)_{t\geq 0}$: 
\[
b_n \to b, \ \ \tilde{c}_n \to \tilde{c} \ \ \ \mathrm{and} \ \ \int g(x) \nu_n (\dr x) \to \int g(x) \nu( \dr x), 
\]
for all continuous bounded function $g:\mathbb{R}^d \to \mathbb{R}$ such that g is equal to $0$ in a neighborhood of $0$. Thus, setting $C_1 = \sup_n \mathrm{Tr}(\tilde{c}_n)$, $C_2 = 2\sup_n\int_{|x|>1}\nu_n(\dr x)$, $C_3 = (\sup_n \vert b_n \vert^2)^{1/2}$, $C_4 = C_3\vee1$ and $C = 4(C_1 + C_2 + C_4)$, which are finite quantities, we get for any $n\geq1$, for any $t\geq0$,
\[
\mathbb{E}[ \vert Z_t^n \vert^2 \wedge 1 ] \leq Ct.
\]
Indeed, we have $(t^2\vert b_n \vert^2) \wedge 1 \leq (t^2 C_3^2) \wedge 1 \leq C_4 t$. The result follows from Cauchy-Schwartz inequality.
\end{proof}

\begin{lemma}\label{indep_limite}
Let $(\tau^n_t, Z^n_t)_{t\geq 0}$ be a sequence of L\'evy processes taking values in $\mathbb{R}_{+}\times \mathbb{R}$. Suppose that the sequence of subordinators $(\tau^n_t)_{t\geq 0}$ converges in law to $(\tau_t)_{t\geq 0}$ and $(Z_t^n)_{t\geq 0}$ converges to a Brownian motion $(Z_t)_{t\geq 0}$. Then $(\tau^n_t, Z^n_t)_{t\geq 0}$ converges in law to $(\tau_t, Z_t)_{t \geq 0}$ where $(\tau_t)_{t\geq0}$ and $(Z_t)_{t\geq0}$ are independent. 
\end{lemma}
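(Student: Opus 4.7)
\medskip\noindent
\textit{Proof plan.} My plan is to reduce the claim to joint convergence at time $t=1$, then identify any subsequential limit through the structure of its Lévy-Itô decomposition. Since all processes involved are Lévy processes, Theorem~2.9 of \cite{js} (already invoked in the proof of Lemma~\ref{tempspetit}) tells us that joint convergence in law for the Skorokhod topology is equivalent to joint convergence in law of $(\tau_1^n, Z_1^n)$ to some pair of random variables $(T, Y)$. The marginal convergences imply that $(\tau_1^n)_n$ and $(Z_1^n)_n$ are each tight on $\mathbb{R}_+$ and $\mathbb{R}$ respectively, hence $(\tau_1^n, Z_1^n)_n$ is tight on $\mathbb{R}_+\times\mathbb{R}$ and has a subsequential limit $(T,Y)$.

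Any such subsequential limit $(T,Y)$ is infinitely divisible (as limit of infinitely divisible laws), and by Jacod--Shiryaev corresponds to a $2$-dimensional L\'evy process $(T_t, Y_t)_{t\geq 0}$ whose marginals must coincide in law with $(\tau_t)_{t\geq 0}$ and $(Z_t)_{t\geq 0}$ respectively. The key observation is now purely structural: since $(Y_t)_{t\geq 0}$ is a Brownian motion, it has a.s.\ continuous paths, which forces the L\'evy measure $\Pi$ of the bivariate process $(T_t, Y_t)$ to satisfy $\Pi(\mathbb{R}_+\times(\mathbb{R}\setminus\{0\}))=0$; in other words, $\Pi$ is supported on $(0,\infty)\times\{0\}$. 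Likewise, the Gaussian covariance matrix of the limiting bivariate process must be of the form $\bigl(\begin{smallmatrix} a & b \\ b & \sigma^2 \end{smallmatrix}\bigr)$, but since $(T_t)_{t\geq 0}$ is a subordinator it has no Brownian component, hence $a=0$, which by positive semi-definiteness forces $b=0$. Therefore the Gaussian part of $(T,Y)$ lives only in the second coordinate.

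Combining these two facts, the L\'evy--It\^o decomposition of $(T_t, Y_t)$ reads
\[
T_t = b^T t + \int_0^t\!\!\int_{(0,\infty)\times\{0\}} r\, N(\mathrm{d}s,\mathrm{d}r,\mathrm{d}z), \qquad Y_t = b^Y t + \sigma W_t,
\]
where the Poisson random measure $N$ (governing the jumps of $T$) and the Brownian motion $W$ (governing $Y$) are independent by construction. This gives the desired independence of $T$ and $Y$, and identifies the joint law uniquely. Since every subsequential limit has the same distribution, the whole sequence $(\tau_1^n, Z_1^n)$ converges in law to $(\tau_1, Z_1)$ with $\tau_1$ and $Z_1$ independent, and Jacod--Shiryaev then upgrades this to convergence of the processes for the Skorokhod topology. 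The main (albeit standard) obstacle is the structural argument of the second paragraph, where one must carefully transfer the continuity of the marginal $Y$ and the absence of a Brownian part in $T$ into corresponding constraints on the bivariate characteristic triple.
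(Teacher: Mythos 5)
Your argument is correct, and it reaches the conclusion by a genuinely different route from the paper. The paper works directly with the convergence-of-characteristics criterion of Jacod--Shiryaev: writing the bivariate exponent $\psi_n$ with a fixed truncation function, it extracts from the two marginal convergences the convergence of the drifts, of $\int 1\wedge r^2\,\nu_n(\dr r)$ and of $\sigma_n^2+\int 1\wedge x^2\,\mu_n(\dr x)$, deduces that $\pi_n(\{|x|\geq\delta\})\to 0$, and then verifies by hand the two remaining conditions of \cite[Thm.~VII.2.9]{js} (vanishing of the mixed truncated term $\int(1\wedge r)(-1\vee(x\wedge1))\,\pi_n$ and convergence of $\int g\,\dd\pi_n$ to $\int g(r,0)\,\nu(\dr r)$), which yields the factorized limiting exponent. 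You instead use tightness of $(\tau_1^n,Z_1^n)$ plus identification of subsequential limits: any weak limit is infinitely divisible, its bivariate L\'evy measure must project to zero on the second coordinate because the limit marginal is Gaussian, its Gaussian covariance matrix has vanishing $(1,1)$ entry because the first marginal is a subordinator, and positive semi-definiteness then kills the off-diagonal term; the L\'evy--It\^o decomposition then gives independence and pins down the law uniquely, so the full sequence converges. Both arguments are sound; yours is more conceptual and avoids the explicit estimates on $\pi_n$, while the paper's is more computational but makes the limiting exponent completely explicit. Two small points worth making explicit if you write this up: (a) the identification of the limiting drift from the marginal drifts uses that the bivariate L\'evy measure is carried by $\{x=0\}$, so that a product truncation function restricted there agrees with the univariate one; (b) the equivalence between convergence at time $1$ and functional convergence for L\'evy processes is the same reduction the paper itself invokes at the start of Appendix~B, so it is available.
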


\begin{proof}Denote by $\pi_n( \dr r, \dr x)$  the L\'evy measure of $(\tau_t^n, Z_t^n)_{t\geq 0}$ and define by $\chi(r,x)=(1\wedge r, -1\vee (x\wedge 1) )$ a truncation function on $\mathbb{R}_{+}\times \mathbb{R}$. It is continuous, bounded and equals $(r,x)$ on a neighborhood of $0$ in $\mathbb{R}_{+}\times \mathbb{R}$. Then for all $\alpha>0$, $\beta \in \mathbb{R}$, 
\[
\mathbb{E}[ \e^{-\alpha \tau_t^n +i \beta Z_t^n}]=\exp \big ( t \psi_n(\alpha, \beta) \big )
\]
where 
\[
\psi_n(\alpha, \beta)=  -\alpha b_n +i \beta \hat{b}_n -\frac{\sigma_n^2}{2} \beta^2+ \int_{\mathbb{R}_{+}\times \mathbb{R}} \e^{-\alpha r + i \beta x}-1 - (-\alpha, i \beta)\cdot \chi(r,x)  \; \pi_n (\dr r, \dr x). 
\]
We denoted by $(b_n, \hat{b}_n)\in \mathbb{R}_{+}\times \mathbb{R}$ the drift coefficient (which depends on the choice of the truncation function) and $\sigma_n^2$ the Brownian coefficient of $(Z_t^n)_{t\geq0}$. Then making $\alpha=0$ and $\beta=0$ in the expression we deduce that the  characteristics of $(\tau^n_t)_{t\geq 0}$ and $(Z^n_t)_{t\geq 0}$ are respectively $(b_n,\nu_n)$ and $(\hat{b}_n, \sigma_n^2, \mu_n)$, where $\nu_n$ and $\mu_n$ are defined as
\[
\nu_n(\dr r) := \int_{\mathbb{R}} \pi_n( \dr r, \dr x), \quad \mu_n(\dr x) := \int_{\mathbb{R}_+} \pi_n(\dr r, \dr x).
\]
By Theorem VII.2.9 p 396 \cite{js}, the convergence in law of $(\tau_t^n)_{t\geq0}$ and $(Z_t^n)_{t\geq0}$ implies that $b_n$, $\hat{b}_n$, $\int_{\mathbb{R}} 1\wedge r^2 \nu_n(\dr r)$ and $\sigma_n^2 + \int_{\mathbb{R}} 1\wedge x^2 \mu_n(\dr x)$ converges and that for all $g: \mathbb{R}_{+} \to \mathbb{R}$ and $\hat{g}:\mathbb{R}\to \mathbb{R}$  continuous bounded which are $0$ around $0$ 
\[
\int_{\mathbb{R}_+} g(r) \nu_n(\dr r)  \underset{n\to +\infty}{\longrightarrow} \int_{\mathbb{R}_+} g(r) \nu(\dr r), \ \ \mathrm{and} \ \ \int_{\mathbb{R}_+} \hat{g}(x) \mu_n(\dr x)  \underset{n\to +\infty}{\longrightarrow} 0.
\]
Here we denote by $\nu(\dr r)$ the L\'evy measure of the limit process $(\tau_t)_{t\geq 0}$. It follows that for all $\delta>0$, $\int_{\bbR} \mathbf{1}_{\vert x\vert \geq \delta} \mu_n(\dr x)$ goes to $0$ and which implies that
\begin{align}
\int_{\mathbb{R}_+\times \mathbb{R}} \mathbf{1}_{\vert x\vert \geq \delta} \,  \pi_n(\dr r , \dr x)  \underset{n\to +\infty}{\longrightarrow} 0. \label{dirac}
\end{align}
By Theorem VII.2.9 p 396 \cite{js} again, the convergence of $(\tau^n_t,Z^n_t)_{t\geq 0}$ will follow if we can show that \textit{(i)}
\[
\int_{\mathbb{R}_+\times \mathbb{R}} (1\wedge r )(-1\vee(x\wedge 1)) \, \pi_n( \dr r, \dr x)
\]
converges as $n\to\infty$, and that \textit{(ii)} for all continuous function $g: \mathbb{R}_+ \times \mathbb{R} \to \mathbb{R}_+$ which is constant outside a compact set and is $0$ around $0$, $\int g(r,x) \pi_n(\dr r, \dr x)$ converges. 

Regarding item \textit{(i)}, we set $f(r,x):= (1\wedge r )(-1\vee(x\wedge 1))$ and fix $\delta>0$. By \eqref{dirac}, and since $f$ is bounded, $\int_{\vert x\vert > \delta} \vert f(r,x) \vert \pi_n(\dr r, \dr x)$ vanishes as $n\to\infty$. It follows that
\[
\underset{n\to\infty}{\limsup} \bigg\vert \int_{\mathbb{R}_+\times \mathbb{R}} f(r,x) \pi_n(\dr r, \dr x) \bigg\vert  \leq \underset{n\to\infty}{\limsup}  \int_{\vert x \vert \leq \delta} \vert f(r,x)\vert \pi_n(\dr r, \dr x)  \leq \delta \, \underset{n\to\infty}{\limsup} \int 1\wedge r \, \nu_n(\dr r). 
\]
Now, remark that $b_n-\int_{\bbR} 1\wedge r \nu_n(\dr r)$ is the drift coefficient of the subordinator $(\tau_t^n)_{t\geq 0}$ and therefore, it must be positive. Since $b_n$ converges, it follows that 
\[
 0 \leq \limsup_{n\to\infty} \int_{\bbR} 1\wedge r\nu_n(\dr r) \leq \limsup_{n\to\infty}b_n < \infty.
\]
Making $\delta$ to $0$ shows that $\int_{\mathbb{R}_+\times \mathbb{R}} f(r,x) \pi_n( \dr r, \dr x)$ converges to $0$.

For item \textit{(ii)}, we consider a function $g : \mathbb{R}_+ \times \mathbb{R} \to \mathbb{R}_+$ which is constant outside a compact set and is $0$ around $0$, say on $K:=\{r \leq 1,  \vert x \vert \leq 1\}$. It is uniformly continuous and for all $\varepsilon>0$ there is some $1>\delta >0$ such that $\vert x \vert \leq \delta$ implies $\vert g(r,x)-g(r,0)\vert \leq \varepsilon$. Using \eqref{dirac} again, and using that $\int_{\bbR_+}g(r,0)\nu_n(\dr r) = \int_{\bbR_+\times \bbR}g(r,0)\mu_n(\dr r, \dr x)$ 
\begin{align*}
\underset{n\to\infty}{\limsup} &\bigg\vert \int_{\bbR_+ \times \bbR} g(r,x) \pi_n(\dr r, \dr x) - \int_{\bbR_+} g(r,0) \nu_n(\dr r) \bigg\vert \leq \underset{n\to\infty}{\limsup} \int_{\vert x \vert \leq \delta} \vert g(r,x)-g(r,0) \vert \pi_n(\dr r, \dr x) \\ 
&=  \underset{n\to\infty}{\limsup} \int_{K^c\cap\{\vert x \vert \leq \delta\}} \vert g(r,x)-g(r,0) \vert \pi_n(\dr r, \dr x)\leq \varepsilon \underset{n\to\infty}{\limsup}\, \nu_n( \mathbb{R}_+ \setminus [0,1]).
\end{align*}
Since $\limsup_{n\to\infty} \, \nu_n( \mathbb{R}_+ \setminus [0,1]) <+\infty$ and since $ \int_{\bbR_+}g(r,0) \nu_n(\dr r)$ converges to $ \int g(r,0) \nu (\dr r)$, we conclude that 
\[
 \int g(r,x) \pi_n(\dr r, \dr x) \underset{n\to +\infty}{\longrightarrow} \int g(r,0) \nu(\dr r). 
\]
To summarize we get that $(\tau_t^n, Z_t^n)_{t\geq 0}$ converges in law and the limiting characteristic Laplace-Fourier exponent is
\[
\psi (\alpha, \beta) = -\alpha b +i \beta \hat{b} -\frac{\sigma^2}{2} \beta^2+ \int_{\mathbb{R}_{+}} \big ( \e^{-\alpha r}-1 +\alpha 1\wedge r  \big )\; \nu (\dr r),
\]
where $b:= \lim_n b_n$, $\hat{b}:= \lim_n \hat{b}_n$ and $\sigma^2:=\lim_n \sigma_n^2 + \int 1\wedge x^2 \mu_n(\dr x)$. This is the characteristic exponent of $(\tau_t,Z_t)_{t\geq 0}$ where $\tau_t$ and $Z_t$ are independent.
\end{proof}

\section{Technical results on generalized one-dimensional diffusions}
\label{app:generaldiffusion}

In this appendix, we prove some technical results from Section \ref{section_ito_mc_kean}. 
We do not recall here the notation here, so we refer the reader to the introduction of Section~\ref{section_ito_mc_kean}.


\subsection{Local times and the occupation time formula}
\label{appendix_loc_times}

The goal of this section is to prove Proposition~\ref{prop_ok_gen} and Lemma~\ref{lemma_moy}.
We do not recall the statements and refer the reader to Section~\ref{section_ito_mc_kean}.
%
%
%

\begin{proof}[Proof of Proposition~\ref{prop_ok_gen}]
 \textit{Item (i).\ } Let $t\geq0$ and $h$ be a Borel function. Using a change of variables, we have
 \[
  \int_0^t h(X_s)\dr s = \int_0^{t}h(\sca^{-1}(B_{\rho_s}))\dr s = \int_0^{\rho_t}h(\sca^{-1}(B_s))\dr A_s^{\mm^\sca}.
 \]
By Corollary 2.13 in \cite[Ch. X]{ry}, it holds that for any $u\geq0$,
\[
 \int_0^{u}h(\sca^{-1}(B_s))\dr A_s^{\mm^\sca} = \int_{\mathbb{R}}h(\sca^{-1}(x))L_u^x \mm^\sca(\dr x), 
\]
and the first item follows by substituting $\rho_t$ in the preceding equation and performing a the change of variables $y = \sca^{-1}(x)$.

\bigskip\noindent
\textit{Item (ii).\ } Let us now show that $0$ is regular for $(X_t)_{t\geq0}$. More precisely, we will show that the time $t = 0$ is an accumulation point of $\mathcal{Z}_X = \{t\geq0, X_t = 0\} = \{t\geq0, B_{\rho_t} = 0\}$. First we remark that for every $(\mathcal{F}_t)_{t\geq0}$-stopping time $T$ such that $B_T = 0$, we have $A_{T+\epsilon}^{\mm^\sca}-A_T^{\mm^\sca} > 0$ for every $\epsilon > 0$. Indeed, the family $(L_{T+t}^x - L_T^x)_{t\geq0, x\in\bbR}$ is the family of local times of the Brownian motion $(B_{T + t})_{t\geq0}$. Therefore, $L_{T+\epsilon}^0 - L_T^0 > 0$ for every $\epsilon > 0$ and by the continuity of the Brownian local times in the space variable, $L_{T+\epsilon}^x - L_T^x > 0$ in a neighbourhood of $0$. Finally, since $0\in \mathrm{supp}(\mm^\sca)$, we get
\[
 A_{T+\epsilon}^{\mm^\sca}-A_T^{\mm^\sca} = \int_{\bbR}(L_{T+\epsilon}^x - L_T^x)\mm^\sca(\dr x) > 0 
\]
for every $\epsilon > 0$. Hence, for such a stopping time $T$, we have $\rho_{A_T^{\mm^\sca}} = \inf\{s > 0, \: A_s^{\mm^\sca}  > A_T^{\mm^\sca}\} = T$. Let us define for any $t \geq 0$, $d_t = \inf\{s > t, \: B_s = 0\}$ the first zero of $(B_u)_{u\geq0}$ after time $t$, which is an $(\mathcal{F}_t)_{t\geq0}$-stopping time. Consider a decreasing sequence $(t_n)_{n\in\mathbb{N}}$ which converges to $0$ as $n\to\infty$. Then for any $n\in\mathbb{N}$, $d_{t_n}$ is a stopping time such that $B_{d_{t_n}} = 0$ and therefore $\rho_{A_{d_{t_n}}^{\mm^\sca}} = d_{t_n}$. Let us set $(u_n)_{n\in\mathbb{N}} = (A_{d_{t_n}}^{\mm^\sca})_{n\in\mathbb{N}}$, then it is clear that it is a non-increasing sequence of points in $\mathcal{Z}_X$. Since $d_{t_n} \to 0$ as $n\to\infty$ and since $(A_{t}^{\mm^\sca})_{t\geq0}$ is continuous, it comes that $u_n \to 0$ as $n\to\infty$ which shows that $0$ is an accumulation point of $\mathcal{Z}_X$.

\bigskip\noindent
\textit{Item (iii).\ } We show item \textit{(ii)} and start by showing that $(L_{\rho_t}^0)_{t\geq0}$ is continuous. Let $\Sigma_L$ be the complement of the open set $\bigcup_{s\geq0}]\tau_{s-}^B, \tau_s^B[$ where $(\tau_t^B)_{t\geq0}$ is the right-continuous inverse of $(L_t^0)_{t\geq0}$. The set $\Sigma_L$ a.s. coincides with the set $\mathcal{Z}_B = \{t\geq0, \: B_t = 0\}$. Let $\Sigma_A$ be the complement of $\bigcup_{s\geq0}]\rho_{s-}, \rho_s[$. It is shown in \cite[Ch. X Proposition 2.17]{ry} that $\Sigma_A$ a.s. coincides with the set $\Gamma_A = \{t\geq0, \: B_t \in \mathrm{supp}(\mm^\sca)\}$ and with the support of the measure $\dr A_t^{\mm^\sca}$. Since $0\in \mathrm{supp}(\mm^\sca)$, it comes that $\Gamma_L \subset \Gamma_A$ a.s. and therefore, we have
\[
 \bigcup_{s\geq0}]\rho_{s-}, \rho_s[\: \subset\: \bigcup_{s\geq0}]\tau_{s-}^B, \tau_s^B[.
\]
Since $\bigcup_{s\geq0}]\tau_{s-}^B, \tau_s^B[$ also coincides with the flat sections of $(L_t^0)_{t\geq0}$ we get that for any $t\geq0$ such that $\rho_{t-} < \rho_t$, then $L_{\rho_{t-}}^0 = L_{\rho_t}^0$ which shows that $(L_{\rho_t}^0)_{t\geq0}$ is continuous. The fact that $(L_t^0)_{t\geq0}$ remains an additive functional after time-change is rather classical. It follows from the fact that $(\rho_t)_{t\geq0}$ is itself the inverse of a continuous additive functional, and $(L_t^0)_{t\geq0}$ is a \textit{strong} additive functional. We refer to Revuz-Yor \cite[Ch. X, Propositions 1.2 and 1.3]{ry} for more details.

\medskip
Let us show that the support of the measure $\dr L_{\rho_t}^0 = \mu(\dr t)$ is included in the closure of $\mathcal{Z}_X$. We introduce the measure $\nu(\dr t)$ which is the pushforward measure of $\mu$ by $(\rho_t)_{t\geq0}$. It is the Stieltjes measure associated to the non-decreasing process $(L_{\rho_{A_t^{\mm^\sca}}}^0)_{t\geq0}$. Let $t\geq0$ be fixed, then if $A_{t+\epsilon} > A_t$ for every $\epsilon > 0$, we have $L_{\rho_{A_t^{\mm^\sca}}}^0 = L_t^0$. On the other hand, if $t$ is such that there exists $\epsilon >0$ such that $A_t^{\mm^\sca} = A_{t+\epsilon}^{\mm^\sca}$, then $A_t^{\mm^\sca}$ is a jumping time of $(\rho_s)_{s\geq0}$ and since $(L_s^0)_{s\geq0}$ is constant on $[\rho_{u-}, \rho_u]$ for every $u$ such that $\Delta\rho_u > 0$, we get $L_t^0 = L_{\rho_{A_t^{\mm^\sca}}}^0$. In the end, we see that $\nu$ is the Stieltjes measure associated to $(L_t^0)$ and, applying the change of variables, we get
\[
 \int_0^t \bm{1}_{\{X_s \neq 0\}}\dr L_{\rho_s}^0 = \int_{[0,t]} \bm{1}_{\{B_{\rho_s} \neq 0\}}\mu(\dr s) = \int_{\rho([0,t])}\bm{1}_{\{B_{s} \neq 0\}} \nu(\dr s) \leq \int_0^{\rho_t}\bm{1}_{\{B_{s} \neq 0\}}\dr L_{s}^0 = 0.
\]
This shows that the support of the measure $\mu$ is included in the closure of $\mathcal{Z}_X$.

We now show the converse and we will assume that $0$ is an instantaneous point for $(X_t)_{t\geq0}$, as the proof is easier when $0$ is a holding point. In this case, the closure of $\mathcal{Z}_x$ is a perfect set with empty interior. For any $t\geq0$, we denote by $d_t^X = \inf\{s > t, \: X_t = 0\}$ the first zero of $(X_s)_{s\geq0}$ after $t$. Then for any $t\geq0$, $B_{\rho_{d_t^X}} = 0$ and since $\rho_{d_t^X}$ is an $(\mathcal{F}_t)$-stopping time, $L_{\rho_{d_t} + \epsilon}^0 > L_{\rho_{d_t}}^0$ for every $\epsilon > 0$. Since $(A_t^{\mm^\sca})_{t\geq0}$ is continuous, $(\rho_t)_{t\geq0}$ is increasing and therefore $L_{\rho_{d_t +
 \epsilon}}^0 > L_{\rho_{d_t}}^0$ for any $\epsilon > 0$. This shows that almost surely, for any $t\in\mathbb{Q}_+$, $d_t$ belongs to the support of $\mu$.  Consider now some $t$ in the closure of $\mathcal{Z}_x$ and some $\epsilon > 0$. Then since $\overline{\mathcal{Z}_X}$ has empty interior, there exists $r\in\mathbb{Q}\cap[t-\epsilon, t)$ such that $r\notin\overline{\mathcal{Z}_X}$ and $d_r \leq t$. Hence $t$ is a limit of points belonging to the support of $\mu$, which is a closed set, and therefore $t$ belongs to this set.

\bigskip\noindent
\textit{Item (iv).\ } We now consider $(\tau_t)_{t\geq0}$ the right-continuous inverse of $(L_{\rho_t}^0)_{t\geq0}$. Let us show that for any $t\geq0$, $\tau_t = A^{\mm^\sca}_{\tau_t^B}$. The continuous non-decreasing process $(A_t^{\mm^\sca})_{t\geq0}$ is also the right-continuous inverse of $(\rho_t)_{t\geq0}$ so that we have for any $t>0$,
\[
 A^{\mm^\sca}_{\tau_t^B} = \inf\{s > 0, \: \rho_s > \tau_t^B\} = \inf\{s > 0, \: L_{\rho_s}^0 > t\} = \tau_t.
\]
For any $t\geq0$, $\tau_t^B$ is a stopping time such that $B_{\tau_t^B} = 0$ and thus, by the argument from the second step, we have $\rho_{A^{\mm^\sca}_{\tau_t^B}} = \rho_{\tau_t} = \tau_t^B$.
\end{proof}

\begin{proof}[Proof of Lemma~\ref{lemma_moy}]
 We introduce the Lévy process $Z_t^g = \int_0^{\tau_t}((g\circ\sca)\times f)(X_s) \dr s$. It is clear that, by assumption and by the occupation time formula from item~\textit{(i)} of Proposition~\ref{prop_ok_gen}, this process is well defined. Let us denote by $\nu_g(\dr z) = \nn\big(\int_0^{\ell} ((g\circ\sca)\times f\big)(\epsilon_s) \dr s \in \dr z)$ the Lévy measure of $(Z_t)_{t\geq0}$. On one hand, we have
 \[
  \mathbb{E}\left[Z_t\right] = t \int_{\bbR}z \nu_g(\dr z) = t \nn\left(\int_0^{\ell} ((g\circ\sca)\times f)(\epsilon_s) \dr s\right).
 \]
On the other hand, it holds by items \textit{(i)} and \textit{(iv)} of Proposition~\ref{prop_ok_gen} that 
\[
  \mathbb{E}\left[Z_t\right] = \mathbb{E}\left[\int_{\bbR}L_{\rho_{\tau_t}}^x g(x)\mm^f(\dr x)\right] = \int_{\bbR}\mathbb{E}\big[L_{\tau_t^B}^x\big]g(x)\mm^f(\dr x).
 \]
By Ray-Knight's theorem, see~\cite[Chp.~XI, Thm.~2.3]{ry}, for any fixed $t\geq0$, the processes $(L_{\tau_t^B}^x)_{x\geq0}$ and $(L_{\tau_t^B}^x)_{x\leq0}$ are two independent squared Bessel processes of dimensions $0$ starting at $t$ and therefore $\mathbb{E}[L_{\tau_t^B}^x] = t$ for any $x\in\bbR$, which completes the proof.
\end{proof}

\subsection{Convergence of strings under the Brownian excursion measure}\label{appendix_strings}

The goal of this section is to prove the technical Lemmas~\ref{lemconv_excursion} and~\ref{conv_domination} about strings with regular variation.
Again, we do not recall the notation nor the statements of the results; we refer the reader to Section~\ref{section_strings}.
In particular $\mm$ is a string of regular variation with regular variation of index $\alpha \in (0,2)$ and $\mm_h$ denote its rescaled version.

Let us start with a technical lemma in the case $\alpha \in [1,2)$, which is an almost direct application of Potter's bound~\cite[Thm.~1.5.6]{bgt89}.
\begin{lemma}\label{Potter'sbounds}
Let $\mm$ be a string with regular variation of index $\alpha \in [1,2)$. 
Then for any $\eta>0$, there exists a constant $C >0$, such that for any $x>0$ and any $h\in(0,1)$,
\begin{equation}\label{bound_mh_1_2}
 |\mm_h(x)| \leq C x^{1/\alpha - 1 -\eta}\vee x^{1/\alpha - 1 + \eta}.
\end{equation}
\end{lemma}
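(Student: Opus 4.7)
The plan is to prove the bound by a direct application of Potter's bound for slowly varying functions (\cite[Thm.~1.5.6]{bgt89}), treating the two ranges of $\alpha$ separately, since $\mm$ itself is of different nature in each.

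For $\alpha \in (1,2)$, I would introduce the non-increasing bounded function $g := \mm(\infty) - \mm$ on $\bbR_+$, which satisfies $g(y) \sim \Lambda(y) y^{1/\alpha -1}$ as $y\to\infty$ and is bounded by $\mm(\infty)$. Then $|\mm_h(x)| = h^{1/\alpha -1} g(x/h)/\Lambda(1/h)$, and I would split according to whether $x\geq h$ or $x<h$. When $x\geq h$, the asymptotic on $g$ (combined with the trivial bound on any fixed compact set of $y$-values to absorb the small-$y$ regime) gives $g(x/h) \leq C \Lambda(x/h)(x/h)^{1/\alpha-1}$, hence $|\mm_h(x)| \leq C x^{1/\alpha-1} \Lambda(x/h)/\Lambda(1/h)$, and Potter's bound applied to $\Lambda$ yields $\Lambda(x/h)/\Lambda(1/h) \leq C'(x^{\eta}\vee x^{-\eta})$ for $h$ small enough. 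When $x<h\leq 1$, I use the crude bound $g(x/h)\leq \mm(\infty)$, so $|\mm_h(x)|\leq \mm(\infty) h^{1/\alpha-1}/\Lambda(1/h)$; since $1/\alpha-1-\eta<0$ and $x<h$, one has $x^{1/\alpha-1-\eta}\geq h^{1/\alpha-1-\eta}$, and it remains to check $h^{1/\alpha-1}/\Lambda(1/h) \leq C h^{1/\alpha-1-\eta}$, i.e.\ $h^{\eta}\leq C\Lambda(1/h)$, which follows from Potter's bound applied to the slowly varying function $1/\Lambda$. The finitely many values of $h\in[h_0,1)$ are dealt with by continuity and boundedness of $\mm$.

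For $\alpha=1$, the situation is more delicate since $\mm$ is only of de Haan-type, not regularly varying. Here I would use the representation resulting from the de Haan condition $(\mm(x/h)-\mm(1/h))/\Lambda(1/h)\to\log x$: up to a modification on a bounded set (absorbed into the constant), $\mm$ admits a Karamata-type representation
\[
\mm(y) = \mm(1) + \int_1^{y} \frac{\ell(t)}{t}\, \dr t \quad \text{for $y$ large},
\]
with $\ell$ slowly varying and $\ell(y)\sim \Lambda(y)$ as $y\to\infty$ (this is the de Haan representation theorem). Then for $x\geq 1$,
\[
\frac{|\mm(x/h)-\mm(1/h)|}{\Lambda(1/h)} = \int_{1/h}^{x/h} \frac{\ell(t)}{t\, \Lambda(1/h)}\, \dr t \leq C\int_{1/h}^{x/h} \frac{(t/(1/h))^{\eta}}{t}\, \dr t = \frac{C}{\eta}(x^{\eta}-1)\leq C' x^{\eta},
\]
using Potter's bound $\ell(t)/\Lambda(1/h)\leq C(t h)^{\eta}$ valid for $t,1/h$ large enough. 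The symmetric argument for $x\leq 1$ gives the $x^{-\eta}$ bound, and values of $h$ bounded away from $0$ are again handled separately.

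The main obstacle is the case $\alpha=1$: one has to argue beyond a direct Potter bound on a regularly varying $\mm$ (since here $\mm$ is only of extended regular variation). Either one quotes de Haan theory directly (e.g.\ \cite[Ch.~3]{bgt89}), or one derives the integral representation above as an intermediate step. The $\alpha\in(1,2)$ case, by contrast, is a fairly clean two-regime splitting argument once $g$ is introduced.
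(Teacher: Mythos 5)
Your proof is correct and follows essentially the same route as the paper's: for $\alpha\in(1,2)$ the paper likewise bounds $\mm(\infty)-\mm(y)$ globally by $C_\alpha\Lambda(y)y^{1/\alpha-1}$ (using that the exponent is negative to absorb small $y$) and then applies Potter's bound to $\Lambda(x/h)/\Lambda(1/h)$, so your splitting at $x=h$ merely makes the edge cases explicit. For $\alpha=1$ the paper simply cites the Potter-type bound for the de Haan class \cite[Thm.~3.8.6]{bgt89}, which is exactly what your Karamata/de Haan representation argument rederives.
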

\begin{proof}[Proof of Lemma \ref{Potter'sbounds}]
When $\alpha = 1$, this is a direct application of \cite[Thm.~3.8.6]{bgt89}. 
When $\alpha \in (1,2)$, we have that there is a constant $C_{\alpha}$ such that 
\[
\mm(\infty) - \mm(y) \leq C_\alpha \Lambda(y) y^{1/\alpha-1} \qquad \text{for any $y>0$},
\]
using also that $1/\alpha-1 <0$ so the upper bound diverges as $x\downarrow 0$.
It then simply remains to show that
$\frac{\Lambda(x/h)}{\Lambda(1/h)} \leq C x^{-\eta} \vee x^{\eta}$ for any $x>0$, which is exactly the content of Potter's bound~\cite[Thm.~1.5.6]{bgt89}.
\end{proof}
%
%
%
%

\begin{proof}[Proof of Lemma \ref{lemconv_excursion}]
 When $\alpha \in (0,1)$ there is nothing to prove since $x^{1/\alpha-1}= c_\alpha \int_0^x y^{1/\alpha-2} \dr y$ is the cumulant  to the Radon measure $x^{1/\alpha-2} \dr x$ on $\bbR_+$ and $x\mapsto g(x)$ is continuous with compact support. When $\alpha\in[1,2)$, then for any $\delta \in(0,1)$, then we have
\[
\lim_{\delta \downarrow 0} \int_{\delta}^{\infty}\bm{\mathrm{L}}_t^x \mm_h(\dr x)
= c_\alpha\int_{\delta}^{\infty}\bm{\mathrm{L}}_t^x x^{1/\alpha-2} \dr x.
\]

\noindent
Again, this is a direct consequence of the fact that the measure $\mm_h$ restricted to $(\delta, \infty)$ converges weakly to the measure $x^{1/\alpha-2} \dr x$ restricted to $(\delta, \infty)$, and that $x\mapsto \bm{\mathrm{L}}_t^x$ is continuous with compact support. 

Now consider some $\gamma \in(0,1/2)$ such that $1/\alpha - 1 + \gamma > 0$, which is possible since $\alpha<2$. Since $x\mapsto g(x)$ is Hölder of order $\gamma$, and $g(0)=0$, there exists a constant $D > 0$  such for any $x \in (0, \delta)$, $g(x) \leq D x^{\gamma}$. Therefore we have
\[
 c_\alpha\int_0^{\delta}g(x) x^{1/\alpha-2} \dr x \leq  D'  \delta^{1/\alpha - 1 +\gamma}, 
\]
for some constant $D'>0$. Now pick some $\eta \in(0,1/\alpha - 1 +\gamma)$, so by \eqref{bound_mh_1_2} there exists $C > 0$ such that $|\mm_h(x)| \leq C x^{1/\alpha - 1 -\eta}$ for any $x\in(0,\delta]$. Therefore we have
\[
 \int_0^\delta g(x)\mm_h(\dr x) \leq D\int_0^\delta x^{\gamma} \mm_h(\dr x) \leq \delta^{\gamma}|\mm_h(\delta)| + \gamma \int_0^{\delta}x^{\gamma - 1}|\mm_h(x)|\dr x \leq C\, \delta^{1/\alpha - 1 +\gamma - \eta},
\]
for some constant $C>0$, the second inequality being obtained by integration by parts. Therefore, we have just proved that for all $\delta\in(0,1)$, we have
\[
 \limsup_{h\to0}\left|\int_{\bbR_+}g(x) \mm_h(\dr x) - c_\alpha\int_{\bbR_+}g(x) x^{1/\alpha -2}\dr x\right| \leq D'\delta^{1/\alpha - 1 +\gamma} + C\delta^{1/\alpha - 1 +\gamma - \eta}.
\]
Letting $\delta \downarrow 0$ proves the result.
\end{proof}

%
\begin{proof}[Proof of Lemma~\ref{conv_domination}]
Before we start with the proof of teh four items, let us recall William's decomposition of the excursion measure, see~\cite[Chp.~XII, Thm.~4.5]{ry}.
Take $(U_t)_{t\geq0}$ and $(\Tilde{U}_t)_{t\geq0}$ two independent 3-dimensional Bessel processes starting from $0$, defined on some probability space. For any $c>0$, let $T_c$ and $\Tilde{T}_c$ be their respective first hitting of $c$. We define the process $(Q_t^c)_{t\geq0}$ by
\[
Q_t^c =
\left \{
\begin{array}{lll}
 U_t  \quad &\text{if } 0\leq t \leq T_c  \\
 c - \Tilde{U}_{t- T_c} &\text{if } T_c \leq t \leq T_c + \Tilde{T}_c \\
 0 \quad &\text{if } t\geq \Tilde{T}_c.
 \end{array} \right.
\]
Then, the law of a positive excursion conditioned to $M(\varepsilon) = c$ is equal to the law of $(Q_t^c)_{t\geq0}$, and for any measurable set $\Gamma$, we have
\begin{equation}
\label{william}
 \nn^B_+(\Gamma) = \frac{1}{2}\int_0^\infty\mathbb{P}\big((Q_t^y)_{t\geq0} \in \Gamma\big)y^{-2} \dr y.
\end{equation}
Then, denoting by $L^x_t(X)$ the local time in $x$ at time $t$ of a process $(X_s)_{s\geq 0}$, we have
\begin{equation}\label{tpslocbound}
L_{T_y+ \tilde{T}_y}^x \big ( Q^y \big) = L_{T_y}^x ( U )+ L_{\tilde{T}_y}^x (\tilde{U}) \leq  L_{\infty}^x ( U )+ L_{\infty}^x (\tilde{U}) =:R_x.
\end{equation}
Recalling the variant of the second Ray-Knight's theorem, see \cite[RK2.a)]{y}, $(L_{\infty}^x ( U ) )_{x\geq0}$ and $(L_{\infty}^x ( \tilde{U} ) )_{x\geq0}$ are two independant 2-dimensional squared Bessel process started at $0$ and so $(R_u)_{u\geq0}$ is a square Bessel process of dimension $4$ started at $0$.

\medskip\noindent
\textit{Item (i).\ }
Consider the first point and take $\alpha \in (0,1)$, then combining~\eqref{william} and~\eqref{tpslocbound} we have
\begin{align*}
\nn^B_+ \left [  \Big (  \int_{\bbR_+} \bm{\mathrm{L}}_\ell^x \mm_h(\dr x) \Big ) \mathbf{1}_{\mathbf{A}_\delta } \right ]
 &=\frac{1}{2}\int_0^\delta \bbE \left [  \int_0^y L^u_{T_y+ \tilde{T}_y}(Q^y) \mm_h(\dr u) \right ]y^{-2} \dr y\\
&\leq \frac{1}{2}\int_0^\delta \bbE \left [ \int_0^y R_u \mm_h(\dr u) \right ]y^{-2} \dr y,  \,.
\end{align*}
Since $\mathbb{E}[R_u]=4u$, this is bounded by
\begin{align*}
2\int_0^\delta  \left [  \int_0^y u \; \mm_h(\dr u) \right ]y^{-2} \dr y 
=  2 \int_0^\delta u \int_u^{+\infty} \frac{\dr y}{y^2} \mm_h(\dr u) = 2 \int_0^\delta \mm_h(\dr u ) = 2 \mm_h(\delta).
\end{align*}
The first point of the Lemma follows since $\underset{h \to 0}{\lim}\mm_h(\delta)=\delta^{1/\alpha -1}$ and $\alpha \in (0,1)$.

\medskip\noindent
\textit{Item (ii).\ } Take  $\alpha \in [1,2)$ and consider $\varphi = x^2$ so that 
\begin{align*}
\nn^B_+ \bigg [ \sup_{h\in (0,1]}\Big (  \int_{\bbR_+} \bm{\mathrm{L}}_\ell^x \mm_h(\dr x) \Big )^2 \mathbf{1}_{\mathbf{A}_\delta } \bigg ] &\leq \frac{1}{2}\int_0^\delta \bbE \bigg [ \sup_{h \in (0,1]} \Big ( \int_0^y R_u \mm_h(\dr u) \Big )^2 \bigg ]y^{-2} \dr y.
\end{align*}
Fix $\eta>0$ (how small depends on $\alpha$).
Since $(R_u)_{u\geq 0}$ is a sum of independent square of Brownian process, by Kolmogorov's regularity criterion (see \cite[Thm.~2.1]{ry}), we have that $R_u \leq C_\eta  u^{1-\eta}$ for all  $u \in [0, 1]$, where  $C_\eta>0$ is a square integrable positive random variable.
So for $\delta \in (0,1]$, 
\begin{equation}
\label{truc2}
\nn^B_+ \bigg [ \sup_{h\in (0,1]}\Big (  \int_{\bbR_+} \bm{\mathrm{L}}_\ell^x \mm_h(\dr x) \Big )^2 \mathbf{1}_{\mathbf{A}_\delta } \bigg ] 
\leq \frac{1}{2}\bbE [C_\eta^2] \int_0^\delta \sup_{h \in (0,1]} \Big (\int_0^y u^{1-\eta} \mm_h(\dr u) \Big)^2 y^{-2}\dr y. 
\end{equation}
Moreover, since $ \int_0^y u^{1-\eta} \; \mm_h(\dr u) = y^{1-\eta}\mm_h( y) -(1-\eta)\int_0^y u^{-\eta}\mm_h(u) \dr u$, we have
\begin{equation*}
\Big (\int_0^y u^{1-\eta} \mm_h(\dr u) \Big)^2 \leq 4 \Big ( y^{2-2\eta} \mm_h(y)^2 +  \Big ( \int_0^y u^{-\eta}\vert \mm_h(u) \vert \dr u  \Big)^2 \Big ).
\end{equation*}
Take $0<2 \eta <1/\alpha -1/2$ (which is possible since  $\alpha<2$), by Lemma \ref{Potter'sbounds} we get that $|\mm_h(u)| \leq c_\eta u^{1/\alpha - 1 -\eta}$ uniformly in $h\in(0,1)$ and $u\in (0,1]$, for some constant $c_{\eta}$. Since $1/\alpha-1-2\eta >-1$ there is a constant $\hat{c}_{\eta}>0$ such that 
\[
\sup_{h\in (0,1]}\Big (\int_0^y u^{1-\eta} \mm_h(\dr u) \Big)^2 \leq \hat{c}_{\eta} y^{2/\alpha -4\eta}, \quad \forall y \in [0,1].
\]
Hence for $\eta>0$ small enough, the left-hand side of~\eqref{truc2} is bounded by a constant (that depends on $\eta$) times $\delta^{2/\alpha--1-4\eta}$, which goes to $0$ as $\delta\downarrow 0$.


\medskip\noindent
\textit{Item (iii).\ } We proceed in the same vein. Similarly as above, we have
\[
\nn_+^B \bigg [ \underset{h\in (0,1)}{\sup} \Big(  \int_0^{+\infty} \bm{\mathrm{L}}_\ell^x \mm_h(\dr x) \Big)   \mathbf{1}_{\mathbf{A}_\delta^c}\bigg ] \leq  \frac{1}{2} \int_\delta^\infty  \bbE \bigg [ \sup_{h \in (0,1)} \Big (  \int_0^y R_u \mm_h(\dr u) \Big ) \bigg ]y^{-2} \dr y \,.
\]
Fix $\eta>0$.
By time-inversion, $(\tilde{R}_u)_{u\geq 0}:=(u^2 R_{1/u})_{u\geq0}$ has the same law at $(R_u)_{u\geq 0}$, hence Kolmogorov's regularity theorem applied both to $(\tilde R_u)_{u\leq 1}$ and $(R_u)_{u\leq 1}$ gives that 
\[
 R_u \leq C_\eta \, (u^{1-\eta}\vee u^{1+\eta}), \quad \forall u\geq 0 \,,
\]
with $C_\eta$ some integrable random variable.
Thus we have, 
 \[
 \nn_+^B \bigg [ \underset{h\in (0,1]}{\sup} \Big(  \int_0^{+\infty} \bm{\mathrm{L}}_\ell^x \mm_h(\dr x) \Big)   \mathbf{1}_{\mathbf{A}_\delta^c}\bigg ] \leq \frac{1}{2}\bbE [C_\eta] \int_\delta^{+\infty} \sup_{h \in (0,1]} \Big ( \int_0^y u^{1-\eta}\vee u^{1+\eta}\mm_h(\dr u) \Big )y^{-2}\dr y
 \]
Then we write for all $y\geq\delta$,
\[
 \int_0^y u^{1-\eta}\vee u^{1+\eta}\mm_h(\dr u) = \int_0^{y\wedge 1} u^{1-\eta}\mm_h(\dr u) + \int_1^{y\vee 1} u^{1+\eta} \mm_h (\dr u).
\]
If $\eta$ is small enough so that $2\eta < 1-1/\alpha$ and $2\eta < 1/\alpha$, which is possible since $\alpha \in (1,2)$, it follows from Lemma ~\ref{Potter'sbounds} that $\vert \mm_h(y) \vert\leq C y^{1/\alpha-1-\eta}\vee y^{1/\alpha-1+\eta}$. 
Then, recalling that $\mm_h(0) = 0$, we see that
\[
 \int_0^{y\wedge 1} u^{1-\eta}\mm_h(\dr u) \leq (y\wedge1)^{1-\eta} \mm_h(y\wedge1) \leq C (y\wedge1)^{1/\alpha-2\eta} \leq C.
\]
Similarly we have
\[
 \int_1^{y\vee 1} u^{1+\eta} \mm_h (\dr u) \leq (y\vee1)^{1+\eta}( \mm_h(y\vee1) - \mm_h(1)) \leq C(y\vee1)^{1/\alpha+2\eta} \,.
\]
In the end, we have for any $y\geq\delta$ the bound $\int_0^y u^{1-\eta}\vee u^{1+\eta}\mm_h(\dr u) \leq C + C(y\vee1)^{1/\alpha+2\eta}$ which is integrable with respect to the measure $y^{-2}\dr y$ on $[\delta,\infty)$ since $2-1/\alpha - 2\eta > 1$. This concludes the proof item \textit{(iii)}.

\medskip\noindent
\textit{Item (iv).\ } This point is similar.  We write similarly as above
\[
\begin{split}
\nn_+^B \bigg [ \underset{h\in (0,1)}{\sup} \Big(  \int_0^{1} \bm{\mathrm{L}}_\ell^x \mm_h(\dr x) \Big)   \mathbf{1}_{\mathbf{A}_\delta^c}\bigg ] 
&= \frac{1}{2}\int_\delta^{\infty}  \bbE \Big [ \underset{h\in (0,1)}{\sup} \int_0^{1\wedge y} R_u \mm_h(\dr u) \Big ] y^{-2}\dr y \\
& \leq  \frac{1}{2}\bbE [C_\eta] \int_\delta^{+\infty} \sup_{h \in (0,1]} \Big ( \int_0^{y\wedge 1} u^{1-\eta}\mm_h(\dr u) \Big )y^{-2}\dr y\,,
\end{split}
\]
which is finite as in the case of the previous item.
\end{proof}

\subsection*{Acknowledgements}

We warmly thank Nicolas Fournier and Thomas Duquesne for fruitful discussions and remarks.

{\small
\bibliographystyle{abbrv}
\bibliography{biblio.bib}
}

{
  \bigskip
  \footnotesize

  Q.~Berger, \textsc{Sorbonne Universit\'e, Laboratoire de Probabilit\'es, Statistique et Modélisation, 75005 Paris, France} and 
  \textsc{DMA, École Normale Supérieure, Université PSL, 75005 Paris, France.}\par\nopagebreak
  \textit{E-mail address}: \texttt{quentin.berger@sorbonne-universite.fr}

  \medskip

  L.~B\'ethencourt, \textsc{Sorbonne Universit\'e, Laboratoire de Probabilit\'es, Statistique et Modélisation, 75005 Paris, France.}\par\nopagebreak
  \textit{E-mail address}: \texttt{loic.bethencourt@sorbonne-universite.fr}

  \medskip

  C.~Tardif \textsc{Sorbonne Universit\'e, Laboratoire de Probabilit\'es, Statistique et Modélisation, 75005 Paris, France.}\par\nopagebreak
  \textit{E-mail address}: \texttt{camille.tardif@sorbonne-universite.fr}
}

\end{document}